\DeclareSymbolFontAlphabet{\mathbb}{AMSb}
\DeclareSymbolFontAlphabet{\mathbbl}{bbold}
\definecolor{darkblue}{rgb}{0,0,0.4} 
\numberwithin{equation}{section}
\newtheorem{thm}{Theorem}
\newtheorem{theorem}[thm]{Theorem}
\newtheorem{lem}{Lemma}[section]               
\newtheorem{lemma}[lem]{Lemma}
\newtheorem{corollary}[lem]{Corollary}               
\newtheorem{proposition}[lem]{Proposition}
\newtheorem{citethm}[lem]{Theorem}
\theoremstyle{definition}
\newtheorem{definition}[lem]{Definition}
\theoremstyle{remark}
\newtheorem{remark}[lem]{Remark}
\newtheorem{example}[lem]{Example}
\newtheorem{convention}[lem]{Convention}
\numberwithin{figure}{section}
\numberwithin{table}{section}
\newcommand{\R}{\mathbb{R}}
\newcommand{\C}{\mathbb{C}}
\newcommand{\mc}{\mathcal}
\newcommand{\wt}{\widetilde}
\renewcommand{\emptyset}{\varnothing}
\newcommand{\interior}{\mathring}
\newcommand{\Wmirror}[1]{\widehat{#1}}
\newcommand{\into}{\hookrightarrow}
\newcommand{\onto}{\twoheadrightarrow}
\renewcommand{\th}{^{\text{th}}}
\DeclareMathOperator{\nbd}{nbd}
\DeclareMathOperator{\Ob}{Ob}
\DeclareMathOperator{\Id}{Id}
\DeclareMathOperator{\Hom}{Hom}
\newcommand{\dg}{\textit{dg} }
\newcommand{\Kh}{\mathit{Kh}}
\newcommand{\KhCx}{\mc{C}}
\newcommand{\Cat}{\mathscr{C}}
\newcommand{\Dat}{\mathscr{D}}
\newcommand{\Eat}{\mathscr{E}}
\newcommand{\op}{\mathrm{op}}
\newcommand{\gr}{\mathrm{gr}}
\newcommand{\intgr}{\gr_{q}}
\newcommand{\CubeCat}[1]{\underline{2}^{#1}}
\newcommand{\CubeCatP}[1]{\underline{2}_+^{#1}}
\newcommand{\co}{\colon}
\newcommand{\bdy}{\partial}
\newcommand{\RR}{\R}
\newcommand{\CC}{\C}
\newcommand{\ZZ}{\mathbb{Z}}
\newcommand{\RP}{\mathbb{R}\mathrm{P}}
\newcommand{\mathcenter}[1]{\vcenter{\hbox{$#1$}}}
\DeclareMathOperator{\Cone}{Cone}
\newcommand{\BurnsideCat}{\mathscr{B}}
\newcommand{\CCat}[1]{\CubeCat{#1}}%The cube category
\newcommand{\CCatP}[1]{\CubeCatP{#1}}%The cube category with an extra object
\newcommand{\AbelianGroups}{\mathsf{Ab}}
\newcommand{\Spectra}{\mathscr{S}}
\newcommand{\Complexes}{\mathsf{Kom}}
\newcommand{\SphereS}{\mathbb{S}} %The sphere spectrum.
\newcommand{\CobD}{\mathsf{Cob}_d}
\newcommand{\CobDenl}{\widetilde{\mathsf{Cob}}_d}
\DeclareMathOperator{\hocolim}{hocolim}
\newcommand{\Crossingless}[1]{{\mathsf{B}}({#1})}
\newcommand{\mHshape}[1]{\wt{\mathcal{S}}_{#1}}
\newcommand{\mHshapeS}[1]{\mathcal{S}_{#1}}
\newcommand{\mTshape}[2]{\wt{\mathcal{T}}_{#1;#2}}
\newcommand{\SmTshape}[2]{\mathcal{T}_{#1;#2}} %Strict
\newcommand{\KhSpace}{\mathscr{X}}
\newcommand{\mGlue}[3]{\wt{\mathcal{U}}_{#1;#2;#3}}
\newcommand{\mGlueS}[3]{\mathcal{U}_{#1;#2;#3}}
\newcommand{\mBurnside}{\BurnsideCat}
\newcommand{\ttimes}{\tilde{\times}}
\newcommand{\lab}[1]{$\scriptstyle #1$}
\newcommand{\mTinvNF}[1]{{\underline{\mathsf{MB}}}_{#1}}%from T to Burn. For non-flat tangles.
\DeclareMathOperator{\THH}{THH}
\DeclareMathOperator{\HH}{HH}
\DeclareMathOperator{\RHom}{RHom}
\newcommand{\annulus}{A}
\newcommand{\OneHalf}{{\textstyle\frac{1}{2}}}
\newcommand{\TangMulticat}{\mathscr{T}}
\newcommand{\TangMovMulticat}{\mathbb{T}}
\newcommand{\BimCat}{\mathsf{Bim}}
\newcommand{\SBimCat}{\mathsf{SBim}}
\newcommand{\enl}[1]{\widetilde{#1}}
\newcommand{\grs}[2]{\{#2,#1\}} %Grading shift by homological, quantum amount.
\newcommand{\sA}{\mathscr{A}}
\newcommand{\sB}{\mathscr{B}}
\newcommand{\Crossings}{\mathfrak{C}}
\newcommand{\tId}{\widetilde{\Id}}
\begin{document}

%%% Local Variables: 
%%% mode: latex
%%% TeX-master: "Cobordism2"
%%% End: 

\title{Homotopy functoriality for Khovanov spectra}

\author{Tyler Lawson}
\thanks{\texttt{TL was supported by NSF FRG Grant DMS-1560699.}}
\email{\href{mailto:tlawson@math.umn.edu}{tlawson@math.umn.edu}}
\address{Department of Mathematics, University of Minnesota, Minneapolis, MN 55455}

\author{Robert Lipshitz}
\thanks{\texttt{RL was supported by NSF Grant DMS-1810893.}}
\email{\href{mailto:lipshitz@uoregon.edu}{lipshitz@uoregon.edu}}
\address{Department of Mathematics, University of Oregon, Eugene, OR 97403}

\author{Sucharit Sarkar}
\thanks{\texttt{SS was supported by NSF Grant DMS-1905717.}}
\email{\href{mailto:sucharit@math.ucla.edu}{sucharit@math.ucla.edu}}
\address{Department of Mathematics, University of California, Los Angeles, CA 90095}

%\subjclass[2010]{\href{http://www.ams.org/mathscinet/search/mscdoc.html?code=57M25,55P42}{57M25,55P42}}

\keywords{}

\date{}

\begin{abstract}
  We prove that the Khovanov spectra associated to links and tangles
  are functorial up to homotopy and sign.
\end{abstract}
\maketitle
\vspace{-1cm}
%\tableofcontents

%%% Local Variables: 
%%% mode: latex
%%% TeX-master: "Cobordism"
%%% End: 

\tableofcontents

% \listoftodos

\section{Introduction}
The goal of this paper is to prove that the Khovanov
spectrum~\cite{RS-khovanov,LLS-khovanov-product,HKK-Kh-htpy}, an
object in the homotopy category of spectra, is natural with respect to
link cobordisms, up to sign. That is:
\begin{theorem}\label{thm:main}
  If $L_0$ and $L_1$ are oriented link diagrams in $\RR^2$ and
  $\Sigma\co L_0\to L_1$ is an oriented cobordism then there is an
  induced homotopy class of maps of spectra  \label{not:KhSpace}
  \[
    \KhSpace(\Sigma)\co \KhSpace^j(L_0)\to
    \KhSpace^{j-\chi(\Sigma)}(L_1)
  \]
  from the Khovanov spectrum of $L_0$ to the Khovanov spectrum of
  $L_1$, well-defined up to sign. Given another oriented link
  cobordism $\Sigma'\co L_1\to L_2$,
  \[
    \KhSpace(\Sigma')\circ\KhSpace(\Sigma)=\pm \KhSpace(\Sigma'\circ\Sigma).
  \]
  Further, if $\Sigma$ consists of a single Reidemeister move then the
  map $\KhSpace(\Sigma)$ is homotopic to the map in the original proof
  of invariance of $\KhSpace(L)$~\cite{RS-khovanov}, and if $\Sigma$
  consists of a single birth, death, or saddle then $\KhSpace(\Sigma)$
  is homotopic to the map defined previously in those
  cases~\cite{LS-rasmus}.
\end{theorem}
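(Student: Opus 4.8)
The strategy is to reduce the theorem to a finite list of local checks via movie presentations, carried out inside the Burnside-category model of the Khovanov spectrum. By the Carter--Saito movie-move theorem, every oriented link cobordism $\Sigma\co L_0\to L_1$ is presented by a movie --- a finite sequence of planar isotopies, Reidemeister moves, and Morse moves (births, deaths, and saddles) --- and two movies present isotopic cobordisms if and only if they differ by a finite sequence of the fifteen elementary movie moves together with reorderings of moves supported in disjoint disks. I would define $\KhSpace(\Sigma)$ to be the composite, in the homotopy category of spectra, of the homotopy classes of maps attached to the successive moves of a movie for $\Sigma$: for a Reidemeister move, the homotopy equivalence from the invariance proof of \cite{RS-khovanov,LLS-khovanov-product}; for a birth, death, or saddle, the map of \cite{LS-rasmus}; for a planar isotopy, the canonical identification. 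With this definition the compatibility assertions of the last two sentences hold \emph{by construction}, and $\KhSpace(\Sigma')\circ\KhSpace(\Sigma)=\pm\KhSpace(\Sigma'\circ\Sigma)$ is immediate once $\KhSpace(\Sigma)$ is known to be well defined, since a movie for $\Sigma'\circ\Sigma$ is the concatenation of movies for $\Sigma$ and $\Sigma'$. Thus the entire content is that this composite is independent, up to sign, of the chosen movie.

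To control the composites and their coherences I would work throughout in the model of \cite{LLS-khovanov-product}, where $\KhSpace(L)$ is the realization of a cube-shaped Burnside functor $F_L\co\CubeCat{n}\to\BurnsideCat$ whose stable equivalence class is the link invariant. The first step is to promote each elementary move to a map --- or a short zig-zag of stable equivalences --- of Burnside functors: the birth, death, and saddle maps are induced directly by maps of Khovanov cubes, while the Reidemeister equivalences are produced, exactly as in \cite{LLS-khovanov-product}, by iterated cancellation (Gaussian elimination) of acyclic subfunctors, now recorded as zig-zags; the choices inherent in these constructions are what give rise to the sign ambiguity. One then checks, by naturality of realization, that realizing these lifts returns the homotopy classes used to define $\KhSpace(\Sigma)$.

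What remains is movie-move invariance. I would first \emph{localize}: a movie move occurs inside a disk $D$ whose complementary tangle is fixed throughout, so, using the gluing properties of the Khovanov tangle spectra and the compatibility of realization with gluing, the two composite maps differ only by a self-map supported on $D$, and each movie move reduces to an equality (up to sign) of two maps of Burnside functors over a small cube. The movie moves involving only Reidemeister moves become coherence relations among the Reidemeister zig-zags, provable by the same cancellation bookkeeping that produces them. The movie moves involving cups, caps, or saddles --- a cancelling birth/death pair, a cancelling saddle pair, and a saddle passing a Reidemeister move --- require genuinely new local computations: for each I would construct an explicit homotopy of Burnside functors witnessing that the two local composites agree up to an overall sign, built by induction over the cube, and then pin down the sign by comparing the induced maps on Khovanov homology, where the ambiguity is at worst a unit.

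The main obstacle is this last verification. In ordinary Khovanov homology, movie-move invariance up to sign can often be deduced from the fact that the pertinent group of homomorphisms has rank one, so any two candidate maps are proportional; at the spectrum level no such rigidity is available --- in general a map of Khovanov spectra is \emph{not} determined up to homotopy and sign by its effect on homology --- so the witnessing homotopies must be built by hand. Arranging these homotopies at the Burnside-functor level so that they cohere over the cube --- which I expect amounts to checking that certain obstruction groups attached to the posets involved vanish or are absorbed --- while simultaneously tracking signs consistently through every Reidemeister zig-zag, is where essentially all of the work of the proof lies.
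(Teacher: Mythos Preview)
Your diagnosis of the difficulty is exactly right, and in fact the paper's Section~\ref{sec:Khovanovs} carries out precisely the computation you allude to: it shows that for the identity $2$-strand braid, the grading-preserving bimodule self-maps of $\KhSpace(2)$ over $\KhSpace(2)\otimes\KhSpace(2)^\op$ form $\ZZ\oplus\ZZ/2$, with the extra $\ZZ/2$ generated by $\Id+X\eta$. So Khovanov's rank-one argument genuinely fails for bimodules. Where your proposal diverges from the paper is in the remedy. You propose to accept this failure and build explicit Burnside-level homotopies witnessing each movie move, carrying out the coherence bookkeeping by hand. That is not what the paper does, and I think you would find it extremely difficult to execute: the obstruction classes you would need to kill live in higher stable homotopy of spheres, and there is no systematic mechanism in sight for making them vanish move by move.

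The paper's key idea, which your proposal misses, is that rigidity \emph{can} be recovered by localizing further. Instead of viewing the local piece of a movie move as an $(m,n)$-tangle with its associated bimodule, view it as a diskular $n$-tangle $T$ (all boundary points on one circle), which yields a one-sided $\KhSpace(n)$-module. The paper proves a duality theorem (Proposition~\ref{prop:spec-dual}):
\[
  \RHom_{\KhSpace(n)}(\KhSpace(T,P),\KhSpace(T,P))_{0,0}\simeq \KhSpace^{-n/2}(\Wmirror{T}T,N).
\]
For the bridge tangles $T$ that occur in the movie moves, $\Wmirror{T}T$ is an $(n/2)$-component unlink, and $\KhSpace^{-n/2}(U_{n/2})\simeq\SphereS$. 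Hence $\pi_0$ of the endomorphism spectrum is $\ZZ$, and the only grading-preserving automorphisms are $\pm 1$ (Lemma~\ref{lem:spec-bridges}). This single rigidity lemma disposes of movie moves 1--7, 23(a), 25, 26 immediately; the remaining moves 23(b), 27--30 are handled by the same duality, identifying the relevant Hom-spectrum with a sphere or a wedge of spheres so that the map is determined by its effect on homology. No explicit homotopies are ever constructed. The infrastructure making this work --- the multicategory of diskular tangles, the planar-algebra-style gluing for spectral multimodules (Section~\ref{sec:planar}), and the duality theorem (Section~\ref{sec:duality}) --- is the real content of the paper.
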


For spectra, ``up to sign'' means the following. Roughly, reflection
across the first coordinate of $\RR^{n+1}$ induces an automorphism
$(-1)\co \SphereS\to\SphereS$ of the sphere
spectrum\phantomsection\label{not:sphere}; more precisely, to make
this automorphism commute with the structure maps of $\SphereS$ one
takes a cofibrant-fibrant replacement of the sphere spectrum
first. Then, for any spectrum $X$, there is an induced map
$X=\SphereS\wedge X\xrightarrow{(-1)\wedge\Id}\SphereS\wedge X=X$,
which plays the role of multiplication by $-1$.

Functoriality of Khovanov homology up to sign was first established by
Jacobsson~\cite{Jac-kh-cobordisms}, by checking directly that the maps
Khovanov had associated to elementary cobordisms~\cite[Section
6.3]{Kho-kh-categorification} were invariant under Carter-Saito's
movie moves~\cite{CS-knot-movie}. Shortly after, Khovanov and
Bar-Natan gave new proofs of this result, using extensions of Khovanov
homology to tangles to simplify checking most of the movie
moves~\cite{Kho-kh-cobordism,Bar-kh-tangle-cob}. Detailed
analyses of Jacobsson's proof led to better understanding of the sign
ambiguity, and ways to remove
it~\cite{CMW-kh-functoriality,Cap-kh-functoriality}. Recently,
Blanchet~\cite{Bla-kh-oriented} gave another approach to avoiding the
sign ambiguity of Khovanov homology, using Lee's
deformation~\cite{Lee-kh-endomorphism}; see also~\cite{Sano-Kh-func}. A spectral refinement of part
of Blanchet's work was given by Krushkal-Wedrich~\cite{KW-kh-Blanchet}.

The strategy to prove Theorem~\ref{thm:main} is generally similar to
Khovanov's proof of naturality. In a previous paper, we gave a
spectral refinement of Khovanov's tangle
invariants~\cite{LLS-kh-tangles}. (By contrast, a spectral refinement
of Bar-Natan's tangle invariant is not currently known, nor is a
spectral refinement of the Lee deformation.)  Much of Khovanov's
argument reduces to understanding the automorphisms of the bimodule
associated to the identity braid, and a few similar arguments. In the
spectral case, this bimodule has too many grading-preserving
automorphisms for Khovanov's argument to go through. (See
Section~\ref{sec:Khovanovs} for further discussion of this point.) We
avoid this problem by localizing further, analogous to Bar-Natan's
canopoly. In this more local form, the essence of Khovanov's argument goes
through.

This strategy gives somewhat more than Theorem~\ref{thm:main}. Like
Khovanov's and Bar-Natan's proofs, it gives an extension of
Theorem~\ref{thm:main} to tangle cobordisms
(Theorem~\ref{thm:Kh-spec-functorial}). Additionally, it shows that
Khovanov homology and the Khovanov spectrum are also functorial under
non-orientable cobordisms, though the grading shifts are harder to
track. Along the way, we also prove two structural results about the
Khovanov spectrum (as well as their analogues for Khovanov homology):
the Khovanov spectral bimodule associated to the mirror of $T$ is the
dual to the Khovanov spectral bimodule associated to $T$, and the
Khovanov spectral modules satisfy a planar algebra-like gluing
property. (For Khovanov's arc algebras, the analogous properties seem
to be well-known---see, for instance,~\cite[Section
5.3]{Roberts-kh-planar} for the latter---but we do not have a specific
citation for them.)

This paper is organized as follows. Section~\ref{sec:background}
recalls Khovanov's arc algebras and aspects of their spectral
refinements. Section~\ref{sec:Khovanovs} discusses why Khovanov's
proof of invariance does not immediately translate to the spectral
case. The failure motivates the rest of this
paper. Section~\ref{sec:planar} gives the planar algebra-like gluing
property of the Khovanov modules and their spectral refinements, using
the language of multicategories. Section~\ref{sec:duality} proves the
duality between the Khovanov bimodules of a tangle and its mirror, and
the spectral refinement of this
duality. Section~\ref{sec:functoriality-proof} combines these to prove
functoriality of the Khovanov spectra, Theorems~\ref{thm:main}
and~\ref{thm:Kh-spec-functorial}. We also give the analogous proof of
functoriality of Khovanov homology,
Theorem~\ref{thm:Kh-functorial}. Section~\ref{sec:neck} gives a
neck-cutting relation for the cobordism maps and uses it to deduce a
lift of Levine-Zemke's theorem about ribbon concordances and Khovanov homology.
Section~\ref{sec:computations} gives
an example of how to extract an explicit invariant of cobordisms from
the functor, in the spirit of the Hopf invariant.

\subsubsection*{Acknowledgments} We thank Jon Brundan, Slava
Krushkal, and Aaron Lauda for helpful conversations. We also thank the
referee for further helpful comments and corrections.

\section{Background and grading conventions}\label{sec:background}
\emph{Wherein we} summarize expeditiously
Khovanov's \textsc{arc algebras and bimodules} including their key
\textsc{gluing} and \textsc{invariance} properties. We then recall the \textsc{spectral
  refinements} of these algebraic objects, and corresponding
properties of these spectral refinements. We conclude with a
summary of the paper's \textsc{grading conventions}.

\subsection{Khovanov's arc algebras and modules}\label{sec:arc-alg}
\label{not:V}
Let $V=\ZZ[X]/(X^2)$ denote Khovanov's Frobenius algebra. The
comultiplication on $V$ is given by $\Delta(1)=1\otimes X+X\otimes 1$
and $\Delta(X)=X\otimes X$, and the counit is $\epsilon(1)=0$,
$\epsilon(X)=1$. Equivalently, we can view $V$ as a
$(1+1)$-dimensional TQFT. So, given a closed, oriented $1$-manifold $Z$, we have
an abelian group $V(Z)$, which is generated by all ways of labeling
the components of $Z$ by $1$ or $X$, and an oriented cobordism from $Z$ to $Z'$
induces a homomorphism from $V(Z)$ to $V(Z')$ (which is the multiplication
in $V$ if the cobordism is a single merge and the comultiplication
$\Delta$ if the cobordism is a single split.)

Let $\CCat{}$ be the category with two objects, $0$ and $1$, and a
single morphism from $0$ to $1$,
\[
  \CCat{}=\bigl(0\longrightarrow 1).
\]

\phantomsection\label{not:crossings}%
Given a link diagram $L$ with $N$ crossings $\Crossings$ one can consider the
cube of resolutions of $L$, a functor from $\CCat{\Crossings}$ to the
$(1+1)$-dimensional cobordism category. The edges of the cube
correspond to \emph{crossing change cobordisms}. A checkerboard coloring induces
orientations on all of the 1-manifolds and cobordisms in this cube.  Composing
with the TQFT $V$ gives a commutative cube
$\CCat{\Crossings}\to \AbelianGroups$, the category of abelian
groups. Traditionally, the
Khovanov complex is defined as the total complex or iterated mapping
cone of this cube. To avoid choosing a sign assignment or ordering of
the crossings, we will take another version of the iterated mapping
cone. Let $\CCatP{\Crossings}$ be the category obtained by adding one more
object $*$ to $\CCat{\Crossings}$ and a morphism from each object except the
terminal one in $\CCat{\Crossings}$ to $*$. Extend $V$ to a functor
$\CCatP{\Crossings}\to\AbelianGroups$ by sending $*$ to the trivial group.
Let $\KhCx(L)=\bigoplus_{i,j}\KhCx_{i,j}(L)$\phantomsection\label{not:KhCx} be the homotopy colimit of
this diagram (see, e.g.,~\cite[Section 4.2]{LLS-khovanov-product}
or~\cite[Definition 3.11]{HLS-flexible}), with an internal (quantum)
grading and a homological shift that use the orientation of $L$ or other auxiliary data (see
Section~\ref{sec:gradings}). That is, up to a shift,
\begin{equation}
  \KhCx(L)=\hocolim_{w\in \CCatP{\Crossings(L)}}V(L_w).
\end{equation}
This complex is homotopy equivalent to the usual Khovanov complex,
though the signs in the homotopy equivalence depend on some
choices. The Khovanov homology $\Kh(L)$ is the homology of $\KhCx(L)$.

Recall that an $(m,n)$-tangle consists of a compact 1-dimensional
manifold-with-boundary $T$ properly embedded in $[0,1]\times\RR$, so that
the boundary of $T$ is the $(m+n)$ points
$\{(0,1),\dots,(0,m)\}\cup\{(1,1),\dots,(1,n)\}$.
Khovanov extended his construction to tangles as
follows~\cite{Kho-kh-tangles}. Given an even integer $n$, let
$\Crossingless{n}$\phantomsection\label{not:crossingless} denote the set of crossingless matchings of $n$
points. View an element $a\in\Crossingless{n}$ as a
$(0,n)$-tangle, and let $\Wmirror{a}$ denote its mirror, an
$(n,0)$-tangle. Let $\KhCx(n)$ denote the linear category with:
\begin{itemize}
\item Objects $\Crossingless{n}$,
\item $\KhCx(n)(a,b)\coloneqq \Hom_{\KhCx(n)}(a,b)=V(a\Wmirror{b})$, and
\item Composition $\Hom_{\KhCx(n)}(b,c)\times\Hom_{\KhCx(n)}(a,b)\to\Hom_{\KhCx(n)}(a,c)$ induced by
  the TQFT $V$ and the \emph{canonical saddle cobordism}
  $\Wmirror{a}\amalg a\to \Id$, the identity braid on $n$ points.
\end{itemize}
Equivalently, we can view $\KhCx(n)$ as an algebra, by taking
\[
  \bigoplus_{a,b\in\Ob(\KhCx(n))}\KhCx(n)(a,b)
\]
with multiplication $(x\cdot y)=y\circ x$ when defined and $0$
otherwise. (Some elementary concepts
related to linear categories are recalled in Section~\ref{sec:lin-cat}.)

Given an $(m,n)$-tangle
diagram $T$ with $N$ crossings, there is a differential module
$\KhCx(T)$ over $\KhCx(m)$ and $\KhCx(n)$ defined by
\begin{equation}\label{eq:KhCx-T-hocolim}
  \KhCx(T)(a,b)=\KhCx(aT\Wmirror{b})=\hocolim_{v\in \CCatP{\Crossings(T)}}V(aT_vb),
\end{equation}
where $T_v$ is the resolution of $T$ associated to $v$, and for $v=*$ we define $V(aT_vb)=0$.
\label{not:KhCxT}
The module structure is induced by the canonical saddle cobordisms,
and the differential again comes from the crossing change
cobordisms. Far-commutativity of these cobordisms implies that the
module structure is associative and respects the differential.
(Note that any composition of these cobordisms is again orientable.)

Khovanov proves:
\begin{lemma}\label{lem:Kh-projective}\cite{Kho-kh-tangles}
  The module $\KhCx(T_v)$ associated to each resolution $T_v$ of $T$ is left-projective and right-projective. In
  fact, for each $a\in\Crossingless{m}$ there is a crossingless
  matching $a'\in\Crossingless{n}$ and an integer $j$ so that
  $\KhCx(T_v)(a,\cdot)\cong V^{\otimes j}\otimes
  \KhCx(n)(a',\cdot)$, and similarly in the other factor.
\end{lemma}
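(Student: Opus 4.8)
The plan is to use the fact that a resolution $T_v$ is a \emph{crossingless} $(m,n)$-tangle diagram, so that $\KhCx(T_v)$ carries the zero differential and the claim is entirely about its bimodule structure; this can then be checked by an explicit analysis. First I would note that, having no crossings, $T_v$ is a disjoint union of embedded arcs joining boundary points in a planar fashion together with finitely many disjoint circles. Fixing $a\in\Crossingless{m}$ and viewing it as a $(0,m)$-tangle, the composite $aT_v$ obtained by gluing $a$ along the left $m$ endpoints of $T_v$ is again crossingless, i.e.\ a crossingless $(0,n)$-tangle. The combinatorial input I would invoke is the classification of such tangles up to isotopy rel the $n$ right endpoints: each is determined by its underlying crossingless matching $a'\in\Crossingless{n}$ together with the number $j=j(a)\ge 0$ of closed components (the arcs cut the ambient disk into disk regions, so each circle bounds a disk disjoint from the arcs, and a family of such circles is determined up to isotopy by its cardinality). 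I would then fix, once and for all, an isotopy carrying $aT_v$ to $a'$ together with $j$ small disjoint circles placed near the right boundary but outside a fixed neighborhood of the $n$ endpoints, and fixing those endpoints pointwise.

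Next I would feed this into the TQFT $V$. For every $b\in\Crossingless{n}$, gluing $\Wmirror{b}$ on the right and applying the chosen isotopy (extended by the identity on $\Wmirror{b}$) identifies the closed $1$-manifold $aT_v\Wmirror{b}$ with $(a'\Wmirror{b})\amalg(j\text{ circles})$; since $V$ is a symmetric monoidal $(1+1)$-dimensional TQFT, this yields a natural isomorphism of abelian groups
\[
  \KhCx(T_v)(a,b)=V(aT_v\Wmirror{b})\ \cong\ V^{\otimes j}\otimes V(a'\Wmirror{b})=V^{\otimes j}\otimes\KhCx(n)(a',b).
\]
The step I expect to require the most care is checking that these isomorphisms assemble into an isomorphism of \emph{right $\KhCx(n)$-modules}. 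The right action is induced by the canonical saddle cobordisms attached at the $n$ right endpoints; I would arrange these cobordisms to be supported in the fixed neighborhood of those endpoints, disjoint from the $j$ circles, so that under the identification above the action of $y\in\KhCx(n)(b,c)$ becomes its action on $\KhCx(n)(a',\cdot)$ tensored with $\Id_{V^{\otimes j}}$, which is precisely the module structure on $V^{\otimes j}\otimes\KhCx(n)(a',\cdot)$. This gives $\KhCx(T_v)(a,\cdot)\cong V^{\otimes j}\otimes\KhCx(n)(a',\cdot)$ as right $\KhCx(n)$-modules.

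Finally I would conclude projectivity. Since $V\cong\ZZ\oplus\ZZ X$ is free of rank $2$ over $\ZZ$, the module $V^{\otimes j}\otimes\KhCx(n)(a',\cdot)$ is a finite direct sum of copies of the representable right module $\KhCx(n)(a',\cdot)=\Hom_{\KhCx(n)}(a',-)$, which is a direct summand of $\KhCx(n)$ as a right module over itself and hence projective. Because the right $\KhCx(n)$-action leaves the left index $a$ unchanged, the submodules $\KhCx(T_v)(a,\cdot)$ give a direct-sum decomposition of $\KhCx(T_v)$ over $a\in\Crossingless{m}$, so $\KhCx(T_v)$ is projective as a right $\KhCx(n)$-module. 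The left-projectivity statement, together with the analogous description for fixed $b\in\Crossingless{n}$, follows by running the same argument with the two sides interchanged; equivalently, one can apply the result just proved to the mirror resolution $\Wmirror{T_v}$, a resolution of $\Wmirror{T}$, since mirroring preserves crossinglessness and exchanges the left and right module structures. The only genuine point requiring care is the module-compatibility in the second step; the remainder is a direct consequence of the combinatorial triviality of flat tangles together with the monoidality of $V$.
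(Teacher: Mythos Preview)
Your proof is correct and is essentially the standard argument (due to Khovanov). Note that the paper does not give its own proof of this lemma: it simply cites Khovanov's original tangle paper~\cite{Kho-kh-tangles}, where this is Proposition~3 (and the surrounding discussion). Your argument---identifying $aT_v$ with a crossingless matching $a'$ together with $j$ closed circles, applying monoidality of $V$, and checking compatibility with the right action via the canonical saddle cobordisms---is exactly the argument Khovanov gives there, so there is nothing further to compare.
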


\begin{citethm}\label{thm:Kh-tangle-invt}\cite{Kho-kh-tangles}
  Up to quasi-isomorphism, the differential graded bimodule $\KhCx(T)$
  associated to an oriented tangle $T$ is invariant under Reidemeister
  moves.
\end{citethm}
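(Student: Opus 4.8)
The plan is to reproduce Khovanov's argument \cite{Kho-kh-tangles}: for each Reidemeister move, exhibit an explicit homotopy equivalence of differential graded $(\KhCx(m),\KhCx(n))$-bimodules between $\KhCx(T)$ and $\KhCx(T')$, where $T'$ is the diagram obtained by performing the move; since a homotopy equivalence is in particular a quasi-isomorphism, this suffices. Two elementary operations do all of the work, both carried out entirely inside the category of dg bimodules. The first is \emph{delooping}: if a resolution $T_v$ contains a circle disjoint from everything else and $T_v'$ is the result of deleting that circle, then $V(aT_vb)$ is isomorphic, as a bimodule, to two copies of $V(aT_v'b)$ shifted up and down by one in the quantum grading, because $V$ is free of rank one in quantum degrees $\pm 1$; disjointness of the circle from the tangle is what makes this isomorphism module-equivariant. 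The second is \emph{Gaussian elimination} (the cancellation lemma): if in a finite complex of bimodules one component of the differential is an isomorphism, then its source and target split off as an acyclic summand and may be deleted at the cost of a zig-zag correction term, producing a homotopy equivalent smaller complex of bimodules. What makes both operations bimodule-equivariant is far-commutativity: the crossing-change cobordisms defining the differential of $\KhCx(T)$ commute with the canonical saddle cobordisms defining the left and right $\KhCx(m)$- and $\KhCx(n)$-actions, as noted after \eqref{eq:KhCx-T-hocolim}.

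One then runs through the moves. For Reidemeister~I, one of the two resolutions of the extra crossing contains a small disjoint circle; delooping it and applying Gaussian elimination to the component of the differential coming from the saddle that merges (or splits off) this circle leaves a complex that, after the relevant grading shift, is isomorphic to $\KhCx$ of the diagram with the kink removed — here the homological shift is arranged to absorb the writhe change, following the grading conventions of Section~\ref{sec:gradings}. For Reidemeister~II, one considers the $2\times2$ sub-cube on the two new crossings; exactly one of its vertices carries a disjoint circle, and delooping the circle and then performing two Gaussian eliminations reduces the sub-cube to the single vertex representing the two uncrossed strands, up to the grading shift and the zig-zag corrections. Reidemeister~III is the most laborious: one expands the $2\times2\times2$ sub-cube on the three crossings, performs a longer sequence of deloopings and cancellations on each of the two sides of the move, and checks that the two reduced complexes are isomorphic; alternatively it can be reduced to the Reidemeister~II case by sliding a strand past a crossing. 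Finally, the remaining chiralities and orientation variants of RI, RII, and RIII follow from these by mirroring the diagram and by the left--right symmetry of the arc-algebra construction.

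The work here is essentially bookkeeping, and the one place requiring care is checking at each stage that the maps written down are genuinely bimodule maps and that the homological and quantum grading shifts agree on the nose rather than merely up to isomorphism of underlying groups. Far-commutativity of the cobordisms supplies the bimodule-equivariance, and the left- and right-projectivity of each $\KhCx(T_v)$ from \Lemma{Kh-projective} guarantees that the homotopy equivalences obtained are genuine bimodule homotopy equivalences, with no derived or $\Tor$ subtleties — which is exactly what is needed later, when these bimodules are tensored together. Concretely, I would organize the proof move by move: write the cube of resolutions of $T$ as an iterated mapping cone of bimodules, locate the circle-containing vertices, deloop, identify the isomorphism components of the differential, cancel, and match the result with the corresponding cone for $T'$, invoking far-commutativity each time to see that the intermediate complexes remain of the required form.
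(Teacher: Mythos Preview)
This theorem is a \emph{cited} result in the paper: it is stated as \verb|\cite{Kho-kh-tangles}| and the paper gives no proof of its own, only the remark that ``In the proof, Khovanov associates specific homomorphisms to the Reidemeister moves'' and that the result in fact holds up to genuine homotopy equivalence of dg bimodules. So there is no paper proof to compare against; your proposal is a reasonable sketch of how such a proof goes.

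One remark on your approach: the delooping/Gaussian-elimination packaging you describe is really Bar-Natan's reformulation rather than Khovanov's original argument in \cite{Kho-kh-tangles}, where explicit chain maps and homotopies are written down directly for each move. The two approaches are equivalent and yours is arguably cleaner to organize, but if you want to match the citation literally you would write the maps by hand rather than invoke cancellation. Also, your appeal to \Lemma{Kh-projective} to rule out derived subtleties is correct in spirit but slightly misplaced: projectivity is what lets you replace the derived tensor product by the ordinary one when \emph{gluing} (Theorem~\ref{thm:Kh-gluing}), whereas for invariance under Reidemeister moves you are simply producing explicit homotopy equivalences of complexes of bimodules, and no derived issues arise in the first place.
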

In fact, Theorem~\ref{thm:Kh-tangle-invt} holds up to homotopy
equivalence of differential graded
bimodules, which could be used to simplify some of the discussion
below in the algebraic, but not the spectral, case; see
Remark~\ref{rem:simple-aa}. In the proof, Khovanov associates specific
homomorphisms to the Reidemeister moves. The orientation is needed to
pin down the grading shifts (see Section~\ref{sec:gradings}).

The other key property is that gluing of tangles corresponds to tensor
product of bimodules:
\begin{citethm}\label{thm:Kh-gluing}\cite{Kho-kh-tangles}
  Given an $(m,n)$-tangle $T_1$ and an $(n,p)$-tangle $T_2$, there is
  a quasi-isomorphism
  \[
    \KhCx(T_1)\otimes_{\KhCx(n)}\KhCx(T_2)\simeq \KhCx(T_1T_2).
  \]
\end{citethm}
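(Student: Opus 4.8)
Following Khovanov~\cite{Kho-kh-tangles}, the plan is to build an explicit comparison map $\Phi$ out of the TQFT $V$, check that it is a chain map, and then show it is a quasi-isomorphism by an induction on the total number of crossings that reduces everything to the flat (crossingless) case, where \Lemma{Kh-projective} lets one compute both sides by hand.

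First I would put both sides in cubical form. Since $\Crossings(T_1T_2)=\Crossings(T_1)\sqcup\Crossings(T_2)$ and the resolution $(T_1T_2)_{(v,w)}$ of $T_1T_2$ at a vertex $(v,w)$ of the product cube is the composite flat tangle $T_{1,v}T_{2,w}$, the module $\KhCx(T_1T_2)$ is --- up to the homotopy equivalence between the homotopy-colimit model of \cite[Section 4.2]{LLS-khovanov-product} and a signed iterated mapping cone --- the totalization of the cube $(v,w)\mapsto\KhCx(T_{1,v}T_{2,w})$ over $\CCat{\Crossings(T_1)}\times\CCat{\Crossings(T_2)}$. On the other side, each $\KhCx(T_{1,v})$ is right-projective over $\KhCx(n)$ by \Lemma{Kh-projective}, so $\KhCx(T_1)$ is a bounded complex of projective right $\KhCx(n)$-modules; as $\otimes_{\KhCx(n)}$ commutes with finite direct sums and with totalization of bounded bicomplexes, $\KhCx(T_1)\otimes_{\KhCx(n)}\KhCx(T_2)$ is the totalization of the product cube $(v,w)\mapsto\KhCx(T_{1,v})\otimes_{\KhCx(n)}\KhCx(T_{2,w})$. (Right-projectivity also shows this is the derived tensor product, so no flatness issues arise.)

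I would then define $\Phi$ one vertex at a time. For flat tangles and $a\in\Crossingless{m}$, $b\in\Crossingless{n}$, $c\in\Crossingless{p}$, capping $\Wmirror{b}\amalg b$ to the identity braid on $n$ points by the canonical saddle cobordism --- the very cobordism defining composition in $\KhCx(n)$ and the module structures --- gives a cobordism from $(aT_{1,v}\Wmirror{b})\amalg(bT_{2,w}\Wmirror{c})$ to $aT_{1,v}T_{2,w}\Wmirror{c}$; applying $V$ and summing over $b$ produces a map $\KhCx(T_{1,v})\otimes_{\KhCx(n)}\KhCx(T_{2,w})\to\KhCx(T_{1,v}T_{2,w})$. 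Associativity of the canonical saddle cobordisms under far-commutation shows this is well-defined on the quotient by the $\KhCx(n)$-relations and is a map of $(\KhCx(m),\KhCx(p))$-bimodules, and far-commutation of the crossing-change cobordisms with these saddles shows that the squares of the product cube commute, so the vertex maps assemble to a chain map $\Phi\co\KhCx(T_1)\otimes_{\KhCx(n)}\KhCx(T_2)\to\KhCx(T_1T_2)$. A separate but routine check (Section~\ref{sec:gradings}) shows the quantum and homological shifts are additive under juxtaposition of diagrams, so $\Phi$ is grading-preserving.

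Finally I would prove $\Phi$ is a quasi-isomorphism. An $N$-dimensional cube of chain complexes is a mapping cone of a map of $(N-1)$-dimensional cubes, and a mapping cone of quasi-isomorphisms is a quasi-isomorphism, so by induction on $|\Crossings(T_1T_2)|$ it suffices to check that the vertex map $\KhCx(S_1)\otimes_{\KhCx(n)}\KhCx(S_2)\to\KhCx(S_1S_2)$ is an isomorphism for flat tangles $S_1$, $S_2$. Here \Lemma{Kh-projective} does the work: fixing $a\in\Crossingless{m}$, the $(0,n)$-tangle $aS_1$ is a disjoint union of some number $j$ of closed circles together with a crossingless matching $a'\in\Crossingless{n}$, so $\KhCx(S_1)(a,\cdot)\cong V^{\otimes j}\otimes\KhCx(n)(a',\cdot)$ as right $\KhCx(n)$-modules, hence $\KhCx(S_1)(a,\cdot)\otimes_{\KhCx(n)}\KhCx(S_2)\cong V^{\otimes j}\otimes\KhCx(S_2)(a',\cdot)=V^{\otimes j}\otimes V(a'S_2\Wmirror{\cdot})$; on the other hand $aS_1S_2$ is these same $j$ circles together with $a'S_2$, so $\KhCx(S_1S_2)(a,\cdot)=V(aS_1S_2\Wmirror{\cdot})=V^{\otimes j}\otimes V(a'S_2\Wmirror{\cdot})$, and unwinding the definition of $\Phi$ on this row identifies it with the identity. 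These identifications are induced by identifications of $1$-manifolds, hence are compatible with the residual $\KhCx(m)$- and $\KhCx(p)$-actions, so $\Phi$ is an isomorphism of bimodules in the flat case. This completes the induction; moreover, since both sides are bounded complexes of projectives, $\Phi$ is in fact a homotopy equivalence, not merely a quasi-isomorphism. I expect the main obstacle to be not the flat computation, which \Lemma{Kh-projective} makes essentially automatic, but the verification that $\Phi$ is a chain map --- pinning down the signs coming from the iterated-mapping-cone model and checking, via far-commutation and associativity of the canonical saddle cobordisms, that $\Phi$ respects both the $\otimes_{\KhCx(n)}$-relations and the cube differentials.
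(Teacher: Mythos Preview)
Your proposal is correct and follows the standard Khovanov argument. Note that the paper does not actually prove this cited theorem itself; however, it proves a generalization in \Lemma{Kh-gluing}, and the sketch given there is exactly your approach: the gluing map comes from the canonical saddle cobordisms, far-commutation shows it is a bimodule chain map that descends to the tensor product, and the isomorphism is reduced to the flat case (where the paper simply cites Khovanov~\cite[Theorem~1]{Kho-kh-tangles}, which is precisely the \Lemma{Kh-projective} computation you wrote out).
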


\subsection{Terminology for linear and spectral categories}\label{sec:lin-cat}
Since we are working mainly in the language of linear or spectral
categories, we recall how some constructions and terminology for rings
extends to this setting. In the linear case, verifying that these
extensions have the expected properties is elementary; for the
spectral case, see for instance Blumberg-Mandell~\cite[Section 2]{BM-top-spectral}.

We call a linear category \emph{finite} if it has finitely many
objects and each morphism space is a finitely-generated free abelian
group. A spectral category is finite if it has finitely many objects
and each morphism space is weakly equivalent to a finite CW spectrum.

Let $\Cat$ and $\Dat$ be linear categories. The \emph{tensor product}
$\Cat\otimes\Dat$ has objects $\Ob(\Cat)\times\Ob(\Dat)$ and
$\Hom_{\Cat\otimes\Dat}((c_1,d_1),(c_2,d_2))=\Hom_{\Cat}(c_1,c_2)\otimes_\ZZ\Hom_{\Dat}(d_1,d_2)$.
A \emph{\dg $(\Cat,\Dat)$-bimodule} is a \dg functor
$\Cat^\op\otimes\Dat\to\Complexes$, where $\Complexes$ denotes the
category of chain complexes of free abelian groups (and the morphism
spaces in $\Cat$ and $\Dat$ have trivial differential). We will often
drop the term \dg even though we are considering \dg bimodules. If
$\Cat$ and $\Dat$ are spectral categories, their tensor product and
bimodules are defined similarly, with smash product in place of tensor
product and spectra in place of chain complexes.

Given a $(\Cat,\Dat)$-bimodule $M$ and a $(\Dat,\Eat)$-bimodule $N$, 
the tensor product of $M$ and $N$ is the $(\Cat,\Eat)$-bimodule
$M\otimes_{\Dat}N$ with
\[
  (M\otimes_{\Dat}N)(c,e)=\Bigl(\bigoplus_{d\in\Ob(\Dat)}M(c,d)\otimes_\ZZ
  N(d,e)\Bigr)/(f_*(m)\otimes n\sim m\otimes f^*(n))
\]
for $f\in \Dat(d,d')$, with the obvious structure maps. There is an
analogous tensor product in the spectral case.

Given $(\Cat,\Dat)$-bimodules $M$ and $N$, a \emph{chain map} from $M$ to
$N$ is a natural transformation. Explicitly, a chain map consists of
chain maps $F_{c,d}\co M(c,d)\to N(c,d)$ for each $c\in\Ob(\Cat)$ and
$d\in\Ob(\Dat)$ so that for any objects $(c_1,d_1), (c_2,d_2)\in
\Ob(\Cat^\op\times\Dat)$, $m\in M(c_1,d_1)$, $f\in \Cat(c_2,c_1)$, and
$g\in\Dat(d_1,d_2)$,
\begin{equation}
  F_{c_2,d_2}(M(f^\op,g)(m))=N(f^\op,g)(F_{c_1,d_1}(m));\label{eq:chain-map}
\end{equation}
if we write $M(f^\op,g)(m)$ in the perhaps more suggestive
notation $f\cdot m\cdot g$, and similarly for $N$, this equation becomes
\[
  F_{c_2,d_2}(f\cdot m\cdot g)=f\cdot F_{c_1,d_1}(m)\cdot g.
\]
Similarly, a chain homotopy from a chain map $F$ to a chain map $G$
consists of chain homotopies $H_{c,d}$ from $F_{c,d}$ to $G_{c,d}$ for
each $(c,d)\in\Ob(\Cat^\op\times\Dat)$ satisfying the same
compatibility condition~\eqref{eq:chain-map}.
One can also define the homology of a
$(\Cat,\Dat)$-bimodule, and hence a \emph{quasi-isomorphism} of
$(\Cat,\Dat)$-bimodules. The set of chain homotopy classes of chain
maps is not invariant under quasi-isomorphism, but is if one takes a
\emph{projective resolution} of $M$ first, so we define $\Hom(M,N)$ to be
the set of chain homotopy classes of chain maps from a projective
resolution of $M$ to $N$. Similarly, in the spectral
case, we define $\Hom(M,N)$ to be the path components of the space of
natural transformations from a cofibrant resolution of $M$ to a
fibrant resolution of $N$. This notion is invariant under weak
equivalence of $M$ and $N$. Tensor products are also not
invariant under quasi-isomorphism or weak equivalence, but if one
takes projective (cofibrant) replacements first then they become so.

Sometimes, we will be interested in the chain complex of maps between complexes
(after taking a projective resolution of the source), or the spectrum of maps
between spectra (after taking a cofibrant replacement); in this case, we will
use the notation $\RHom$. So, for chain complexes $\Hom(M,N)=H_0\RHom(M,N)$ and
for spectra $\Hom(M,N)=\pi_0\RHom(M,N)$.

\subsection{Spectral arc algebras and modules}\label{sec:spec-arc-alg}
The construction of the spectral Khovanov algebras and modules uses
Elmendorf-Mandell's $K$-theory of permutative categories~\cite{EM-top-machine},
and the first step is translating the notion of algebras and modules into that
language. For each even integer $n$, there is a \emph{arc algebra shape
  multicategory} $\mHshapeS{n}$ with an object for each pair of crossingless
matchings $(a_1,a_2)\in\Crossingless{n}\times\Crossingless{n}$---these remember
the $\Hom$-spaces in the arc algebra $\KhCx(n)$---and
morphisms encoding when $\Hom$'s can be composed~\cite[Section 2.3]{LLS-kh-tangles}. That is, there is a unique
multimorphism
\[
  (a_1,a_2),(a_2,a_3),\dots,(a_{\alpha-1},a_\alpha)\to (a_1,a_\alpha)
\]
for each $n$-tuple of crossingless matchings
$a_1,\dots,a_\alpha\in\Crossingless{n}$. Khovanov's arc algebra $\KhCx(n)$
is a multifunctor from $\mHshapeS{n}$ to abelian groups (and
multilinear maps).\phantomsection\label{not:KhSpaceAlg} To define the spectral arc algebra $\KhSpace(n)$,
it suffices to lift the arc algebra multifunctor to a functor
$\mHshapeS{n}\to\Spectra$, the multicategory of spectra. Similarly,
there is a \emph{tangle shape multicategory} $\SmTshape{m}{n}$ so that
a multifunctor from the tangle shape multicategory to chain complexes
or spectra encodes the notion of a pair of linear or spectral categories with
object sets $\Crossingless{m}$ and $\Crossingless{n}$, respectively, and a
differential bimodule or spectral bimodule over them. Khovanov's
bimodules $\KhCx(T)$ define a functor from $\SmTshape{m}{n}$ to chain
complexes, and to construct the spectral tangle invariants it suffices
to lift these bimodules to $\Spectra$. (See also the discussion
in~\cite[Section 3.3]{LLS-kh-CK}.) There are also groupoid-enriched
versions $\mHshape{n}$, $\mTshape{m}{n}$ of these shape
multicategories. It is easier to define functors from the
groupoid-enriched versions (because this encodes a kind of lax
multifunctor), and Elmendorf-Mandell's rectification theorem implies
that the space of functors from the groupoid-enriched versions is
equivalent to the space of functors from the honest
versions~\cite[Sections 2.4 and 2.9]{LLS-kh-tangles}.

The construction of the functors from $\mHshapeS{n}$ and $\SmTshape{m}{n}$ to
spectra proceeds in several steps. Elmendorf and Mandell's $K$-theory is a
multifunctor from the category of permutative categories to spectra. The
Burnside category $\BurnsideCat$ (of the trivial group) is the multicategory
enriched in groupoids with objects finite sets, morphisms
$\Hom(X_1,\dots,X_k;Y)$ the finite correspondences from
$X_1\times\cdots\times X_k$ to $Y$, and $2$-morphisms bijections of
correspondences. There is a functor from the Burnside category to permutative
categories sending a set $X$ to the category of sets over $X$. So, to construct
functors $\mHshapeS{n}\to\Spectra$ and $\SmTshape{m}{n}\to\Spectra$ it suffices to
give functors to the Burnside category.

The \emph{embedded cobordism category} has objects closed $1$-manifolds embedded
in $\RR^2$ or $(0,1)^2$, $1$-morphisms cobordisms embedded in
$[0,1]\times\RR^2$, and $2$-morphisms isotopies of embedded cobordisms. In a
previous paper~\cite[Section 2.11]{LLS-khovanov-product}, we constructed the
\emph{Khovanov-Burnside functor}, from the embedded cobordism category to the
Burnside category. (See also~\cite[Section 2.11]{LLS-kh-tangles}
and~\cite{HKK-Kh-htpy}.)

To avoid needing to check that no loops of cobordisms where the
Khovanov-Burnside functor has nontrivial monodromy appear in the construction of
the tangle invariants, we introduce another auxiliary category, the
\emph{divided cobordism category}~\cite[Section 3.1]{LLS-kh-tangles}. The
following is a trivial generalization of that definition:
\begin{definition}\label{def:CobD}
  Let $U$ be a subset of $\RR^2$. The \emph{divided cobordism category of $U$},
  denoted $\CobD(U)$, is the category enriched in groupoids defined as follows:
  \begin{enumerate}
\item An object of $\CobD(U)$ is an equivalence class of the following data:
  \begin{itemize}
  \item A smooth, closed $1$-manifold $Z$ embedded in the interior of $U$.
  \item A compact $1$-dimensional submanifold-with-boundary $A\subset Z$, the
    \emph{active arcs}, satisfying the following: If $I$ denotes the closure of
    $Z\setminus A$, then each component of $A$ and each component of $I$ is an interval. The
    components of $I$ are the \emph{inactive arcs}.
  \end{itemize}
\item A morphism from $(Z,A)$ to $(Z',A')$ is an equivalence class of
  pairs $(\Sigma,\Gamma)$ where
  \begin{itemize}
  \item $\Sigma$ is a smoothly embedded cobordism in $[0,1]\times
    \interior{U}$ from $\{0\}\times Z$ to $\{1\}\times Z'$, vertical near $\{0,1\}\times\interior{U}$.
  \item $\Gamma\subset \Sigma$ is a collection of properly embedded
    arcs in $\Sigma$, vertical near $\bdy\Sigma$, with $(\bdy A\cup \bdy A')=
    \bdy\Gamma$, and so that every component of $\Sigma\setminus
    \Gamma$ has one of the following forms:
    \begin{enumerate}[label=(\Roman*)]
    \item A rectangle, with two sides components of $\Gamma$
      and two sides components of $A\cup A'$. 
    \item A $(2k+2)$-gon, $k\geq 0$, with $(k+1)$ sides in $\Gamma$,
      one side in $I'$, and the other $k$ sides in $I$.
    \end{enumerate}
  \end{itemize}
  The pairs $(\Sigma,\Gamma)$ and $(\Sigma',\Gamma')$ are equivalent if there is
  an orientation-preserving diffeomorphism $\phi\co[0,1]\to[0,1]$ so that
  $(\phi\times\Id_{U})(\Sigma)=\Sigma'$ and
  $(\phi\times\Id_{U})(\Gamma)=\Gamma'$.
\item There is a unique $2$-morphism from $(\Sigma,\Gamma)$ to
  $(\Sigma',\Gamma')$ whenever $(\Sigma,\Gamma)$ is isotopic to
  $(\Sigma',\Gamma')$ rel boundary.
\item Composition of divided cobordisms is defined in the obvious way.
\end{enumerate}
\end{definition}

In the case that $U$ is a square $(0,1)^2$, the diffeomorphism group of the
first $(0,1)$-factor acts on the divided cobordism category, and we quotient by
this action. More precisely, we quotient the object set by the action of the
orientation-preserving diffeomorphisms of $(0,1)$ which are the identity near
$\{0,1\}$ and the morphism sets by the group of orientation-preserving
diffeomorphisms of $[0,1]\times(0,1)$ which are the identity near
$[0,1]\times\{0,1\}$ and which are independent of the first coordinate near
$\{0,1\}\times(0,1)$. (This last condition ensures that the diffeomorphisms
preserve the property of the cobordisms being vertical near the boundary.) Then
concatenation in the first $(0,1)$-factor gives a strictly associative
multiplication or \emph{horizontal composition} $\amalg$ on
$\CobD((0,1)^2)$. This horizontal composition allows us to view $\CobD((0,1)^2)$
as a multicategory, with multimorphisms from
$(Z_1,A_1),\dots,(Z_n,A_n)$ to $(Z,A)$ given by the morphisms in
$\CobD((0,1)^2)$ from $(Z_1,A_1)\amalg\cdots\amalg(Z_n,A_n)$ to
$(Z,A)$. (In the language of Hu-Kriz-Kriz, this is an example of a
$\star$-category~\cite{HKK-Kh-htpy}.)

Another case of composition is if $U=D^2\setminus(D_1\cup\dots\cup
D_k)$ and $V=D^2\setminus(D'_1\cup\dots\cup D'_\ell)$ are complements
of disjoint round disks inside $D^2$. Then, we can form the composition
$U\circ_iV$ by rescaling and translating $V$ to identify the outer
$D^2$ of $V$ with $D_i$ in $U$; see Definition~\ref{def:CobD-annulus}
below for more details. We will sometimes also call this composition
\emph{horizontal}, to distinguish it from composition of
cobordisms.

\phantomsection\label{not:enlCobD}
The category $\CobD$ has a \emph{canonical groupoid enrichment}
$\CobDenl$~\cite[Section 2.4]{LLS-kh-tangles}.

\begin{lemma}
  For any $U$, the Khovanov-Burnside functor induces a functor
  $\CobDenl(U)\to\BurnsideCat$. In the case $U=(0,1)^2$, this
  functor respects the action of the diffeomorphism group of
  $(0,1)$ on the first factor, and for $U$, $V$ equal to the
  complements of disks in $D^2$ there is a natural isomorphism between
  the horizontal composition of the Khovanov-Burnside functors for $U$
  and $V$ and the Khovanov-Burnside functor for $U\circ_i V$.
\end{lemma}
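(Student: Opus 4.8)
The plan is to deduce all three assertions from the construction of the Khovanov--Burnside functor $F$ on the embedded cobordism category~\cite[Section 2.11]{LLS-khovanov-product} together with two features of that construction. First, $F$ is \emph{combinatorial}: the finite set $F(Z)$ attached to a closed $1$-manifold $Z$ depends only on the set $\pi_0(Z)$ of its components. Second, $F$ is \emph{local on divided cobordisms}: the correspondence $F(\Sigma,\Gamma)$ attached to a divided cobordism is assembled, with no residual choices, from the elementary pieces of $\Sigma\setminus\Gamma$---the rectangles and $(2k+2)$-gons of \Definition{CobD}---with the pieces' correspondences composed in their (canonical) time-order. This locality is exactly why the divided structure was introduced: it rigidifies the decomposition of a cobordism into elementary cobordisms and thereby kills the monodromy that would otherwise obstruct $F$ from being an honest functor of groupoid-enriched categories.

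First, for an arbitrary $U\subseteq\RR^2$, I would observe that the construction of $F$ as a functor $\CobDenl((0,1)^2)\to\BurnsideCat$ in~\cite[Section 3.1]{LLS-kh-tangles} uses nothing about the ambient square beyond the combinatorics of the $1$-manifolds, cobordisms, and their divided structure; running that construction verbatim with $(0,1)^2$ replaced by $U$ yields a functor $\CobDenl(U)\to\BurnsideCat$, and the proofs that it is well-defined on $2$-morphisms (isotopies rel boundary of divided cobordisms) and compatible with composition are word for word the same. By construction it lifts the composite of the forgetful functor $(Z,A)\mapsto Z$, $(\Sigma,\Gamma)\mapsto\Sigma$ with the Khovanov--Burnside functor on the embedded cobordism category of $\RR^2$.

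Second, for $U=(0,1)^2$: a diffeomorphism $\phi$ of the first $(0,1)$-factor that is the identity near $\{0,1\}$ acts on objects and divided cobordisms by $\phi\times\Id$, and this action fixes $\pi_0(Z)$ and carries each elementary piece of $\Sigma\setminus\Gamma$ to the corresponding piece of the image; since $F$ depends only on that data, $F\circ(\phi\times\Id)=F$ on the nose, so $F$ is equivariant for the diffeomorphism-group action and descends to the quotient $\CobD((0,1)^2)$. Third, for $U,V$ equal to complements of disjoint round disks in $D^2$: by construction of $U\circ_i V$ (\Definition{CobD-annulus}) the gluing circle $\bdy D_i$ carries a collar in which objects and divided cobordisms of $\CobD(U\circ_i V)$ are product-like, so cutting along $\bdy D_i$, respectively along $[0,1]\times\bdy D_i$, identifies an object or morphism of $\CobD(U\circ_i V)$ with a pair consisting of a datum in $\CobD(U)$ and a datum in $\CobD(V)$, and this cutting carries $\pi_0$ and the elementary pieces of a divided cobordism to those of the two halves. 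Applying locality of $F$ on each side and re-associating the resulting finite correspondences then gives the required natural isomorphism between the horizontal composite of the functors for $U$ and $V$ and the functor for $U\circ_i V$; naturality and compatibility with $2$-morphisms hold because the cutting operation is itself functorial and respects isotopies rel boundary.

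I expect the one genuinely technical point to be setting up the cutting operation in the third part carefully enough: one must use the collar structure in $U\circ_i V$ to ensure that no elementary piece of $\Sigma\setminus\Gamma$ straddles the cutting locus, so that cutting takes elementary pieces to elementary pieces, and one must check that the induced identification of correspondences is coherent at the level of the groupoid enrichment. The remaining verifications are transcriptions of the checks for the square already carried out in~\cite{LLS-kh-tangles}.
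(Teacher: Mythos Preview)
Your proposal is correct and follows essentially the same approach as the paper: the paper's proof simply says the first statement is a trivial generalization of~\cite[Proposition 3.2]{LLS-kh-tangles} and that the diffeomorphism-invariance and gluing statements are immediate from the construction of the Khovanov--Burnside functor. You have unpacked those ``immediate'' assertions more explicitly (combinatorial dependence on $\pi_0$, locality on the elementary pieces of $\Sigma\setminus\Gamma$), which is entirely in the spirit of the cited arguments; the paper itself does not spell out the collar/cutting bookkeeping you describe, so your third part is more detailed than strictly necessary but not wrong.
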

\begin{proof}
  The first statement is a trivial generalization of~\cite[Proposition
  3.2]{LLS-kh-tangles}. The statement about invariance under the
  diffeomorphism action is immediate from the construction of the
  Khovanov-Burnside functor (see~\cite[Section 2.11]{LLS-kh-tangles}),
  as are the statements about gluing.
\end{proof}

Given crossingless matchings $a_1,a_2\in\Crossingless{m}$, there is an
associated object of $\CobD((0,1)^2)$ with underlying $1$-manifold
$a_1\Wmirror{a_2}$ and inactive arcs a small neighborhood of
$\bdy a_1=\bdy \Wmirror{a_2}$. The canonical saddle cobordisms have natural
choices of divides, giving divided cobordisms
$a_1\Wmirror{a_2}\amalg a_2\Wmirror{a_3}\to a_1\Wmirror{a_3}$~\cite[Section
3.2]{LLS-kh-tangles}, where $\amalg$ means concatenation and then rescaling in
the first $(0,1)$-direction. This gives a functor
$\mHshape{m}\to \CobDenl((0,1)^2)$. Composing with the Khovanov-Burnside functor then gives the spectral arc algebra.

\phantomsection\label{not:ttimes1}
For the spectral tangle invariants, given an $(m,n)$-tangle $T$ with crossings
$\Crossings$ (and $m$, $n$ even), there is a multicategory $\CCat{\Crossings}\ttimes \mTshape{m}{n}$
enriched in groupoids, a kind of thickened product of the cube category and the
tangle shape multicategory~\cite[Section 3.2.4]{LLS-kh-tangles}. There is a
multifunctor $\CCat{\Crossings}\ttimes \mTshape{m}{n}\to \CobDenl((0,1)^2)$ which
sends an object $(v,a,T,b)$ to the $1$-manifold $aT_v\Wmirror{b}$ with active
arcs at the boundary of $T_v$, a region around each crossing of $T$ which was
given the $0$-resolution, and a small neighborhood of at least one point in the
interior of each segment of $T$. (The last active arcs come from giving $T$
\emph{pox}; to reduce clutter, we will suppress the pox in this paper.)
Composing with the Khovanov-Burnside functor and $K$-theory gives a functor
$\CCat{\Crossings}\ttimes \mTshape{m}{n}\to \Spectra$. Applying
Elmendorf-Mandell's rectification procedure gives a functor
$\CCat{\Crossings}\times \SmTshape{m}{n}\to \Spectra$ from an ordinary
(non-enriched) multicategory. On the full subcategories $\mHshapeS{m}$ and
$\mHshapeS{n}$, this functor agrees with the arc algebra multifunctor. (This uses
the fact that those categories are \emph{blockaded}~\cite[Proposition
2.39]{LLS-kh-tangles}.) For each pair of crossingless
matchings $a,b$ we can restrict the functor to the subcategory spanned by
objects $(v,a,T,b)$, i.e., to the different resolutions of $T$ capped-off by $a$
and $b$, to get a map $\CCat{\Crossings}\to \Spectra$. Take the iterated mapping
cone of this functor by extending it to $\CCatP{\Crossings}$ by
sending $*$ to a one-point space and then
taking the homotopy colimit.\phantomsection\label{not:KhSpaceT1} Doing this for all pairs $(a,b)$ gives a functor
$\SmTshape{m}{n}\to\Spectra$, which corresponds to the spectral Khovanov tangle bimodule $\KhSpace(T)$.

A key property is that applying singular chains to these spectral invariants
gives the ordinary Khovanov algebras and chain complexes of bimodules up to
chain homotopy equivalence~\cite[Proposition 4.2]{LLS-kh-tangles}. (In fact,
the chain homotopy equivalences are canonical up to homotopy.) So, by
Whitehead's theorem, to verify invariance of the bimodules, it suffices to
construct maps associated to Reidemeister moves which induce Khovanov's homotopy
equivalences at the level of singular chains. Doing so is
straightforward~\cite[Sections 3.5 and 4.2]{LLS-kh-tangles}.

\phantomsection\label{not:mGlueS}
The final basic property of the tangle invariants is that gluing tangles
corresponds to tensor product of bimodule spectra. To prove this, we use yet
another multicategory: the \emph{gluing shape multicategory}
$\mGlueS{m}{n}{p}$, which encodes the notion of three bimodules $X$, $Y$, and $Z$ and a map from the
derived tensor product of $X$ and $Y$ to $Z$~\cite[Section
5]{LLS-kh-tangles}, and its groupoid enrichment $\mGlue{m}{n}{p}$. (See also
Section~\ref{sec:planar-spectral} for a generalization of this construction.)
Given an $(m,n)$-tangle $S$ and an $(n,p)$-tangle $T$, the same scheme
gives a multifunctor $\CCat{\Crossings}\ttimes \mGlue{m}{n}{p}\to\CobD((0,1)^2)$. Composing with the
Khovanov-Burnside functor and $K$-theory, then rectifying, gives a functor
$\mGlue{m}{n}{p}\to\Spectra$, which encodes a map from
$\KhSpace(S)\otimes^L_{\KhSpace(n)}\KhSpace(T)\to \KhSpace(T\circ S)$. At the
level of singular chains, this agrees with Khovanov's gluing map, hence is a
weak equivalence~\cite[Theorem 5]{LLS-kh-tangles}.

\subsection{Gradings}\label{sec:gradings}
To avoid keeping track of orientations of tangles, we will assign
Khovanov complexes to pairs $(T,P)$ where $T$ is a tangle and $P$ is
an integer. (This is similar to Khovanov's category
$\mathbb{ETL}$~\cite{Kho-kh-tangles}.) Given an oriented tangle, we
recover the usual Khovanov invariants by letting $P$ be the number of
positive crossings of $T$. Other than this, we follow the grading
conventions from our previous paper~\cite[Section
2.10.1]{LLS-kh-tangles}.

Grade the Khovanov Frobenius algebra $V$ by $\intgr(1)=-1$ and
$\intgr(X)=1$.

On the arc algebras: 
\begin{itemize}
\item For the quantum grading on $\KhCx(n)$, we shift
  $\KhCx(a\Wmirror{b})$ up by $n/2$, so the lowest-graded elements are
  idempotents in $\KhCx(a\Wmirror{a})$ in grading $0$.
\item For the homological grading, $\KhCx(n)$ lies in grading $0$.
\end{itemize}

Next, fix an $(m,n)$-tangle $T$ with $N$ crossings and an integer
$P$.
Recall that $\KhCx(T)(a,b)$ is the iterated mapping cone
(via a homotopy colimit) of a diagram
$\CCat{\Crossings(T)}\to\AbelianGroups$ (see
Equation~\eqref{eq:KhCx-T-hocolim}).
\begin{itemize}
\item \phantomsection\label{not:abs-v} For the quantum grading,  we shift the grading on
  $\KhCx(aT_v\Wmirror{b})$ up by $n/2-|v|+2N-3P$. (Here, $|v|$ is the height of $v$, i.e., the sum of the entries of $v$.)
\item For the homological grading, we let
  $\KhCx(aT_v\Wmirror{b})$ lie in homological grading $-P$. (This is
  before taking the mapping cone. After taking the mapping
  cone, the grading of the term corresponding to
  $\KhCx(aT_v\Wmirror{b})$ will be shifted up by $N-|v|$, so it will
  lie in homological grading $N-|v|-P$.)
\end{itemize}

\phantomsection\label{not:grs}
In formulas, if we let $\grs{q}{h}$ denote shifting the quantum
grading up by $q$ and the homological grading up by $h$, then 
\begin{align*}
  \KhCx(n)&=\bigoplus_{a\in\Crossingless{n}}V(a\Wmirror{a})\grs{n/2}{0}\\
  \KhCx(T,P)(v,a,b)&=V(aT_v\Wmirror{b})\grs{n/2-|v|+2N-3P}{-P}\\
  \KhCx(T,P)(a,b)&=\hocolim_{v \in\CCatP{\Crossings(T)}}\KhCx(T,P)(v,a,b).
\end{align*}

The homotopy equivalence for gluing tangles
(Theorem~\ref{thm:Kh-gluing}) consists of grading-preserving maps
\begin{align*}
  \KhCx(T_1,P_1)\otimes_{\KhCx(n)}\KhCx(T_2,P_2)&\stackrel{\simeq}{\longrightarrow}\KhCx(T_2\circ T_1,P_1+P_2).
\end{align*}

Given graded modules $M,N$, we define a homogeneous morphism
$f\co M\to N$ to have grading $k$ if $f$ increases the grading by $k$.
(This is the opposite of the typical grading convention for
cohomology, and would result in the cohomology of a topological space
being supported in negative gradings.)

\begin{remark}\label{rem:grading-help}
  With our grading conventions, the graded Euler characteristic of the
  Khovanov homology of $L$ is the (unnormalized) Jones polynomial of $m(L)$, the
  mirror of $L$, and positive knots have Khovanov homology supported
  in negative gradings. The differential on the Khovanov complex
  decreases the homological grading.
\end{remark}

\section{Khovanov's argument and why it does not translate
  immediately}\label{sec:Khovanovs}
\emph{Wherein we} recall key points of \textsc{Khovanov's proof of functoriality}
of Khovanov homology, observe \textsc{subtleties obstructing} one of these
key arguments in the spectral case, and note an \textsc{idea to
partly circumvent} this obstruction by \textsc{further localizing} the
problem which sets the scene for the rest of the paper.

The rest of the paper is independent of the discussion in this section.

Like all known proofs of functoriality of Khovanov homology, Khovanov's starts from a movie description of a cobordism. Each elementary movie is a cobordism between layered tangles. Several of the elementary movies are planar isotopies of tangles; the others are Reidemeister moves, births or deaths of zero-crossing unknots, and local saddles. Khovanov associates a map of bimodules to each of these elementary movies: for planar isotopies there are obvious isomorphisms, for Reidemeister moves he associated isomorphisms when he proved invariance of the tangle bimodules, and the maps for births, deaths, and saddles come from the unit, counit, and multiplication and comultiplication maps in his Frobenius algebra. The map associated to a movie is obtained by tensoring the maps for elementary movies, on the local slices of the layered tangle, with the identity map on the rest of the tangle, and then composing these maps. 

The next step is to prove that two movies representing isotopic cobordisms induce the same map on Khovanov homology, by checking that the maps are invariant under Carter-Saito's movie moves. Rather than laboriously checking each move (as Jacobsson did~\cite{Jac-kh-cobordisms}), the local description of the movie moves allows Khovanov to reduce this check to three principles and minor variants on them:
\begin{enumerate}[label=(\arabic*)]
\item Movies involving no crossings correspond to cobordisms in
  $\RR^3$, and he verified earlier that the maps associated to
  cobordisms in $\RR^3$ are isotopy invariants of those
  cobordisms~\cite{Kho-kh-cobordism}. (In fact, they depend only on the
  combinatorics of the cobordism, and not even its embedding.) This
  principle is used for movie moves 8, 9, 10, 23(b), and 24 in his list.
\item\label{item:Kh-hoch-coh} If $\Sigma$ is a movie between invertible tangles inducing a
  quasi-isomorphism on the Khovanov complex of bimodules (e.g.,
  because each piece is a Reidemeister move) then $\Sigma$ corresponds
  to a unit in
  $\HH^{0,0}(\KhCx(n))=\RHom^{0,0}_{\KhCx(n)\otimes
    \KhCx(n)^\op}(\KhCx(n),\KhCx(n))$, the
  Hochschild cohomology of $\KhCx(n)$ in bigrading $(0,0)$. This
  Hochschild cohomology group is identified with the part of the
  center of $\KhCx(n)$ in quantum grading $0$, which in turn is
  isomorphic to $\ZZ$. So, the only units are $\pm 1$. 
  This principle is used for movie moves 6, 12, 13, 23a, and 25, and variants on it are used for moves 7, 11, 14--22, and 26--30.
\item The map associated to the inverse of a Reidemeister move is the inverse of the map associated to a Reidemeister move (up to sign). (In fact, as Khovanov notes, this also follows from the previous principle and its variants.) This principle is used for movie moves 1--5.
\item The tensor product is a bifunctor, i.e., $f\otimes g = (f\otimes\Id)\circ (\Id\otimes g)=(\Id\otimes g)\circ (f\otimes\Id)$. This principle is used for movie move 31.
\end{enumerate}

To extend this argument to the Khovanov homotopy type, there are two
difficulties. The first is that we have not verified that the maps
associated to cobordisms in $\RR^3$ are isotopy invariants: in
constructing the homotopy refinements of Khovanov's tangle invariants,
we allow only certain isotopies of surfaces. (This restriction is
because of how the Khovanov-Burnside functor is defined on genus-1
surfaces with boundary~\cite[Section 2.11]{LLS-kh-tangles}.) For movies 8, 9, and 10, it is clear that the maps of homotopy types are the same, and it would not be hard to verify directly that the maps are homotopic for moves 23(b) and 24.

The second, more serious difficulty is with
Point~\ref{item:Kh-hoch-coh}. The difficulty can already been seen in
the case of the identity braid with two strands, i.e., for $\KhCx(2)\cong
\ZZ[X]/(X^2)$ and its spectral refinement $\KhSpace(2)\simeq
\SphereS\vee\SphereS$. Using the biprojective resolution
\[
  0\leftarrow \ZZ[X,Y]/(X^2,Y^2)\stackrel{X-Y}{\longleftarrow}\ZZ[X,Y]/(X^2,Y^2)\stackrel{X+Y}{\longleftarrow}\ZZ[X,Y]/(X^2,Y^2)\stackrel{X-Y}{\longleftarrow}\cdots,
\]
the Hochschild cohomology of $\KhCx(2)$ is the homology of the complex
\[
  0\to \KhCx(2)\stackrel{0}{\longrightarrow}\KhCx(2)\stackrel{2X}{\longrightarrow}\KhCx(2)\stackrel{0}{\longrightarrow}\cdots.
\]

The set of homotopy classes of bimodule homomorphisms from
$\KhSpace(2)$ to itself is $\pi_0\THH^*(\KhSpace(2))$. Since
$\KhCx(2)$ is flat over $\ZZ$ and $\KhSpace(2)$ is connective, there is a spectral sequence converging
to $\pi_*\THH^*(\KhSpace(2))$ with $E^1$-page given by
\[
  0\to [\KhCx(2)\otimes \pi_*(S^0)]_0\stackrel{0}{\longrightarrow}[\KhCx(2)\otimes \pi_*(S^0)]_1\stackrel{2X}{\longrightarrow}[\KhCx(2)\otimes \pi_*(S^0)]_2\stackrel{0}{\longrightarrow}\cdots
\]
where the subscripts $0$, $1$, $2$ are just labels for the different terms.
(This is the spectral sequence associated to the smash product of $\KhSpace(2)$
with the Postnikov tower of $\SphereS$.)
\phantomsection\label{not:grh}
The gradings are as follows. The homological grading of
$a\otimes \zeta\in [\KhCx(2)\otimes\pi_j(S^0)]_i$ is $\gr_h(a)+j-i$,
so the differential decreases the homological grading by $1$. The
quantum grading of $a\otimes \zeta\in [\KhCx(2)\otimes\pi_j(S^0)]_i$
is $\gr_q(a)-2i$, so the differential preserves the quantum grading.

Let $\eta\in \pi_1(S^0)\cong\ZZ/2\ZZ$ be the Hopf map. Then, the
(homological, quantum) bigrading $(0,0)$ part of the $E^2$-page is
\[
  \ZZ\langle [1\otimes1]_0\rangle \oplus(\ZZ/2\ZZ)\langle[X\otimes\eta]_1\rangle.
\]
Since the only elements in quantum grading $0$ have the form are $[1\otimes
\zeta]_0$ and $[X\otimes \zeta]_1$, for quantum grading $0$ the spectral sequence collapses
at the $E^2$-page. Hence, $\pi_0\THH^0(\KhSpace(2))$ fits into a short exact
sequence
\[
  0\to (\ZZ/2\ZZ)\langle[X\otimes\eta]_1\rangle\to
  \pi_0\THH^0(\KhSpace(2))\to \ZZ\langle [1\otimes 1]_0\rangle\to 0,
\]
so
\[
  \pi_0\THH^0(\KhSpace(2))\cong  \ZZ\langle [1\otimes1]_0\rangle \oplus(\ZZ/2\ZZ)\langle[X\otimes\eta]_1\rangle.
\]

Abusing notation, we denote the generator of this $\ZZ/2\ZZ$ by
$X\eta$. Then $\Id+X\eta$ is a nontrivial, grading-preserving (derived)
automorphism of the bimodule $\KhSpace(2)$, interfering with
technique~\ref{item:Kh-hoch-coh}.

For the case $n>2$, presumably $\KhSpace(n)$ has other, more
complicated automorphisms, as well.

On a more optimistic note, in the case of $\KhSpace(2)$, the only
obstruction to technique~\ref{item:Kh-hoch-coh} was a $2$-torsion
class. So, if we invert $2$ then Khovanov's argument would apply, to
show that the homotopy classes of bimodule automorphisms are the units
in $\ZZ[1/2]$. More generally, if we were only interested in
$\KhSpace(n)$ for finitely many $n$, there would be a finite list of
primes, corresponding to the torsion in $\pi_i(S^0)$ for $i$ small, so
that after inverting them Khovanov's argument applies. So, it is
natural to adapt Khovanov's argument to be more local, so that only
the $\KhSpace(n)$ for $n\leq 8$, say, appear. In fact, we will see that, perhaps
surprisingly, this adaptation leads to a proof of naturality without
inverting any primes.

\section{Planar composition for Khovanov's tangle invariants and their spectral refinements}\label{sec:planar}
\emph{Wherein we} formulate certain \textsc{multicategories of tangles
  and tangle cobordisms}, and use this language to give a minor
extension of Khovanov's \textsc{gluing results} for bimodules over the
arc algebra~\cite{Kho-kh-tangles}, in the spirit of
Bar-Natan's \textsc{canopoly}~\cite[Section 8]{Bar-kh-tangle-cob} or
of Jones's
\textsc{planar algebras}~\cite{Jones-oth-planar}. This material seems
to be \textsc{well-known}
  to experts (see, e.g.,~\cite[Section 5.3]{Roberts-kh-planar}).  We
follow this with \textsc{analogous} extensions for the
\textsc{spectral} tangle invariants.

\subsection{Multicategories of tangles}\label{sec:tang-multicat}

Let $S^1=\{z\in\CC\mid |z|=1\}$ and
$D^2=\{z\in\CC\mid |z|\leq 1\}$. A \emph{round disk} in $D^2$
is a subset of the form
$\{z\in D^2\mid |z-z_0|\leq r\}$ for some
$z_0\in \interior{D}^2$ and some $0<r<1-|z_0|$. If $D$ is a round disk then translation and
scaling gives a canonical identification $\phi_D\co D\to D^2$. Let
$\annulus=\{z\in D^2\mid 1/2\leq |z|\leq 1\}$ denote the annulus
with inner radius $1/2$ and outer radius $1$.

\begin{definition}\label{def:diskular}
  Fix non-negative, even integers $n,m_1,\dots,m_k$.  A
  \emph{diskular $(m_1,\dots,m_k;n)$-tangle} is a tangle diagram
  $T=T^{m_1,\dots,m_k;n}$ in
  $D^2\setminus (\interior{D}_1\cup\cdots\cup \interior{D}_k)$, where
  $D_1,\dots,D_k$ are disjoint round disks in $D^2$, so that the
  boundary of $T$ consists of
  \begin{itemize}
  \item the points $e^{2\pi i j/(n+1)}$, $j=1,\dots,n$, in $\bdy D^2$, and
  \item the points $\phi_{D_i}^{-1}(e^{2\pi i j/(m_i+1)})$, $j=1,\dots,m_i$, in $\bdy D^2_i$,
  \end{itemize}
  and $T$ is radial near $\bdy D^2$ and each $\bdy D_i$. See Figure~\ref{fig:diskular}.
  The disks $D_1,\dots,D_k$ are viewed as ordered. 

  Given diskular tangles $T^{m_1,\dots,m_k;n}$ and
  $S^{\ell_1,\dots,\ell_{j_i};m_i}_i$, $i=1,\dots,k$, let
  \[
    T\circ(S_1,\dots,S_k)=T\cup \phi_{D_1}(S_1)\cup\cdots\cup\phi_{D_k}(S_k).
  \]
  Alternatively, given an integer $1\leq i\leq k$ and pair of tangles
  $T^{m_1,\dots,m_k;n}$ and
  $S^{\ell_1,\dots,\ell_{j};m_i}$, there is a pairwise
  composition\phantomsection\label{not:TangComp}
  \[
    T\circ_i S=T\cup\phi_{D_i}(S).
  \]
  Again, see Figure~\ref{fig:diskular}.
  These are related by
  \[
    T\circ(S_1,\dots,S_k)=(\cdots((T\circ_k
    S_k)\circ_{k-1}S_{k-1})\circ_{k-2}\cdots)\circ_1 S_1.
  \]
\end{definition}

\begin{figure}
  \centering
  %Font is 12 point.
  \includegraphics[scale=.833333]{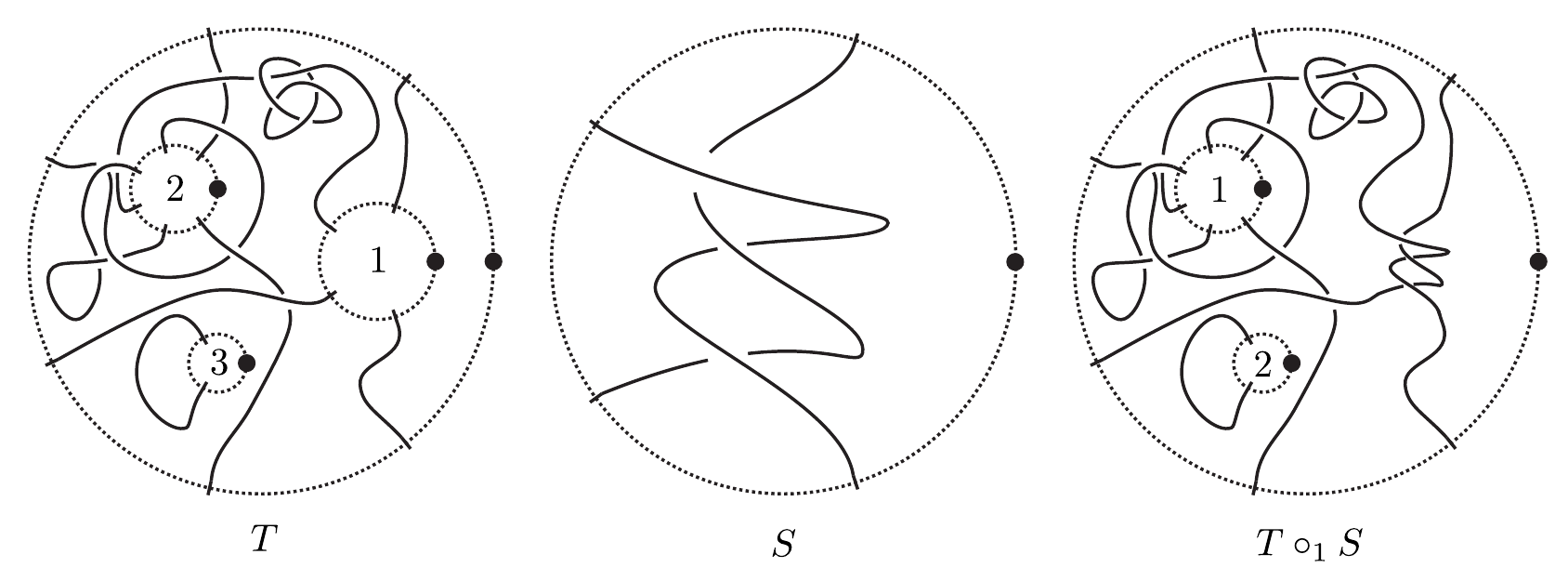}
  \caption{\textbf{Two diskular tangles and a composition of them.}
    Left: a diskular $(4,6,2;6)$ tangle $T=T^{4,6,2;6}$. Center: a diskular
    $(;4)$-tangle $S=S^{;4}$. Right: the composition $T\circ_1 S$.}
  \label{fig:diskular}
\end{figure}

We will call a diskular $(;n)$-tangle (i.e., a diskular tangle
involving no sub-disks) simply a \emph{diskular $n$-tangle.}

In Definitions~\ref{def:el-cob} and~\ref{def:cobordism}, we define an
essentially combinatorial version of cobordisms, in the spirit of
movies. We give a topological interpretation of these in (and
immediately preceding) Theorem~\ref{thm:CarterSaito}.

\begin{definition}\label{def:el-cob}
  Fix diskular $(m_1,\dots,m_k;n)$-tangles $S$ and $T$. An
  \emph{elementary cobordism} from $S$ to $T$ is any of the following:
  \begin{enumerate}
  \item\label{item:planar-isotopy} A planar ambient isotopy $\Phi_t\co D^2\to D^2$ from $S$ to $T$, so that
    $\Phi_t|_{\nbd(S^1)}$ is the identity for all $t$ and
    $\Phi_t|_{\nbd(D_i)
    }$ is the
    composition of translation and scaling for all $i,t$. The
    \emph{support} of the ambient isotopy is the union over $t$ of the
    support of $\Phi_t$ (the set of points where
    $\Phi_t\neq\Id$).
  \item\label{item:Reidemeister-move} A single Reidemeister move. For
    each type of Reidemeister move, we fix once and for all a pair of
    tangles $R,R'$ in $D^2$ corresponding to (the before and after
    pictures of) that Reidemeister move.  Then, a Reidemeister
    elementary cobordism is a pair of tangle diagrams obtained by
    replacing the image of $\phi_D^{-1}(R)\subset S$, for some round
    disk $D$, with $\phi_D^{-1}(R')$. (In particular, this is only
    permitted if $S$ contains such a $\phi_D^{-1}(R)$.)  The disk $D$
    is the \emph{support} of the Reidemeister move.
  \item\label{item:birth-death} A birth or death of a 0-crossing unknot disjoint from
    $S$. Again, this is the image of a fixed standard birth in $D^2$
    under the map $\phi_D^{-1}$ for some round disk $D$. The disk $D$
    is the \emph{support} of the birth or death.
  \item\label{item:saddle} A planar saddle. Again, this is the image of a fixed standard saddle in $D^2$
    under the map $\phi_D^{-1}$ for some round disk $D$. The disk $D$
    is the \emph{support} of the saddle.
  \end{enumerate}
\end{definition}

We will call births, deaths, and saddles \emph{Morse moves}. As we
will discuss below, each of these elementary cobordisms corresponds to
a particular embedded cobordism in the usual sense, but we treat the
elementary
cobordisms~(\ref{item:Reidemeister-move}),~(\ref{item:birth-death}),
and~(\ref{item:saddle}) as formal objects---just a pair of tangle
diagrams.

\begin{definition}\label{def:cobordism}
  Given an elementary cobordism $\Sigma$ from $S$ to $T$, let
  \begin{align*}
    P(\Sigma)&=
    \begin{cases}
      1 & \text{if $\Sigma$ is an R1 move creating a positive
        crossing or an R2 move creating crossings}\\
      -1 & \text{if $\Sigma$ is an R1 move removing a positive
        crossing or an R2 move removing crossings}\\
      0 & \text{otherwise}.
    \end{cases}\\
    \chi'(\Sigma)&=
                   \begin{cases}
                     -1 & \text{if $\Sigma$ is a saddle}\\
                     1 & \text{if $\Sigma$ is a birth or death}\\
                     0 & \text{otherwise}.
                   \end{cases}
  \end{align*}

  Given a sequence $\Sigma=(\Sigma_1,\dots,\Sigma_k)$ of
  elementary cobordisms starting at some tangle $S$ and ending at a
  tangle $T$, and an integer $P_S$, define
  \begin{align*}
    P(\Sigma)&=\sum_i P(\Sigma_i)\\
    \chi'(\Sigma)&=\sum_i \chi'(\Sigma_i).
  \end{align*}
  We say that $\Sigma$ \emph{goes from} $(S,P_S)$ to
  $(T,P_S+P(\Sigma))$.
\end{definition}

An alternative definition of $\chi'$ for a cobordism $\Sigma\co S\to
T$ (viewed as a surface) is the Euler characteristic of $\Sigma$ minus half the number of
endpoints of $S$. (This works for both elementary cobordisms and
compositions of elementary cobordisms.)

\begin{definition}\label{def:tangle-movie-multicat}
  The \emph{tangle movie multicategory} $\TangMovMulticat$ is the
  multicategory enriched in categories defined as follows. The objects
  of $\TangMovMulticat$ are the non-negative, even integers $n$. Given
  objects $m_1,\dots,m_k$ and $n$, an object of
  $\Hom_{\TangMovMulticat}(m_1,\dots,m_k;n)=\TangMovMulticat(m_1,\dots,m_k;n)$
  is a diskular tangle $T^{m_1,\dots,m_k;n}$, together with an
  integer $P$. In the special case $k=1$ and $m_1=n$, we also
  include an identity map of $n$ as a morphism. Composition of
  morphism objects is given by composition of diskular tangles as in
  Definition~\ref{def:diskular} and adding the integers
  $P$. Given objects
  $(S,P_S),\ (T,P_T)\in \TangMovMulticat(m_1,\dots,m_k;n)$, a
  morphism from $(S,P_S)$ to $(T,P_T)$ is a finite sequence of
  elementary cobordisms which goes from $(S,P_S)$ to
  $(T,P_T)$, modulo (the transitive closure of) the following relations:
  \begin{enumerate}[label=(D\arabic*)]
  \item\label{item:disj-support} Elementary cobordisms with disjoint supports
    commute.
  \item \label{item:isoiso} Isotopic ambient
    isotopies are equal.
  \item\label{item:iso-comp} The formal composition of two ambient
    isotopies is equal to the composition of the two ambient isotopies
    in the usual sense.
  \item\label{item:iso-Reid} Applying a Reidemeister move or Morse
    move and then an ambient isotopy is equivalent to
    performing the ambient isotopy first and then the corresponding
    Reidemeister move or Morse move. Here, the diagram must have a
    disk which has exactly the form of the model Reidemeister or Morse
    move both before and after the ambient isotopy.
  \end{enumerate}
  Given multi-morphism morphisms (2-morphisms)
  $f\co (S,P_S)\to(S',P'_{S'})$ and
  $g_i\co (T_i,P_{T_i})\to (T'_i,P'_{T'_i})$ so that
  $(S,P_s)\circ\bigl((T_1,P_{T_1}),\cdots,(T_\ell,P_{T_\ell})\bigr)$
  is sensible, the multi-composition $f\circ(g_1,\dots,g_\ell)$ is
  defined by scaling down the elementary cobordisms in the $g_i$ and
  inserting them in the corresponding disks for $f$.
\end{definition}
Lemma~\ref{lem:tang-is-multicat} below states that this does, in fact,
define a multicategory.

\begin{example}
  Given an oriented diskular tangle $T$, there is a corresponding
  multi-morphism object $(T,P(T))$ in the tangle movie multicategory where
  $P(T)$ is the number of positive crossings in $T$. (See also
  Remark~\ref{rem:oriented-tang}.)
\end{example}

Consider Carter-Saito's movie moves~\cite[Figures 23--38]{CS-knot-movie}, as listed by
Khovanov~\cite[Figures 5--9]{Kho-kh-cobordism}\footnote{The only
  differences are that some of Khovanov's moves are rotated by $\pi/2$ from
Carter-Saito's, and Khovanov arranges that all strands end on the top
or bottom.} Each is a move of
layered $(m,n)$-tangles. There are two kinds of moves. Moves 8--22,
24, and 31 correspond to composing planar isotopies, or to commuting a
planar isotopy past a Reidemeister move or Morse move. The remaining
moves (moves 1--7, 23(a,b), 25--30) correspond to nontrivial
sequences of Morse moves and Reidemeister moves, at least on one
side. We will call the first class of moves \emph{Type I movie moves},
and the second class \emph{Type II movie moves}. Each Type II movie
move has a \emph{main piece}, drawn in the figure, and an identity
braid to the left and right. Identify the square with $D^2$, so the
main piece of each movie move consists of diskular $n$-tangles. We
will call the main pieces of the Type II movie moves, viewed this way,
\emph{diskular movie moves}.

\begin{definition}\label{def:tangle-multicat}
  The tangle multicategory $\TangMulticat$ is the same as the tangle
  movie multicategory $\TangMovMulticat$ except that we quotient the $2$-morphisms by
  (the transitive closure of) the following relations:
  \begin{enumerate}[resume,label=(D\arabic*)]
  \item\label{item:movies} If two sequences
    of Morse moves and Reidemeister moves are related by a diskular
    movie move then we declare them to be equal. More generally, if
    there is a round disk so that over that disk the two sequences
    differ by a movie move, and away from the disk they are the same,
    then we declare the two sequences to be equal.
  \end{enumerate}
\end{definition}

\begin{lemma}\label{lem:tang-is-multicat}
  The tangle movie multicategory and tangle multicategory are, in
  fact, multicategories.
\end{lemma}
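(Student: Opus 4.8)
The plan is to verify directly that each of $\TangMovMulticat$ and $\TangMulticat$ satisfies the axioms of a multicategory enriched in categories, organizing the check into the three layers of structure: composition of multimorphism \emph{objects} (diskular tangles carrying an integer $P$), the category structure on each multimorphism set (vertical composition of sequences of elementary cobordisms modulo the relations), and the multi-composition of $2$-morphisms (the ``scale down and insert'' operation). At the object level almost nothing is required: composition of diskular tangles is defined by the maps $\phi_{D_i}$, which are translations and scalings, and a nested translation-and-scaling is again a translation-and-scaling computed by the evident formula, so $T\circ(S_1,\dots,S_k)$ is strictly associative; the labels $P$ simply add, so this is associative too; the identity $n$-tangle (included as a morphism object precisely in the $k=1$, $m_1=n$ case) is a strict two-sided unit; and the symmetric-group equivariance is witnessed by relabeling the ordered disks $D_1,\dots,D_k$. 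These are all immediate from \Definition{diskular}.

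Next I would treat the hom-categories. For fixed $(S,P_S)$ and $(T,P_T)$, vertical composition of $2$-morphisms is concatenation of sequences of elementary cobordisms (the $P$-constraint is automatically consistent, since each elementary cobordism contributes additively to $P$), and the identity $2$-morphism is the empty sequence; concatenation of sequences is strictly associative and strictly unital before passing to equivalence classes. The content here is that the relations~\ref{item:disj-support}--\ref{item:iso-Reid} (resp.\ together with~\ref{item:movies} for $\TangMulticat$) descend to a well-defined category structure, i.e.\ that they generate a \emph{congruence} with respect to concatenation. This is checked relation by relation: each generating relation either commutes two adjacent entries or rewrites a single short contiguous block of entries, so prepending or appending an arbitrary sequence preserves the relation; and taking the reflexive–symmetric–transitive closure of a congruence is still a congruence. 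Hence each multimorphism set is a genuine category.

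The crux is the multi-composition of $2$-morphisms and its compatibility with the rest of the structure. Given $f\co(S,P_S)\to(S',P_{S'})$ in $\TangMovMulticat(m_1,\dots,m_k;n)$ and $g_i\co(T_i,P_{T_i})\to(T_i',P_{T_i'})$ in $\TangMovMulticat(\dots;m_i)$, I would define $f\circ(g_1,\dots,g_k)$ by picking representatives, scaling each elementary cobordism occurring in $g_i$ into the disk $D_i$ and each elementary cobordism of $f$ into the outer region, and concatenating these blocks in the order $g_1,\dots,g_k,f$; note that along a planar isotopy in $f$ the disk $D_i$ moves but, by the constraint on $\Phi_t|_{\nbd(D_i)}$ in \Definition{el-cob}\ref{item:planar-isotopy}, stays a round disk throughout, so ``a scaled elementary cobordism inside $D_i$'' always makes sense. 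Then I would check, in order: (i) independence of the chosen interleaving of the blocks, which follows because any two orderings differ by commuting elementary cobordisms with disjoint supports (relation~\ref{item:disj-support}) together with relation~\ref{item:iso-Reid} to slide inserted $g_i$-moves past the planar isotopies of $f$; (ii) independence of the chosen representatives of $f$ and the $g_i$, since a generating relation on $f$ or on $g_i$, after scaling and insertion, becomes the corresponding generating relation on the composite (disjoint supports stay disjoint, isotopies stay isotopic, model Reidemeister/Morse disks stay model disks); (iii) operadic associativity, which reduces to the associativity of nested scalings already used at the object level, after reordering with~\ref{item:disj-support}; (iv) unitality with respect to the empty-sequence $2$-morphisms and the identity $n$-tangles; and (v) the interchange law between multi-composition and vertical composition, which again reduces to a reordering of disjoint-support blocks. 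Finally, for $\TangMulticat$ the extra relation~\ref{item:movies} is supported in a single round disk, hence is again local, so the same congruence and descent arguments apply verbatim.

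The step I expect to be the main obstacle is part (i)/(iii) above: making precise, and then trivial, the claim that multi-composition is well defined and associative even though the planar isotopies occurring in $f$ are continuously dragging around the disks into which the $g_i$ are being inserted. The resolution is exactly the roundness constraint in \Definition{el-cob}\ref{item:planar-isotopy} together with relations~\ref{item:disj-support} and~\ref{item:iso-Reid}, which are tailored to provide precisely the commutations needed; once this is set up, everything else is bookkeeping and no genuinely new idea is required.
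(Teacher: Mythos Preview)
Your proposal is correct and follows essentially the same approach as the paper's own proof: organize the verification into well-definedness and associativity of horizontal composition (of $1$- and $2$-morphisms), well-definedness and associativity of vertical composition, and the interchange law, observing that the nontrivial content (well-definedness of horizontal composition of $2$-morphisms and interchange) comes down precisely to relations~\ref{item:disj-support} and~\ref{item:iso-Reid}. The paper's proof is much terser---it simply declares most points ``obvious'' and attributes the two nontrivial ones to those same relations---so your more explicit treatment of why the relations generate a congruence and of the disk-dragging issue under planar isotopies is a genuine elaboration rather than a different argument.
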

\begin{proof}
  In both cases, we must check that:
  \begin{enumerate}[label=(MC-\arabic*)]
  \item\label{item:hor-defd} Horizontal composition (of $1$-morphisms and of $2$-morphisms) is well-defined.
  \item\label{item:hor-assoc} Horizontal composition is associative.
  \item\label{item:vert-defd} Vertical composition (of $2$-morphisms) is well-defined.
  \item\label{item:vert-assoc} Vertical composition is associative.
  \item\label{item:hor-vert} Vertical composition commutes with horizontal composition.
  \end{enumerate}

  For the tangle movie multicategory, Point~\ref{item:hor-defd} is obvious for
  $1$-morphisms, and for $2$-morphisms it follows from the fact that we imposed
  the relations that elementary cobordisms with disjoint supports commute and
  elementary cobordisms commute with planar
  isotopies. Points~\ref{item:hor-assoc},~\ref{item:vert-defd},
  and~\ref{item:vert-assoc} are obvious. Point~\ref{item:hor-vert} again uses
  the facts that elementary cobordisms with disjoint supports commute and
  elementary cobordisms commute with planar isotopies.

  For the tangle multicategory, we must check
  Points~\ref{item:hor-defd} and~\ref{item:vert-defd}; then the others
  follow from the previous case. But both of these points are still
  obvious: both horizontal and vertical gluing respect the equivalence
  relation in Definition~\ref{def:tangle-multicat}.
\end{proof}

Given a diskular tangle $T$, we can view $T$ as a
1-manifold-with-boundary inside $D^2\times\RR$, with the boundary
contained in $D^2\times \{0\}$ or, more specifically,
$(S^1\times\{0\})\cup \bigcup_i (\bdy D_i\times\{0\})$.  Given diskular
tangles $S,T$, a \emph{genuine cobordism} from $S$ to $T$ consists of:
\begin{itemize}
\item A smoothly-varying family $D_{i,t}$ of round disks inside $D^2$,
  $t\in[0,1]$, disjoint for each $t\in[0,1]$ and so that the $D_{i,0}$
  are the disks corresponding to
  $S$ and the $D_{i,1}$ are the disks corresponding to $T$.
\item A smoothly and properly embedded surface
  \[
    \Sigma\subset
    \bigl([0,1]\times D^2\times\RR\bigr)\setminus
    \bigl(\bigcup_{i,t}\{t\}\times D_{i,t}\times\RR\bigr)
  \]
  with boundary:
  \begin{itemize}
  \item $\{0\}\times S$, 
  \item $\{1\}\times T$, 
  \item the points $(t,p,0)\in [0,1]\times S^1\times\RR$ where
    $(p,0)\in \bdy S$, and
  \item the points on
    $(t,p,0)\in [0,1]\times \bdy D_{i,t}\times\RR$ which are the
    images of $\bdy S$ under the translation and scaling that sends
    $D_i$ to $D_{i,t}$.
  \end{itemize}
\end{itemize}
In particular, if $S$ and $T$ are links, this reduces to the usual
definition of a link cobordism. More generally, this is also the
standard notion of a tangle cobordism when there are no sub-disks
$D_i$.

Fixing topological models for the elementary tangle
cobordisms (mapping cylinders or traces for
types~(\ref{item:planar-isotopy}) and~(\ref{item:Reidemeister-move}), and
elementary Morse cobordisms for types~(\ref{item:birth-death}) and~(\ref{item:saddle})), any sequence of elementary cobordisms gives rise to a
genuine cobordism between diskular tangles. The following is
essentially due to Carter-Saito~\cite{CS-knot-movie}:
\begin{theorem}\label{thm:CarterSaito}
  Every isotopy class of genuine cobordisms is represented by a
  sequence of elementary tangle cobordisms. Further, two sequences of
  elementary tangle cobordisms represent isotopic genuine cobordisms
  if and only if they represent the same 2-morphism in the tangle
  multicategory $\TangMulticat$.
\end{theorem}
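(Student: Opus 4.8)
The plan is to deduce the theorem from Carter--Saito's movie-move theorem, after two reductions. First, normalize the sub-disks: given a genuine cobordism with moving families $D_{i,t}$, choose a smooth isotopy of $D^2$ carrying $D_{i,0}$ to $D_{i,t}$ (translation and scaling on each disk), extend it to an ambient isotopy $\Psi_t$ of $[0,1]\times D^2\times\RR$ which is the identity near $S^1\times\{0\}$ and vertical near $D^2\times\{0\}$, and apply $\Psi_t^{-1}$; this replaces $\Sigma$ by an isotopic cobordism inside $(D^2\setminus\bigcup_i\interior{D}_i)\times[0,1]\times\RR$ with constant disks. On movies this reparametrization is exactly the effect of relations~\ref{item:isoiso}--\ref{item:iso-Reid}: the ambient isotopy of the square is absorbed into the planar-isotopy pieces and commuted past the Reidemeister and Morse moves. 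So we may assume throughout that the sub-disks are fixed, i.e.\ $\Sigma$ is an honest tangle cobordism rel boundary in a disk with finitely many disjoint round sub-disks removed, crossed with $[0,1]$; the only place the sub-disks intervene again is in the ``more general'' clause of~\ref{item:movies}, which records that a movie move supported in a round disk may be performed inside any larger diagram.

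\emph{Existence.} Put $\Sigma$ in generic position with respect to the projection onto $[0,1]$ and the further projection onto $D^2$. Then $t|_\Sigma$ is Morse with finitely many nondegenerate critical points (index $0$ births, index $1$ saddles, index $2$ deaths), all at distinct times, and away from these times the diagram of $\Sigma_t$ in $D^2$ changes combinatorial type only by single Reidemeister moves, again at finitely many distinct times; between all of these finitely many events $\Sigma$ restricts to a planar ambient isotopy supported away from $S^1$ and the $\bdy D_i$. Reading the events in order of increasing $t$ and interposing these planar isotopies produces a sequence of elementary cobordisms realizing $\Sigma$, and hence a $2$-morphism from $(S,P_S)$ to $(T,P_S+P(\Sigma))$ for any chosen $P_S$; that $P(\Sigma)$ depends only on the isotopy class of $\Sigma$ follows either directly (it is the signed count of crossing-creating R1/R2 events, a homological quantity) or a posteriori from the uniqueness half, and the $P$-bookkeeping is thereafter purely mechanical.

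\emph{Uniqueness.} One direction is easy: none of~\ref{item:disj-support}--\ref{item:movies} changes the isotopy class of the associated genuine cobordism. For~\ref{item:disj-support} this is because elementary cobordisms with disjoint supports can be pushed to separate time slices in either order; for~\ref{item:isoiso}--\ref{item:iso-Reid} it is formal; and for~\ref{item:movies} it is precisely Carter--Saito's (and Khovanov's) verification that each movie move interpolates between movies of ambiently isotopic surfaces, together with the localization remark above. The reverse direction is the content of the movie-move theorem: two sequences of elementary cobordisms presenting ambiently isotopic genuine cobordisms are related by a finite chain of movie moves and ``trivial'' moves. Carter--Saito prove this for knotted surfaces in $\RR^4$~\cite{CS-knot-movie}; the relative version we need---tangle cobordisms rel a nonempty boundary, inside a punctured disk crossed with an interval---follows by the same generic one-parameter-family argument, since everything is a product near $\bdy D^2\times\{0\}$ and near each $\bdy D_i\times\{0\}$ so the boundary conditions persist along the isotopy. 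It then remains to match the outputs: Carter--Saito's ``trivial'' moves (interchanging the temporal order of disjoint elementary changes, cancelling an ambient isotopy with its inverse, and sliding a planar isotopy past a Reidemeister or Morse change) are exactly relations~\ref{item:disj-support}--\ref{item:iso-Reid}, while their genuine movie moves~\cite[Figures 23--38]{CS-knot-movie}, reorganized as in~\cite[Figures 5--9]{Kho-kh-cobordism}, are supported in a disk with an identity braid alongside and so are precisely the diskular movie moves generating~\ref{item:movies}.

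\emph{Main obstacle.} The real work is the relative movie-move theorem and the dictionary between Carter--Saito's framework and ours. One must check that the transversality and genericity arguments go through rel the (nonempty) tangle boundary and in the presence of fixed sub-disks---which, after the first reduction, is a routine adaptation---and, more delicately, that \emph{no} relation beyond~\ref{item:disj-support}--\ref{item:movies} is required: every ``far commutation'' of events must be an instance of disjoint-support commutation, and all reparametrizations of the interval direction and of ambient isotopies must be captured by~\ref{item:isoiso}--\ref{item:iso-Reid}. This last verification is the step I expect to be most finicky, since it requires reading Carter--Saito's list of elementary equivalences carefully against Definitions~\ref{def:tangle-movie-multicat} and~\ref{def:tangle-multicat} and confirming there is an exact, not merely a rough, correspondence.
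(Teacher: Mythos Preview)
Your approach is in the right spirit and close to the paper's, but there is a genuine gap in the reduction step that the paper handles more carefully.

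Your normalization replaces $\Sigma$ by $\Psi^{-1}(\Sigma)$, a cobordism with constant sub-disks $D_{i,0}$. But this cobordism ends at $\psi_1^{-1}(T)$, not at $T$; when $D_{i,0}\neq D_{i,1}$ you have changed the target, and even when the disk positions agree you have only produced the \emph{classical half} of the cobordism. The paper makes this explicit by introducing a \emph{braid-classical factorization}: every genuine cobordism is canonically isotopic to a classical cobordism (constant disks) followed by a \emph{braid-like} cobordism (the trace $\Psi([0,1]\times\psi_1^{-1}(T))$ of the disk-moving isotopy). Your normalization is precisely the classical factor; the braid-like factor is what you have silently dropped.

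More seriously, your uniqueness argument does not explain why, after normalizing two movies $M$ and $M'$ separately (using the possibly different disk families carried by their underlying cobordisms), the resulting constant-disk cobordisms are isotopic \emph{through constant-disk cobordisms}---which is what you need before invoking Carter--Saito rel boundary. The paper's fix is to observe that the braid-classical factorization applies in $1$-parameter families: given an isotopy of genuine cobordisms from $\Sigma$ to $\Sigma'$, one lifts the $1$-parameter family of disk-isotopies to a $1$-parameter family of ambient isotopies, and hence obtains a $1$-parameter family of factorizations. This yields an isotopy of classical cobordisms (through classical cobordisms) and an isotopy of braid-like cobordisms (through braid-like cobordisms). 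The braid-like parts are then related by move~\ref{item:isoiso}, and Carter--Saito handles the classical parts. Your sentence ``So we may assume throughout that the sub-disks are fixed'' is exactly where this $1$-parameter argument is needed but missing.
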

\begin{proof}
  The first statement, that every isotopy class of genuine cobordisms
  is represented by a sequence of elementary tangle cobordisms, is
  clear: one can isotope the cobordism to be a sequence of isotopies
  (in which the boundary disks are also allowed to move) and Morse
  moves, and then perturb the isotopy steps so each consists of a
  sequence of planar isotopies and model Reidemeister moves. For each
  of the planar isotopies, one also chooses an ambient isotopy
  covering it.

  It is also clear that each of the
  moves~\ref{item:disj-support}--\ref{item:movies} induces an isotopy
  of genuine cobordisms.

  We reduce the rest of the theorem to Carter-Saito's result by using
  the following canonical factorization of genuine cobordisms. Call a
  genuine cobordism $(\Sigma,\{D_{i,t}\})$ from $S$ to $T$
  \emph{classical} if the family of disks $D_{i,t}$ is constant
  (independent of $t$), and \emph{braid-like} if there is an ambient
  isotopy $\psi_t$ of $D^2$ extending the isotopy of the $D_i$ and so
  that $\Sigma\cap(\{t\}\times\RR^3) = \psi_t(S)$ for all
  $t\in[0,1]$. For braid-like cobordisms, we consider the ambient
  isotopy map $\psi_t$ part of the data.

  Given a genuine cobordism $(\Sigma, \{D_{i,t}\})$, there is an
  ambient isotopy $\psi_t$ of $D^2$ extending the isotopy $\{D_{i,t}\}$ (with
  $\psi_0=\Id$), and a 1-parameter family of isotopies can be
  lifted to a 1-parameter family of ambient isotopies. Let
  $\Psi\co [0,1]\times D^2\times\RR \to [0,1]\times D^2\times\RR$ be
  the trace of $\psi_t$. Given $\psi_t$, there is a
  canonical isotopy from $(\Sigma,\{D_{i,t}\})$ to
  \[
    \bigl(\Psi([0,1]\times\psi_1^{-1}(T)),\{D_{i,t}\}\bigr)
    \circ (\Psi^{-1}(\Sigma),\{D_{i,0}\}).
  \]
  Call this a \emph{braid-classical factorization} of $(\Sigma, \{D_{i,t}\})$ into the
  composition of a classical cobordism and a braid-like
  cobordism. Given a $1$-parameter family of cobordisms, there is a
  corresponding $1$-parameter family of braid-classical factorizations.
  
  Now, suppose $M$ and $M'$ are two sequences of elementary tangle
  cobordisms representing isotopic genuine cobordisms. We want to show
  that $M$ and $M'$ are related by a sequence of moves of
  type~\ref{item:disj-support}--\ref{item:movies}. By applying a
  sequence of moves of type~\ref{item:iso-Reid}, we can assume that both
  $M$ and $M'$ consist of a classical cobordism followed by a
  braid-like cobordism. Further, since the braid-classical factorization
  applies in 1-parameter families, the resulting classical cobordisms
  are isotopic through classical cobordisms, and the braid-like
  cobordisms are isotopic through braid-like cobordisms. Hence,
  the braid-like cobordisms are related by
  move~\ref{item:isoiso}.  By
  Carter-Saito's theorem~\cite{CS-knot-movie} (or rather, its folklore
  extension to tangles with fixed ends, as used by
  Khovanov~\cite{Kho-kh-cobordism} and
  Bar-Natan~\cite{Bar-kh-tangle-cob}), the classical cobordisms differ
  by a sequence of movie moves. Every movie move is either a diskular
  movie move or a move of
  type~\ref{item:disj-support},~\ref{item:isoiso},~\ref{item:iso-comp}, 
  or~\ref{item:iso-Reid}. Hence, $M$ and $M'$ differ by a sequence of
  moves of types~\ref{item:disj-support}--\ref{item:movies}, as
  desired.
\end{proof}

There is an enlargement of these categories which is also
useful. Before giving it, we introduce some terminology about
trees. Given a rooted tree $Y$ there is a partial order on the
vertices of $Y$, induced by $v>w$ if there is an edge from $v$ to $w$
and $v$ is closer to the root than $w$. This ordering induces a
function $L$ from the vertices of $Y$ to $\ZZ_{\geq 0}$ by declaring
that if $v$ and $w$ are connected by an edge, with $v>w$, then
$L(v)=L(w)+1$, and that $L$ takes the value $0$ on some vertex (which
is necessarily a leaf). So, the maximum value of $L$ occurs at the
root $v_0$; $L(v_0)-L(v)$ is the distance from $v$ to the root. We
call the vertices $L^{-1}(i)$ (for each $i\in\ZZ$) the \emph{$i\th$
  layer} of $Y$.

\begin{definition}\label{def:enlargement}\cite[Section 2.4.1]{LLS-kh-tangles}
  Given a multicategory $\Cat$ enriched in categories, the
  \emph{canonical enlargement} $\enl{\Cat}$ of $\Cat$ is defined as follows:
  \begin{itemize}
  \item The objects $\Ob(\enl{\Cat})$ are the same as $\Ob(\Cat)$.
  \item An object of $\enl{\Cat}(x_1,\dots,x_n;y)$ is a planar, rooted
    tree $Y$ with $n$ distinguished leaves called inputs, together with a
    labeling of the $k\th$ vertex of $Y$ in layer $\ell$ by a
    multi-morphism $f_{k,\ell}$ of
    $\Cat$ with the same number of inputs as the vertex, so that:
    \begin{itemize}
    \item All inputs of $Y$ are at layer $0$,
    \item The target of the last morphism (the one closest to the
      root) of $Y$ is $y$,
    \item The sources of the first layer of morphisms
      $f_{1,1},\dots,f_{p_1,1}$ are $x_1,\dots,x_n$, and
    \item Successive layers of morphisms are composable, i.e., if
      $f_{k,\ell}$ is the $i\th$ input to $f_{k',\ell+1}$ in $Y$ then
      $f_{k',\ell+1}\circ_i f_{k,\ell}$ is defined.
    \end{itemize}
    (Note that $0$-input vertices can appear at any layer of $Y$ below
    the root.)

    Given a labeled tree $(Y,\{f_{k,\ell}\})$, let
    $\circ(Y,\{f_{k,\ell}\})$ be the result of composing the
    multi-morphisms $f_{k,\ell}$ according to $Y$.
  \item Multi-composition of morphism objects is induced by composition of trees.
  \item Given morphism objects $(Y,\{f_{k,\ell}\})$ and
    $(Z,\{g_{k',\ell'}\})$, the morphisms in $\enl{\Cat}$ from $(Y,\{f_{k,\ell}\})$ to
    $(Z,\{g_{k',\ell'}\})$ are the morphisms in $\Cat$ from
    $\circ(Y,\{f_{k,\ell}\})$ to
    $\circ(Z,\{g_{k',\ell'}\})$.
  \item Multi-composition of morphism morphisms in $\enl{\Cat}$ is
    induced by multi-composition of morphism morphisms in $\Cat$.
  \end{itemize}

  There is a canonical quotient map $q\co \enl{\Cat}\to\Cat$ which is
  the identity on objects and sends $(Y,\{f_{k,\ell}\})$ to
  $\circ(Y,\{f_{k,\ell}\})$.
\end{definition}

\begin{convention}
  For the rest of the paper, the word \emph{tree} means a planar
  rooted tree.
\end{convention}

\begin{remark}
  We can visualize $\enl{\TangMovMulticat}$ as follows. Consider a
  morphism $(Y,\{T_{k,\ell}\})$ in $\enl{\TangMovMulticat}$ from
  $m_1,\dots,m_n$ to $m'$. (So, $Y$ is a tree and the $T_{k,\ell}$ are
  diskular tangles. We are suppressing the integer $P$ from
  this discussion.) There is an associated diskular tangle
  $T=\circ(Y,\{T_{k,\ell}\})$ in
  $D^2\setminus (D^2_1\cup\cdots\cup D^2_n)$. There is also a
  collection of disjoint, embedded, round circles $Z_{k,\ell}$ in
  $D^2\setminus (D^2_1\cup\cdots\cup D^2_n)$: the images of the outer
  boundaries of the $T_{k,\ell}$ under the composition
  maps. Conversely, given $T$ and the round circles $Z_{k,\ell}$, one
  can reconstruct $(Y,\{T_{k,\ell}\})$ uniquely. These circles must
  satisfy a condition on their nesting depth.
  A 2-morphism is a sequence of elementary
  tangle cobordisms, paying no regard to the extra round circles.
\end{remark}

The following lemma will be useful for constructing multifunctors
below:
\begin{lemma}\label{lem:tId}
  Given a multicategory $\Cat$ enriched in categories and a morphism
  object $(Y,\{f_{k,\ell}\})\in \enl{\Cat}(a_1,\dots,a_n;b)$ let
  \[
    \tId\co (Y,\{f_{k,\ell}\})\to \circ(Y,\{f_{k,\ell}\})
  \]
  be the morphism morphism corresponding to the identity map of
  $\circ(Y,\{f_{k,\ell}\})$. Given another morphism object
  $(Y',\{f'_{k',\ell'}\})$, any morphism morphism
  $\alpha\co (Y,\{f_{k,\ell}\})\to (Y',\{f'_{k',\ell'}\})$ can be
  factored uniquely as
  \[
    \alpha=\tId^{-1}\circ \alpha'\circ \tId
  \]
  where $\alpha'$ is a morphism from $\circ(Y,\{f_{k,\ell}\})$ to
  $\circ(Y',\{f'_{k',\ell'}\})$.
\end{lemma}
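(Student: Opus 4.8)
The plan is to unwind Definition~\ref{def:enlargement}: the statement turns out to be essentially a tautology once one recalls that every $2$-morphism set appearing in it is, by fiat, a morphism set of $\Cat$, and that vertical composition of $2$-morphisms in $\enl{\Cat}$ is inherited from composition in $\Cat$.

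First I would set up the bookkeeping: regard $\circ(Y,\{f_{k,\ell}\})$ and $\circ(Y',\{f'_{k',\ell'}\})$ as morphism objects of $\enl{\Cat}$ via the trees with a single internal vertex labelled by these morphisms, noting that applying $\circ$ to such a tree recovers the morphism unchanged. By Definition~\ref{def:enlargement}, the $2$-morphism $\tId\co(Y,\{f_{k,\ell}\})\to\circ(Y,\{f_{k,\ell}\})$ is nothing but the identity morphism $\Id_{\circ(Y,\{f_{k,\ell}\})}$ of $\Cat$, and similarly for $(Y',\{f'_{k',\ell'}\})$; in particular $\tId$ is invertible in $\enl{\Cat}$, with $\tId^{-1}$ corresponding to the same identity morphism of $\Cat$, since $2$-morphism composition in $\enl{\Cat}$ is that of $\Cat$. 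Next I would identify the given $\alpha$ with its underlying morphism $g\in\Cat(\circ(Y,\{f_{k,\ell}\}),\circ(Y',\{f'_{k',\ell'}\}))$, and likewise identify any candidate factor $\alpha'$ with a morphism $g'$ of $\Cat$ with the same source and target. Under these identifications the asserted equation $\alpha=\tId^{-1}\circ\alpha'\circ\tId$ becomes $g=\Id\circ g'\circ\Id=g'$ in $\Cat$, which forces $g'=g$ and hence pins $\alpha'$ down uniquely; conversely, taking $\alpha'$ to be $g$, reinterpreted as a $2$-morphism $\circ(Y,\{f_{k,\ell}\})\to\circ(Y',\{f'_{k',\ell'}\})$ of $\enl{\Cat}$, supplies a factorization. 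So existence and uniqueness both follow at once.

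I do not anticipate a real obstacle here: the lemma is a normalization statement recording that a labelled tree is interchangeable with its composite via the canonical isomorphism $\tId$. The only step worth a second glance is the invertibility of $\tId$ \emph{as a $2$-morphism of $\enl{\Cat}$} (rather than merely of $\Cat$), and that is immediate from the definition of $2$-morphism composition in $\enl{\Cat}$.
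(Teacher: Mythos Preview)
Your proposal is correct and takes essentially the same approach as the paper: the paper's proof is a single sentence noting that $\alpha'$ is just the morphism in $\Cat$ inducing $\alpha$ in the definition of $\enl{\Cat}$, which is exactly what you unwind in more detail.
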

\begin{proof}
  This is immediate from the definitions: $\alpha'$ is just the
  morphism inducing $\alpha$ in the definition of $\enl{\Cat}$.
\end{proof}

\begin{remark}\label{rem:oriented-tang}
  There is an \emph{oriented tangle multicategory} with:
  \begin{itemize}
  \item one object for each pair of an even integer $m$ and a function
    $\{1,\dots,m\}\to\{\pm 1\}$, or equivalently an orientation on
    $\{e^{2\pi i j/(m+1)}\mid j=1,\dots,m\}$,
  \item a $1$-morphism for each pair of 
    a diskular tangle and an orientation of its components,
    compatible with the orientations of the points on its boundary, and
  \item a $2$-morphism for each oriented tangle cobordism.
  \end{itemize}
  There is a forgetful
  functor from the oriented tangle multicategory to the tangle
  multicategory, sending an oriented tangle $\vec{T}$ to the pair
  $(T,P)$ where $T$ is the underlying unoriented tangle and $P$ is the
  number of positive crossings of $\vec{T}$. The composition of the
  Khovanov multifunctor defined below with this forgetful functor
  gives an invariant of oriented tangles. While this is arguably a
  more natural invariant to study from the point of view of topology
  (e.g., it is clearer what Khovanov homology is an invariant of in
  this setting), we find it more convenient, and also slightly more
  general, to work at the level of the tangle multicategory.
\end{remark}

\subsection{Arc algebra multi-modules and gluing}
The goal of this section is to prove that Khovanov's arc algebras and
bimodules extend to give a functor from $\TangMovMulticat$, as a
warm-up for the spectral case. None of the ideas involved are new.

\subsubsection{The target multicategory}\label{sec:alg-target}
To be parallel with the spectral situation, we give a somewhat
elaborate multicategory as the target of Khovanov's arc algebra
functor. See Remark~\ref{rem:simple-aa} for a simpler option which is,
however, not parallel to the spectral case.

Let $A_1,\dots,A_n$ and $B$ be graded linear categories (or, less
generally, rings). A \emph{multi-module} over $A_1,\dots,A_n$ and
$B$ is just a \dg $(A_1\otimes_\ZZ\cdots\otimes_\ZZ A_n,B)$-bimodule. More
generally, the \emph{derived category of multi-modules} over
$A_1,\dots,A_n$ and $B$ is the derived category of \dg
$(A_1\otimes_\ZZ\cdots\otimes_\ZZ A_n,B)$-bimodules. For
multi-modules, however,
we can form more tensor products: given algebras $C$, $B_1,\dots,B_n$,
and $A_{i,1},\dots, A_{i,m_i}$ ($i=1,\dots,m$), multi-modules $M_i$
over $A_{i,1},\dots,A_{i,m_i}$ and $B_i$, and a multi-module $N$
over $B_1,\dots,B_n$ and $C$, we can form the tensor product
\[
   (M_1,\dots,M_n)\otimes_{B_1,\dots,B_n}N=(M_1\otimes_\ZZ\cdots\otimes_\ZZ M_n)\otimes_{B_1\otimes_\ZZ\cdots\otimes_\ZZ
    B_n}N.
\]
We can also form the derived tensor product of multi-modules by first
replacing $N$ and/or the $M_i$ by projective (or flat) multi-modules
and then taking the tensor product.

It is convenient to have a model of the derived category of
multi-modules so that the derived tensor product is strictly
associative, strictly functorial, and has a strict unit. There are
standard ways to do this; here is one. First, fix a functor from the
category of multimodules over $A_1,\dots,A_n$ and $B$ to the category
of projective multimodules, for each collection of linear categories
$A_1,\dots,A_n$ and $B$ (for example, the bar resolution if
$A_1,\dots,A_n$ and $B$ are finitely generated and free over
$\ZZ$). Then instead of the usual derived category consider the
category with objects planar, rooted trees with $n+1$ leaves, together
with a labeling of each edge by an algebra and each internal vertex by
a multi-module over the algebras associated to the edges incident to
it, so that the edge adjacent to the root is labeled by $B$ and the
edges associated to the other leaves are labeled by $A_1,\dots,A_n$
(in that order). The morphism set between two objects is obtained by
taking the (chosen) projective resolution of the module associated to
each internal vertex, tensoring the results together according to the
edges, and then taking homotopy classes of \dg module
homomorphisms. Tensor product of objects is formal: it is just given
by composition of trees. This tensor product is automatically
associative. The identity elements correspond to the tree with two
leaves and no internal vertices. It is straightforward to verify that
this extends to a strictly associative tensor product of morphisms as
well. For each tuple $A_1,\dots,A_n,B$, taking projective resolutions
and then tensoring according to the tree gives a functor from this
derived category to the usual derived category of
$(A_1\otimes_\ZZ\cdots\otimes_\ZZ A_n,B)$-bimodules. This map is fully
faithful and essentially surjective (i.e., an equivalence) by definition.

Fix this or any other model for the derived category of multimodules,
with a strictly associative, unital derived tensor product. Then, the
target of the arc algebra multifunctor is the following:
\begin{definition}\label{def:BimCat}
  Let $\BimCat$ be the multicategory enriched in categories with
  \begin{enumerate}
  \item Objects finite, graded linear categories in which each
    morphism space is free as a $\ZZ$-module.
  \item Morphisms
    $\Hom_\BimCat(A_1,\dots,A_n;B)=\BimCat(A_1,\dots,A_n;B)$ the
    derived category of graded multi-modules over $A_1,\dots,A_n$ and $B$.
  \item Multi-composition of morphisms given by the derived tensor
    product of multi-modules.
  \end{enumerate}
\end{definition}

\begin{lemma}
  The definitions above make $\BimCat$ into a multicategory.
\end{lemma}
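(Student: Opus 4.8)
The plan is to check the axioms of a category-enriched multicategory directly, leaning on the model of the derived category of multimodules fixed just before Definition~\ref{def:BimCat}, in which the derived tensor product is strictly associative and strictly unital. What must be verified is: for each object $A$ a unit object $\Id_A\in\BimCat(A;A)$; for each composable configuration (an outer morphism together with $n$ inner morphisms feeding its $n$ inputs) a multi-composition \emph{functor} out of the product of the relevant morphism categories; strict associativity and strict unitality of these multi-compositions; and, if one wishes to record the symmetric structure, equivariance under reordering inputs, inherited from the symmetry of $\otimes_\ZZ$.

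For the unit I would take $\Id_A$ to be the diagonal bimodule $A$, viewed in the model as the labeled tree with a single edge and no internal vertex. Multi-composition of objects is, by construction of the model, grafting of trees with the vertex-labels carried along, so it is strictly associative and has $\Id_A$ as a strict two-sided unit; moreover the result of grafting is again a graded multimodule over objects of $\BimCat$, since tensor products over $\ZZ$ of finite graded linear categories with free hom-spaces are again of this form. This disposes of all the axioms at the level of objects of the morphism categories. The one substantive point is that multi-composition is a functor of the product of the morphism categories. Here a morphism of the model between two labeled trees is, after applying the fixed projective resolutions and tensoring along the tree, a homotopy class of dg multimodule maps; grafting such data and tensoring the underlying chain maps over the intermediate algebras sends chain maps to chain maps and chain homotopies to chain homotopies, because the chosen resolutions are degreewise free, hence flat, over $\ZZ$---this is where we use that the hom-spaces of objects of $\BimCat$ are free $\ZZ$-modules. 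Thus multi-composition is well defined on homotopy classes, preserves identities and composition of $2$-morphisms, and, again by strictness of the underlying tensor product in the model, is strictly associative and unital. Compatibility with the grading is immediate, since tensor products of graded multimodules are graded and the resolutions can be chosen in the graded category.

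I expect the only mildly delicate part to be the bookkeeping: verifying that the ``grafting of trees'' description really makes every multicategory coherence hold on the nose rather than up to coherent isomorphism. Since the tree model was introduced precisely to make this true, the verification amounts to unwinding the definitions, and is routine if somewhat tedious.
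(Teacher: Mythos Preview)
Your proposal is correct and follows the same approach as the paper: both rely on the strictly associative, strictly unital model for the derived tensor product fixed just before Definition~\ref{def:BimCat}. The paper's proof is a single sentence (``This is immediate from our hypotheses on the derived tensor product''), whereas you have unpacked in detail what that immediacy amounts to.
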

\begin{proof}
  This is immediate from our hypotheses on the derived tensor product.
\end{proof}

\begin{definition}
  The \emph{projectivization} of $\BimCat$ is the result of
  quotienting each set of $2$-morphisms by the relation $f\sim -f$. (Note that
  multi-composition respects this equivalence relation. Also
  that, after quotienting, the 2-morphism sets are no longer abelian groups.) A
  \emph{projective functor} to $\BimCat$ is a functor to the
  projectivization of $\BimCat$.
\end{definition}

\subsubsection{The arc algebra multifunctor}\label{sec:aa-mfunc}
Given an even integer $n$, consider the $n$ points $e^{2\pi i j/(n+1)}$, $j=1,\dots,n$, in
$S^1$. Identifying $S^1\setminus\{1\}$ with $(0,1)$, we can view an
element $a\in\Crossingless{n}$ as a crossingless matching of
$\{e^{2\pi i j/(n+1)}\}$ and hence as a flat tangle in the annulus
$\annulus\subset D^2$, with boundary
$\{e^{2\pi i j/(n+1)}\}$. 
For definiteness, choose this embedding of
$a$ in $\annulus$ to be  disjoint from the line segment
$\{r e^{0i}\mid r\in[1/2,1]\}$.  Abusing notation, we continue to denote this
flat tangle by $a$.
Reflecting $a$ in the radial direction of
the annulus (i.e., reflecting across the mid-circle) gives a flat
tangle $\Wmirror{a}$, with boundary $\{\OneHalf e^{2\pi i j/(n+1)}\}$
(which corresponds, under the embedding, to the previous definition of
$\Wmirror{a}$).
Using the standard homeomorphism
\[
  \annulus\cup_{S^1\times \{1\}\sim S^1\times \{1/2\}}
  \annulus\cong \annulus,
\]
we can view $\Wmirror{a}\amalg a$ as lying in $\annulus$, and the
standard saddle cobordism $\Wmirror{a}\amalg a\to \Id$ as lying in
$[0,1]\times \annulus$. In particular, for any other crossingless
matching $b$, there is an induced cobordism $b\Wmirror{a}a\to b$
inside $\annulus\subset D^2$.

We extend Khovanov's arc algebra modules to associate multi-modules to
diskular tangles. 
Given a diskular $(m_1,\dots,m_k;n)$-tangle and an
integer $P$, as well as crossingless matchings
$a_i\in\Crossingless{m_i}$ and $b\in\Crossingless{n}$, define
\[
  \KhCx(T,P)(a_1,\dots,a_k;b)=\KhCx(\Wmirror{b}\circ T\circ(a_1,\dots,a_k),P)\grs{n/2}{0}.
\]
The right-hand side is the Khovanov complex of a link diagram in
$\RR^2$, with a grading shift, and $\circ$ denotes gluing
tangles. This has an action of $\KhCx(n)$ and $\KhCx(m_i)$ by the
standard saddle cobordisms.

Next, we note that Khovanov's theorem that gluing tangles corresponds
to the tensor product of arc algebra bimodules~\cite[Proposition
13]{Kho-kh-tangles} extends to this setting. Given a diskular
$(m_1,\dots,m_k;n_i)$-tangle $S$ and a diskular
$(n_1,\dots,n_\ell;p)$-tangle $T$, construct a \emph{gluing map}
\[
  \KhCx(T,P_T)\otimes_{\ZZ}\KhCx(S,P_S)\to \KhCx(T\circ_i S,P_S+P_T)
\]
as follows. Given crossingless matchings $(a_1,\dots,a_k)$,
$(b_1,\dots,b_\ell)$, and $c$, and resolutions $S_v$ of $S$ and $T_w$
of $T$, the canonical saddle cobordism $\Wmirror{b_i}\amalg b_i\to
\Id$ gives a cobordism
\begin{equation}\label{eq:gluing1}
  [\Wmirror{c}\circ T_w\circ(b_1,\dots,b_\ell)]\circ_i[\Wmirror{b_i}\circ S_v\circ (a_1,\dots,a_k)]
  \to
  [\Wmirror{c}\circ (T_w\circ_i S_v)\circ (b_1,\dots,b_{i-1},a_1,\dots,a_k,b_{i+1},\dots,b_\ell)].
\end{equation}
The flat tangle on the left of Formula~\eqref{eq:gluing1} is the
disjoint union of the closed 1-manifolds $[\Wmirror{c}\circ
T_w\circ(b_1,\dots,b_\ell)]$ and $[\Wmirror{b_i}\circ S_v\circ
(a_1,\dots,a_k)]$, so
\[
  V([\Wmirror{c}\circ T_w\circ(b_1,\dots,b_\ell)]\circ_i[\Wmirror{b_i}\circ S_v\circ (a_1,\dots,a_k)])=
  V([\Wmirror{c}\circ T_w\circ(b_1,\dots,b_\ell)])\otimes_{\ZZ} V([\Wmirror{b_i}\circ S_v\circ (a_1,\dots,a_k)]).
\]
Hence, applying $V$ to this cobordism gives a map
\begin{multline*}
V([\Wmirror{c}\circ T_w\circ(b_1,\dots,b_\ell)])\otimes_\ZZ  V([\Wmirror{b_i}\circ S_v\circ (a_1,\dots,a_k)])\\
  \to V([\Wmirror{c}\circ (T_w\circ_i S_v)\circ (b_1,\dots,b_{i-1},a_1,\dots,a_k,b_{i+1},\dots,b_\ell)]),
\end{multline*}
shifting the quantum grading by $|b_i|$, the number of arcs in $b_i$
(half the number of endpoints).
Far-commutativity of the saddle maps implies that these gluing maps
commute with the edge maps in the cube of resolutions. Hence, they
induce maps of iterated mapping cones
\begin{multline*}
  \KhCx([\Wmirror{c}\circ T\circ(b_1,\dots,b_\ell)],P_T) \otimes_\ZZ \KhCx([\Wmirror{b_i}\circ S\circ (a_1,\dots,a_k)],P_S)\\
  \to \KhCx([\Wmirror{c}\circ (T\circ_i S)\circ
  (b_1,\dots,b_{i-1},a_1,\dots,a_k,b_{i+1},\dots,b_\ell)],P_S+P_T).
\end{multline*}

\begin{lemma}\label{lem:Kh-gluing}
  This gluing map is a map of
  $(\KhCx(n_1),\dots,\KhCx(n_{i-1}),\KhCx(m_1),\dots,\allowbreak\KhCx(m_k),\KhCx(n_{i+1}),\dots,\allowbreak\KhCx(n_\ell);\KhCx(p))$-multi-modules.
  Further, it descends to an isomorphism
  \[
    \KhCx(T,P_T)\otimes_{\KhCx(n_i)}\KhCx(S,P_S)\to \KhCx(T\circ_i S,P_S+P_T)
  \]
  and hence to a homotopy equivalence
  \[
    \KhCx(S,P_S)\circ_i\KhCx(T,P_T)\to \KhCx(T\circ_i S,P_S+P_T),
  \]
  where the left side is composition in the tangle movie multicategory $\TangMovMulticat$.
\end{lemma}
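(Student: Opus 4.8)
The plan is to follow Khovanov's proof that gluing tangles corresponds to the tensor product of arc-algebra bimodules~\cite[Proposition 13]{Kho-kh-tangles}, adapted to the diskular setting; as noted above, none of the ideas are new, so the real work is bookkeeping the extra boundary circles and the grading shifts. First I would check that the gluing map is a map of multi-modules over the algebras attached to the remaining boundary circles. Both these multi-module structures and the gluing map are induced by composing with canonical saddle cobordisms: the module actions use the saddles near $\bdy D^2$ and near the $\bdy D_j$ with $j\neq i$, while the gluing map uses the interface saddle $\Wmirror{b_i}\amalg b_i\to\Id$ near $\bdy D_i$. These saddles have pairwise disjoint supports, so far-commutativity of cobordisms---already invoked above for the chain-map property---shows that $V$ sends them to commuting homomorphisms, which is precisely the multi-module compatibility. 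The same argument shows the gluing maps commute with the edge maps of both cubes of resolutions, so that both sides of the asserted equivalence are assembled from the cubes of resolutions of $S$ and $T$ by the same iterated-mapping-cone (homotopy colimit) procedure, with the gluing map respecting this cube structure.

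Next I would verify that the gluing map kills the relations defining $\otimes_{\KhCx(n_i)}$, i.e.\ that for $f\in\KhCx(n_i)(b_i,b'_i)$ the images of $f_*(t)\otimes s$ and $t\otimes f^*(s)$ agree. Here $f$ acts by composing with a canonical saddle near $\bdy D_i$---on the $T$-side in the first case, on the $S$-side in the second---and after post-composing with the interface saddle the two resulting composite cobordisms become isotopic rel boundary by a standard neck-sliding isotopy; applying $V$ gives the required equality. Hence the gluing map descends to a map $\KhCx(T,P_T)\otimes_{\KhCx(n_i)}\KhCx(S,P_S)\to\KhCx(T\circ_i S,P_S+P_T)$, and one checks it is grading-preserving by adding up the shifts from Section~\ref{sec:gradings} together with the shift by $|b_i|$ recorded in~\eqref{eq:gluing1}.

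It remains to see this descended map is an isomorphism, and then a homotopy equivalence. Since it respects the cube structures noted above, it suffices to prove it is an isomorphism on each pair of resolutions $(S_v,T_w)$. On such a vertex, Lemma~\ref{lem:Kh-projective} presents $\KhCx(S_v)$, after fixing its inputs $(a_1,\dots,a_k)$, as $V^{\otimes j}\otimes\KhCx(n_i)(b'_i,\cdot)$ for some crossingless matching $b'_i$ and integer $j$; hence $\KhCx(T_w)\otimes_{\KhCx(n_i)}\KhCx(S_v)$ collapses to $V^{\otimes j}$ tensored with the value of $\KhCx(T_w)$ on $b'_i$ in its $i$-th slot, and a direct computation with the TQFT $V$---cutting the merged $1$-manifold along the interface circle---identifies this with $\KhCx(T_w\circ_i S_v)$ capped off as in~\eqref{eq:gluing1}, compatibly with the gluing map. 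Finally, since each $\KhCx(S_v)$ is projective over $\KhCx(n_i)$, the underived tensor product already computes the derived one, which is the composition $\circ_i$ in the statement; the isomorphism thus upgrades to the asserted homotopy equivalence $\KhCx(S,P_S)\circ_i\KhCx(T,P_T)\to\KhCx(T\circ_i S,P_S+P_T)$. I expect the main obstacle to be exactly this vertex-by-vertex identification: aligning Khovanov's projective (``sweet'') decomposition of $\KhCx(S_v)$ with the Frobenius-algebra computation for the interface saddle while keeping all the grading shifts consistent---conceptually a line-by-line translation of~\cite[Proposition 13]{Kho-kh-tangles}, but the kind of translation where signs and shifts are easy to get wrong.
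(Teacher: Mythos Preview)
Your proposal is correct and follows essentially the same route as the paper: multi-module compatibility and descent to the tensor product via far-commutation of the canonical saddles, the isomorphism checked vertex-by-vertex on flat tangles using Khovanov's projective decomposition (your invocation of Lemma~\ref{lem:Kh-projective} is exactly the paper's appeal to~\cite[Theorem 1]{Kho-kh-tangles}), and the upgrade to a homotopy equivalence from projectivity of $\KhCx(S,P_S)$ over $\KhCx(n_i)$. The paper's version is considerably terser, but the argument is the same.
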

\begin{proof}
  That the gluing map respects the multi-module structure
  and that it descends to the tensor product over $\KhCx(n_i)$ follow
  from far-commutation of disjoint saddles: The
  multi-module operations are induced by saddles away from the gluing
  region, while the gluing map is induced by saddles in the gluing
  region, so these commute. Similarly, descending to the tensor product corresponds
  to commuting saddles in different parts of the gluing region. The
  fact that the map is an isomorphism follows from the fact that it is
  an isomorphism for the case of flat diskular tangles (tangles with
  no crossings), which is proved by the argument given by
  Khovanov~\cite[Theorem 1]{Kho-kh-tangles}.  The last statement
  follows from the second and the fact that $\KhCx(S,P_S)$ is a complex of
  projective modules over $\KhCx(n_i)$.
\end{proof}

\begin{definition}\label{def:planar-Kh}
  Define $\KhCx\co \enl{\TangMovMulticat}\to \BimCat$ as follows:
  \begin{enumerate}[label=(\arabic*)]
  \item On an object $n\in2\ZZ$, $\KhCx(n)$ is the Khovanov arc algebra on
    $n$ points.
  \item Given an elementary morphism object ($1$-morphism) $(T,P)$ of
    $\TangMovMulticat$, $\KhCx(T,P)$ is the multi-module defined
    above.
  \item For a general morphism object, which is a formal composition
    of elementary morphism objects, $\KhCx$ is the corresponding composition of its
    value on the elementary morphism objects.
  \item\label{item:KhCx-el-cob} Given an elementary cobordism $\Sigma$ from $(T_0,P_0)$ to
    $(T_1,P_1)$, the map $\KhCx(\Sigma)\co \KhCx(T_0,P_0)\to
    \KhCx(T_1,P_1)\grs{\chi'(\Sigma)}{0}$ is defined in the expected
    way. 
    That is:
    \begin{enumerate}
    \item If $\Sigma$ is a planar isotopy $\Phi_t$ then $\KhCx(\Sigma)$ is the
      isomorphism obtained by applying $\Phi_1$ to each resolution.
    \item If $\Sigma$ is a Reidemeister move then $\KhCx(\Sigma)$ is
      the quasi-isomorphism coming from Khovanov's proof of invariance
      of Khovanov homology for tangles~\cite[Section 4]{Kho-kh-tangles}.
    \item If $\Sigma$ is a birth then $\KhCx(\Sigma)$ is the inclusion
      induced by labeling the new circle by the unit, and if $\Sigma$ is
      a death then $\KhCx(\Sigma)$ is the projection induced by
      applying the counit to the disappearing circle.
    \item If $\Sigma$ is a planar saddle then $\KhCx(\Sigma)$ is the
      result of applying a merge or split map to each resolution.
    \end{enumerate}
  \item\label{item:KhCx-Id} For the morphism morphism $\tId$ from
    Lemma~\ref{lem:tId} from the formal tree composition of elementary
    morphisms to the honest composition, $\KhCx(\tId)$ is the gluing
    quasi-isomorphism from Lemma~\ref{lem:Kh-gluing}.
  \item On a general morphism morphism, $\KhCx$ is induced from
    points~\ref{item:KhCx-el-cob} and~\ref{item:KhCx-Id} via
    Lemma~\ref{lem:tId}.
  \end{enumerate}
\end{definition}

The planar composition property of the arc algebra modules is
contained in the following:
\begin{proposition}\label{prop:Kh-movie-multifunc}
  Definition~\ref{def:planar-Kh} defines a projective multifunctor.
\end{proposition}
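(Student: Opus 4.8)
The plan is to verify, in turn, that $\KhCx$ is well-defined and strictly compatible with all of the structure of a multifunctor: on objects, on morphism objects, on morphism morphisms, and on the grading shifts. On objects there is nothing to check. On morphism objects, an element of $\enl{\TangMovMulticat}(m_1,\dots,m_k;n)$ is a tree whose vertices are labeled by elementary morphism objects $(T,P)$, and $\KhCx$ of such a tree is the corresponding iterated derived tensor product of the multi-modules $\KhCx(T,P)$; since the derived tensor product in $\BimCat$ has been arranged to be strictly associative and unital, and composition of morphism objects in $\enl{\TangMovMulticat}$ is grafting of trees, $\KhCx$ visibly respects composition of $1$-morphisms. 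I would also invoke \Lemma{Kh-gluing} here, to see that this ``formal'' value of $\KhCx$ on a composite agrees with its value on the underlying composite diskular tangle, so the two descriptions are consistent.

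The real content is the behavior on morphism morphisms. By \Lemma{tId}, every $2$-morphism of $\enl{\TangMovMulticat}$ factors as $\tId^{-1}\circ\alpha'\circ\tId$, where $\alpha'$ is a $2$-morphism of $\TangMovMulticat$, i.e. a sequence of elementary cobordisms, and $\KhCx(\tId)$ is the gluing quasi-isomorphism of \Lemma{Kh-gluing}, which is invertible in the derived category. So it suffices to show that the assignment $\alpha'\mapsto\KhCx(\alpha')$, defined on a sequence of elementary cobordisms as the composite of the individual maps from \Definition{planar-Kh}, descends through the relations~\ref{item:disj-support}--\ref{item:iso-Reid} imposed in $\TangMovMulticat$, and that $\KhCx(\tId)$ behaves coherently under vertical and horizontal composition. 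For relation~\ref{item:disj-support}: each elementary-cobordism map is \emph{local}---a reparametrization of resolutions by a planar isotopy, a merge/split/unit/counit applied inside a fixed disk, or one of Khovanov's Reidemeister quasi-isomorphisms, which is likewise the identity outside a fixed disk---so two such maps with disjoint supports commute by the monoidality (``far-commutativity'') of the TQFT $V$ together with the locality of Khovanov's Reidemeister maps. Relation~\ref{item:isoiso} is immediate, since the map of a planar isotopy $\Phi_t$ depends on $\Phi$ only through the induced permutation of the components of each resolution, which is trivial for isotopic isotopies; relation~\ref{item:iso-comp} is the functoriality of ``apply a diffeomorphism of $D^2$''. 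Relation~\ref{item:iso-Reid} holds because Khovanov's Reidemeister and Morse maps are natural with respect to the model embeddings $\phi_D$, so conjugating such a map by an ambient isotopy produces exactly the map associated to the isotoped move. Compatibility with vertical composition is built into the definition, and the $\tId$'s cancel correctly in a composite again by \Lemma{tId}; compatibility with multi-composition amounts to the statement that $\KhCx(\tId)$ intertwines tensor products of elementary-cobordism maps, which is once more the far-commutativity of disjoint saddles---the gluing maps being induced by saddles supported in the gluing collar and the elementary-cobordism maps by moves in the complementary region.

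Finally I would dispatch the gradings: $\chi'$ is additive under composition of elementary cobordisms by \Definition{cobordism}, so the shift $\grs{\chi'(\Sigma)}{0}$ attached to a composite equals the composite of the shifts, and the gluing quasi-isomorphisms $\KhCx(\tId)$ are grading-preserving by the conventions of \Section{gradings}; hence $\KhCx$ respects all the grading data. Because the construction involves genuine choices---in the homotopy colimits defining $\KhCx(T,P)$ and, through \Lemma{Kh-gluing}, in the comparison maps---the relations above can only be expected to hold up to sign, which is why the output is a functor to the \emph{projectivization} of $\BimCat$ rather than to $\BimCat$ itself. None of this is deep; as noted above, the ideas are all standard. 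The one point I would flag as the crux is the ``locality and naturality'' package behind relations~\ref{item:disj-support} and~\ref{item:iso-Reid}---that Khovanov's Reidemeister maps are genuinely supported in a disk and genuinely natural under the embedding diffeomorphisms, and that disjoint cobordism maps far-commute on the nose---since this is precisely the package that fails to lift to the spectral setting (where the bimodule of the identity braid acquires extra automorphisms) and that motivates the localized argument of the later sections.
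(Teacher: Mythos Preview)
Your proof is essentially the paper's own, organized around the same checklist: respect for multi-composition of $1$-morphisms, invertibility of $\KhCx(\tId)$, invariance under the relations~\ref{item:disj-support}--\ref{item:iso-Reid}, and compatibility with horizontal and vertical composition of $2$-morphisms. There is one omission. For multi-composition of $2$-morphisms the paper reduces to \emph{two} basic commutative squares: the one you address (the gluing map $\KhCx(\tId)$ intertwines with an elementary-cobordism map on one tensor factor), and the \emph{associativity} square for the gluing map itself, i.e.\ that the two ways of collapsing a three-level tree via successive $\KhCx(\tId)$'s agree. You invoke \Lemma{tId} to say ``the $\tId$'s cancel correctly,'' but that lemma is purely formal and says nothing about $\KhCx(\tId)$; the associativity of the gluing quasi-isomorphisms is an honest check (again by far-commutation of disjoint saddles, since the two gluing regions are disjoint), and without it neither $\KhCx(\tId)$ for a multi-level tree nor the functoriality under horizontal composition is well-defined.

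Your closing paragraph contains two misdiagnoses. First, the ``projective'' in the statement is not forced by sign ambiguities in the homotopy colimit or the comparison maps at this stage; the paper's argument shows the relevant diagrams commute on the nose via far-commutation, and the projectivization only becomes essential later, when descending through the movie-move relations to $\TangMulticat$. Second, and more importantly, the ``locality and naturality'' package you flag does \emph{not} fail in the spectral setting: the paper proves the exact spectral analogue of this proposition (\Proposition{Kh-space-movie-multifunc}) by the same argument. What fails spectrally is Khovanov's \emph{subsequent} step---using the Hochschild cohomology computation $\HH^{0,0}(\KhCx(n))\cong\ZZ$ to pin down the maps through the Type II movie moves---because $\pi_0\THH^0(\KhSpace(n))$ has extra units. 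That obstruction is unrelated to the present proposition.
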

\begin{proof}
  We must verify:
  \begin{enumerate}[label=(PMF-\arabic*)]
  \item\label{item:Kh-mm-1} $\KhCx$ respects multi-composition of
    morphism objects.
  \item\label{item:Kh-mm-2} $\KhCx(\tId)$ is invertible. (This is
    needed since invertibility of $\KhCx(\tId)$ is used to define
    $\KhCx$ of arbitrary morphism morphisms.)
  \item\label{item:Kh-mm-2b} $\KhCx$ respects the equivalence relation we imposed on morphism morphisms.
  \item\label{item:Kh-mm-3} $\KhCx$ respects multi-composition
    of morphism morphisms.
  % \item\label{item:Kh-mm-4} $\KhCx$ respects the far-commutation
  %   relation that we imposed on elementary cobordisms.
  \item\label{item:Kh-mm-5} $\KhCx$ respects $2$-composition of
    morphism morphisms.
  \end{enumerate}

  Point~\ref{item:Kh-mm-1} is immediate from the definitions.

  Point~\ref{item:Kh-mm-2} follows from Lemma~\ref{lem:Kh-gluing}.

  For Point~\ref{item:Kh-mm-2b}, invariance of $\KhCx$ under
  type~\ref{item:disj-support},~\ref{item:isoiso} and~\ref{item:iso-comp} moves is obvious. Invariance
  under type~\ref{item:iso-Reid} moves follows from the definitions of the
  Reidemeister, birth, death, and saddle maps: none of these maps depend on
  the location of the tangle in the plane.
  
  For Point~\ref{item:Kh-mm-3}, we need to check two basic cases: that
  the gluing map $\KhCx(\tId)$ is associative, in the sense that given
  three tangles $R,S,T$ and integers $P_R,P_S,P_T$, the
  following diagram commutes
  \begin{equation}\label{eq:tId-tId}
    \mathcenter{
    \begin{tikzpicture}
      \node at (0,0) (tl) {$\begin{array}{l}\KhCx(T,P_T)\circ_i \bigl(\KhCx(S,P_S)\circ_{j'} \KhCx(R,P_R)\bigr)\\
          \qquad=\bigl(\KhCx(T,P_T)\circ_i \KhCx(S,P_S)\bigr)\circ_j \KhCx(R,P_R)\end{array}$};
      \node at (8,0) (tr) {$\KhCx(T,P_T)\circ_i\KhCx(S\circ_{j'}R,P_R+P_S)$};
      \node at (0,-2) (bl) {$\KhCx(T\circ_i S,P_S+P_T)\circ_j \KhCx(R,P_R)$};
      \node at (8,-2) (br) {$\begin{array}{l}\KhCx\bigl(T\circ_i(S\circ_{j'} R),P_R+P_S+P_T\bigr)\\\qquad=\KhCx\bigl((T\circ_i S)\circ_j R,P_R+P_S+P_T\bigr)\end{array}$};
      \draw[->] (tl) to node[above]{\lab{\Id_{\KhCx(T,P_T)}\otimes \KhCx(\tId)}} (tr);
      \draw[->] (tl) to node[left]{\lab{\KhCx(\tId)\otimes \Id_{\KhCx(R,P_R)}}} (bl);
      \draw[->] (tr) to node[right]{\lab{\KhCx(\tId)}} (br);
      \draw[->] (bl) to node[below]{\lab{\KhCx(\tId)}} (br);
    \end{tikzpicture}}
  \end{equation}
  and that the gluing map commutes with the maps associated to
  elementary cobordisms, in the sense that given tangles $R,S,T$ and
  an elementary cobordism $\Sigma$ from $R$ to $S$, the following
  diagram and its analogue where $T$ is pre-composed instead of post-composed commute
  \begin{equation}\label{eq:tId-Sigma}
    \mathcenter{
      \xymatrix{
        \KhCx(T,P_T)\circ_i\KhCx(R,P_R)\ar[r]^-{\KhCx(\tId)}\ar[d]_{\Id_{\KhCx(T,P_T)}\otimes \KhCx(\Sigma)}
        & \KhCx(T\circ_i R,P_T+P_R)\ar[d]^{\KhCx(\Id\circ_i\Sigma)}\\
        \KhCx(T,P_T)\circ_i\KhCx(S,P_S)\grs{\chi'(\Sigma)}{0}\ar[r]_-{\KhCx(\tId)} & \KhCx(T\circ_i S)\grs{\chi'(\Sigma)}{0}.
      }}
  \end{equation}
  Commutativity of Diagram~\eqref{eq:tId-tId} follows from
  far-commutativity of the saddle maps.  Commutativity of
  Diagram~\eqref{eq:tId-Sigma} is immediate from the local nature of
  the definition of $\KhCx(\Sigma)$.
  
  % Point~\ref{item:Kh-mm-4} is immediate from Point~\ref{item:Kh-mm-3}.
  
  For Point~\ref{item:Kh-mm-5}, it suffices to prove the result for
  morphisms between trees with a single internal vertex, i.e.,
  cobordisms between tangles in $\TangMovMulticat$. This is then
  immediate from the construction of $\KhCx$, as the composition of
  its value on the elementary cobordisms.
\end{proof}

\begin{remark}\label{rem:simple-aa}
  Since the arc algebra multi-modules are projective over $\KhCx(n)$
  and the maps associated to births, deaths, and Reidemeister moves
  are homotopy equivalences rather than just quasi-isomorphisms, we
  do not need to include taking resolutions in the composition maps
  for the target of
  $\KhCx$. That is, we could define the multi-composition to be the
  ordinary tensor product of multi-modules, and
  2-morphisms to be homotopy classes of chain maps of
  multi-modules. In the spectral case, we do not have an analogue of
  this stricter approach.
\end{remark}

\subsection{Spectral refinements}\label{sec:planar-spectral}
The target category for the spectral Khovanov multifunctor is the
spectral analogue of $\BimCat$. First, given spectral algebras
or categories $\sA_1,\dots,\sA_n$ and $\sB$ there is a notion of a
\emph{spectral multi-module} over $\sA_1,\dots,\sA_n$ and $\sB$: a functor
$(\sA_1\times\cdots\times \sA_n)^\op\times \sB\to\Spectra$ or,
equivalently, a spectrum with commuting actions of
$\sA_1,\dots,\sA_n,\sB$.  (This is a simple extension of the notion of
a bimodule from, e.g.,~\cite[Section 2]{BM-top-spectral}.) For each
$\sA_1,\dots,\sA_n$ and $\sB$, choose a cofibrant replacement functor
(the analogue of a functorial projective resolution) for the category
of spectral multi-modules. Using this, define a derived category of spectral
multi-modules with a strictly associative tensor (or smash) product as
in Section~\ref{sec:alg-target}.
Then, the target multi-category is the following adaptation of
Definition~\ref{def:BimCat}:
\begin{definition}\label{def:SBimCat}
  Let $\SBimCat$ be the multicategory enriched in spectral categories with
  \begin{enumerate}
  \item Objects finite, graded spectral categories.
  \item Multi-morphisms $\SBimCat(\sA_1,\dots,\sA_n;\sB)$ given by the derived
    category of
    multi-modules over $\sA_1,\cdots, \sA_n$ and $\sB$.
  \item Multi-composition given by the derived smash product.
  \end{enumerate}
\end{definition}

\begin{lemma}
  The definitions above make $\SBimCat$ into a multicategory.
\end{lemma}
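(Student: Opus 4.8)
The plan is to run, in the spectral setting, exactly the argument used for the analogous statement about $\BimCat$. Unwinding the definition, a multicategory enriched in spectral categories consists of: an object set; for each tuple $(\sA_1,\dots,\sA_n;\sB)$ a spectral category $\SBimCat(\sA_1,\dots,\sA_n;\sB)$; multi-composition functors assembling these categories along trees; a unit object in each $\SBimCat(\sA;\sA)$; and the associativity, unit, and symmetric-group equivariance axioms. For $\SBimCat$ every piece of this data is supplied directly by the chosen model for the derived category of spectral multi-modules described just before Definition~\ref{def:SBimCat}, in complete parallel with \Section{alg-target}: objects are labeled planar rooted trees, multi-composition is grafting of trees, the unit is the two-leaf tree with no internal vertex, and --- because the derived smash product in that model was arranged to be \emph{strictly} associative and strictly unital --- the associativity and unit axioms hold on the nose. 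Equivariance is likewise immediate, since relabeling the leaves of a tree commutes with grafting and with the symmetry of the smash product.

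Thus the genuine content of the lemma lies in the two properties on which that model rests, which I would invoke rather than reprove. First, inserting the chosen cofibrant replacements at the internal vertices of a tree and smashing along the edges computes the \emph{derived} smash product, so that the tree category is equivalent to the usual homotopy category of multi-modules over the spectral algebras labeling the leaf-edges; this is the spectral analogue of the statement that a projective resolution computes $\Tor$, and it holds because the cofibrant replacement functor takes values in multi-modules that are cofibrant in each variable separately (compare Blumberg-Mandell~\cite[Section 2]{BM-top-spectral}). Second, the assignment ``resolve at the internal vertices, smash along the edges'' is a strictly associative, unital composition, which is formal --- precisely as in the algebraic case. Along the way one checks that ``finite, graded spectral category'' and the various grading shifts are preserved by these operations, so that everything stays inside $\SBimCat$.

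I expect no essential obstacle: the only real work is the bookkeeping that grafting of trees is strictly associative and that the two-leaf tree is a strict two-sided unit, which is word for word the algebraic argument, together with the citation to the Blumberg-Mandell machinery that guarantees the tree model computes the correct derived category. If any single point needs care it is the compatibility of multi-composition with permutations of the inputs, but this too reduces to the obvious equivariance of tree grafting and of the smash product of spectra.
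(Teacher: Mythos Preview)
Your proposal is correct and follows essentially the same approach the paper intends: the paper's own proof is simply ``The proof is left to the reader,'' and the expected argument is exactly the one you give --- namely, that the strictly associative and unital model for the derived smash product (set up just before Definition~\ref{def:SBimCat}, in parallel with Section~\ref{sec:alg-target}) makes all the multicategory axioms immediate. If anything, you have supplied more detail than the paper does.
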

\begin{proof}
  The proof is left to the reader.
\end{proof}

The goal of this section is to define a spectral Khovanov multifunctor
$\KhSpace\co \TangMovMulticat \to \SBimCat$.
We start constructing $\KhSpace$ by
defining it on objects of $\TangMovMulticat$, i.e., on pairs $(T,P)$
of a diskular $(m_1,\dots,m_k;n)$-tangle $T$ with $N$ crossings and an
integer $P$. The construction is essentially the same as for $(m,n)$-tangles in our
previous paper~\cite{LLS-kh-tangles} (see also
Section~\ref{sec:spec-arc-alg}).\phantomsection\label{not:big-tang-shape}
There is a \emph{tangle shape multicategory}
$\SmTshape{m_1,\dots,m_k}{n}$ with an object
$(a_1,\dots,a_k;a'_1,\dots,a'_k)$ for each pair of $k$-tuples of
crossingless matchings $a_i,a'_i\in\Crossingless{m_i}$, an object
$(b,b')$ for each pair of crossingless matchings
$b,b'\in\Crossingless{n}$, and an object $(a_1,\dots,a_k,T,b)$ for a
tuple of crossingless matchings $a_i\in \Crossingless{m_i}$ and
$b\in\Crossingless{n}$. Let $\vec{a}$ denote a $k$-tuple of
crossingless matchings $a_i\in \Crossingless{m_i}$. The multicategory $\SmTshape{m_1,\dots,m_i}{n}$ has a unique morphism of each of the following forms:
\begin{align*}
  (\vec{a}^1,\vec{a}^2),(\vec{a}^2,\vec{a}^3),\dots,(\vec{a}^{\alpha-1},\vec{a}^\alpha)&\to(\vec{a}^1,\vec{a}^\alpha)\\
  (b_1,b_2),(b_2,b_3),\dots,(b_{\beta-1},b_\beta)&\to(b_1,b_\beta)\\
  (\vec{a}^1,\vec{a}^2),\dots,(\vec{a}^{\alpha-1},\vec{a}^\alpha),(\vec{a}^\alpha,T,b_1),(b_1,b_2),\dots,(b_{\beta-1},b_\beta)&\to (\vec{a}^1,T,b_\beta).
\end{align*}
There is an associated multicategory
$\CCat{\Crossings}\ttimes \mTshape{m_1,\dots,m_k}{n}$ enriched in
groupoids~\cite[Section 3.2.4]{LLS-kh-tangles}.

Recall that we introduced a category of divided cobordisms, in
Definition~\ref{def:CobD}. To construct the tangle invariants, we will take the
quotient of this category by certain diffeomorphisms:
\begin{definition}\label{def:CobD-annulus}
  The \emph{divided cobordism category of the annulus}, $\CobD(\annulus)$, is the
  result of quotienting the divided cobordism category from
  Definition~\ref{def:CobD} by radial rescaling. That is, identifying $\annulus$
  with $[1/2,1]\times S^1$, we declare two objects of $\CobD(\annulus)$ to be
  equal if they differ by an orientation-preserving diffeomorphism of $[1/2,1]$
  which is the identity near $\{1/2,1\}$, and declare two morphisms to be equal
  if they differ by a diffeomorphism of $[0,1]\times [1/2,1]$ which is invariant
  in the $[0,1]$-direction near $\{0,1\}\times[1/2,1]$ and is the identity near
  $[0,1]\times\{1/2,1\}$. Composition descends to this quotient in an obvious
  way.

  Given a finite collection of disjoint disks $\{D_i\}\subset D^2$, define
  $\CobD(D^2\setminus\bigcup_i D_i)$ as follows. Glue the annulus $\annulus$ to
  each boundary component $\bdy D_i$ by using the maps $\phi_{D_i}$, and glue
  the annulus $\{z\in \CC\mid 1\leq |z|\leq 2\}$ to $\bdy D^2$. The result is a
  region $V\subset \CC$ containing $(D^2\setminus\bigcup_i D_i)$ in its
  interior. Then $\CobD(D^2\setminus\bigcup_i D_i)$ is $\CobD(V)$ modulo radial
  rescaling of each of the annuli we glued in.

  There are associative multi-composition maps
  \begin{align}
    \CobD(\annulus)\times\cdots\times\CobD(\annulus)\times\CobD(D^2\setminus\bigcup_i D_i)&\to \CobD(D^2\setminus\bigcup_i D_i)\label{eq:tmcomp-1}\\
    (Y_1,\dots,Y_k,Z)&\mapsto Z\circ(Y_1,\dots,Y_k)\nonumber\\
    \intertext{and}\nonumber\\
    \CobD(D^2\setminus\bigcup_i D_i)\times \CobD(\annulus)&\to\CobD(D^2\setminus\bigcup_i D_i)\label{eq:tmcomp-2}\\
    (Z,Y)\mapsto Y\circ Z.\nonumber
  \end{align}

  We can arrange the data of $\CobD(\annulus)$ and
  $\CobD(D^2\setminus\bigcup_i D_i)$ into a multicategory with objects
  \[
    \Ob\bigl(\CobD(\annulus)\times\cdots\times \CobD(\annulus)\bigr)\amalg \Ob\bigl(\CobD(D^2\setminus\bigcup_i D_i)\bigr)\amalg \Ob\bigl(\CobD(\annulus)\bigr)
  \]
  and three types of multi-morphisms, analogous to the three cases in
  $\SmTshape{m_1,\dots,m_i}{n}$, but using the composition maps from
  Formulas~\eqref{eq:tmcomp-1} and~\eqref{eq:tmcomp-2}. As in the
  usual divided cobordism category, there is a unique 2-morphism
  between isotopic morphism objects (divided cobordisms). We will abuse
  notation and denote this multicategory $\CobD(D^2\setminus\bigcup_i D_i)$.
\end{definition}

The crossing
change cobordisms and canonical saddle cobordisms induce a multifunctor
\[
  \CCat{\Crossings}\ttimes \mTshape{m_1,\dots,m_k}{n}\to
  \CobD(D^2\setminus\bigcup_i D_i).
\]
(As mentioned earlier, to define
this multifunctor one needs to choose \emph{pox} on the tangle, as
in~\cite[Definition 3.10]{LLS-kh-tangles}. The functor is, however,
independent of the choice of pox.)
Composing with the Khovanov-Burnside functor gives a multifunctor
$\mTinvNF{T}\co \CCat{\Crossings}\ttimes \mTshape{m_1,\dots,m_k}{n}\to
\mBurnside$. Applying Elmendorf-Mandell's
$K$-theory~\cite{EM-top-machine} and then rectifying gives a
multifunctor\phantomsection\label{not:KhSpaceT2}
\[
  \CCat{\Crossings}\times \SmTshape{m_1,\dots,m_k}{n}\to\Spectra.
\]
Desuspending $P$ times and taking iterated mapping cones gives a functor
$\SmTshape{m_1,\dots,m_k}{n}\to\Spectra$. This functor can be
reinterpreted (analogously to~\cite{LLS-kh-tangles}) as a spectral
multi-module $\KhSpace(T,P)$. The constructions decompose along
quantum gradings, so
\begin{align*}
  \KhSpace(n) &= \bigvee_j \KhSpace^j(n)\\
  \KhSpace(T,P)&=\bigvee_j \KhSpace^j(T,P).
\end{align*}
Here, we use the same quantum grading shifts as in the combinatorial case (Sections~\ref{sec:gradings} and~\ref{sec:aa-mfunc}), so that
\[
  C_i(\KhSpace^j(T,P))\simeq \KhCx_{i,j}(T,P)
\]
The quantum grading shift is formal, just changing the indexing in the
decomposition along quantum gradings. 

The next step in constructing the multifunctor $\KhSpace$ is to define
the maps associated to elementary cobordisms.

Consider the model diskular tangles $T_0$ and $T_1$ for a
Reidemeister move. For an R1 move, these are $2$-tangles,
for an R2 move, these are $4$-tangles, and for an
R3 move these are $6$-tangles. Let $p=1$ for a
R1 move introducing a positive crossing or a R2 move, and $0$ otherwise. In our previous paper~\cite[Proof of
Theorem 4]{LLS-kh-tangles}, we associated a zig-zag of weak
equivalences between $\KhSpace(T_0,P)$ and $\KhSpace(T_1,P+p)$. From
the definition of the derived category
this zig-zag gives an equivalence
$\KhSpace(T_0,P)\to \KhSpace(T_1,P+p)$. (It follows from
Lemma~\ref{lem:spec-bridges} below that this equivalence is, in fact,
unique up to sign.)

\phantomsection\label{not:KhSpSigma}
Given any two diskular tangles $(T,P)$ and $(T',P')$ related by a Reidemeister
move $\Sigma$, tensoring the equivalence from the previous paragraph
with the identity map of the multi-module associated to the rest of
the diskular tangle gives a map
$\KhSpace(\Sigma)\co\KhSpace(T,P)\to\KhSpace(T',P')$.

Similarly, if $U$ is an unknot diagram (with no crossings) and
$\Sigma\co \emptyset\to U$ is the birth cobordism then we 
associate to $\Sigma$ the inclusion
\[
  \KhSpace(\Sigma)\co \KhSpace(\emptyset,0)=\SphereS\into \SphereS\{-1\}\vee \SphereS\{1\}=\KhSpace(U,0)
\]
of the summand $\SphereS\{-1\}$, where the number inside braces indicates the quantum
grading. This shifts the quantum grading down by $1$. If
$\Sigma'\co U\to\emptyset$ is the death cobordism then we associate to $\Sigma'$
the projection
\[
  \KhSpace(\Sigma')\co \KhSpace(U,0)=\SphereS\{-1\}\vee \SphereS\{1\}\onto\SphereS=\KhSpace(\emptyset,0),
\]
to the summand $\SphereS\{1\}$. Again, this decreases the quantum grading by $1$. (These definitions are exactly
analogous to Khovanov homology, and are special cases of the maps
associated to elementary cobordisms of links in our previous
paper~\cite{LS-rasmus}.) As in the case of Reidemeister moves, these
extend to births or deaths of unknots in arbitrary diskular tangles,
by taking the tensor product with the identity map on the rest of the
tangle.

Let $T$ be a diskular $4$-tangle with no closed components and a single
crossing. There is an associated multifunctor
$\mTinvNF{T}\co \CCat{1}\ttimes\mTshape{}{4}\to \mBurnside$~\cite[Section
3.5]{LLS-kh-tangles}. If $T_0$ and $T_1$ denote the $0$- and $1$-resolutions
of $T$, respectively, then $\mTinvNF{T_1}$ is (isomorphic to) an insular
subfunctor of $\mTinvNF{T}$ with corresponding quotient functor (isomorphic
to) $\mTinvNF{T_0}$~\cite[Definition 3.29]{LLS-kh-tangles}. Applying
$K$-theory and rectifying, this gives a cofibration sequence
\[
  \KhSpace^j(T_1,P)\to \KhSpace^{j+1}(T,P)\to\Sigma\KhSpace^{j-1}(T_0,P).
\]
(This also uses the fact that naturally isomorphic functors give
equivalent modules; see~\cite[Proof of Proposition
4.7]{LLS-kh-tangles}.)  The Puppe construction gives a map
$\Sigma\KhSpace^{j-1}(T_0,P)\to \Sigma\KhSpace^j(T_1,P)$. This is the cobordism map
associated to a basic saddle. For a saddle in a general link diagram,
the associated map is the tensor product of this map with the identity
map of the rest of the diagram.

The last ingredients in constructing the spectral Khovanov multifunctor
are the gluing equivalences. These are defined using the analogue of
the gluing multicategory~\cite[Section 5]{LLS-kh-tangles}. Fix even
integers $\vec{m}=(m_1,\dots,m_k)$, $\vec{n}=(n_1,\dots,n_\ell)$, and
$p$, and an integer $j$ with $1\leq j\leq \ell$.\phantomsection\label{not:mGlue-2}
The \emph{tangle
  gluing multicategory} $\mGlueS{\vec{m}}{\vec{n}}{p}$ has five kinds
of objects:
\begin{itemize}
\item Pairs $(\vec{a}^1,\vec{a}^2)$ where
  $a^1_i,a^2_i\in\Crossingless{m_i}$.
\item Pairs $(\vec{b}^1,\vec{b}^2)$ where
  $b^1_i,b^2_i\in\Crossingless{n_i}$.
\item Pairs $(c^1,c^2)$ where $c^1,c^2\in\Crossingless{p}$.
\item Triples $(\vec{a},S,b)$ where $a_i\in\Crossingless{m_i}$,
  $b\in\Crossingless{n_j}$, and $S$ is a placeholder.
\item Triples $(\vec{b},T,c)$ where $b_i\in\Crossingless{n_i}$,
  $c\in\Crossingless{p}$, and $T$ is a placeholder.
\item Quadruples $(\vec{a},S,\vec{b},T,c)$ where
  $a_i\in\Crossingless{m_i}$, $b_i\in\Crossingless{n_i}$,
  $c\in\Crossingless{p}$, and $S$ and $T$ are placeholders.
\end{itemize}
The tangle shape multicategories $\SmTshape{\vec{m}}{n_j}$ and
$\SmTshape{\vec{n}}{p}$ are full subcategories of
$\mGlueS{\vec{m}}{\vec{n}}{p}$. There is also a unique multi-morphism
\begin{multline*}
  \bigl((\vec{a}^1,\vec{a}^2),\cdots,(\vec{a}^{\alpha-1},\vec{a}^\alpha),(\vec{b}^1,\vec{b}^2),\cdots,(\vec{b}^{\beta-1},\vec{b}^\beta),(\vec{a}^\alpha,S,\vec{b}^\beta,T,c^1),(c^1,c^2),\cdots,(c^{\gamma-1},c^\gamma)\bigr)
  \\
  \to(\vec{a}^1,S,\vec{b}^1,T,c^\gamma).
\end{multline*}
Given a finite set $\Crossings$, there is a groupoid-enriched product
$\CCat{\Crossings}\ttimes \mGlue{\vec{m}}{\vec{n}}{p}$ (a trivial
adaptation of the construction in~\cite[Section 5]{LLS-kh-tangles}).
Given 1-morphisms $(S,P_S)$ and $(T,P_T)$ of $\TangMovMulticat$, where $S$
is a diskular $(m_1,\dots,m_k;n_j)$-tangle and $T$ is a diskular
$(n_1,\dots,n_\ell;p)$-tangle, if we let
$\Crossings=\Crossings(S)\cup\Crossings(T)$ denote the set of
crossings of $S\cup T$, then there is a functor 
\[
  \CCat{\Crossings}\ttimes \mGlue{\vec{m}}{\vec{n}}{p}\to \CobD
\]
induced by the canonical saddle cobordisms
$\Wmirror{a_i^\alpha}\amalg a_i^\alpha\to \Id$,
$\Wmirror{b_i^\beta}\amalg b_i^\beta\to \Id$, $\Wmirror{c_i}\amalg c_i\to\Id$,
and the saddles between different resolutions of $S$ and $T$. Here, $\CobD$ is a
mild generalization of the multicategory $\CobD(D^2\setminus\bigcup_i D_i)$ from
Definition~\ref{def:diskular}, allowing diskular tangles of the form of
capped-off resolutions of $S$, capped-off resolutions of $T$, and capped-off
resolutions of $T\circ_i S$. In the last case, in addition to quotienting by
radial diffeomorphisms near the boundary circles, we also quotient by radial
reparametrization near the circle where the gluing $\circ_i$ occurred.

Composing with the Khovanov-Burnside
functor and Elmendorf-Mandell's $K$-theory gives a multifunctor
$\CCat{\Crossings}\ttimes \mGlue{\vec{m}}{\vec{n}}{p}\to\Spectra$,
which rectifies to a functor
$\CCat{\Crossings}\times \mGlueS{\vec{m}}{\vec{n}}{p}\to\Spectra$. Such
a functor induces a map of multi-modules
\begin{equation}\label{eq:s-glue}
  \KhSpace(S,P_S)\otimes_{\KhSpace(n_i)}\KhSpace(T,P_T)\to \KhSpace(T\circ_i S,P_S+P_T)
\end{equation}
(cf.~\cite[Lemma 5.4]{LLS-kh-tangles}).

\begin{lemma}\label{lem:Kh-space-gluing}
  The gluing map of spectral multi-modules from
  Formula~\eqref{eq:s-glue} is a weak equivalence.
\end{lemma}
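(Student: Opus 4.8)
The plan is to deduce this from the corresponding statement for Khovanov's combinatorial invariants, \Lemma{Kh-gluing}, in the same way that~\cite[Theorem 5]{LLS-kh-tangles} is deduced from Khovanov's gluing theorem. First I would note that, once $P$ is fixed, all of the spectra $\KhSpace(S,P_S)$, $\KhSpace(T,P_T)$, $\KhSpace(T\circ_i S,P_S+P_T)$ are bounded below (the homological grading $N-|v|-P$ is bounded), hence so is the derived smash product $\KhSpace(S,P_S)\otimes_{\KhSpace(n_i)}\KhSpace(T,P_T)$, since $\KhSpace(n_i)$ is connective. A map of bounded-below spectra that induces an isomorphism on integral homology is a weak equivalence, so it suffices to show that the gluing map of~\eqref{eq:s-glue} becomes a quasi-isomorphism after applying singular chains $C_*(-)$.

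Next I would identify $C_*$ of the spectral gluing map with the combinatorial gluing map of \Section{aa-mfunc}. By~\cite[Proposition 4.2]{LLS-kh-tangles}, applying $C_*$ to the composite of the Khovanov--Burnside functor, Elmendorf--Mandell $K$-theory, and rectification recovers the ordinary Khovanov functor to chain complexes, naturally up to canonical chain homotopy, and this identification is compatible with the maps induced by cobordisms---in particular by the canonical saddle cobordisms $\Wmirror{a_i^\alpha}\amalg a_i^\alpha\to\Id$, $\Wmirror{b_i^\beta}\amalg b_i^\beta\to\Id$, $\Wmirror{c_i}\amalg c_i\to\Id$ used to build the functor $\CCat{\Crossings}\ttimes\mGlue{\vec{m}}{\vec{n}}{p}\to\CobD$. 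Since~\eqref{eq:s-glue} is obtained by applying $K$-theory and rectification to exactly that functor, its image under $C_*$ is canonically chain homotopic to the map induced at the level of Khovanov cubes by the same saddles, i.e.\ to the combinatorial gluing map of \Lemma{Kh-gluing}. Here I also use that $C_*$ carries the derived smash product $\otimes_{\KhSpace(n_i)}$ to the derived tensor product over $\KhCx(n_i)$, which by \Lemma{Kh-projective} (and the freeness of the arc algebras over $\ZZ$) agrees with the underived $\otimes_{\KhCx(n_i)}$, with no $\Tor$ terms appearing.

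With this identification in place, \Lemma{Kh-gluing} says precisely that the combinatorial gluing map descends to an isomorphism after passing to $\otimes_{\KhCx(n_i)}$, hence is a quasi-isomorphism of multi-modules. Thus $C_*$ of the spectral gluing map is a quasi-isomorphism, and by the first paragraph the spectral gluing map is a weak equivalence.

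The main obstacle is bookkeeping rather than new mathematics: one must check that the naturality statement of~\cite[Proposition 4.2]{LLS-kh-tangles} is strong enough to intertwine the full $\mGlue{\vec{m}}{\vec{n}}{p}$-shaped diagrams on the two sides---not just the individual Khovanov complexes at each vertex---so that the two gluing maps genuinely agree after $C_*$. This is the same verification already carried out for $(m,n)$-tangles in~\cite[Section 5]{LLS-kh-tangles}; passing to diskular tangles only changes the indexing of the shape multicategory and does not affect the argument, so I would cite that proof and record the trivial generalization.
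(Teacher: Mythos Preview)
Your proposal is correct and follows essentially the same approach as the paper: identify the effect of the spectral gluing map on singular chains with Khovanov's combinatorial gluing map (the paper cites~\cite[Lemma 5.6]{LLS-kh-tangles} for this rather than Proposition~4.2), invoke \Lemma{Kh-gluing}, and conclude by Whitehead's theorem. The paper's proof is considerably terser, but your expanded discussion of connectivity and of carrying the derived smash product to the derived tensor product is a reasonable unpacking of exactly the same argument.
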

\begin{proof}
  The induced map on homology agrees with the Khovanov gluing map from
  Section~\ref{sec:aa-mfunc} (see~\cite[Lemma 5.6]{LLS-kh-tangles}),
  so the result follows from Lemma~\ref{lem:Kh-gluing} and Whitehead's
  theorem.
\end{proof}

The following is a straightforward adaptation of
Definition~\ref{def:planar-Kh} to the spectral setting:
\begin{definition}\label{def:planar-Kh-space}
  Define $\KhSpace\co \enl{\TangMovMulticat}\to \SBimCat$ as follows:
  \begin{enumerate}[label=(\arabic*)]
  \item On an object $n\in2\ZZ$, $\KhSpace(n)$ is the spectral Khovanov arc algebra on
    $n$ points.
  \item Given an elementary morphism object (1-morphism) $(T,P)$ of
    $\TangMovMulticat$, where $T$ is a diskular
    $(m_1,\dots,m_k;n)$-tangle with $N$ crossings, $\KhSpace(T,P)$
    is the spectral Khovanov multi-module defined above.
  \item For a general morphism object, which is a formal composition
    of elementary morphism objects, $\KhSpace$ is the corresponding composition of its
    value on the elementary morphism objects.
  \item\label{item:KhSpace-el-cob} Given an elementary cobordism $\Sigma$ from $(T_0,P_0)$ to
    $(T_1,P_1)$, the map $\KhSpace(\Sigma)\co \KhSpace(T_0,P_0)\to
    \KhSpace(T_1,P_1)\grs{\chi'(\Sigma)}{0}$ is defined in the expected way. That is:
    \begin{enumerate}
    \item If $\Sigma$ is a planar isotopy then $\KhSpace(\Sigma)$ is the
      isomorphism obtained by applying $\Phi_1$ to each resolution.
    \item If $\Sigma$ is a Reidemeister move then $\KhSpace(\Sigma)$ is
      the map associated above to the Reidemeister move.
    \item If $\Sigma$ is a Morse move (birth, death, or planar saddle)
      then $\KhSpace(\Sigma)$ is the map of spectral multi-modules
      defined above.
    \end{enumerate}
  \item\label{item:KhSpace-Id} For the morphism morphism $\tId$ from
    Lemma~\ref{lem:tId} from the formal tree composition of elementary
    morphisms to the honest composition, $\KhSpace(\tId)$ is the gluing
    weak equivalence from Lemma~\ref{lem:Kh-space-gluing}.    
  \item On a general morphism morphism, $\KhSpace$ is induced from
    points~\ref{item:KhSpace-el-cob} and~\ref{item:KhSpace-Id} via
    Lemma~\ref{lem:tId}.
  \end{enumerate}
\end{definition}

\begin{proposition}\label{prop:Kh-space-movie-multifunc}
  Definition~\ref{def:planar-Kh-space} defines a projective multifunctor.
\end{proposition}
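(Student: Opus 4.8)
The plan is to follow the proof of Proposition~\ref{prop:Kh-movie-multifunc} essentially verbatim, verifying the spectral analogues of the five points (PMF-1)--(PMF-5) appearing there; throughout, the maps involved (the Reidemeister equivalences, the gluing equivalences) are only well-defined up to sign, so $\KhSpace$ is a multifunctor to the projectivization of $\SBimCat$ and all diagrams below need only commute up to sign. Point (PMF-1), that $\KhSpace$ respects multi-composition of $1$-morphisms, is immediate: $\KhSpace$ on a formal composite of elementary morphism objects is defined to be the corresponding composite of its values, and the derived smash product chosen in Definition~\ref{def:SBimCat} is strictly associative and unital. Point (PMF-2), invertibility of $\KhSpace(\tId)$, is exactly Lemma~\ref{lem:Kh-space-gluing}: the gluing map of Formula~\eqref{eq:s-glue} is a weak equivalence, hence an isomorphism in the derived category of spectral multi-modules, so the recipe of Lemma~\ref{lem:tId} does produce a well-defined value of $\KhSpace$ on general morphism morphisms.

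Point (PMF-2b) asks that $\KhSpace$ respect the relations (D1)--(D4) defining $2$-morphisms in $\TangMovMulticat$ (Definition~\ref{def:tangle-movie-multicat}). For relations~\ref{item:disj-support},~\ref{item:isoiso}, and~\ref{item:iso-comp} the two sides of each relation give rise to isotopic divided cobordisms, and since the canonical groupoid enrichment $\CobDenl$ has a unique $2$-morphism between isotopic cobordisms, applying the Khovanov-Burnside functor, Elmendorf-Mandell's $K$-theory, and rectification produces the same map. For relation~\ref{item:iso-Reid}, sliding a Reidemeister or Morse move past an ambient isotopy, the point is that $\KhSpace(\Sigma)$ for a Morse move is built from an inclusion or quotient of divided subfunctors together with a Puppe map, and for a Reidemeister move from the zig-zag of weak equivalences of~\cite[Proof of Theorem 4]{LLS-kh-tangles}; in each case these are constructed solely from the fixed local model, and are intertwined with the ambient isotopy by the naturality of the Khovanov-Burnside functor under the diffeomorphism action recorded in the compatibility lemma following Definition~\ref{def:CobD}.

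For Point (PMF-3), compatibility of $\KhSpace(\tId)$ with multi-composition of morphism morphisms, I would, as in the algebraic case, reduce to two squares: the associativity square for iterated gluing (the spectral analogue of Diagram~\eqref{eq:tId-tId}) and the square expressing that gluing in one input commutes with the map of an elementary cobordism in that input (the analogue of Diagram~\eqref{eq:tId-Sigma}). Both are first checked at the level of the gluing multicategory $\mGlue{\vec m}{\vec n}{p}$ (and its iterated analogue) together with its multifunctor to $\CobD$: the associativity square commutes because the two relevant divided cobordisms coincide after far-commuting disjoint saddles, and the cobordism-compatibility square commutes because $\KhSpace(\Sigma)$ is defined locally, away from the gluing region, so it far-commutes with the gluing saddles. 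Since the Khovanov-Burnside functor, $K$-theory, and rectification are functorial, commutativity descends to $\SBimCat$. I expect this step to be the main obstacle: unlike the chain-level argument, here the Reidemeister maps are zig-zags of weak equivalences rather than honest chain maps, so one cannot literally tensor such a map with an identity and compose; one must instead arrange the entire comparison inside $\CobD$ and the gluing multicategory, so that the zig-zags are produced compatibly with gluing, and only afterwards pass through $K$-theory to spectra.

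Finally, Point (PMF-5), that $\KhSpace$ respects vertical composition of morphism morphisms, reduces by Lemma~\ref{lem:tId} to the case of trees with a single internal vertex, i.e.\ to cobordisms between diskular tangles in $\TangMovMulticat$; there $\KhSpace$ of a composite of a sequence of elementary cobordisms is by construction the composite of its values on the pieces, so there is nothing further to check.
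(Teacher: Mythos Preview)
Your proposal is correct and follows essentially the same approach as the paper's proof. The paper makes Point~(PMF-3) concrete by introducing a \emph{triple-gluing multicategory} (your ``iterated analogue'' of $\mGlue{\vec m}{\vec n}{p}$) and exhibiting three multifunctors from it to $\CobD$ which are naturally isomorphic by far-commutation of saddles; this is exactly your sketch made explicit. Your worry about the Reidemeister zig-zags is unnecessary: since $\SBimCat$ is already built from derived categories of multi-modules, the zig-zag becomes an honest morphism there, and tensoring such a morphism with an identity is well-defined and functorial, so the paper simply declares the analogue of Diagram~\eqref{eq:tId-Sigma} ``immediate from the local definition of $\KhSpace(\Sigma)$.''
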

\begin{proof}
  We must check the same points~\ref{item:Kh-mm-1}--\ref{item:Kh-mm-5}
  as in the proof of Proposition~\ref{prop:Kh-movie-multifunc}.
  As there, Point~\ref{item:Kh-mm-1} is immediate from the local
  definition of the multifunctor $\KhCx$ on morphism objects.
  
  Point~\ref{item:Kh-mm-2} is Lemma~\ref{lem:Kh-space-gluing}.

  Point~\ref{item:Kh-mm-2b} follows from the same reasoning as in the
  combinatorial case. In particular, the maps associated to Reidemeister moves and Morse moves
  are again independent of the location in the plane.
  
  For Point~\ref{item:Kh-mm-3}, we must check that the analogues of
  Diagrams~\eqref{eq:tId-tId} and~\eqref{eq:tId-Sigma}
  commute. Consider first Diagram~\eqref{eq:tId-tId}. To keep notation
  simple, assume $R$ is an $(\ell;m)$-tangle, $S$ is an $(m;n)$-tangle,
  and $T$ is an $(n;p)$-tangle; only the notation is more complicated
  in the general case. Construct an analogue of the gluing
  multicategory but for the three tangles $R,S,T$. There are three maps from
  this triple-gluing multicategory to a corresponding divided
  cobordism category:
  \begin{itemize}
  \item An analogue of the gluing multifunctor, merging $aR_u\Wmirror{b}$, $bS_v\Wmirror{c}$, and
    $cT_w\Wmirror{d}$ all at once.
  \item The composition of the gluing multifunctor merging
    $aR_u\Wmirror{b}$ and $bS_v\Wmirror{c}$ with the gluing
    multifunctor merging $aR_uS_v\Wmirror{c}$ and
    $cT_w\Wmirror{d}$. That is, this corresponds to first doing the saddle
    maps $aR_u\Wmirror{b}\amalg bS_v\Wmirror{c}\to aR_uS_v\Wmirror{c}$
    and the identity on $cT_w\Wmirror{d}$ and then doing the
    saddle maps
    $aR_uS_v\Wmirror{c}\amalg cT_w\Wmirror{d}\to
    aR_uS_vT_w\Wmirror{d}$.
  \item The composition of the gluing multifunctor merging
    $bS_v\Wmirror{c}$ and $cT_w\Wmirror{d}$ with the gluing multifunctor
    merging $aR_u\Wmirror{b}$ and $bS_vT_w\Wmirror{d}$.
  \end{itemize}
  By far-commutation of saddles in the divided cobordism category, all
  three of these multifunctors are naturally isomorphic. Hence,
  composing with the Khovanov-Burnside functor and $K$-theory gives
  three naturally isomorphic multifunctors from the
  triple-gluing multicategory to the homotopy category of
  spectra. Each of these can be reinterpreted as a map
  \begin{equation}\label{eq:triple-glue}
    \KhSpace(R,P_R)\otimes^L\KhSpace(S,P_S)\otimes^L\KhSpace(T,P_T)\\
    \to\KhSpace(T\circ S\circ R,P_R+P_S+P_T).
  \end{equation}
  The fact that these maps are equal is the desired associativity
  property.

  As in the combinatorial case, commutativity of the analogue of
  Diagram~\eqref{eq:tId-Sigma} is immediate from the local definition
  of $\KhSpace(\Sigma)$.
  
  Again as in the combinatorial case,
  % Point~\ref{item:Kh-mm-4} is
  % immediate from Point~\ref{item:Kh-mm-3} and
  Point~\ref{item:Kh-mm-5} is immediate from the definitions.
\end{proof}

\section{Duality properties of Khovanov's tangle invariants and their spectral refinements}\label{sec:duality}
\emph{Wherein we} show that the arc algebra bimodule associated to the
\textsc{mirror} of a tangle $T$ is homotopy equivalent to the
\textsc{one-sided dual} of the bimodule for $T$, a result that is
\textsc{well-known} to experts, and deduce the \textsc{analogous
  result} for the \textsc{spectral refinements}.

We only need these duality results for $n$-tangles, but prove them in general.
\subsection{Dualizability for the modules and spectra}
To verify the duality theorem for the spectral bimodules, we need a technical
condition on the spectral arc algebras and modules, called
dualizability. Essentially, this is a finiteness condition, like the
fact that the isomorphism between a vector space and its double dual
holds only for finite-dimensional vector spaces. Dualizability has a
number of implications, including relating the chains on the dual with
the cochains on the original spectral module.

\begin{definition}
  Let $A$ be a \dg algebra or spectral algebra. A (\dg or spectral)
  $A$-module $X$ is \emph{dualizable} if, for all $A$-modules $Z$, the
  natural map
  \begin{equation}\label{eq:dual-nat-map}
    \RHom_A(X,A) \otimes_A Z \to \RHom_A(X,Z)
  \end{equation}
  is a weak equivalence. Here, $\RHom_A$ denotes the derived functor of the
  space of maps of left (dg or spectral) $A$-modules (cf.\ Section~\ref{sec:lin-cat}).

  Given another (\dg or spectral) algebra $B$, an $(A,B)$-bimodule $X$
  is \emph{left-dualizable} if $X$ is dualizable as an $A$-module, and
  \emph{right-dualizable} if $X$ is dualizable as a $B$-module.
\end{definition}

The following properties are straightforward to verify:

\begin{proposition}\label{prop:thick-is-dualizable}
  For any (\dg or spectral) algebra $A$, the collection of dualizable
  $A$-modules is closed under the following.
  \begin{enumerate}
  \item \emph{Equivalence}: if $X$ is dualizable and $Y \simeq X$
    then $Y$ is dualizable.
  \item \emph{Retracts}: if $Y$ is dualizable and $X$ is a retract of
    $Y$ then $X$ is dualizable.
  \item \emph{Sums}: If $X$ and $Y$ are dualizable then so is the sum
    $X \oplus Y$.
  \item \emph{Shifts}: If $X$ is dualizable then so are the shifts
    $\Sigma^n X$ for $n \in \ZZ$.
  \item \emph{Cofibers}: if $f\co X \to Y$ is a map of dualizable
    $A$-modules then the mapping cone $Cf$ is dualizable.
  \item \emph{Unit}: $A$ is dualizable.
  \end{enumerate}
  Further, the category of dualizable $A$-modules is the smallest category of
  $A$-modules with this property.
\end{proposition}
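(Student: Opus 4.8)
The plan is to establish the six closure properties by the standard device of recognizing both sides of~\eqref{eq:dual-nat-map} as exact functors of the variable $X$, and then to prove minimality by identifying the dualizable $A$-modules with the perfect (equivalently, compact) ones.

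First I would fix an $A$-module $Z$ and view
\[
  X\longmapsto \RHom_A(X,A)\otimes_A Z
  \qquad\text{and}\qquad
  X\longmapsto \RHom_A(X,Z)
\]
as contravariant functors from $A$-modules to spectra (or chain complexes). Each carries weak equivalences to weak equivalences, carries a cofiber sequence in the variable $X$ to a fiber ($=$ cofiber) sequence, commutes with shifts, and sends a retract diagram to a retract diagram; and the natural transformation~\eqref{eq:dual-nat-map} is compatible with all of this, using that $(-)\otimes_A Z$ is itself exact and commutes with shifts. It follows at once that the class $\mathscr{D}$ of $X$ for which~\eqref{eq:dual-nat-map} is a weak equivalence for all $Z$ is closed under equivalences, retracts, finite sums, shifts, and cofibers, which is properties~(1)--(5). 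Property~(6) is a direct computation: $\RHom_A(A,A)\simeq A$ and $\RHom_A(A,Z)\simeq Z$ compatibly with the $A$-actions, and under these identifications~\eqref{eq:dual-nat-map} becomes the canonical equivalence $A\otimes_A Z\xrightarrow{\ \simeq\ }Z$.

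Granting~(1)--(6), $\mathscr{D}$ is a thick subcategory of $A$-modules containing $A$, hence contains the thick subcategory $\langle A\rangle$ generated by $A$. For the opposite inclusion I would argue that a dualizable $X$ is compact: by hypothesis $\RHom_A(X,-)\simeq \RHom_A(X,A)\otimes_A(-)$, and the right-hand side commutes with arbitrary coproducts since $\otimes_A$ preserves all colimits and $\RHom_A(X,A)$ is a fixed module; hence $\RHom_A(X,-)$, and in particular $[X,-]=\pi_0\RHom_A(X,-)$, preserves coproducts, so $X$ is a compact object of the derived category of $A$-modules. That category is compactly generated by its unit $A$, so its compact objects coincide with $\langle A\rangle$ (the \dg case is classical; the spectral case follows from the corresponding facts about module categories over ring spectra). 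Therefore $X\in\langle A\rangle$, so $\mathscr{D}=\langle A\rangle$, and in particular $\mathscr{D}$ is contained in any class of $A$-modules closed under~(1)--(6), establishing minimality.

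The main obstacle is this last, reverse inclusion: it rests on the identification of compact with perfect modules over a general \dg or spectral algebra, and needs some care about the precise model-categorical meaning of $\RHom_A$ and about the fact that the relative smash product $\otimes_A$ preserves coproducts in each variable. The closure properties themselves are formal once both sides of~\eqref{eq:dual-nat-map} are recognized as exact functors of $X$.
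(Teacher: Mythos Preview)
Your argument is correct and is exactly the standard one; the paper itself offers no proof beyond the phrase ``straightforward to verify,'' so you have supplied precisely the details the authors omit. The closure properties follow, as you say, from exactness of both sides of~\eqref{eq:dual-nat-map} in $X$, and your minimality argument via compactness is the expected route: dualizable $\Rightarrow$ $\RHom_A(X,-)$ preserves coproducts $\Rightarrow$ compact $\Rightarrow$ perfect, with the last step invoking the well-known identification of compact and perfect modules in the compactly generated derived category of $A$-modules.
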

In other words, the category of dualizable $A$-modules is the smallest
\emph{thick subcategory} of the homotopy category of $A$-modules
containing $A$.

For spectra, the homology Whitehead theorem implies the following
well-known criterion for dualizability as modules over the sphere
spectrum $\SphereS$.
\begin{proposition}
  \label{prop:dualizable-spectrum}
  A spectrum $X$ is dualizable over $\SphereS$ if and only if $X$ is $k$-connective for
  some $k$ and its homology
  \[
    H_*(X) = \bigoplus_n H_n(X;\ZZ)
  \]
  is a finitely generated abelian group.
\end{proposition}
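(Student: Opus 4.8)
The plan is to show that each of the two classes named in the statement coincides with the class of finite spectra, i.e., the smallest thick subcategory of the homotopy category of spectra containing $\SphereS$.

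For the ``only if'' direction, I would invoke \Proposition{thick-is-dualizable} with $A=\SphereS$, which says precisely that the dualizable spectra form the smallest thick subcategory containing $\SphereS$. It then suffices to check that the class $\mathcal{D}$ of spectra $X$ which are $k$-connective for some $k$ and have $\bigoplus_n H_n(X;\ZZ)$ finitely generated is itself a thick subcategory containing $\SphereS$; this forces $\mathcal{D}$ to contain every dualizable spectrum. Here $\SphereS\in\mathcal{D}$, and closure under equivalences, retracts, and shifts $\Sigma^n$ ($n\in\ZZ$) is immediate, since a shift changes connectivity by a finite amount and merely reindexes the homology. For cofibers, given $f\co X\to Y$ with $X,Y\in\mathcal{D}$, the cofiber sequence $X\to Y\to Cf$ shows $Cf$ is bounded below by a connectivity estimate, and the long exact homology sequence squeezes each $H_n(Cf)$ between finitely generated groups, so $\bigoplus_n H_n(Cf;\ZZ)$ is finitely generated. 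I would also record, in passing, that finite generation of $\bigoplus_n H_n(X;\ZZ)$ as a single abelian group already forces all but finitely many $H_n(X;\ZZ)$ to vanish, so such $X$ in fact has homology bounded in both directions.

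For the ``if'' direction, suppose $X$ is $k$-connective with $\bigoplus_n H_n(X;\ZZ)$ finitely generated. First I would use the homology Whitehead theorem to produce a finite CW approximation: attaching cells degree by degree to realize generators of the (finitely many, finitely generated) homology groups of $X$ yields a finite CW spectrum $W$ together with a map $W\to X$ that is an integral homology isomorphism, hence an equivalence. Thus $X$ is equivalent to a finite CW spectrum, and it remains to see that finite CW spectra are dualizable. For this I would fix a spectrum $Z$ and view the natural map~\eqref{eq:dual-nat-map}, $\RHom_{\SphereS}(X,\SphereS)\otimes_{\SphereS} Z\to \RHom_{\SphereS}(X,Z)$, as a natural transformation of exact functors of $X$; it is an equivalence when $X=\SphereS$, so the full subcategory of spectra $X$ for which it is an equivalence for every $Z$ is thick and contains $\SphereS$, hence contains all finite CW spectra. (Alternatively one can just cite classical Spanier--Whitehead duality.) This gives dualizability of $X$ and completes the equivalence.

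The only non-formal ingredient is the homology Whitehead theorem together with the finite CW approximation it yields; everything else is bookkeeping with thick subcategories. I therefore expect the ``main obstacle'', such as it is, to be confined to that input, which we are content to cite.
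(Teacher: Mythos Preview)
Your argument is correct. The paper does not actually supply a proof of this proposition: it states the result as a well-known consequence of the homology Whitehead theorem and moves on. Your proposal is a faithful and correct expansion of exactly that hint---you use \Proposition{thick-is-dualizable} to identify dualizable spectra with the thick subcategory generated by $\SphereS$, and then the homology Whitehead theorem (via a finite CW approximation) to identify that class with the bounded-below spectra of finitely generated total homology---so there is nothing to add.
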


\begin{definition}
  An $R$-algebra $A$ is \emph{proper} if it is dualizable as an
  $R$-module.
\end{definition}

\begin{proposition}
  If $A$ is a proper $R$-algebra then every dualizable
  $A$-module is also a dualizable $R$-module.
\end{proposition}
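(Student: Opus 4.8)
The plan is to reduce the statement to the thick-subcategory characterization of dualizable $A$-modules provided by Proposition~\ref{prop:thick-is-dualizable}. That proposition says the dualizable $A$-modules form the smallest thick subcategory of the homotopy category of $A$-modules containing the unit $A$. So it suffices to produce \emph{some} thick subcategory of $A$-modules that contains $A$ and all of whose objects become dualizable after restriction of scalars along $R\to A$; such a subcategory will then automatically contain every dualizable $A$-module, which is exactly what we want.

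Concretely, I would let $\mathcal{D}$ be the full subcategory of the homotopy category of $A$-modules spanned by those $A$-modules whose underlying $R$-module is dualizable. First, $A\in\mathcal{D}$ is precisely the hypothesis that $A$ is proper. Next, $\mathcal{D}$ is thick: each of the operations appearing in the definition of thickness---passage to an equivalent object, to a retract, to a finite direct sum, to a suspension, and to the mapping cone of a map---is computed on $A$-modules by the same construction performed on the underlying $R$-modules, since the restriction-of-scalars functor along $R\to A$ preserves weak equivalences, retract diagrams, finite coproducts, suspensions, and cofiber sequences. Hence closure of $\mathcal{D}$ under these operations follows immediately from the corresponding closure properties of dualizable $R$-modules, which are exactly the ones listed in Proposition~\ref{prop:thick-is-dualizable} applied with $R$ in place of $A$. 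Therefore $\mathcal{D}$ is a thick subcategory of $A$-modules containing $A$, so it contains the smallest one, i.e., every dualizable $A$-module lies in $\mathcal{D}$ and is thus dualizable over $R$.

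I do not expect a genuine obstacle here; the only point that needs a moment's care is the observation that ``dualizable'' is a property of a module together with its action, so one must check that the constructions defining thickness are genuinely compatible with restriction of scalars---routine in the algebraic case and, in the spectral case, a standard consequence of the fact that restriction along a map of ring spectra is (co)limit-preserving and homotopical. No finiteness or connectivity bookkeeping is required, since all of that is already packaged into Proposition~\ref{prop:thick-is-dualizable}.
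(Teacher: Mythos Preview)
Your argument is correct and complete. It differs from the paper's proof, however. The paper argues directly with the defining natural transformation: for a dualizable $A$-module $X$ and any $R$-module $Z$, it factors the map $\RHom_R(X,R)\otimes_R Z\to\RHom_R(X,Z)$ through a chain of equivalences using the tensor--Hom adjunction $\RHom_R(X\otimes_A A,-)\cong\RHom_A(X,\RHom_R(A,-))$, invoking dualizability of $X$ over $A$ and then of $A$ over $R$ at the two places where a map of the form~\eqref{eq:dual-nat-map} appears. Your approach instead leverages the minimality clause of Proposition~\ref{prop:thick-is-dualizable} twice---once over $A$ to reduce to a thick-subcategory statement, and once over $R$ to see that ``$R$-dualizable after restriction'' is thick. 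Your route is slightly more structural and avoids tracking the specific natural map; the paper's route is more explicit and makes visible exactly where each dualizability hypothesis is used.
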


\begin{proof}
  This follows from the fact that, given a dualizable $A$-module $X$ and an $R$-module $Z$, the natural map from
  Equation~\eqref{eq:dual-nat-map} factors as
  \begin{align*}
    \RHom_R(X,R)\otimes_R Z&\cong \RHom_R(X\otimes_A A,R)\otimes_RZ
                            \cong \RHom_A(X,\RHom_R(A,R))\otimes_R Z\\
                          &\cong \RHom_A(X,A)\otimes_A\RHom_R(A,R)\otimes_RZ
                            \cong \RHom_A(X,A)\otimes_A\RHom_R(A,Z)\\
                          &\cong \RHom_A(X,\RHom_R(A,Z))
                            \cong \RHom_R(X\otimes_AA,Z)\\
                          &\cong \RHom_R(X,Z)
  \end{align*}
  where the second line uses dualizability of $X$ over $A$ and then of $A$ over
  $R$. (Throughout, by tensor product we mean the derived functor associated to
  tensor product.)
\end{proof}

\begin{proposition}\label{prop:dual-chains}
  If $A$ a dualizable spectral algebra and $X$ is a dualizable
  $A$-module then the natural map
  \[
    C_*(\RHom_A(X,A))\to \RHom_{C_*(A)}(C_*(X),C_*(A))
  \]
  from singular chains on the morphism spectrum to the morphism complex of
  singular chain complexes induces an isomorphism on homology.  The
  same applies to one-sided Homs of left-dualizable spectral
  bimodules.
\end{proposition}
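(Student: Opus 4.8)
The plan is to reduce the statement to the trivial case $X = A$ via the thick-subcategory characterization of dualizable modules in \Proposition{thick-is-dualizable}. First I would pin down the comparison map. Singular chains $C_*\co\Spectra\to\Complexes$ is lax symmetric monoidal, with the Eilenberg-Zilber maps $C_*(Y)\otimes C_*(Z)\to C_*(Y\wedge Z)$ quasi-isomorphisms; hence $C_*$ carries the spectral algebra $A$ to a \dg algebra $C_*(A)$, any (\dg or spectral) $A$-module $X$ to a $C_*(A)$-module $C_*(X)$, and the evaluation $\RHom_A(X,A)\otimes_A X\to A$ to a map $C_*(\RHom_A(X,A))\otimes_{C_*(A)}C_*(X)\to C_*(A)$, whose adjoint is a natural map
\[
  \Theta_X\co C_*(\RHom_A(X,A))\longrightarrow\RHom_{C_*(A)}(C_*(X),C_*(A)).
\]
Because $A$ is dualizable over $\SphereS$, the complex $C_*(A)$ has bounded-below, degreewise finitely generated homology; because $X$ is dualizable over $A$, applying the exact monoidal functor $C_*$ to a thick-subcategory generation of $X$ by $A$ shows that $C_*(X)$ is a perfect $C_*(A)$-module (and, by \Proposition{dualizable-spectrum}, a perfect $\ZZ$-complex), so the target of $\Theta_X$ is the expected derived object.

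Next, let $\mathcal{E}$ be the class of dualizable $A$-modules $X$ for which $\Theta_X$ is a quasi-isomorphism. The case $X=A$ is immediate: $\RHom_A(A,A)\simeq A$, $\RHom_{C_*(A)}(C_*(A),C_*(A))\simeq C_*(A)$, and $\Theta_A$ is the identity. Naturality of $\Theta$ makes $\mathcal{E}$ closed under equivalences and under retracts (a retract of a quasi-isomorphism is one); exactness of $C_*$ and of $\RHom_A(-,A)$, $\RHom_{C_*(A)}(-,C_*(A))$ makes $\Theta$ commute with suspension and turn a cofiber sequence $X\to Y\to Cf$ into a map of distinguished triangles, so $\mathcal{E}$ is closed under cofibers by two-out-of-three for triangles; and since $C_*$ sends finite wedges to finite direct sums while the two $\RHom$'s send finite direct sums to finite products $=$ finite sums, $\mathcal{E}$ is closed under finite sums. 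Hence $\mathcal{E}$ is a thick subcategory of the homotopy category of $A$-modules containing $A$, so by the final clause of \Proposition{thick-is-dualizable} it is all of the dualizable $A$-modules. The bimodule version follows at once, since for a left-dualizable $(A,B)$-bimodule only the left $A$-module structure enters $\Theta$ and every step above, the right $B$-action being carried along formally.

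The step I expect to be the main obstacle is the first one: constructing $\Theta_X$ as an honest natural transformation at the point-set (model-categorical) level --- coherently assembling the lax monoidal structure on $C_*$, the relative internal Homs over $A$ and over $C_*(A)$, and the requisite cofibrant and fibrant replacements --- and checking it is compatible with suspensions, cofiber sequences, and retracts in the strong sense needed above. Once $\Theta$ is in hand, the thick-subcategory argument is routine.
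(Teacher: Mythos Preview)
Your proposal is correct and follows essentially the same route as the paper: reduce to the base case $X=A$, where both sides are identified with $C_*(A)$, and then use the thick-subcategory characterization of dualizable modules from \Proposition{thick-is-dualizable} to propagate the result. The paper's argument is terser (it simply asserts that the class for which the map is an equivalence is closed under the relevant operations), but your more careful bookkeeping of why $\mathcal{E}$ is closed under each operation, and your flagging of the point-set construction of $\Theta_X$ as the only genuinely delicate step, are appropriate elaborations of the same proof.
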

\begin{proof}
  This follows from Proposition~\ref{prop:thick-is-dualizable} and induction. In
  the case $X=A$, $\RHom_A(A,A)\simeq A$ so the left side is $C_*(A)$, while the
  right side is $\RHom_{C_*(A)}(C_*(A),C_*(A))\simeq C_*(A)$; the natural map
  respects the right action of $C_*(A)$ (by the target), so is determined by
  where it sends the identity map, and hence is an equivalence. The category of
  $A$-modules for which the result holds is closed under equivalences, retracts,
  sums, shifts, and cofibers, hence contains all dualizable $A$-modules.
\end{proof}

\begin{proposition}\label{prop:Kh-dualizable}
  The arc algebra module $\KhCx(T,P_T)$ associated to an $(m,n)$-tangle
  $T$ is left-dualizable and right-dualizable.
\end{proposition}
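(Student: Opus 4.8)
The plan is to deduce this from \Lemma{Kh-projective} together with \Proposition{thick-is-dualizable}. Recall that, up to a grading shift, $\KhCx(T,P_T)$ is the iterated mapping cone (the homotopy colimit over the finite poset $\CCatP{\Crossings(T)}$) of the diagram $v\mapsto \KhCx(T_v)$ assigning to each resolution its arc algebra module, and that the cube edge maps are maps of $(\KhCx(m),\KhCx(n))$-bimodules; in particular they are maps of left $\KhCx(m)$-modules, and of right $\KhCx(n)$-modules. By \Proposition{thick-is-dualizable}, the dualizable left $\KhCx(m)$-modules form the smallest thick subcategory of the homotopy category of left $\KhCx(m)$-modules containing $\KhCx(m)$; since this subcategory is closed under grading shifts and under the finitely many mapping-cone and direct-sum operations needed to assemble a homotopy colimit over a finite poset, it suffices to show that each $\KhCx(T_v)$ lies in it. The same reasoning applies verbatim on the right, with $\KhCx(n)$ in place of $\KhCx(m)$.

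So fix a resolution $T_v$; I will show $\KhCx(T_v)$ is dualizable as a right $\KhCx(n)$-module, the left case being symmetric. By \Lemma{Kh-projective}, for each $a\in\Crossingless{m}$ there are $a'\in\Crossingless{n}$ and $j\geq 0$ with $\KhCx(T_v)(a,\cdot)\cong V^{\otimes j}\otimes\KhCx(n)(a',\cdot)$ as right $\KhCx(n)$-modules. The representable right module $\KhCx(n)(a',\cdot)$ is a direct summand of $\KhCx(n)$ viewed as the free right module over itself, since $\KhCx(n)=\bigoplus_{c\in\Crossingless{n}}\KhCx(n)(c,\cdot)$, and $V^{\otimes j}$ is free of rank $2^j$ over $\ZZ$, so $V^{\otimes j}\otimes\KhCx(n)(a',\cdot)$ is a finite direct sum of grading shifts of a retract of $\KhCx(n)$. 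Hence $\KhCx(T_v)(a,\cdot)$ lies in the thick subcategory generated by $\KhCx(n)$. Since $\Crossingless{m}$ is finite, $\KhCx(T_v)=\bigoplus_{a\in\Crossingless{m}}\KhCx(T_v)(a,\cdot)$ is a finite direct sum of such modules, hence also lies in that thick subcategory, hence is dualizable by \Proposition{thick-is-dualizable}. Combined with the previous paragraph, $\KhCx(T,P_T)$ is right-dualizable; the symmetric argument, using the ``other factor'' clause of \Lemma{Kh-projective}, gives left-dualizability.

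The proof is essentially bookkeeping, and no step is a serious obstacle. The one place to be slightly careful is the passage from the ``one boundary index at a time'' form of \Lemma{Kh-projective} to the statement that $\KhCx(T_v)$ is a finitely generated projective module over the whole arc algebra $\KhCx(n)$ (viewed as a ring), and then checking that the grading shifts and the iterated mapping cone leave the dualizable subcategory invariant --- both of which are immediate from \Proposition{thick-is-dualizable}.
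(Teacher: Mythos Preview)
Your proof is correct and follows essentially the same approach as the paper's: both argue that each vertex module $\KhCx(T_v)$ is a finite direct sum of shifts of retracts of the algebra (the paper calls these \emph{elementary projective modules}), hence dualizable by \Proposition{thick-is-dualizable}, and then pass to the iterated mapping cone using closure under cofibers. The only cosmetic difference is that you cite \Lemma{Kh-projective} explicitly for the structure of $\KhCx(T_v)$, whereas the paper states this fact inline, and you phrase the assembly via the cube while the paper speaks of the filtration by homological grading.
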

\begin{proof}
  We prove right-dualizability; left-dualizability is similar.
  An \emph{elementary projective right module} over $\KhCx(m)$ is a
  module of the form $\KhCx(m)(a,\cdot)$, for some crossingless
  matching $a$.
  Elementary projective modules over $\KhCx(m)$ are retracts of
  $\KhCx(m)$. The homological grading
  gives a filtration of $\KhCx(T,P_T)$ so that each sub-quotient is
  homotopy equivalent to a finite direct sum of shifts of elementary
  projective modules. By Proposition~\ref{prop:thick-is-dualizable},
  the category of dualizable modules is closed under shifts, sums, and
  retracts, and contains the algebra, so each sub-quotient is
  dualizable. The fact that dualizability is preserved by mapping
  cones and induction then gives the result.
\end{proof}

Similarly:
\begin{proposition}\label{prop:Kh-dualizable-spectrum}
  The spectral arc algebras $\KhSpace(n)$ are dualizable and the
  spectral bimodules $\KhSpace(T,P)$ are left- and right-dualizable.
\end{proposition}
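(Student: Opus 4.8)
The plan is to run the argument of Proposition~\ref{prop:Kh-dualizable} in the spectral setting, replacing ``finitely generated free abelian group'' by ``dualizable spectrum'' and using the thick-subcategory closure properties of Proposition~\ref{prop:thick-is-dualizable} together with the spectral lift of Lemma~\ref{lem:Kh-projective}. First consider $\KhSpace(n)$ itself. For crossingless matchings $a,b\in\Crossingless{n}$ the $1$-manifold $a\Wmirror{b}$ is a disjoint union of circles, so the morphism spectrum $\KhSpace(n)(a,b)$ is the Khovanov spectrum of an unlink, i.e., a finite wedge of quantum-shifted sphere spectra; in particular it is connective with finitely generated homology, hence dualizable over $\SphereS$ by Proposition~\ref{prop:dualizable-spectrum} (indeed it is built from $\SphereS$ by wedges and shifts, so Proposition~\ref{prop:thick-is-dualizable} already suffices). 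Since $\Crossingless{n}$ is finite, $\KhSpace(n)=\bigvee_{a,b}\KhSpace(n)(a,b)$ is a dualizable spectrum, i.e., $\KhSpace(n)$ is a proper $\SphereS$-algebra; the same applies to $\KhSpace(m)$.

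For the bimodules it suffices to prove that $\KhSpace(T,P)$ is right-dualizable over $\KhSpace(n)$, since left-dualizability over $\KhSpace(m)$ follows by the mirror-symmetric argument. The key input is the spectral lift of Lemma~\ref{lem:Kh-projective}: for each resolution $T_v$ of $T$, the left-capped flat tangle $aT_v$ is isotopic to a split disjoint union of finitely many closed circles and a flat $(0,n)$-tangle $a'\in\Crossingless{n}$, and the canonical saddle cobordisms defining the right $\KhSpace(n)$-action on $\KhSpace(T_v)(a,\cdot)$ are supported near the $\Wmirror{b}$-boundary, hence away from those circles. Since the Khovanov--Burnside functor (and therefore $\KhSpace$) carries split disjoint unions to smash products, this yields an equivalence of right $\KhSpace(n)$-modules
\[
  \KhSpace(T_v)(a,\cdot)\;\simeq\;W_{a,v}\wedge\KhSpace(n)(a',\cdot),
\]
where $W_{a,v}$ is a finite wedge of shifted sphere spectra. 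Each elementary projective module $\KhSpace(n)(a',\cdot)$ is a retract of $\KhSpace(n)$ via the wedge decomposition $\KhSpace(n)=\bigvee_{a'}\KhSpace(n)(a',\cdot)$ of the free module over itself, so Proposition~\ref{prop:thick-is-dualizable} (unit, retracts, sums, shifts) shows that each $\KhSpace(T_v)$ is a right-dualizable $\KhSpace(n)$-module.

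To conclude, recall that $\KhSpace(T,P)$ is the homotopy colimit over $\CCatP{\Crossings(T)}$ of the diagram $v\mapsto\KhSpace(T_v)$; equivalently it carries a finite filtration by $|v|$ whose associated graded pieces are finite wedges of shifted copies of the $\KhSpace(T_v)$. Since dualizable modules are closed under shifts, sums, and cofibers, induction on the number of crossings of $T$ (equivalently on the filtration length) gives right-dualizability of $\KhSpace(T,P)$, and the mirror argument gives left-dualizability. The main obstacle is exactly the spectral lift of Lemma~\ref{lem:Kh-projective}, i.e., promoting Khovanov's splitting of $V(aT_v\Wmirror{b})$ from an isomorphism of homology groups to an equivalence of spectral right $\KhSpace(n)$-modules. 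This rests on two facts already built into the framework of~\cite{LLS-kh-tangles}: that $\KhSpace$ is symmetric monoidal, up to canonical equivalence, with respect to split disjoint union of $1$-manifolds and of cobordisms, and that the module structure maps are induced by cobordisms localized near a single boundary component, so that the isotopy $aT_v\simeq(\text{circles})\amalg a'$ respects the module structure. Granting this, the remainder is a formal consequence of Proposition~\ref{prop:thick-is-dualizable}, exactly as in the combinatorial proof.
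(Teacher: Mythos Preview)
Your argument is correct and follows essentially the same route as the paper: use Proposition~\ref{prop:dualizable-spectrum} for the first statement, and for the second run the proof of Proposition~\ref{prop:Kh-dualizable} spectrally, using the cube filtration and the fact that each sub-quotient is a wedge of shifts of retracts of $\KhSpace(n)$. Your version supplies more detail on the spectral lift of Lemma~\ref{lem:Kh-projective} than the paper does, but the strategy is the same.
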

\begin{proof}
  The first statement follows from
  Proposition~\ref{prop:dualizable-spectrum}. The proof of the second
  statement is the same as the proof of
  Proposition~\ref{prop:Kh-dualizable}: the cube induces a filtration
  of $\KhSpace(T,P)$ so that each sub-quotient is equivalent to a wedge
  sum of shifts of retracts of $\KhSpace(m)$.
\end{proof}

\subsection{Arc algebra bimodules for mirrors}\label{sec:aa-dual}
In this section, we give two proofs of a well-known duality property
for Khovanov's tangle invariants. The first is a TQFT-style argument,
using functoriality of Khovanov homology. This proof is elegant and
will gives a useful framework for the spectral case discussed in the
next section. Since we are also re-proving functoriality of Khovanov
homology itself in this paper, to avoid circular reasoning we give a
second, direct proof of the case of this duality result needed there.

% In this section, we write down the proof of a well-known duality
% property for Khovanov's tangle invariants, which follows from
% functoriality of Khovanov homology and a familiar TQFT-style
% argument. We use this formulation to prove functoriality of the
% spectral refinements. Since we are also proving functoriality of
% Khovanov homology itself, we also give a direct proof of the case of
% this duality result needed there.

The duality results in this section perhaps first appeared in the work
of Clark-Morrison-Walker~\cite[Theorem 1.3]{CMW-kh-functoriality}.

\begin{figure}
  \centering
  \includegraphics{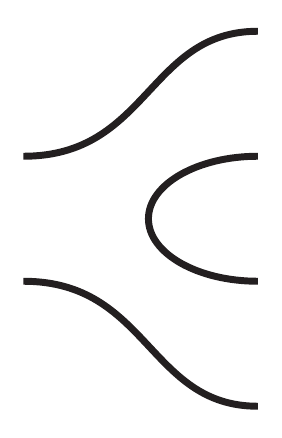}\qquad\qquad
  \includegraphics[width=1.75in]{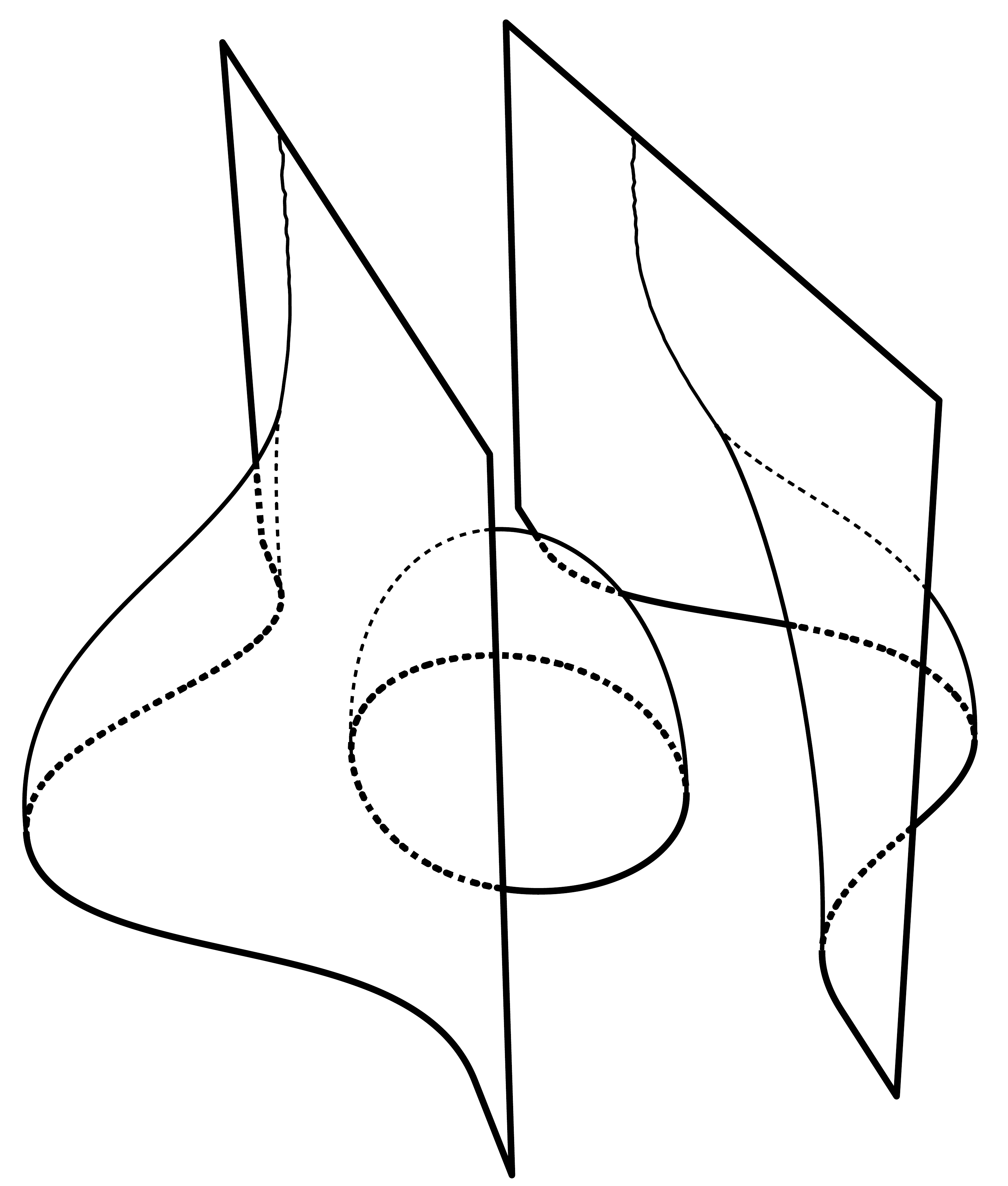}\qquad\qquad
  \includegraphics[width=2.25in]{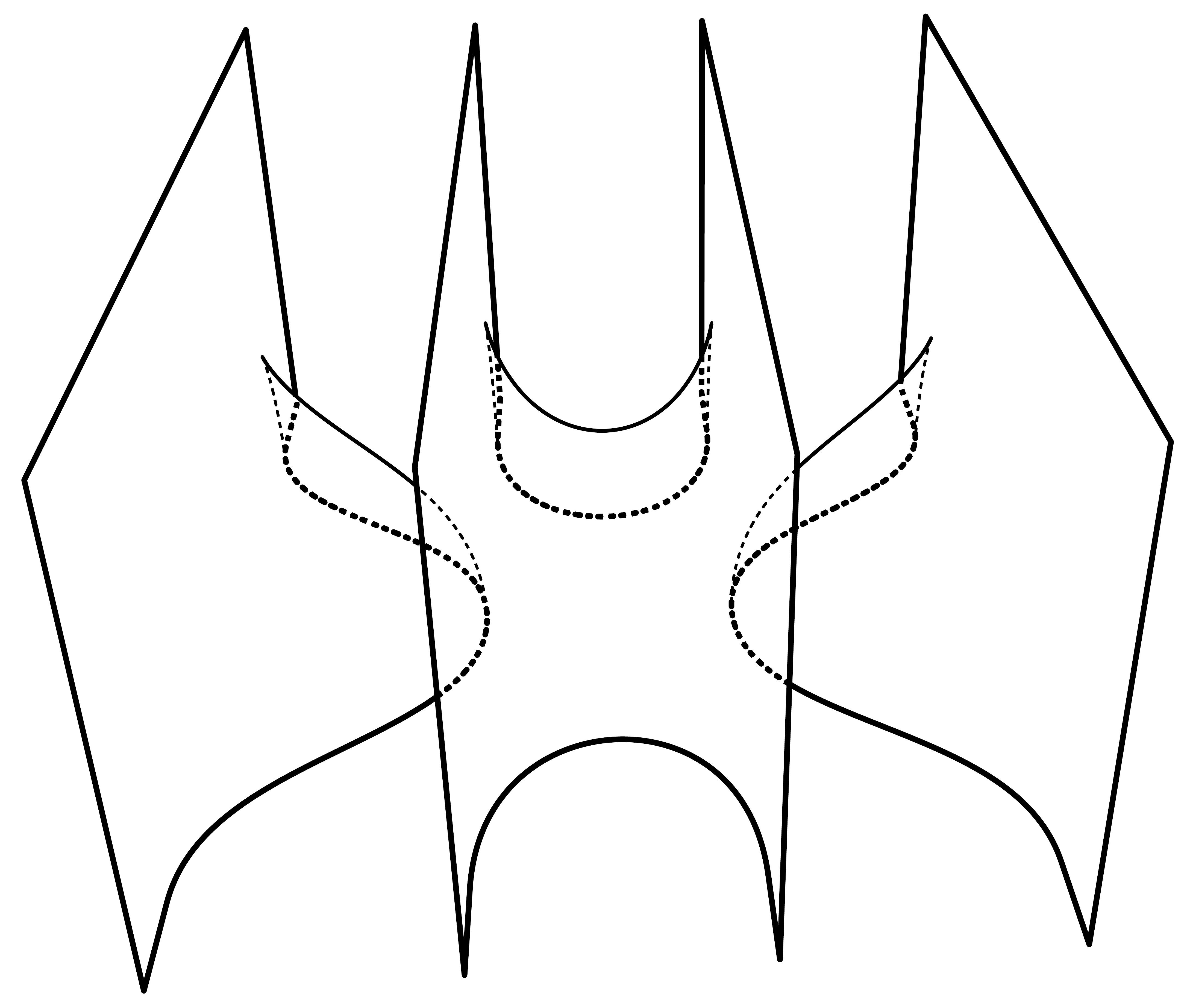}
  \caption{\textbf{The cobordisms $\Sigma_{T\Wmirror{T}}$ and
      $\Sigma_{\Wmirror{T}T}$.} Left: the tangle $T$. Center and
    right: the cobordisms $\Sigma_{T\Wmirror{T}}$ and
    $\Sigma_{\Wmirror{T}T}$. The space shown is the projection
    $[0,1]\times [0,1]\times (0,1)$ of the space
    $[0,1]\times[0,1]\times(0,1)^2$ where the tangle cobordisms
    lie. The cobordism direction is the first coordinate, read
    upwards. Anaglyph versions of this figure and
    Figure~\ref{fig:TTcob-id} can be found at \url{http://pages.uoregon.edu/lipshitz/CSI.html}}
  \label{fig:TTcob}
\end{figure}

Given an $(m,n)$-tangle $T$ in $[0,1]\times(0,1)\times(0,1)$,
$[0,1]\times T$ is a tangle cobordism in
$[0,1]\times [0,1]\times(0,1)\times(0,1)$. Identifying
$(\{0\}\times [0,1])\cup ([0,1]\times \{1\})\cup (\{1\}\times [0,1])$ with
$[0,1]$, this cobordism can be viewed as a tangle cobordism
$\Sigma_{T\Wmirror{T}}$ from the $(m,m)$-tangle $T\Wmirror{T}$ to
the identity braid on $m$ points. Similarly, this cobordism can be
viewed as a tangle cobordism $\Sigma_{\Wmirror{T}T}$ from the identity
braid on $n$ points to the $(n,n)$-tangle $\Wmirror{T}T$. See
Figure~\ref{fig:TTcob}. (There are also similar cobordisms
$\Wmirror{T}T\to\Id_n$ and $\Id_m\to T\Wmirror{T}$, but we will not name
or need these.) Let $N$ be the number of crossings of $T$. For
any integer $P$ there are corresponding maps
\begin{align*}
     \KhCx(\Sigma_{T\Wmirror{T}})&\co \KhCx(T,P)\otimes_{\KhCx(n)}\KhCx(\Wmirror{T},N-P)\grs{\tfrac{m-n}{2}}{0}=\KhCx(T\Wmirror{T},N)\grs{\tfrac{m-n}{2}}{0}\to \KhCx(\Id_{m})=\KhCx(m)\\
     \KhCx(\Sigma_{\Wmirror{T}T}) &\co \KhCx(n)=\KhCx(\Id_{n})\to \KhCx(\Wmirror{T}T,N)\grs{\tfrac{m-n}{2}}{0}=\KhCx(\Wmirror{T},N-P)\otimes_{\KhCx(m)}\KhCx(T,P)\grs{\tfrac{m-n}{2}}{0}.
\end{align*}
The cobordisms $\Sigma_{T\Wmirror{T}}$ and $\Sigma_{\Wmirror{T}T}$
satisfy that
\[
  (\Sigma_{T\Wmirror{T}}\cup \Id_{T})\circ(\Id_{T}\cup \Sigma_{\Wmirror{T}T})
\]
is isotopic to the obvious ambient isotopy from $T\cup\Id$ to $\Id\cup T$
and
\[
  (\Id_{\Wmirror{T}}\cup \Sigma_{T\Wmirror{T}})\circ(\Sigma_{\Wmirror{T}T}\cup\Id_{\Wmirror{T}})
\]
is isotopic to the obvious ambient isotopy from $\Id\cup \Wmirror{T}$
to $\Wmirror{T}\cup\Id$. See
Figure~\ref{fig:TTcob-id}.  Hence, if we identify $\KhCx(\Id\cup
T,P)=\KhCx(T,P)=\KhCx(T\cup\Id,P)$ and
$\KhCx(\Wmirror{T}\cup\Id,N-P)=\KhCx(\Wmirror{T},N-P)=\KhCx(\Id\cup\Wmirror{T},N-P)$ 
via the ambient isotopy then functoriality of Khovanov homology
implies that
\begin{align}
  (\KhCx(\Sigma_{T\Wmirror{T}})\otimes\Id_{\KhCx(T)})\circ(\Id_{\KhCx(T)}\otimes\thinspace \KhCx(\Sigma_{\Wmirror{T}T}))&\sim\Id\co \KhCx(T,P)\to \KhCx(T,P)\label{eq:witness-dual-1}\\
  (\Id_{\KhCx(\Wmirror{T})}\otimes\thinspace
  \KhCx(\Sigma_{T\Wmirror{T}}))\circ(\KhCx(\Sigma_{\Wmirror{T}T})\otimes
  \Id_{\KhCx(\Wmirror{T})}) &\sim\Id\co \KhCx(\Wmirror{T},N-P)\to \KhCx(\Wmirror{T},N-P)\label{eq:witness-dual-2}.
\end{align}

\begin{figure}
  \centering
  \includegraphics[width=4in]{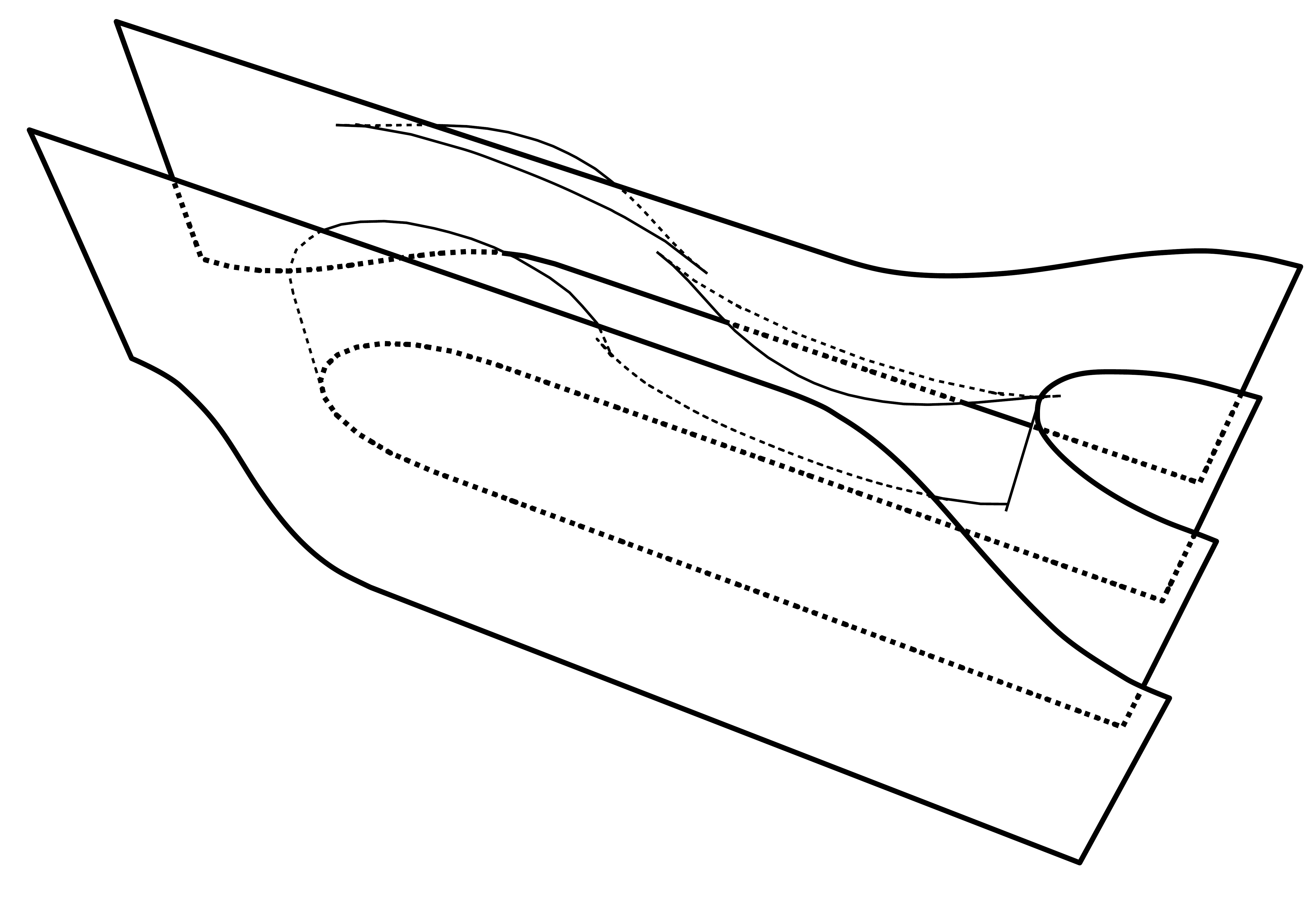}\qquad\qquad  
  \caption{\textbf{Composing cobordisms to get the identity.} With the
    same tangles and conventions as in Figure~\ref{fig:TTcob}, this is
    $(\Sigma_{T\Wmirror{T}}\cup\Id_T)\circ(\Id_T\cup
    \Sigma_{\Wmirror{T}T})$. The cobordism $(\Id_{\Wmirror{T}}\cup
    \Sigma_{T\Wmirror{T}})\circ(\Sigma_{\Wmirror{T}T}\cup
    \Id_{\Wmirror{T}})$ is similar.}
  \label{fig:TTcob-id}
\end{figure}

\begin{proposition}\label{prop:dual}
  Let $T$ be an $(m,n)$-tangle with $N$ crossings and $\Wmirror{T}$ its mirror. For any integer $P$ the
  map
  \begin{equation}\label{eq:dual}
    \begin{split}
      D\co \KhCx(\Wmirror{T},N-P)_{h,q+\frac{n-m}{2}}&\to \RHom_{\KhCx(m)}(\KhCx(T,P),\KhCx(m))_{h,q},\\
      D(x)(y)&=\KhCx(\Sigma_{T\Wmirror{T}})(y\otimes x)
    \end{split}
  \end{equation}
  is a quasi-isomorphism. (Here, the subscripts denote the homological and quantum gradings.)
  
  In particular, given $m$-tangles $T_1$ and $T_2$ with $N_1$ and
  $N_2$ crossings, respectively, and integers $P_1$ and $P_2$, we have
  \[
     \RHom_{\KhCx(m)}(\KhCx(T_1,P_1),\KhCx(T_2,P_2))_{h,q}\cong \KhCx(\Wmirror{T_1}T_2,N_1-P_1+P_2)_{h,q-m/2}.
   \]
 \end{proposition}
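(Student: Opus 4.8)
The plan is to prove the first assertion — that $D$ is a quasi-isomorphism — by a TQFT-style argument that realizes $\KhCx(\Wmirror{T},N-P)$ as a dual of $\KhCx(T,P)$ in the homotopy category of bimodules over arc algebras, using $\KhCx(\Sigma_{T\Wmirror{T}})$ and $\KhCx(\Sigma_{\Wmirror{T}T})$ as evaluation and coevaluation maps; the ``in particular'' statement then follows formally from dualizability (Proposition~\ref{prop:Kh-dualizable}) and Khovanov's gluing theorem (Theorem~\ref{thm:Kh-gluing}).

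For the first assertion I would set $\epsilon=\KhCx(\Sigma_{T\Wmirror{T}})$ and $\eta=\KhCx(\Sigma_{\Wmirror{T}T})$. Using Theorem~\ref{thm:Kh-gluing} to identify the relevant tensor products with Khovanov complexes of glued tangles, $\epsilon$ is a $(\KhCx(m),\KhCx(m))$-bimodule map $\KhCx(T,P)\otimes_{\KhCx(n)}\KhCx(\Wmirror{T},N-P)\grs{\tfrac{m-n}{2}}{0}\to\KhCx(m)$ and $\eta$ is a $(\KhCx(n),\KhCx(n))$-bimodule map $\KhCx(n)\to\KhCx(\Wmirror{T},N-P)\otimes_{\KhCx(m)}\KhCx(T,P)\grs{\tfrac{m-n}{2}}{0}$. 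Since $\KhCx(T,P)$ is a complex of projective $\KhCx(m)$-modules (Lemma~\ref{lem:Kh-projective}), $\RHom_{\KhCx(m)}(\KhCx(T,P),\KhCx(m))$ may be taken to be the honest morphism complex, and one writes down the expected candidate homotopy inverse
\[
  E\co \RHom_{\KhCx(m)}(\KhCx(T,P),\KhCx(m))\too\KhCx(\Wmirror{T},N-P),\qquad E(\phi)=(\Id\otimes\phi)\bigl(\eta(1)\bigr),
\]
using the identification $\KhCx(\Wmirror{T},N-P)\otimes_{\KhCx(m)}\KhCx(m)=\KhCx(\Wmirror{T},N-P)$. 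A direct computation identifies the composites $E\circ D$ and $D\circ E$ with the left-hand sides of the ``zig-zag'' relations~\eqref{eq:witness-dual-1} and~\eqref{eq:witness-dual-2}, so they are chain homotopic to the respective identities; hence $D$ is a chain homotopy equivalence, in particular a quasi-isomorphism. Tracking homological and quantum degrees through $\epsilon$, $\eta$, and the gluing identifications produces the shift $\tfrac{n-m}{2}$.

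For the ``in particular'' statement I would regard $T_1$ as an $(m,0)$-tangle; then $\KhCx(T_1,P_1)$ is a $\KhCx(m)$-module, and the first assertion gives a quasi-isomorphism $\KhCx(\Wmirror{T_1},N_1-P_1)_{h,q-m/2}\to\RHom_{\KhCx(m)}(\KhCx(T_1,P_1),\KhCx(m))_{h,q}$. By Proposition~\ref{prop:Kh-dualizable}, $\KhCx(T_1,P_1)$ is dualizable over $\KhCx(m)$, so, taking $Z=\KhCx(T_2,P_2)$ in the definition of dualizability, the natural assembly map
\[
  \RHom_{\KhCx(m)}(\KhCx(T_1,P_1),\KhCx(m))\otimes_{\KhCx(m)}\KhCx(T_2,P_2)\too\RHom_{\KhCx(m)}(\KhCx(T_1,P_1),\KhCx(T_2,P_2))
\]
is a quasi-isomorphism. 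Substituting the previous equivalence into the left-hand factor and applying Theorem~\ref{thm:Kh-gluing} — which computes the relevant derived tensor product here, since these are complexes of projectives (Lemma~\ref{lem:Kh-projective}) — to rewrite $\KhCx(\Wmirror{T_1},N_1-P_1)\otimes_{\KhCx(m)}\KhCx(T_2,P_2)\simeq\KhCx(\Wmirror{T_1}T_2,N_1-P_1+P_2)$ yields the claimed isomorphism, the shift $\grs{-m/2}{0}$ being carried through from the first step.

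I expect the only genuinely non-formal ingredient to be functoriality of classical Khovanov homology, which is what supplies the zig-zag relations~\eqref{eq:witness-dual-1} and~\eqref{eq:witness-dual-2} (one must check that the two ``snake'' composites of $\Sigma_{T\Wmirror{T}}$ and $\Sigma_{\Wmirror{T}T}$ are isotopic to product cobordisms, Figure~\ref{fig:TTcob-id}); beyond that, the degree bookkeeping is the main point requiring care, and everything else is formal once $\KhCx(T,P)$ is known to be projective. Since the present paper also reproves functoriality, for the special case of this result used there I would instead give a direct proof that avoids functoriality: filter both sides by the cube of resolutions of $T$, check that $D$ restricts on each resolution to an isomorphism using the explicit projective form of $\KhCx(T_v)$ from Lemma~\ref{lem:Kh-projective}, and conclude by comparing the resulting iterated mapping cones.
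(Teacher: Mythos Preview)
Your proposal is correct and follows essentially the same route as the paper: both use the zig-zag relations~\eqref{eq:witness-dual-1}--\eqref{eq:witness-dual-2} (supplied by functoriality of Khovanov homology) to exhibit $\KhCx(\Wmirror{T},N-P)$ as a dual of $\KhCx(T,P)$, and then deduce the ``in particular'' by tensoring with $\KhCx(T_2,P_2)$ and invoking dualizability plus gluing. The only difference is cosmetic---you write out the inverse $E$ and check the triangle identities by hand, while the paper packages this as ``uniqueness of the dual of a dualizable $1$-morphism''---and you correctly anticipate both the circular-reasoning concern and the shape of the direct proof the paper gives separately (Proposition~\ref{prop:dual-direct}).
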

 (Note that in Formula~\eqref{eq:dual} we are taking the chain complex
 of left-module morphisms, not the complex of bimodule
 morphisms.)
\begin{proof}
  Equations~\eqref{eq:witness-dual-1}
  and~\eqref{eq:witness-dual-2} are the statement that $\KhCx(T,P)$ and
  $\KhCx(\Wmirror{T},N-P)\grs{\tfrac{n-m}{2}}{0}$ are dual 1-morphisms in the bicategory of
  $\ZZ$-algebras, chain complexes of bimodules, and homotopy classes
  of chain maps~\cite[Definition 6.1]{SP-other-dual}. Since
  $\KhCx(T,P)$ and $\RHom_{\KhCx(m)}(\KhCx(T,P),\KhCx(m))$ are also a
  dual pair, the result follows from (the proof of) uniqueness of the
  dual of a dualizable $1$-morphism (essentially~\cite[Proposition 2.10.5]{EGNO-other-tens-book}, for instance).   
  The second statement follows by tensoring the first statement with
  $\KhCx(T_2,P_2)$ and then applying Proposition~\ref{prop:Kh-dualizable} and the composition theorem
  for the tangle invariants.  
\end{proof}

To avoid circular reasoning, we also give a direct proof of the isomorphism in
Proposition~\ref{prop:dual} for the special case of
$(m,0)$-tangles. (The eventual reasoning is that
Proposition~\ref{prop:dual-direct} can be used to prove functoriality of Khovanov
homology, Theorem~\ref{thm:Kh-functorial}, which then implies
Formulas~\eqref{eq:witness-dual-1} and~\eqref{eq:witness-dual-2},
hence Proposition~\ref{prop:dual}. Proposition~\ref{prop:dual} is then
used to prove its spectral version, Proposition~\ref{prop:spec-dual},
which in turn implies functoriality of the stable homotopy type, Theorem~\ref{thm:Kh-spec-functorial}.)
\begin{proposition}\label{prop:dual-direct}
  Let $T$ be an $(m,0)$-tangle and $\Wmirror{T}$ its mirror. Then, there is an isomorphism
  \begin{equation}\label{eq:dual-direct}
    \RHom_{\KhCx(m)}(\KhCx(T,P),\KhCx(m))_{h,q}\cong \KhCx(\Wmirror{T},N-P)_{h,q-m/2}.
  \end{equation}
  In particular, given $(m,0)$-tangles $T_1$ and $T_2$, we have
  \[
     \RHom_{\KhCx(m)}(\KhCx(T_1,P_1),\KhCx(T_2,P_2))_{h,q}\cong \KhCx(\Wmirror{T_1}T_2,N_1-P_1+P_2)_{h,q-m/2}.
   \]
\end{proposition}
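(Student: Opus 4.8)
The plan is to prove \Formula{dual-direct} by a direct computation with the cube of resolutions, working one resolution at a time and then assembling, and to deduce the ``in particular'' statement from \Formula{dual-direct} together with Khovanov's composition theorem. Throughout write $\mathcal{A}=\KhCx(m)$.

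\emph{Reduction to underived $\Hom$, and the crossingless case.} Since $T$ is an $(m,0)$-tangle, every resolution $T_v$ is a crossingless $(m,0)$-tangle, so by \Lemma{Kh-projective} each $\KhCx(T_v,P)$ is a finite direct sum of grading shifts of elementary projective $\mathcal{A}$-modules: if $\Wmirror{c_v}$ denotes $T_v$ with its $j_v$ closed components deleted, then $\KhCx(T_v,P)\cong \KhCx(\Wmirror{c_v})\otimes_\ZZ V^{\otimes j_v}$ up to a grading shift, and $\KhCx(\Wmirror{c_v})$ is a direct summand of $\mathcal{A}$. In particular $\KhCx(T,P)$ is a bounded complex of projective $\mathcal{A}$-modules, so $\RHom_{\mathcal{A}}(\KhCx(T,P),\mathcal{A})$ is computed termwise as the $\Hom$-complex, with no projective resolution needed. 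For the crossingless pieces, $\Hom_{\mathcal{A}}(-,\mathcal{A})$ carries the elementary projective $\KhCx(\Wmirror{c})$ to the elementary projective on the opposite side, which is $\KhCx(c)$ up to a grading shift; using moreover that Khovanov's Frobenius algebra $V$ is self-dual under the pairing $x\otimes y\mapsto \epsilon(xy)$ (grading preserving with our conventions), one gets natural isomorphisms $\Hom_{\mathcal{A}}(\KhCx(\Wmirror{c})\otimes V^{\otimes j},\mathcal{A})\cong \KhCx(c)\otimes V^{\otimes j}$ up to a grading shift. Using the combinatorial fact that mirroring a diagram interchanges $0$- and $1$-resolutions, so that $(\Wmirror{T})_v=\Wmirror{T_{\bar v}}$ for the complementary vertex $\bar v$, these assemble vertex by vertex into natural isomorphisms $\Hom_{\mathcal{A}}(\KhCx(T_v,P),\mathcal{A})\cong \KhCx(\Wmirror{T_{\bar v}},N-P)$. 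Reconciling the grading shifts here is a bookkeeping exercise in the conventions of \Section{gradings}: complementation sends $|v|\mapsto N-|v|$, which forces the substitution of $N-P$ for $P$; the quantum shift $\grs{n/2}{0}$ in the definition of the tangle modules contributes nothing for the $(m,0)$-tangle $T$ but contributes $\grs{m/2}{0}$ for the $(0,m)$-tangle $\Wmirror{T}$; and dualization negates $\grs{q}{h}$ on the relevant factors. Combining these produces exactly the $m/2$ shift in quantum grading and the negation of homological grading in \Formula{dual-direct}.

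\emph{Globalizing over the cube.} Write $\KhCx(T,P)=\hocolim_{v\in\CCatP{\Crossings}}\KhCx(T_v,P)$ as in \Formula{KhCx-T-hocolim}. The functor $\Hom_{\mathcal{A}}(-,\mathcal{A})$ is additive and exact on the subcategory of projective $\mathcal{A}$-modules (every short exact sequence of projectives splits), so it carries this iterated mapping cone to the iterated mapping cone of the pointwise-dualized diagram, with every cube edge reversed. Re-indexing that diagram along the complementation $v\mapsto\bar v$ turns the reversed cube back into an honest cube of resolutions, which by the previous paragraph is precisely the cube whose iterated mapping cone defines $\KhCx(\Wmirror{T},N-P)$ (the $\Sigma^{-N}$ coming from replacing $N$ successive cofibers by fibers is absorbed into the same grading bookkeeping). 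This proves \Formula{dual-direct}.

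\emph{The ``in particular'' statement.} By \Proposition{Kh-dualizable}, $\KhCx(T_1,P_1)$ is left-dualizable over $\mathcal{A}$, so the natural map of \Formula{dual-nat-map} gives an equivalence
\[
  \RHom_{\mathcal{A}}\bigl(\KhCx(T_1,P_1),\KhCx(T_2,P_2)\bigr)\simeq \RHom_{\mathcal{A}}\bigl(\KhCx(T_1,P_1),\mathcal{A}\bigr)\otimes_{\mathcal{A}}\KhCx(T_2,P_2).
\]
Applying \Formula{dual-direct} to the first factor identifies it, up to the $m/2$ shift, with $\KhCx(\Wmirror{T_1},N_1-P_1)$, and then \Lemma{Kh-gluing} (a version of Khovanov's composition theorem \Theorem{Kh-gluing}, reproven in this paper, so there is no circularity) identifies $\KhCx(\Wmirror{T_1},N_1-P_1)\otimes_{\mathcal{A}}\KhCx(T_2,P_2)$ with $\KhCx(\Wmirror{T_1}T_2,N_1-P_1+P_2)$. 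Tracking the $m/2$ shift through gives the displayed formula.

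\textbf{Main obstacle.} The delicate point is the globalization step: making precise that ``dualizing the Khovanov complex, and complementing the cube, recovers the Khovanov complex of the mirror,'' and in particular that the signs in the two iterated mapping cones agree. Because we use the sign-free $\CCatP{\Crossings}$ model of the iterated mapping cone, this reduces to naturality of the vertexwise isomorphisms above, but still demands care. The quantum-grading accounting behind the $-m/2$ is routine but must be done attentively, since it superposes three separate shift conventions (the $\grs{n/2}{0}$ in the module definition, the Frobenius self-duality of $V$, and the substitution of $N-P$ for $P$).
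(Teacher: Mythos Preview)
Your proposal is correct and follows essentially the same route as the paper: compute on each resolution using \Lemma{Kh-projective}, the Frobenius self-duality of $V$, and the identification of the dual of an elementary projective, then assemble over the cube using the complementation $v\mapsto\bar v$, and finally deduce the second statement from \Proposition{Kh-dualizable} and the gluing theorem. The only substantive difference is that where you invoke ``naturality of the vertexwise isomorphisms'' (and flag it as the main obstacle), the paper does not argue abstractly but instead enumerates and checks by hand all the ways a saddle edge map can interact with interior versus boundary circles (their Figure~\ref{fig:dual-chain}); this is exactly the verification your sketch defers, and it is not entirely formal because the edge maps can mix the $V^{\otimes j}$ factor with the elementary-projective factor.
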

\begin{proof}
  For the first statement, suppose initially that $T$ is a flat
  tangle and $P=0$. Write $T$ as the union of (the mirror of) a crossingless matching $\Wmirror{a}$ and
  $k$ unknots. Then, $\KhCx(T,0)=V^{\otimes k}\otimes
  \KhCx(\Wmirror{a},0)$. Since $\KhCx(\Wmirror{a},0)$ is an elementary projective
  module, an element $f\in \RHom(\KhCx(\Wmirror{a},0),\KhCx(m))$ is determined by
  $f(1_a)$ (where $1_a\in V(a\Wmirror{a})$). Further, $f(1_a)=1_af(1_a)$, so $f(1_a)$ must be an
  element of $1_a\KhCx(m)=\KhCx(\Wmirror{a},0)$. The map $1_a\mapsto 1_a$
  generates this $\KhCx(m)$-module. The element $1_a\in \KhCx(\Wmirror{a},0)$ has
  quantum grading $-m/2$, while $1_a\in \KhCx(m)$ has quantum grading $0$, so
  this map shifts the quantum grading up by $m/2$.  We also have
  $\KhCx(\Wmirror{T},0)\cong V^{\otimes k}\otimes
  \KhCx(a,0)$, but here the quantum grading is shifted up by $m/2$ (so
  if $k=0$, $1_a$ would have quantum grading $0$, not $-m/2$).
  Finally, the isomorphism $V\cong V^*$ which sends
  \[
    1\mapsto (X\mapsto 1,\ 1\mapsto 0)\qquad\qquad X\mapsto (X\mapsto
    0,\ 1\mapsto 1)
  \]
  preserves the quantum grading. Hence, overall, the isomorphism
  decreases the quantum grading by $m/2$.

  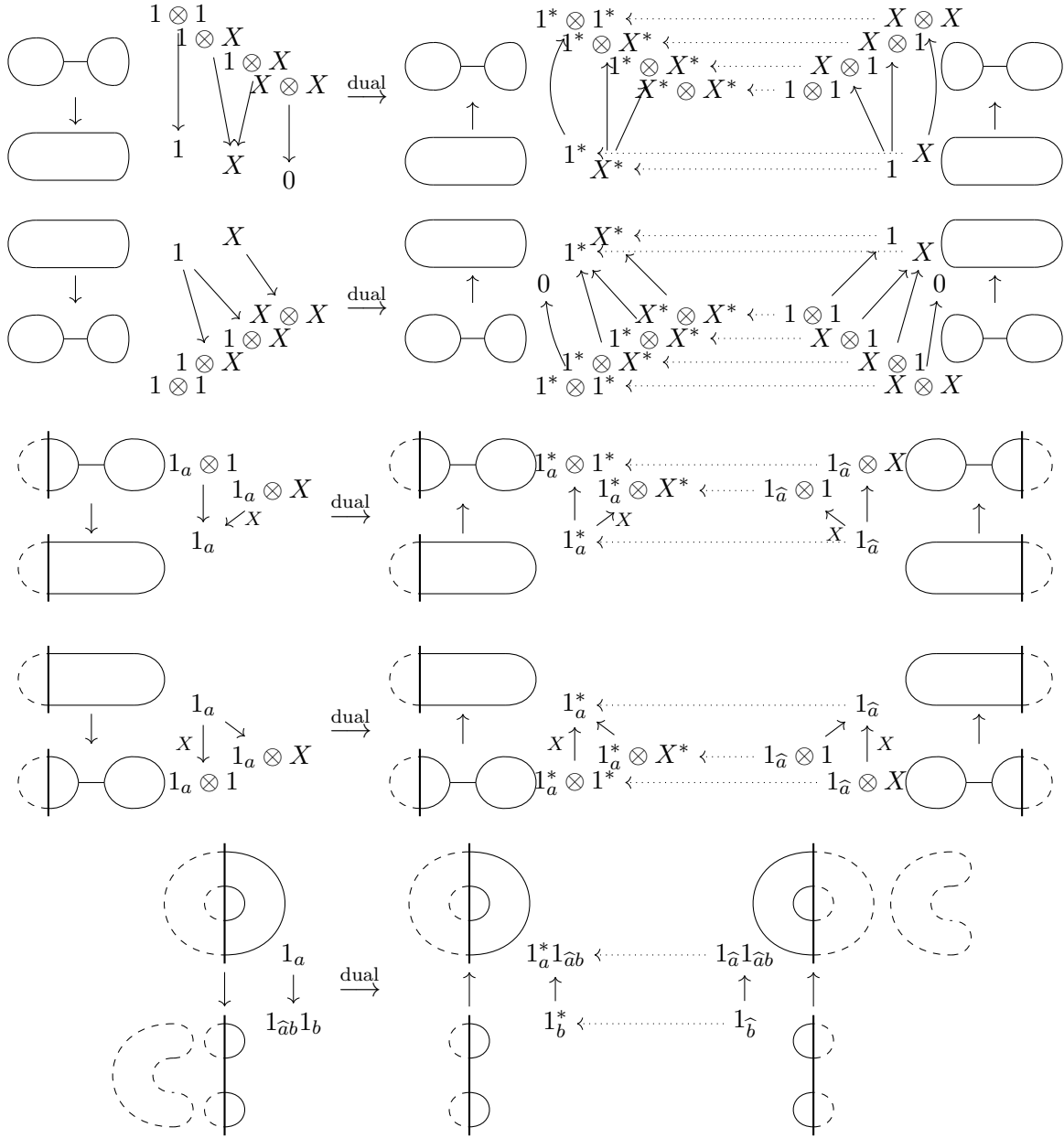
\begin{figure}
    \centering
\[
    \mathcenter{\begin{tikzpicture}[scale=.23]
      \node  (2) at (5, 3) {};
      \node  (3) at (5, 0) {};
      \node  (4) at (3.25, 1.5) {};
      \node  (5) at (0, 3) {};
      \node  (6) at (0, 0) {};
      \node  (7) at (1.75, 1.5) {};
      \node  (8) at (5, -3) {};
      \node  (9) at (5, -6) {};
      \node  (11) at (0, -3) {};
      \node  (12) at (0, -6) {};
      \node  (14) at (2.5, -0.75) {};
      \node  (15) at (2.5, -2.5) {};
      \node  (oneone) at (9, 4.5) {$1\otimes 1$};
      \node  (one) at (9, -4) {$1$};
      \node  (oneX) at (11, 3) {$1\otimes X$};
      \node  (Xone) at (14, 1.5) {$1\otimes X$};
      \node  (XX) at (16, 0) {$X\otimes X$};
      \node  (zero) at (16, -6) {$0$};
      \node  (X) at (12.5, -5) {$X$};
      \draw [bend left=90, looseness=1.00] (2.center) to (3.center);
      \draw [bend right=45] (2.center) to (4.center);
      \draw [bend right=45] (4.center) to (3.center);
      \draw [in=180, out=-180, looseness=2.00,solid] (5.center) to (6.center);
      \draw [bend left=45] (5.center) to (7.center);
      \draw [bend left=45] (7.center) to (6.center);
      \draw (7.center) to (4.center);
      \draw [bend left=90, looseness=1.00] (8.center) to (9.center);
      \draw [bend right=90, looseness=2.00, solid] (11.center) to (12.center);
      \draw [->] (14.center) to (15.center);
      \draw (11.center) to (8.center);
      \draw (12.center) to (9.center);
      \draw[->] (oneone) to (one);
      \draw[->] (oneX) to (X);
      \draw[->] (Xone) to (X);
      \draw[->] (XX) to (zero);
    \end{tikzpicture}}
    \stackrel{\text{dual}}{\longrightarrow}
        \mathcenter{\begin{tikzpicture}[scale=.23]
      \node  (2) at (5, 3) {};
      \node  (3) at (5, 0) {};
      \node  (4) at (3.25, 1.5) {};
      \node  (5) at (0, 3) {};
      \node  (6) at (0, 0) {};
      \node  (7) at (1.75, 1.5) {};
      \node  (8) at (5, -3) {};
      \node  (9) at (5, -6) {};
      \node  (11) at (0, -3) {};
      \node  (12) at (0, -6) {};
      \node  (14) at (2.5, -0.75) {};
      \node  (15) at (2.5, -2.5) {};
      \node  (oneone) at (9, 4.5) {$1^*\otimes 1^*$};
      \node  (one) at (9, -4) {$1^*$};
      \node  (oneX) at (11, 3) {$1^*\otimes X^*$};
      \node  (Xone) at (14, 1.5) {$1^*\otimes X^*$};
      \node  (XX) at (16, 0) {$X^*\otimes X^*$};
      \node  (X) at (11, -5) {$X^*$};
      \draw [bend left=90, looseness=1.00] (2.center) to (3.center);
      \draw [bend right=45] (2.center) to (4.center);
      \draw [bend right=45] (4.center) to (3.center);
      \draw [in=180, out=-180, looseness=2.00,solid] (5.center) to (6.center);
      \draw [bend left=45] (5.center) to (7.center);
      \draw [bend left=45] (7.center) to (6.center);
      \draw (7.center) to (4.center);
      \draw [bend left=90, looseness=1.00] (8.center) to (9.center);
      \draw [bend right=90, looseness=2.00, solid] (11.center) to (12.center);
      \draw [->] (15.center) to (14.center);
      \draw (11.center) to (8.center);
      \draw (12.center) to (9.center);
      \draw[->, bend left=30] (one) to (oneone);
      \draw[->] (X) to (oneX);
      \draw[->] (X) to (Xone);
      \node  (b2) at (33, 3) {};
      \node  (b3) at (33, 0) {};
      \node  (b4) at (34.75, 1.5) {};
      \node  (b5) at (38, 3) {};
      \node  (b6) at (38,0) {};
      \node  (b7) at (36.25, 1.5) {};
      \node  (b8) at (33, -3) {};
      \node  (b9) at (33, -6) {};
      \node  (b11) at (38, -3) {};
      \node  (b12) at (38, -6) {};
      \node  (b14) at (35.5, -0.75) {};
      \node  (b15) at (35.5, -2.5) {};
      \node  (boneone) at (31, 4.5) {$X\otimes X$};
      \node  (bone) at (31, -4) {$X$};
      \node  (boneX) at (29, 3) {$X\otimes 1$};
      \node  (bXone) at (26, 1.5) {$X\otimes 1$};
      \node  (bXX) at (24, 0) {$1\otimes 1$};
      \node  (bX) at (29, -5) {$1$};
      \draw [bend left=-90, looseness=1.00] (b2.center) to (b3.center);
      \draw [bend right=-45] (b2.center) to (b4.center);
      \draw [bend right=-45] (b4.center) to (b3.center);
      \draw [bend right=-90,looseness=2,solid] (b5.center) to (b6.center);
      \draw [bend left=-45] (b5.center) to (b7.center);
      \draw [bend left=-45] (b7.center) to (b6.center);
      \draw (b7.center) to (b4.center);
      \draw [bend left=-90, looseness=1.00] (b8.center) to (b9.center);
      \draw [bend right=-90, looseness=2.00, solid] (b11.center) to (b12.center);
      \draw [->] (b15.center) to (b14.center);
      \draw (b11.center) to (b8.center);
      \draw (b12.center) to (b9.center);
      \draw[->, bend right=15] (bone) to (boneone);
      \draw[->] (bX) to (boneX);
      \draw[->] (bX) to (bXone);
      \draw[->, dotted] (boneone) to (oneone);
      \draw[->, dotted] (boneX) to (oneX);
      \draw[->, dotted] (bXone) to (Xone);
      \draw[->, dotted] (bXX) to (XX);
      \draw[->, dotted] (bX) to (X);
      \draw[->, dotted] (bone) to (one);
    \end{tikzpicture}}
  \]
%Interior interior split
\[
    \mathcenter{\begin{tikzpicture}[scale=.23]
      \node  (2) at (5, -3) {};
      \node  (3) at (5, 0) {};
      \node  (4) at (3.25, -1.5) {};
      \node  (5) at (0, -3) {};
      \node  (6) at (0, 0) {};
      \node  (7) at (1.75, -1.5) {};
      \node  (8) at (5, 3) {};
      \node  (9) at (5, 6) {};
      \node  (11) at (0, 3) {};
      \node  (12) at (0, 6) {};
      \node  (14) at (2.5, 0.75) {};
      \node  (15) at (2.5, 2.5) {};
      \node  (oneone) at (9, -4.5) {$1\otimes 1$};
      \node  (one) at (9, 4) {$1$};
      \node  (oneX) at (11, -3) {$1\otimes X$};
      \node  (Xone) at (14, -1.5) {$1\otimes X$};
      \node  (XX) at (16, 0) {$X\otimes X$};
      \node  (X) at (12.5, 5) {$X$};
      \draw [bend right=90, looseness=1.00] (2.center) to (3.center);
      \draw [bend left=45] (2.center) to (4.center);
      \draw [bend left=45] (4.center) to (3.center);
      \draw [in=180, out=-180, looseness=2.00,solid] (5.center) to (6.center);
      \draw [bend right=45] (5.center) to (7.center);
      \draw [bend right=45] (7.center) to (6.center);
      \draw (7.center) to (4.center);
      \draw [bend right=90, looseness=1.00] (8.center) to (9.center);
      \draw [bend left=90, looseness=2.00, solid] (11.center) to (12.center);
      \draw [->] (15.center) to (14.center);
      \draw (11.center) to (8.center);
      \draw (12.center) to (9.center);
      \draw[->] (one) to (oneX);
      \draw[->] (one) to (Xone);
      \draw[->] (X) to (XX);
    \end{tikzpicture}}
    \stackrel{\text{dual}}{\longrightarrow}
        \mathcenter{\begin{tikzpicture}[scale=.23]
      \node  (2) at (5, -3) {};
      \node  (3) at (5, 0) {};
      \node  (4) at (3.25, -1.5) {};
      \node  (5) at (0, -3) {};
      \node  (6) at (0, 0) {};
      \node  (7) at (1.75, -1.5) {};
      \node  (8) at (5, 3) {};
      \node  (9) at (5, 6) {};
      \node  (11) at (0, 3) {};
      \node  (12) at (0, 6) {};
      \node  (14) at (2.5, 0.75) {};
      \node  (15) at (2.5, 2.5) {};
      \node  (oneone) at (9, -4.5) {$1^*\otimes 1^*$};
      \node  (one) at (9, 4) {$1^*$};
      \node  (oneX) at (11, -3) {$1^*\otimes X^*$};
      \node  (Xone) at (14, -1.5) {$1^*\otimes X^*$};
      \node  (XX) at (16, 0) {$X^*\otimes X^*$};
      \node  (X) at (11, 5) {$X^*$};
      \node  (zero) at (7, 2) {$0$};
      \draw [bend right=90, looseness=1.00] (2.center) to (3.center);
      \draw [bend left=45] (2.center) to (4.center);
      \draw [bend left=45] (4.center) to (3.center);
      \draw [in=180, out=-180, looseness=2.00,solid] (5.center) to (6.center);
      \draw [bend right=45] (5.center) to (7.center);
      \draw [bend right=45] (7.center) to (6.center);
      \draw (7.center) to (4.center);
      \draw [bend right=90, looseness=1.00] (8.center) to (9.center);
      \draw [bend left=90, looseness=2.00, solid] (11.center) to (12.center);
      \draw [->] (14.center) to (15.center);
      \draw (11.center) to (8.center);
      \draw (12.center) to (9.center);
      \draw[->] (XX) to (X);
      \draw[->] (oneX) to (one);
      \draw[->] (Xone) to (one);
      \draw[->, bend left=10] (oneone) to (zero);
      \node  (b2) at (33, -3) {};
      \node  (b3) at (33, 0) {};
      \node  (b4) at (34.75, -1.5) {};
      \node  (b5) at (38, -3) {};
      \node  (b6) at (38,0) {};
      \node  (b7) at (36.25, -1.5) {};
      \node  (b8) at (33, 3) {};
      \node  (b9) at (33, 6) {};
      \node  (b11) at (38, 3) {};
      \node  (b12) at (38, 6) {};
      \node  (b14) at (35.5, 0.75) {};
      \node  (b15) at (35.5, 2.5) {};
      \node  (boneone) at (31, -4.5) {$X\otimes X$};
      \node  (bone) at (31, 4) {$X$};
      \node  (boneX) at (29, -3) {$X\otimes 1$};
      \node  (bXone) at (26, -1.5) {$X\otimes 1$};
      \node  (bXX) at (24, 0) {$1\otimes 1$};
      \node  (bX) at (29, 5) {$1$};
      \node  (bzero) at (32, 2) {$0$};
      \draw [bend right=-90, looseness=1.00] (b2.center) to (b3.center);
      \draw [bend left=-45] (b2.center) to (b4.center);
      \draw [bend left=-45] (b4.center) to (b3.center);
      \draw [bend left=-90,looseness=2,solid] (b5.center) to (b6.center);
      \draw [bend right=-45] (b5.center) to (b7.center);
      \draw [bend right=-45] (b7.center) to (b6.center);
      \draw (b7.center) to (b4.center);
      \draw [bend right=-90, looseness=1.00] (b8.center) to (b9.center);
      \draw [bend left=-90, looseness=2.00, solid] (b11.center) to (b12.center);
      \draw [->] (b14.center) to (b15.center);
      \draw (b11.center) to (b8.center);
      \draw (b12.center) to (b9.center);
      \draw[->] (bXX) to (bX);
      \draw[->] (bXone) to (bone);
      \draw[->] (boneX) to (bone);
      \draw[->] (boneone) to (bzero);
      \draw[->, dotted] (boneone) to (oneone);
      \draw[->, dotted] (boneX) to (oneX);
      \draw[->, dotted] (bXone) to (Xone);
      \draw[->, dotted] (bXX) to (XX);
      \draw[->, dotted] (bX) to (X);
      \draw[->, dotted] (bone) to (one);
    \end{tikzpicture}}
  \]
    %Boundary interior merge.
    \[
    \mathcenter{\begin{tikzpicture}[scale=.25]
      \node  (2) at (5, 3) {};
      \node  (3) at (5, 0) {};
      \node  (4) at (3.25, 1.5) {};
      \node  (5) at (0, 3) {};
      \node  (6) at (0, 0) {};
      \node  (7) at (1.75, 1.5) {};
      \node  (8) at (5, -3) {};
      \node  (9) at (5, -6) {};
      \node  (11) at (0, -3) {};
      \node  (12) at (0, -6) {};
      \node  (14) at (2.5, -0.75) {};
      \node  (15) at (2.5, -2.5) {};
      \node  (16) at (9, 1.5) {$1_a\otimes 1$};
      \node  (1a) at (9, -3) {$1_a$};
      \node  (1aX) at (13, 0) {$1_a\otimes X$};
      \draw [bend left=90, looseness=2.00] (2.center) to (3.center);
      \draw [bend right=45] (2.center) to (4.center);
      \draw [bend right=45] (4.center) to (3.center);
      \draw [in=180, out=-180, looseness=2.00,dashed] (5.center) to (6.center);
      \draw [bend left=45] (5.center) to (7.center);
      \draw [bend left=45] (7.center) to (6.center);
      \draw (7.center) to (4.center);
      \draw [bend left=90, looseness=2.00] (8.center) to (9.center);
      \draw [bend right=90, looseness=2.00, dashed] (11.center) to (12.center);
      \draw [->] (14.center) to (15.center);
      \draw (11.center) to (8.center);
      \draw (12.center) to (9.center);
      \draw[->] (16) to (1a);
      \draw[->] (1aX) to node[right]{$\scriptstyle{X}$} (1a);
      \node (lline1) at (0,4) {};
      \node (lline2) at (0,-1) {};
      \node (lline3) at (0,-2) {};
      \node (lline4) at (0,-7) {};
      \draw[-, thick] (lline1) to (lline2);
      \draw[-, thick] (lline3) to (lline4);
    \end{tikzpicture}}
    \stackrel{\text{dual}}{\longrightarrow}
        \mathcenter{\begin{tikzpicture}[scale=.25]
      \node  (2) at (5, 3) {};
      \node  (3) at (5, 0) {};
      \node  (4) at (3.25, 1.5) {};
      \node  (5) at (0, 3) {};
      \node  (6) at (0, 0) {};
      \node  (7) at (1.75, 1.5) {};
      \node  (8) at (5, -3) {};
      \node  (9) at (5, -6) {};
      \node  (11) at (0, -3) {};
      \node  (12) at (0, -6) {};
      \node  (14) at (2.5, -0.75) {};
      \node  (15) at (2.5, -2.5) {};
      \node  (16) at (9, 1.5) {$1_a^*\otimes 1^*$};
      \node  (1a) at (9, -3) {$1_a^*$};
      \node  (1aX) at (13, 0) {$1_a^*\otimes X^*$};
      \draw [bend left=90, looseness=2.00] (2.center) to (3.center);
      \draw [bend right=45] (2.center) to (4.center);
      \draw [bend right=45] (4.center) to (3.center);
      \draw [in=180, out=-180, looseness=2.00,dashed] (5.center) to (6.center);
      \draw [bend left=45] (5.center) to (7.center);
      \draw [bend left=45] (7.center) to (6.center);
      \draw (7.center) to (4.center);
      \draw [bend left=90, looseness=2.00] (8.center) to (9.center);
      \draw [bend right=90, looseness=2.00, dashed] (11.center) to (12.center);
      \draw [->] (15.center) to (14.center);
      \draw (11.center) to (8.center);
      \draw (12.center) to (9.center);
      \draw[->] (1a) to (16);
      \draw[->] (1a) to node[right]{$\scriptstyle{X}$} (1aX);
      \node (lline1) at (0,4) {};
      \node (lline2) at (0,-1) {};
      \node (lline3) at (0,-2) {};
      \node (lline4) at (0,-7) {};
      \draw[-, thick] (lline1) to (lline2);
      \draw[-, thick] (lline3) to (lline4);
      \node  (b2) at (30, 3) {};
      \node  (b3) at (30, 0) {};
      \node  (b4) at (31.75, 1.5) {};
      \node  (b5) at (35, 3) {};
      \node  (b6) at (35,0) {};
      \node  (b7) at (33.25, 1.5) {};
      \node  (b8) at (30, -3) {};
      \node  (b9) at (30, -6) {};
      \node  (b11) at (35, -3) {};
      \node  (b12) at (35, -6) {};
      \node  (b14) at (32.5, -0.75) {};
      \node  (b15) at (32.5, -2.5) {};
      \node  (b16) at (26, 1.5) {$1_{\Wmirror{a}}\otimes X$};
      \node  (b1a) at (26, -3) {$1_{\Wmirror{a}}$};
      \node  (b1aX) at (22, 0) {$1_{\Wmirror{a}}\otimes 1$};
      \draw [bend left=-90, looseness=2.00] (b2.center) to (b3.center);
      \draw [bend right=-45] (b2.center) to (b4.center);
      \draw [bend right=-45] (b4.center) to (b3.center);
      \draw [bend right=-90,looseness=2,dashed] (b5.center) to (b6.center);
      \draw [bend left=-45] (b5.center) to (b7.center);
      \draw [bend left=-45] (b7.center) to (b6.center);
      \draw (b7.center) to (b4.center);
      \draw [bend left=-90, looseness=2.00] (b8.center) to (b9.center);
      \draw [bend right=-90, looseness=2.00, dashed] (b11.center) to (b12.center);
      \draw [->] (b15.center) to (b14.center);
      \draw (b11.center) to (b8.center);
      \draw (b12.center) to (b9.center);
      \draw[->] (b1a) to (b16);
      \draw[->] (b1a) to node[below]{$\scriptstyle{X}$} (b1aX);
      \node (blline1) at (35,4) {};
      \node (blline2) at (35,-1) {};
      \node (blline3) at (35,-2) {};
      \node (blline4) at (35,-7) {};
      \draw[-, thick] (blline1) to (blline2);
      \draw[-, thick] (blline3) to (blline4);
      \draw[->, dotted] (b1aX) to (1aX);
      \draw[->, dotted] (b16) to (16);
      \draw[->, dotted] (b1a) to (1a);
    \end{tikzpicture}}
\]
    %Boundary interior split.
    \[
    \mathcenter{\begin{tikzpicture}[scale=.25]
      \node  (2) at (5, -3) {};
      \node  (3) at (5, 0) {};
      \node  (4) at (3.25, -1.5) {};
      \node  (5) at (0, -3) {};
      \node  (6) at (0, 0) {};
      \node  (7) at (1.75, -1.5) {};
      \node  (8) at (5, 3) {};
      \node  (9) at (5, 6) {};
      \node  (11) at (0, 3) {};
      \node  (12) at (0, 6) {};
      \node  (14) at (2.5, 0.75) {};
      \node  (15) at (2.5, 2.5) {};
      \node  (16) at (9, -1.5) {$1_a\otimes 1$};
      \node  (1a) at (9, 3) {$1_a$};
      \node  (1aX) at (13, 0) {$1_a\otimes X$};
      \draw [bend right=90, looseness=2.00] (2.center) to (3.center);
      \draw [bend left=45] (2.center) to (4.center);
      \draw [bend left=45] (4.center) to (3.center);
      \draw [in=180, out=180, looseness=2.00,dashed] (5.center) to (6.center);
      \draw [bend right=45] (5.center) to (7.center);
      \draw [bend right=45] (7.center) to (6.center);
      \draw (7.center) to (4.center);
      \draw [bend right=90, looseness=2.00] (8.center) to (9.center);
      \draw [bend left=90, looseness=2.00, dashed] (11.center) to (12.center);
      \draw [->] (15.center) to (14.center);
      \draw (11.center) to (8.center);
      \draw (12.center) to (9.center);
      \draw[->] (1a) to node[left]{$\scriptstyle{X}$} (16);
      \draw[->] (1a) to  (1aX);
      \node (lline1) at (0,-4) {};
      \node (lline2) at (0,1) {};
      \node (lline3) at (0,2) {};
      \node (lline4) at (0,7) {};
      \draw[-, thick] (lline1) to (lline2);
      \draw[-, thick] (lline3) to (lline4);
    \end{tikzpicture}}
    \stackrel{\text{dual}}{\longrightarrow}
        \mathcenter{\begin{tikzpicture}[scale=.25]
      \node  (2) at (5, -3) {};
      \node  (3) at (5, 0) {};
      \node  (4) at (3.25, -1.5) {};
      \node  (5) at (0, -3) {};
      \node  (6) at (0, 0) {};
      \node  (7) at (1.75, -1.5) {};
      \node  (8) at (5, 3) {};
      \node  (9) at (5, 6) {};
      \node  (11) at (0, 3) {};
      \node  (12) at (0, 6) {};
      \node  (14) at (2.5, 0.75) {};
      \node  (15) at (2.5, 2.5) {};
      \node  (16) at (9, -1.5) {$1_a^*\otimes 1^*$};
      \node  (1a) at (9, 3) {$1_a^*$};
      \node  (1aX) at (13, 0) {$1_a^*\otimes X^*$};
      \draw [bend right=90, looseness=2.00] (2.center) to (3.center);
      \draw [bend left=45] (2.center) to (4.center);
      \draw [bend left=45] (4.center) to (3.center);
      \draw [in=180, out=-180, looseness=2.00,dashed] (5.center) to (6.center);
      \draw [bend right=45] (5.center) to (7.center);
      \draw [bend right=45] (7.center) to (6.center);
      \draw (7.center) to (4.center);
      \draw [bend right=90, looseness=2.00] (8.center) to (9.center);
      \draw [bend left=90, looseness=2.00, dashed] (11.center) to (12.center);
      \draw [->] (14.center) to (15.center);
      \draw (11.center) to (8.center);
      \draw (12.center) to (9.center);
      \draw[->] (16) to node[left]{$\scriptstyle{X}$} (1a);
      \draw[->] (1aX) to  (1a);
      \node (lline1) at (0,-4) {};
      \node (lline2) at (0,1) {};
      \node (lline3) at (0,2) {};
      \node (lline4) at (0,7) {};
      \draw[-, thick] (lline1) to (lline2);
      \draw[-, thick] (lline3) to (lline4);
      \node  (b2) at (30, -3) {};
      \node  (b3) at (30, 0) {};
      \node  (b4) at (31.75, -1.5) {};
      \node  (b5) at (35, -3) {};
      \node  (b6) at (35,0) {};
      \node  (b7) at (33.25, -1.5) {};
      \node  (b8) at (30, 3) {};
      \node  (b9) at (30, 6) {};
      \node  (b11) at (35, 3) {};
      \node  (b12) at (35, 6) {};
      \node  (b14) at (32.5, 0.75) {};
      \node  (b15) at (32.5, 2.5) {};
      \node  (b16) at (26, -1.5) {$1_{\Wmirror{a}}\otimes X$};
      \node  (b1a) at (26, 3) {$1_{\Wmirror{a}}$};
      \node  (b1aX) at (22, 0) {$1_{\Wmirror{a}}\otimes 1$};
      \draw [bend right=-90, looseness=2.00] (b2.center) to (b3.center);
      \draw [bend left=-45] (b2.center) to (b4.center);
      \draw [bend left=-45] (b4.center) to (b3.center);
      \draw [bend left=-90,looseness=2,dashed] (b5.center) to (b6.center);
      \draw [bend right=-45] (b5.center) to (b7.center);
      \draw [bend right=-45] (b7.center) to (b6.center);
      \draw (b7.center) to (b4.center);
      \draw [bend right=-90, looseness=2.00] (b8.center) to (b9.center);
      \draw [bend left=-90, looseness=2.00, dashed] (b11.center) to (b12.center);
      \draw [->] (b14.center) to (b15.center);
      \draw (b11.center) to (b8.center);
      \draw (b12.center) to (b9.center);
      \draw[->] (b16) to  node[right]{$\scriptstyle{X}$} (b1a);
      \draw[->] (b1aX) to (b1a);
      \node (blline1) at (35,-4) {};
      \node (blline2) at (35,1) {};
      \node (blline3) at (35,2) {};
      \node (blline4) at (35,7) {};
      \draw[-, thick] (blline1) to (blline2);
      \draw[-, thick] (blline3) to (blline4);
      \draw[->, dotted] (b1aX) to (1aX);
      \draw[->, dotted] (b16) to (16);
      \draw[->, dotted] (b1a) to (1a);
    \end{tikzpicture}}
\]
%Boundary boundary merge
\[
  \mathcenter{\begin{tikzpicture}[scale=.5]
    \coordinate (p1) at (0,0);
    \coordinate (p2) at (0,-1);
    \coordinate (p3) at (0,-2);
    \coordinate (p4) at (0,-3) {};
    \coordinate  (top) at (0,.25);
    \coordinate (bottom) at (0,-3.25);
    \coordinate (tail) at (0,-3.5);
    \coordinate (head) at (0,-4.5);
    \coordinate (q1) at (0,-5);
    \coordinate (q2) at (0,-6);
    \coordinate (q3) at (0,-7);
    \coordinate (q4) at (0,-8) {};
    \coordinate (r1) at (-1.5,-5);
    \coordinate (r2) at (-1.5,-6);
    \coordinate (r3) at (-1.5,-7);
    \coordinate (r4) at (-1.5,-8) {};
    \coordinate  (qtop) at (0,-4.75);
    \coordinate (qbottom) at (0,-8.25);
    \draw[-, thick] (top) to (bottom);
    \draw[in=180, out=180, dashed, looseness=2] (p1) to (p4);
    \draw[in=180, out=180, dashed, looseness=2] (p2) to (p3);
    \draw[bend right=90, looseness=2] (p4) to (p1);
    \draw[bend right=90, looseness=2] (p3) to (p2);
    \draw[in=180, out=180, dashed, looseness=2] (q1) to (q2);
    \draw[in=180, out=180, dashed, looseness=2] (q3) to (q4);
    \draw[bend right=90, looseness=2] (q4) to (q3);
    \draw[bend right=90, looseness=2] (q2) to (q1);
    \draw[in=180, out=180, dashed, looseness=2] (r1) to (r4);
    \draw[in=180, out=180, dashed, looseness=2] (r2) to (r3);
    \draw[bend right=90, looseness=2, dashed] (r4) to (r3);
    \draw[bend right=90, looseness=2, dashed] (r2) to (r1);
    \draw[-, thick] (qtop) to (qbottom);
    \draw[->] (tail) to (head);
    \node at (2,-3) (topelt) {$1_a$};
    \node at (2,-5) (botelt) {$1_{\Wmirror{a}b}1_b$};
    \draw[->] (topelt) to (botelt);
  \end{tikzpicture}}
\stackrel{\text{dual}}{\longrightarrow}
  \mathcenter{\begin{tikzpicture}[scale=.5]
    \coordinate (p1) at (0,0);
    \coordinate (p2) at (0,-1);
    \coordinate (p3) at (0,-2);
    \coordinate (p4) at (0,-3) {};
    \coordinate  (top) at (0,.25);
    \coordinate (bottom) at (0,-3.25);
    \coordinate (tail) at (0,-3.5);
    \coordinate (head) at (0,-4.5);
    \coordinate (q1) at (0,-5);
    \coordinate (q2) at (0,-6);
    \coordinate (q3) at (0,-7);
    \coordinate (q4) at (0,-8) {};
    \coordinate  (qtop) at (0,-4.75);
    \coordinate (qbottom) at (0,-8.25);
    \draw[-, thick] (top) to (bottom);
    \draw[in=180, out=180, dashed, looseness=2] (p1) to (p4);
    \draw[in=180, out=180, dashed, looseness=2] (p2) to (p3);
    \draw[bend right=90, looseness=2] (p4) to (p1);
    \draw[bend right=90, looseness=2] (p3) to (p2);
    \draw[in=180, out=180, dashed, looseness=2] (q1) to (q2);
    \draw[in=180, out=180, dashed, looseness=2] (q3) to (q4);
    \draw[bend right=90, looseness=2] (q4) to (q3);
    \draw[bend right=90, looseness=2] (q2) to (q1);
    \draw[-, thick] (qtop) to (qbottom);
    \draw[->] (head) to (tail);
    \node at (2.5,-3) (topelt) {$1_a^*1_{\Wmirror{a}b}$};
    \node at (2.5,-5) (botelt) {$1_b^*$};
    \draw[->] (botelt) to (topelt);
    \coordinate (bp1) at (10,0);
    \coordinate (bp2) at (10,-1);
    \coordinate (bp3) at (10,-2);
    \coordinate (bp4) at (10,-3) {};
    \coordinate  (btop) at (10,.25);
    \coordinate (bbottom) at (10,-3.25);
    \coordinate (btail) at (10,-3.5);
    \coordinate (bhead) at (10,-4.5);
    \coordinate (bq1) at (10,-5);
    \coordinate (bq2) at (10,-6);
    \coordinate (bq3) at (10,-7);
    \coordinate (bq4) at (10,-8) {};
    \coordinate (r1) at (14,0);
    \coordinate (r2) at (14,-1);
    \coordinate (r3) at (14,-2);
    \coordinate (r4) at (14,-3) {};
    \coordinate  (bqtop) at (10,-4.75);
    \coordinate (bqbottom) at (10,-8.25);
    \draw[-, thick] (btop) to (bbottom);
    \draw[in=180, out=180,  looseness=2] (bp1) to (bp4);
    \draw[in=180, out=180, looseness=2] (bp2) to (bp3);
    \draw[bend right=90, dashed, looseness=2] (bp4) to (bp1);
    \draw[bend right=90, dashed, looseness=2] (bp3) to (bp2);
    \draw[in=180, out=180, looseness=2] (bq1) to (bq2);
    \draw[in=180, out=180, looseness=2] (bq3) to (bq4);
    \draw[bend right=90, dashed, looseness=2] (bq4) to (bq3);
    \draw[bend right=90, dashed, looseness=2] (bq2) to (bq1);
    \draw[in=180, out=180, dashed, looseness=2] (r1) to (r4);
    \draw[in=180, out=180, dashed, looseness=2] (r2) to (r3);
    \draw[bend right=90, looseness=2, dashed] (r4) to (r3);
    \draw[bend right=90, looseness=2, dashed] (r2) to (r1);
    \draw[-, thick] (bqtop) to (bqbottom);
    \draw[->] (bhead) to (btail);
    \node at (8,-3) (btopelt) {$1_{\Wmirror{a}}1_{\Wmirror{a}b}$};
    \node at (8,-5) (bbotelt) {$1_{\Wmirror{b}}$};
    \draw[->] (bbotelt) to (btopelt);
    \draw[->, dotted] (btopelt) to (topelt);
    \draw[->, dotted] (bbotelt) to (botelt);
  \end{tikzpicture}}
\]
\caption{\textbf{Checking the duality isomorphisms induce chain maps.}
  We check that the duality isomorphisms commute with merge and split
  maps. There are cases depending on how many of the circles being
  merged or split are in the interior versus the boundary of the
  tangle diagram. Resolutions of the tangle are drawn with solid lines, and
  crossingless matchings capping it off are dashed. The duality
  isomorphism is indicated with dotted arrows. In the last case,
  $1_{\Wmirror{a}b}$ denotes labeling the circle $\Wmirror{a}b$ by
  $1$.}
    \label{fig:dual-chain}
  \end{figure}

  For the general case, we apply the isomorphism of the previous
  paragraph at each vertex. Rather than giving an abstract argument
  that these are chain maps, we simply check all the cases; see
  Figure~\ref{fig:dual-chain}.
  
  Turning to the gradings, the isomorphism exchanges $0$ and $1$
  resolutions, positive and negative crossings, and the generators $1$
  and $X$. Dualizing also negates the grading. Hence, given a
  generator of $\KhCx(T,P)$ in
  $V(aT_v)$ with grading $q$, the dual generator of
  $\RHom_{\KhCx(m)}(\KhCx(T,P),\KhCx(m))$ has grading
  $|v|-2N+3P-q$. The corresponding generator of
  $\KhCx(\Wmirror{T},N-P)$ has grading
  \[
    m/2-(N-|v|)+2N-3(N-P)-q=m/2+|v|-2N+3P-q,
  \]
  which is $m/2$ higher, as claimed.

  For the homological grading, every generator of $V(aT_v)$ has
  homological grading $N-|v|-P$, their dual generators of
  $\RHom_{\KhCx(m)}(\KhCx(T,P),\KhCx(m))$ have homological grading
  $P-N+|v|=N-(N-P)-(N-|v|)$, which is the grading of the corresponding
  generators of $\KhCx(\Wmirror{T},N-P)$.
  
  As in Proposition~\ref{prop:dual}, the second statement follows from
  the first, Proposition~\ref{prop:Kh-dualizable}, and the composition
  theorem for the tangle invariants.
\end{proof}

\subsection{Duality for spectral modules}\label{sec:spec-dual}
We have the following spectral refinement of Proposition~\ref{prop:dual}:
\begin{proposition}\label{prop:spec-dual}
  Let $T$ be a $(m,n)$-tangle with $N$ crossings and $\Wmirror{T}$ its mirror. Then, there is a weak equivalence 
  \[
    \RHom_{\KhSpace(m)}(\KhSpace(T,P),\KhSpace(m))_{q}\simeq \KhSpace^{q+\frac{n-m}{2}}(\Wmirror{T},N-P).
  \]
  (This is the space of morphisms as left module spectra.)  In particular, given
  $m$-tangles $T_1$ and $T_2$ with $N_1$ and $N_2$ crossings,
  respectively, and integers $P_1$ and $P_2$, we have
  \[
    \RHom_{\KhSpace(m)}(\KhSpace(T_1,P_1),\KhSpace(T_2,P_2))_{q}
    \simeq \KhSpace^{q-m/2}(\Wmirror{T_1}T_2,N_1-P_1+P_2).
  \]
\end{proposition}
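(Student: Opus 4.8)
The plan is to follow the algebraic argument of Proposition~\ref{prop:dual}, but to verify the equivalence by passing to singular chains rather than by an abstract duality argument in a homotopy bicategory of spectral bimodules (which would require the spectral zig-zag identities, essentially the functoriality we are still building toward). First I would record the finiteness input: by Proposition~\ref{prop:Kh-dualizable-spectrum} the module $\KhSpace(T,P)$ is left-dualizable over $\KhSpace(m)$, and by Proposition~\ref{prop:dualizable-spectrum} both $\RHom_{\KhSpace(m)}(\KhSpace(T,P),\KhSpace(m))$ and $\KhSpace(\Wmirror{T},N-P)$ are bounded-below spectra whose homology is finitely generated in each degree, so the homology Whitehead theorem will apply to any map between them.

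Next I would construct a comparison map. The cobordism $\Sigma_{T\Wmirror{T}}$ of Figure~\ref{fig:TTcob} is a composite of elementary cobordisms, so the constructions of Section~\ref{sec:planar-spectral} assign to it a map $\KhSpace(\Sigma_{T\Wmirror{T}})\co\KhSpace(T\Wmirror{T},N)\to\KhSpace(m)$ of left $\KhSpace(m)$-module spectra (any decomposition will do, since we only need existence of a weak equivalence). Composing with the gluing equivalence of Lemma~\ref{lem:Kh-space-gluing} and passing through the Hom--tensor adjunction for module spectra yields a map
\[
  \mathfrak{D}\co \KhSpace(\Wmirror{T},N-P)\longrightarrow \RHom_{\KhSpace(m)}(\KhSpace(T,P),\KhSpace(m)),
\]
with the grading shift as in Proposition~\ref{prop:dual}, and compatible with the residual $\KhSpace(n)$- and $\KhSpace(m)$-actions on both sides.

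Then I would show $\mathfrak{D}$ is a weak equivalence by applying singular chains. The ingredients are: $C_*$ of each spectral elementary-cobordism map is Khovanov's corresponding chain map~\cite{LS-rasmus,LLS-kh-tangles}; $C_*$ of the gluing map of Lemma~\ref{lem:Kh-space-gluing} is Khovanov's gluing map; and Proposition~\ref{prop:dual-chains}, which identifies $C_*\RHom_{\KhSpace(m)}(\KhSpace(T,P),\KhSpace(m))$ with $\RHom_{\KhCx(m)}(\KhCx(T,P),\KhCx(m))$. Using in addition that singular chains commute with derived tensor products and carry the spectral Hom--tensor adjunction to the algebraic one, $C_*(\mathfrak{D})$ is identified, up to chain homotopy and an overall sign, with the quasi-isomorphism $D$ of Proposition~\ref{prop:dual}; the grading shift matches the chain-level computation there. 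Hence $\mathfrak{D}$ is a homology isomorphism, and by the homology Whitehead theorem (applicable by the first paragraph) it is a weak equivalence. I expect the delicate point to be exactly this chain-level identification $C_*(\mathfrak{D})\simeq D$: it rests both on the refinement of Khovanov's cobordism maps by the spectral cobordism maps, which leans on the earlier constructions in this paper and in~\cite{LLS-kh-tangles,LS-rasmus}, and on the naturality of the various adjunctions and comparison maps (including the one in Proposition~\ref{prop:dual-chains}) under $C_*$; one must also check that the appeal to Proposition~\ref{prop:dual} uses only Theorem~\ref{thm:Kh-functorial}, which is already available in the logical order.

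Finally, the second statement follows by tensoring with $\KhSpace(T_2,P_2)$: left-dualizability of $\KhSpace(T_1,P_1)$ makes the natural map
\[
  \RHom_{\KhSpace(m)}(\KhSpace(T_1,P_1),\KhSpace(m))\otimes_{\KhSpace(m)}\KhSpace(T_2,P_2)\longrightarrow \RHom_{\KhSpace(m)}(\KhSpace(T_1,P_1),\KhSpace(T_2,P_2))
\]
a weak equivalence; applying the first statement and then the spectral composition theorem (the gluing equivalence of Lemma~\ref{lem:Kh-space-gluing}) rewrites the left-hand side as $\KhSpace(\Wmirror{T_1}T_2,N_1-P_1+P_2)$, up to the stated grading shift.
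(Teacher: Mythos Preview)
Your proposal is correct and follows essentially the same approach as the paper: construct the comparison map $\mathcal{D}$ from the cobordism $\Sigma_{T\Wmirror{T}}$, identify its effect on singular chains with the map $D$ of Proposition~\ref{prop:dual} via Proposition~\ref{prop:dual-chains} and Diagram~\eqref{eq:homol-com}, and conclude by a Whitehead-type argument. You are slightly more explicit than the paper about the Hom--tensor adjunction and the logical ordering (that Proposition~\ref{prop:dual} is available once Theorem~\ref{thm:Kh-functorial} is proved); the only minor deviation is that the paper invokes the module-spectrum Whitehead theorem from~\cite[Theorem~2.18]{LLS-kh-tangles} rather than the plain-spectrum version you cite via Proposition~\ref{prop:dualizable-spectrum}.
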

\begin{proof}
  From Section~\ref{sec:planar-spectral}, given a tangle cobordism $\Sigma$ from $T$ to $T'$,
  decomposed as a movie, there is an induced map
  $\KhSpace(\Sigma)\co \KhSpace(T)\to\KhSpace(T')$ of spectral bimodules and a
  commutative diagram
  \begin{equation}\label{eq:homol-com}
    \mathcenter{\xymatrix{
      C_*(\KhSpace(T,P))\ar[r]^-{\KhSpace(\Sigma)_*} & C_*(\KhSpace(T',P'))\grs{\chi'(\Sigma)}{0}\\
      \KhCx(T,P)\ar[r]_-{\KhCx(\Sigma)} \ar[u]_\simeq & \KhCx(T',P')\grs{\chi'(\Sigma)}{0}\ar[u]_\simeq.
    }}
\end{equation}
  In particular, if $\Sigma_{T\Wmirror{T}}$ is the cobordism from Section~\ref{sec:aa-dual}
  then there is an induced map of spectral bimodules
  \[
    \KhSpace(\Sigma_{T\Wmirror{T}})\co
    \KhSpace(T,P)\otimes^L_{\KhSpace(n)}\KhSpace(\Wmirror{T},-P)\to \KhSpace(\Id_{m},0)\grs{n-m}{0}\simeq\KhSpace(m)\grs{\tfrac{n-m}{2}}{0}.
  \]
  There is an induced map
  \[
    \mathcal{D}\co \KhSpace(\Wmirror{T},N-P)_{q+\frac{n-m}{2}}\to \RHom_{\KhSpace(m)}(\KhSpace(T,P),\KhSpace(m))_{q}.
  \]
  By Proposition~\ref{prop:dual}, Diagram~\eqref{eq:homol-com} and
  Propositions~\ref{prop:Kh-dualizable} and~\ref{prop:dual-chains},
  the map $\mathcal{D}$ induces an isomorphism on homology. Hence, $D$
  is a weak equivalence of spectral modules (cf.~\cite[Theorem
  2.18 and proof of Theorem 5]{LLS-kh-tangles}).
\end{proof}

\section{Functoriality of Khovanov's tangle invariants and their
  spectral refinements}\label{sec:functoriality-proof}
\emph{Wherein we} prove that certain modules over the arc algebra and
spectral arc algebra have \textsc{no nontrivial automorphisms} up to sign, and
use this and similar results to \textsc{verify functoriality} for Khovanov
homology and its spectral refinement.

\subsection{Some rigidity results}
The key to Khovanov's proof of functoriality of Khovanov homology, and
hence also the key to ours, is rigidity of certain bimodules,
i.e., the fact that they have no nontrivial automorphisms. For us, the
relevant tangles are the following:
\begin{definition}
  A \emph{bridge tangle} is a diskular $n$-tangle ($n$ even) so that the corresponding
  geometric tangle is isotopic to a collection of embedded arcs in
  $S^1\times \RR \subset D^2\times\RR$. Equivalently, a bridge tangle is a
  tangle with no closed components, such that every component is unknotted and
  there is a collection of disks in the complement of $T$ separating the
  components of $T$.
\end{definition}

\begin{lemma}\label{lem:bridges}
  Let $T$ be a bridge tangle. Then, up to chain homotopy, the only
  grading-preserving chain homotopy autoequivalences of the Khovanov module
  $\KhCx(T,P)$ associated to $T$ are multiplication by $\pm 1$.
\end{lemma}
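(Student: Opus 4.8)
The plan is to reduce the statement to the case of a crossingless matching, where the ring of module endomorphisms can be written down by hand.

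First I would use invariance of the tangle invariant. By the second description in the definition, the geometric tangle underlying a bridge tangle $T$ is a disjoint union of unknotted arcs that can be separated by disks and has no closed components, so $T$ is isotopic (through tangles) to a crossingless matching $a\in\Crossingless{n}$: shrink each component into the ball it is cut off in, untangle it there, and reassemble. A movie realizing this isotopy is a composition of planar isotopies and Reidemeister moves only (no Morse moves), so by \Theorem{Kh-tangle-invt} — in the strengthened, homotopy-equivalence form recorded just after that theorem and used in \Remark{simple-aa} — there is a grading-preserving homotopy equivalence of $\KhCx(n)$-modules $\KhCx(T,P)\simeq\KhCx(a,P')$ for the appropriate $P'$ (possibly shifted; the shift is irrelevant below). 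Conjugating by this equivalence and a chosen homotopy inverse carries homotopy classes of grading-preserving autoequivalences of $\KhCx(T,P)$ bijectively onto those of $\KhCx(a,P')$ and respects composition, and a grading shift does not change a module's group of grading-preserving autoequivalences; so it suffices to treat $\KhCx(a,0)$.

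Second I would compute. Since $a$ has no crossings, $\KhCx(a,0)$ is concentrated in a single homological degree with zero differential, and there it is (a grading shift of) the elementary projective $\KhCx(n)$-module $1_a\KhCx(n)=\KhCx(n)(a,\cdot)$, where $1_a$ is the idempotent in $V(a\Wmirror{a})=\KhCx(n)(a,a)$. Because the differential vanishes there are no nonzero chain homotopies, so homotopy classes of grading-preserving chain endomorphisms of $\KhCx(a,0)$ coincide with grading-preserving $\KhCx(n)$-module endomorphisms, and the standard identification $\Hom_{\KhCx(n)}(1_a\KhCx(n),1_a\KhCx(n))\cong 1_a\KhCx(n)1_a=V(a\Wmirror{a})$ is a ring isomorphism sending $\Id$ to $1_a$. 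With the grading conventions of \Section{gradings}, labelling $k$ of the $n/2$ circles of $a\Wmirror{a}$ by $X$ gives an element in quantum grading $2k$, so the quantum-grading-$0$ part of $V(a\Wmirror{a})$ is the rank-one subgroup spanned by $1_a$, i.e.\ by the identity endomorphism (its homological grading is automatically $0$). Hence the ring of grading-preserving endomorphisms of $\KhCx(a,0)$, up to homotopy, is $\ZZ\langle\Id\rangle$, whose group of units is $\{\pm\Id\}$; transporting back through the equivalence of the previous paragraph gives the lemma. (Alternatively, since $T$ is a $(0,n)$-tangle and $\Wmirror T$ an $(n,0)$-tangle, one could read the endomorphism complex off \Proposition{dual-direct} as $\KhCx(\Wmirror T T,N)$ up to shift, which for a bridge tangle is the Khovanov complex of an unlink, with the same conclusion; but the reduction above is more self-contained and avoids any circularity with functoriality.)

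The only point that takes care rather than calculation is the reduction in the first paragraph: one must check that a bridge tangle really is isotopic to an element of $\Crossingless{n}$ (this is exactly what the two equivalent descriptions in the definition are arranged to guarantee, the "no closed components" clause being what excludes stray unknots) and that the homotopy equivalence supplied by invariance may be taken to be an honest map of $\KhCx(n)$-modules of a pure bidegree, so that conjugation by it matches up the grading-preserving autoequivalence groups on the two sides. The rigidity itself is then immediate — it is the one-line fact that $V(a\Wmirror{a})$ lives in non-negative even quantum degrees with a one-dimensional bottom piece — and it is precisely the one-sided analogue of the computation $\HH^{0,0}(\KhCx(n))\cong\ZZ$ that powers Khovanov's own proof of functoriality.
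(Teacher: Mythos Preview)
Your proof is correct and takes a genuinely different route from the paper's. The paper argues via duality: it applies \Proposition{dual} (or really its non-circular special case \Proposition{dual-direct}) to identify $H_{0,0}\RHom_{\KhCx(n)}(\KhCx(T,P),\KhCx(T,P))$ with $\Kh_{0,-n/2}(\Wmirror{T}T,N)$, observes that $\Wmirror{T}T$ is the $n/2$-component unlink, and reads off that this group is $\ZZ$. You instead use Reidemeister invariance (in its homotopy-equivalence form) to transport the problem to a crossingless matching $a$, where $\KhCx(a,0)$ is the elementary projective $1_a\KhCx(n)$ and the endomorphism ring is visibly $V(a\Wmirror{a})$, with degree-zero part $\ZZ\langle 1_a\rangle$.

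Your approach is more elementary and self-contained---it needs neither duality result---and your parenthetical correctly identifies the paper's line of argument. One small correction to your framing: invoking \Proposition{dual-direct} would \emph{not} be circular, since that proposition is proved directly precisely to break the circularity; your main argument simply avoids the duality machinery altogether. The trade-off is that the paper's argument has an immediate spectral analogue (\Lemma{spec-bridges} via \Proposition{spec-dual}), whereas your reduction relies on the Reidemeister maps being honest homotopy equivalences of modules, which---as the paper notes after \Theorem{Kh-tangle-invt} and in \Remark{simple-aa}---is available in the algebraic case but not in the spectral one, where one only has zig-zags of weak equivalences. So your route is cleaner for this lemma in isolation, but the paper's route is the one that ports to spectra.
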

\begin{proof}
  We want to show that the only units in
  $H_{0,0}\RHom_{\KhCx(n)}(\KhCx(T,P),\KhCx(T,P))$ are $\pm 1$. By
  Proposition~\ref{prop:dual}, this group is exactly
  $\Kh_{0,-n/2}(\Wmirror{T}T,N)=\Kh_{0,-n/2}(U_{n/2},0)$, the Khovanov homology of
  the $n/2$-component unlink. Since $\Kh_{0,-n/2}(U_{n/2},0)\cong \ZZ$, the result
  follows.
\end{proof}

\begin{lemma}\label{lem:spec-bridges}
  Let $T$ be a bridge tangle. Then, up to homotopy, the only
  grading-preserving automorphisms of the spectral Khovanov module
  $\KhSpace(T,P)$ associated to $T$ are multiplication by $\pm 1$.
\end{lemma}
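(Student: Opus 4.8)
The plan is to run the proof of \Lemma{bridges} with the spectral duality \Proposition{spec-dual} in place of the algebraic one. The homotopy classes of grading-preserving $\KhSpace(n)$-module self-maps of $\KhSpace(T,P)$ form a ring $R:=\pi_0\RHom_{\KhSpace(n)}(\KhSpace(T,P),\KhSpace(T,P))_0$ under composition, and such a self-map is a homotopy equivalence exactly when its class is a unit of $R$; since $\pm\Id$ are units, it suffices to show $R^\times=\{\pm 1\}$.

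First I would apply \Proposition{spec-dual} with $T_1=T_2=T$ and $P_1=P_2=P$, giving a weak equivalence
\[
  \RHom_{\KhSpace(n)}(\KhSpace(T,P),\KhSpace(T,P))_q\simeq \KhSpace^{q-n/2}(\Wmirror{T}T,N),
\]
so that $R\cong\pi_0\KhSpace^{-n/2}(\Wmirror{T}T,N)$. Since $T$ is a bridge tangle, $\Wmirror{T}T$ is isotopic to the $(n/2)$-component unlink $U_{n/2}$, so by invariance of the Khovanov spectrum $\KhSpace^{-n/2}(\Wmirror{T}T,N)\simeq\KhSpace^{-n/2}(U_{n/2},0)$. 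The Khovanov spectrum of an unlink is a wedge of sphere spectra, one summand per labeling of the components, all in homological grading $0$; with the conventions of \Section{gradings} the unique summand in quantum grading $-n/2$, the all-$1$'s labeling, is a single $\SphereS$. Hence $R\cong\pi_0\SphereS\cong\ZZ$. Finally, a ring with underlying additive group $\ZZ$ is $\ZZ$ with its usual multiplication: its identity must generate, since $\Id=m\cdot t$ forces $t=t\cdot\Id=mt^2$, so $\Id$ is divisible by every power of $m$, so $m=1$. Therefore $R^\times=\{\pm 1\}$; that is, the only grading-preserving homotopy automorphisms of $\KhSpace(T,P)$ are $\pm\Id$.

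Given \Proposition{spec-dual}, this is short, so the real content is in the hypothesis rather than in the lemma: the obstacle being sidestepped is exactly the phenomenon of \Section{Khovanovs}, where the endomorphism spectrum of the identity-braid bimodule $\KhSpace(2)$ acquires the $2$-torsion class $X\eta$ coming from $\pi_1\SphereS$ and Khovanov's argument breaks. The bridge-tangle hypothesis rescues us precisely because \Proposition{spec-dual} identifies the \emph{one-sided} module endomorphisms of $\KhSpace(T,P)$ — rather than the two-sided, Hochschild-type endomorphisms governing identity braids — with the Khovanov spectrum of an honest unlink, whose relevant quantum-grading piece is a single sphere and so leaves no room for stable-stem torsion. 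The one place that needs a moment's care is confirming that the grading shift in \Proposition{spec-dual} carries the grading-preserving part ($q=0$) to the $-n/2$ summand of $\KhSpace(U_{n/2})$; this is forced by the grading normalizations of \Section{gradings} and is already implicit in the computation $\Kh_{0,-n/2}(\Wmirror{T}T,N)\cong\ZZ$ used in \Lemma{bridges}.
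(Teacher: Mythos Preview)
Your proof is correct and follows essentially the same route as the paper's: apply \Proposition{spec-dual} to identify the grading-preserving endomorphism spectrum with $\KhSpace^{-n/2}(U_{n/2})\simeq\SphereS$, take $\pi_0$, and conclude that the only units are $\pm 1$. Your additional care in checking that a unital ring with additive group $\ZZ$ must be $\ZZ$ is a small extra bit of rigor the paper leaves implicit, and your closing commentary accurately explains why the one-sided duality sidesteps the obstruction of \Section{Khovanovs}.
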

\begin{proof}
  Suppose $T$ has $n/2$ bridges. Let $N$ be the number of crossings of $T$. By Proposition~\ref{prop:spec-dual},
  \[
    \RHom_{\KhSpace(n)}(\KhSpace(T,P),\KhSpace(T,P))_{0,0}\simeq \KhSpace^{-n/2}(\Wmirror{T}T,N)=\KhSpace^{-n/2}(U_{n/2}),
  \]
  the Khovanov spectrum of the $n/2$-component unlink, in quantum
  grading $-n/2$. This space is exactly $\SphereS$, the sphere
  spectrum. Hence, the homotopy classes of endomorphisms are
  $\pi_0\SphereS\cong\ZZ$. The only automorphisms are $\pm1$.  
\end{proof}

\subsection{Functoriality of the arc algebra multi-modules}
In the language of Section~\ref{sec:planar}, functoriality of Khovanov
homology is the following:
\begin{theorem}\label{thm:Kh-functorial}
  The projective multi-functor $\KhCx$ from
  Definition~\ref{def:planar-Kh} descends to a projective
  multi-functor $\KhCx\co \enl{\TangMulticat}\to \BimCat$.
\end{theorem}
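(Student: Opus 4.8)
The plan is to show that the projective multifunctor $\KhCx$ of \Definition{planar-Kh} respects the one extra relation~\ref{item:movies} imposed in passing from $\TangMovMulticat$ to $\TangMulticat$, i.e.\ that if two sequences of elementary cobordisms differ (locally) by a diskular movie move then they induce the same $2$-morphism of $\BimCat$ up to sign; together with \Proposition{Kh-movie-multifunc} and the formal nature of the enlargement $\enl{(-)}$ (\Lemma{tId}) this gives the theorem. The first step is to reduce to the diskular movie moves themselves: a relation of type~\ref{item:movies} compares sequences $M$ and $M'$ that agree outside a round disk $D$ and differ by a movie move inside $D$, and enclosing $D$ in a round circle presents the ambient tangle as the gluing of the model tangle inside $D$ into the rest; since $\KhCx(\tId)$ is a gluing equivalence commuting with the maps attached to elementary cobordisms (\Diagram{tId-Sigma}), it then suffices to prove $\KhCx(M)=\pm\KhCx(M')$ for the diskular movie moves alone, i.e.\ for pairs of sequences of Morse and Reidemeister moves between diskular $n$-tangles $S,T$.

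The easy cases come next. The Type~I movie moves (composing planar isotopies, or sliding a planar isotopy past a Reidemeister or Morse move) are consequences of relations~\ref{item:disj-support}--\ref{item:iso-Reid}, which already hold in $\TangMovMulticat$, so $\KhCx$ respects them by \Proposition{Kh-movie-multifunc}. The single crossingless Type~II move (Carter-Saito's move 23(b)) compares two sequences of Morse moves and planar isotopies applied to crossingless tangles; for such sequences $\KhCx(M)$ is simply $V$ applied to the associated composite cobordism surface between the capped-off $1$-manifolds, and because $V$ is a $(1+1)$-dimensional TQFT this map depends only on the diffeomorphism type of that surface, which is the same for $M$ and $M'$.

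The crux is the remaining crossingful Type~II moves, which I would handle by rigidity. For each such move the endpoints $S,T$ are isotopic bridge tangles --- this is a finite check against Carter-Saito's list as organized by Khovanov~\cite{Kho-kh-cobordism}, and in the few cases where the model piece is not literally a bridge tangle one first caps it off, or tensors with an identity, to reduce to that situation, exactly as in~\cite{Kho-kh-cobordism} --- so \Theorem{Kh-tangle-invt} supplies a grading-preserving homotopy equivalence $\KhCx(S,P_S)\simeq\KhCx(T,P_T)\grs{\chi'(M)}{0}$. Composing a homotopy inverse of $\KhCx(M')$ with $\KhCx(M)$ then yields a grading-preserving homotopy autoequivalence of $\KhCx(S,P_S)$; when $M$ and $M'$ themselves contain a saddle and so fail to be quasi-isomorphisms, one first uses naturality of the Morse maps to collect the Reidemeister equivalences on one side, reducing to the quasi-isomorphism case. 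By \Lemma{bridges} --- invoked through the direct duality \Proposition{dual-direct} to keep the argument non-circular --- this autoequivalence is multiplication by $\pm 1$, so $\KhCx(M)=\pm\KhCx(M')$ in $\BimCat$, which is what relation~\ref{item:movies} demands.

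The hard part will be the reduction in the first paragraph: making precise, within the planar/gluing formalism of \Section{planar}, that a movie move supported in a disk genuinely factors out as a tensor with the identity on the rest of the tangle. Beyond that, the work is the bookkeeping in the last paragraph, confirming that Khovanov's movie-by-movie analysis goes through once his Hochschild cohomology computation is replaced by \Lemma{bridges}; no genuinely new idea beyond~\cite{Kho-kh-cobordism} is needed, but the saddle-containing moves require the naturality observation noted above.
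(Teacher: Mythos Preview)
Your overall architecture is right and matches the paper: reduce relation~\ref{item:movies} to the diskular (Type~II) movie moves via the gluing equivalence, then check those. Your treatment of moves 1--7, 23(a), 25, 26 via \Lemma{bridges} is exactly the paper's argument, and your TQFT observation for the crossingless move 23(b) is a fine substitute for the paper's ``easy direct check.''

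The gap is in moves 27--30, and it is real. Your scheme is to compose $\KhCx(M)$ with a homotopy inverse of $\KhCx(M')$ and invoke \Lemma{bridges} on the resulting autoequivalence of a bridge tangle. Two things go wrong. First, in moves 27 and 28 the target tangle contains a closed unknot component, so it is \emph{not} a bridge tangle; the hypothesis of \Lemma{bridges} fails, and ``capping off'' does not remove the closed component. Second, in all of moves 27--30 the maps $\KhCx(M),\KhCx(M')$ involve a birth, death, or saddle and hence are \emph{not} quasi-isomorphisms, so there is no homotopy inverse to compose with. Your proposed fix, ``use naturality of the Morse maps to collect the Reidemeister equivalences on one side,'' does not obviously reduce these moves to a comparison of autoequivalences: the two movies perform their Morse moves at different stages and on different intermediate tangles, and there is no a~priori reason the residual Reidemeister parts match up.

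The paper handles 27--30 by a different (and simpler) mechanism: rather than reducing to automorphisms, it uses the direct duality \Proposition{dual-direct} to identify the bigraded $\Hom$ group in which both $\KhCx(M)$ and $\KhCx(M')$ live with a specific Khovanov homology group of an unlink ($\Kh_{0,-1}(U_1)$ or $\Kh_{0,-2}(U_2)$), observes this group is $\ZZ$, and then argues from the explicit form of the maps (birth followed by an isomorphism, or a saddle that participates in an invertible movie such as 23(b)) that each side is a generator $\pm 1$. This bypasses both obstacles: no invertibility of the cobordism map is needed, and the tangles need not be bridge tangles. You should replace your treatment of moves 27--30 with this $\Hom$-group computation.
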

\begin{proof}
  We must check that the value of $\KhCx$ on 2-morphisms is invariant under
  type~\ref{item:movies} moves, i.e., under the diskular movie moves. That is,
  we must show that the main parts of the type II movie moves, viewed as maps of
  $m$-tangles (where $0\leq m\leq 8$ depends on the move), give homotopic maps
  of spectral bimodules (up to sign). Recall that the diskular movie
  moves correspond to movie moves
  1--7, 23(a), 23(b), and 35--30 in
  % 1, 2, 3, 4, 5, 6, 7, 23(a), 23(b), 25, 26, 27, 28, 29, and 30 in
  Khovanov's list.
  
  For any cobordism between bridges consisting entirely of
  Reidemeister moves and planar isotopies, Lemma~\ref{lem:bridges}
  implies that the two maps agree up to sign. This handles moves
  1--7, 23(a), 25 and 26. The remaining movie moves are 23(b), 27, 28,
  29, and 30.

  Invariance under move 23b is easy to check directly. (So is
  invariance under lots of other moves, of course.)

  For move 27, both sides are maps from the empty link to the unknot
  $U_1$ of $(h,q)$-bidegree $(0,-1)$. Further, both are compositions
  of the birth map, which maps to the unit $1\in \Kh(U_1)$, with an
  isomorphism. Since up to chain homotopy the only grading-preserving
  automorphisms of $\Kh(U_1)$ are multiplication by $\pm1$, these two
  maps agree up to sign. A similar argument applies to this movie read backwards,
  with a death in place of a birth.
  
  Similarly, both movies in move 28 are $(h,q)$-bidegree $(0,-1)$
  homomorphisms from the invariant of a single bridge $B$ to the
  invariant of a bridge union an unknot, $B\cup U_1$.  By
  Proposition~\ref{prop:dual}, this homomorphism is an element of
  $\Kh_{0,-2}(\Wmirror{B}\cup B\cup U_1)=\Kh_{0,-2}(U_2)$, where $U_2$
  denotes the 2-component unlink. This group is isomorphic to
  $\ZZ$. Further, since both maps are a birth followed by an
  isomorphism, both correspond to $\pm 1$ in $\ZZ$. A similar argument
  applies to move 28 read backwards; again, the map lies in
  $\Kh_{0,-2}(U_2)$, this time because a death map has bidegree
  $(0,-1)$.

  Move 29 is the composition of a saddle and a Reidemeister move.  The
  saddle has bidegree $(0,1)$, so by Proposition~\ref{prop:dual}, both
  sides are represented by elements of $\Kh_{0,-1}(U_1)\cong\ZZ$.
  Further, since there exist invertible cobordism maps containing some
  saddles (e.g., by move 23b), both elements must be $\pm 1$ in this
  group.

  Move 30 is the composition of a saddle and a planar isotopy. Hence,
  the corresponding maps have bidegree $(0,1)$. By
  Proposition~\ref{prop:dual} again, both sides are represented by
  elements of $\Kh_{0,-2}(U_2)\cong\ZZ$. This element is a generator
  by the same argument as for move 29. This completes the proof.
\end{proof}

\begin{corollary}
  Given an oriented link cobordism $\Sigma\co L_0\to L_1$ between
  oriented links there is an induced chain homotopy class of chain
  maps $\KhCx(\Sigma)\co \KhCx(L_0)\to \KhCx(L_1)\grs{\chi(\Sigma)}{0}$, well-defined up to
  sign. Further, given another oriented link cobordism $\Sigma'\co L_1\to L_2$,
  \[
    \KhCx(\Sigma')\circ\KhCx(\Sigma)=\pm \KhCx(\Sigma'\circ\Sigma).
  \]
\end{corollary}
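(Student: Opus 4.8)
The plan is to obtain this corollary by restricting the multifunctor of \Theorem{Kh-functorial} to the part of $\enl{\TangMulticat}$ lying over the object $0\in2\ZZ$, i.e.\ to links and link cobordisms. First I would identify the inputs. A link diagram $L\subset\RR^2\cong\interior{D}^2$ is exactly a diskular $(\,;0)$-tangle, so together with the integer $P(L)$ equal to the number of positive crossings of the oriented diagram $L$, the pair $(L,P(L))$ is a $1$-morphism of $\TangMulticat$ with no inputs and target $0$, hence an object of $\enl{\TangMulticat}$ via the trivial one-vertex tree. Specializing \Definition{planar-Kh} to $k=0$, $n=0$ — where the shift $\grs{n/2}{0}$ and the capping off by crossingless matchings are both vacuous — the multi-module $\KhCx(L,P(L))$ is literally the Khovanov complex $\KhCx(L)$ of \Section{arc-alg} with the grading conventions of \Section{gradings}.

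Next, by \Theorem{CarterSaito} with no sub-disks, together with the forgetful functor from the oriented tangle multicategory of \Remark{oriented-tang}, an oriented link cobordism $\Sigma\co L_0\to L_1$ determines, and up to isotopy is determined by, a $2$-morphism in $\TangMulticat$ from $(L_0,P(L_0))$ to $(L_1,P(L_1))$; the equality $P(L_1)=P(L_0)+P(\Sigma)$ along any oriented movie presentation is exactly what makes that forgetful functor well-defined. Applying the projective multifunctor $\KhCx\co\enl{\TangMulticat}\to\BimCat$ of \Theorem{Kh-functorial} to this $2$-morphism yields a chain homotopy class of chain maps, well-defined up to sign because the target is the projectivization of $\BimCat$ (in which $f\sim-f$), of the form
\[
  \KhCx(\Sigma)\co\KhCx(L_0,P(L_0))\to\KhCx(L_1,P(L_1))\grs{\chi'(\Sigma)}{0}.
\]
Since $L_0$ and $L_1$ are closed $1$-manifolds, $\Sigma$ has no boundary arcs, so $\chi'(\Sigma)=\chi(\Sigma)$; combined with the identifications of the previous paragraph this is precisely the map $\KhCx(\Sigma)\co\KhCx(L_0)\to\KhCx(L_1)\grs{\chi(\Sigma)}{0}$ asserted in the statement.

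For the composition law, given $\Sigma'\co L_1\to L_2$ the concatenated cobordism $\Sigma'\circ\Sigma$ represents the vertical composite of the corresponding $2$-morphisms in $\TangMulticat$ (well-defined on isotopy classes, again by \Theorem{CarterSaito}), and $\chi(\Sigma'\circ\Sigma)=\chi(\Sigma)+\chi(\Sigma')$, so the grading shifts are consistent. A functor respects vertical composition of $2$-morphisms, and a projective functor respects it up to the sign inherent in the projectivization; hence $\KhCx(\Sigma')\circ\KhCx(\Sigma)=\pm\,\KhCx(\Sigma'\circ\Sigma)$, as claimed. The substantive content is carried entirely by \Theorem{Kh-functorial}; the only points needing care are the bookkeeping ones — recognizing links and link cobordisms as living over the object $0$ in $\enl{\TangMulticat}$, checking that $\chi'$ degenerates to the topological Euler characteristic $\chi$ in the closed case, and matching the orientation-determined integers $P(L_i)$ with the cumulative $P(\Sigma)$ of a movie — none of which is a genuine obstacle, so I expect the proof to be short.
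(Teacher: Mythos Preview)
Your proposal is correct and is exactly the intended argument: the paper states this corollary immediately after \Theorem{Kh-functorial} without proof, treating it as the evident specialization to the object $0$, and your write-up supplies precisely the bookkeeping (links as diskular $(\,;0)$-tangles, $\chi'=\chi$ in the closed case, $P(L_1)=P(L_0)+P(\Sigma)$ via \Remark{oriented-tang}, and \Theorem{CarterSaito} to identify isotopy classes of cobordisms with $2$-morphisms) that makes this specialization explicit.
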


\subsection{Functoriality of the spectral invariants}
Functoriality of the Khovanov stable homotopy type is the following:
\begin{theorem}\label{thm:Kh-spec-functorial}
  The projective multi-functor $\KhSpace$ from
  Definition~\ref{def:planar-Kh-space} descends to a projective
  multi-functor $\KhSpace\co \enl{\TangMulticat}\to \SBimCat$.
\end{theorem}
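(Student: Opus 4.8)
The plan is to follow the proof of Theorem~\ref{thm:Kh-functorial} essentially verbatim, substituting the spectral rigidity result Lemma~\ref{lem:spec-bridges} for Lemma~\ref{lem:bridges} and the spectral duality Proposition~\ref{prop:spec-dual} for Proposition~\ref{prop:dual} throughout. By Proposition~\ref{prop:Kh-space-movie-multifunc}, $\KhSpace$ is already a projective multifunctor on $\enl{\TangMovMulticat}$, so the only thing to verify is that the value of $\KhSpace$ on $2$-morphisms is invariant under the extra relation~\ref{item:movies}: concretely, that the main pieces of the Type~II movie moves (movie moves $1$--$7$, $23$(a), $23$(b), $25$--$30$ in Khovanov's list), regarded as maps of diskular $m$-tangles with $0\le m\le 8$, induce homotopic maps of spectral multi-modules up to sign. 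Since the relations and the enlargement construction are local, it suffices to treat each diskular movie move as a statement about the corresponding $m$-tangle invariants; tensoring with the identity on the rest of the diagram then handles the general case.

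First I would dispose of the movie moves both of whose sides are cobordisms between bridge tangles built entirely out of Reidemeister moves and planar isotopies --- moves $1$--$7$, $23$(a), $25$, and $26$. For such a move the two $\KhSpace$-maps are grading-preserving homotopy autoequivalences of $\KhSpace(T,P)$ for a bridge tangle $T$, so Lemma~\ref{lem:spec-bridges} forces them to be equal up to sign. Move $23$(b) involves no crossings, and invariance can be checked directly from the definitions of the birth, death, and saddle maps, exactly as in the combinatorial case.

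For the remaining moves $27$, $28$, $29$, $30$ I would carry out the same bidegree bookkeeping as in Theorem~\ref{thm:Kh-functorial}, now using Proposition~\ref{prop:spec-dual} to identify the relevant spectra of module maps. For move $27$ both sides are bidegree $(0,-1)$ maps $\KhSpace(\emptyset)\to\KhSpace(U_1)=\SphereS\{-1\}\vee\SphereS\{1\}$; each is a birth map (hitting the summand $\SphereS\{-1\}$) followed by an equivalence, so they agree up to sign since the relevant endomorphism monoid is $\pi_0\SphereS\cong\ZZ$, whose units are $\pm 1$ --- and likewise read backwards, with a death in place of a birth. For move $28$ the two sides are bidegree $(0,-1)$ maps from $\KhSpace(B)$ to $\KhSpace(B\cup U_1)$ for a single bridge $B$; by Proposition~\ref{prop:spec-dual} this mapping spectrum in the appropriate bidegree is equivalent to $\KhSpace^{-2}(\Wmirror{B}\cup B\cup U_1)\simeq\KhSpace^{-2}(U_2)\simeq\SphereS$, and both maps, being a Morse-move map followed by an equivalence, represent $\pm 1\in\pi_0\SphereS$; again the same holds read backwards. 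Moves $29$ and $30$ are each a saddle composed with a Reidemeister move or a planar isotopy and have bidegree $(0,1)$; Proposition~\ref{prop:spec-dual} identifies the corresponding mapping spectra with $\KhSpace^{-1}(U_1)\simeq\SphereS$ and $\KhSpace^{-2}(U_2)\simeq\SphereS$ respectively, and in both cases the two sides represent a generator of $\pi_0\SphereS$ because invertible cobordism maps containing saddles exist (e.g.\ via move $23$(b)). This exhausts the list.

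The main obstacle --- and the reason the machinery of Sections~\ref{sec:planar} and~\ref{sec:duality} is needed --- is exactly the phenomenon from Section~\ref{sec:Khovanovs}: the bimodule $\KhSpace(n)$ is \emph{not} rigid, having grading-preserving automorphisms such as $\Id+X\eta$ on $\KhSpace(2)$, so one cannot simply run Khovanov's Hochschild-cohomology argument. What rescues the proof is that every movie move gets localized, via the multicategorical framework, to a statement about the very local invariants $\KhSpace(T,P)$ with $T$ a bridge tangle, and Lemma~\ref{lem:spec-bridges} --- whose proof rests on Proposition~\ref{prop:spec-dual} identifying $\RHom_{\KhSpace(n)}(\KhSpace(T,P),\KhSpace(T,P))_{0,0}$ with $\KhSpace^{-n/2}(U_{n/2})\simeq\SphereS$ --- shows that these local bimodules \emph{are} rigid, with automorphism group $\{\pm 1\}\subset\pi_0\SphereS$, and integrally, without inverting any primes. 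Once this is in place, the remainder is the bookkeeping above.
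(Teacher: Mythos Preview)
Your proposal is correct and is exactly the paper's own proof: the paper simply says to rerun the proof of Theorem~\ref{thm:Kh-functorial} with Lemma~\ref{lem:spec-bridges} in place of Lemma~\ref{lem:bridges} and Proposition~\ref{prop:spec-dual} in place of Proposition~\ref{prop:dual}, and you have spelled out precisely that substitution move-by-move.
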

\begin{proof}
  The proof is the same as the proof of
  Theorem~\ref{thm:Kh-functorial}, using Lemma~\ref{lem:spec-bridges}
  in place of Lemma~\ref{lem:bridges} and
  Proposition~\ref{prop:spec-dual} in place of
  Proposition~\ref{prop:dual}.
\end{proof}

\begin{proof}[Proof of Theorem~\ref{thm:main}]
  Given oriented link diagrams $L_0$, $L_1$ with $P_0$ and $P_1$
  positive crossings and an oriented cobordism $\Sigma$ from $L_0$ to
  $L_1$, we have $P(\Sigma)=P_1-P_0$, so $\Sigma$ goes from
  $(L_0,P_0)$ to $(L_1,P_1)$. Hence,
  Theorem~\ref{thm:Kh-spec-functorial} gives a well-defined homotopy
  class of maps
  $\KhSpace(\Sigma)\co
  \KhSpace^j(L_0)\to\KhSpace^{j-\chi(\Sigma)}(L_1)$. It is immediate
  from that theorem that $\KhSpace$ is functorial in $\Sigma$.

  It remains to verify that the maps associated to Reidemeister moves
  and elementary cobordisms agree with the maps defined in our
  previous papers. This is equivalent to showing that the map
  associated with Reidemeister moves and elementary cobordisms in our
  previous papers~\cite{LLS-kh-tangles,LS-rasmus} commute with
  the gluing map for gluing tangles, up to homotopy. This is
  straightforward from the definitions, and is left to the reader.
\end{proof}

\section{Dotted cobordisms, neck cutting, and ribbon concordance}\label{sec:neck}
\emph{Wherein we} lift a result of Levine-Zemke on \textsc{ribbon concordances}
and Khovanov homology to the stable homotopy type.

A \emph{ribbon concordance} from $K_1$ to $K_2$ is a smoothly embedded
cylinder $\Sigma\subset [0,1]\times S^3$ so that projection to
$[0,1]$ gives a Morse function on $\Sigma$ with only index $0$ and $1$
critical points. (We are following Zemke's
convention~\cite{Zem-hf-ribbon}, not Gordon's convention~\cite{Gordon-top-ribbon},
because our results parallel Zemke's.) Following a pioneering result of Zemke's for the Heegaard
Floer knot invariants~\cite{Zem-hf-ribbon}, Levine-Zemke showed that
the map $\Kh(\Sigma)\co\Kh(K_1)\to\Kh(K_2)$ associated to a ribbon
concordance $\Sigma$ is a split injection~\cite{LV-kh-ribbon}.
The goal of this section is to prove the analogue for our map of
Khovanov spectra:
\begin{theorem}\label{thm:ribbon}
  If $\Sigma$ is a ribbon concordance from $K_1$ to $K_2$ then the map
  $\KhSpace(\Sigma)\co\KhSpace^j(K_1)\to\KhSpace^j(K_2)$ has a left
  homotopy inverse. That is, there is a map $G\co\KhSpace^j(K_2)\to\KhSpace^j(K_1)$ so
  that $G\circ \KhSpace(\Sigma)$ is homotopic to the identity map of $\KhSpace^j(K_1)$.
\end{theorem}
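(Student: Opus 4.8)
The plan is to run the standard reverse‑concordance argument of Levine--Zemke~\cite{LV-kh-ribbon}, but upgraded to the spectral level using functoriality (Theorem~\ref{thm:Kh-spec-functorial}), and then to conclude by a Whitehead‑type argument. Let $\bar\Sigma\co K_2\to K_1$ denote the cobordism obtained by turning $\Sigma$ upside down; although $\bar\Sigma$ is no longer ribbon, it is still a concordance, so $\KhSpace(\bar\Sigma)\co\KhSpace^j(K_2)\to\KhSpace^j(K_1)$ is defined, with trivial grading shift since $\chi(\Sigma)=\chi(\bar\Sigma)=0$. By functoriality of $\KhSpace$,
\[
  \KhSpace(\bar\Sigma)\circ\KhSpace(\Sigma)\simeq\pm\KhSpace(\bar\Sigma\circ\Sigma)\co\KhSpace^j(K_1)\to\KhSpace^j(K_1) .
\]
I would set $G'=\KhSpace(\bar\Sigma)$ and study the self‑map $G'\circ\KhSpace(\Sigma)$.

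Next I would pass to singular chains. Using the compatibility of $\KhSpace$ with $\KhCx$ recorded in Diagram~\eqref{eq:homol-com}, which is natural with respect to composition of cobordisms, applying $C_*$ to the displayed homotopy gives $C_*\bigl(G'\circ\KhSpace(\Sigma)\bigr)\simeq\pm\KhCx(\bar\Sigma\circ\Sigma)\simeq\pm\KhCx(\bar\Sigma)\circ\KhCx(\Sigma)$ as chain self‑maps of $\KhCx(K_1)$, the last step being functoriality of $\KhCx$ (Theorem~\ref{thm:Kh-functorial}). By Levine--Zemke's theorem~\cite{LV-kh-ribbon}, for the ribbon concordance $\Sigma$ the composite $\KhCx(\bar\Sigma)\circ\KhCx(\Sigma)$ is chain homotopic to $\pm\Id_{\KhCx(K_1)}$, hence induces $\pm\Id$ on $\Kh(K_1)$. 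Alternatively—and this is presumably the point of developing the neck‑cutting relation in this section—one re‑derives this here: put $\Sigma$ in normal form as $n$ births followed by $n$ saddles, so $\bar\Sigma\circ\Sigma$ is presented by $n$ births, $n$ band moves, their $n$ reverses, and $n$ deaths; by Zemke's doubling argument~\cite{Zem-hf-ribbon} one isotopes the cobordism (an isotopy of genuine cobordisms, hence transparent to $\KhSpace$ by Theorems~\ref{thm:CarterSaito} and~\ref{thm:Kh-spec-functorial}) so that the $i$‑th co‑band runs parallel to the $i$‑th band, producing $n$ disjoint compressible tubes; applying the neck‑cutting relation of Section~\ref{sec:neck} to each tube expresses the map as a combination of $2^n$ capped‑off cobordism maps, all but one of which vanish because they contain a twice‑dotted sphere (corresponding to $X^2=0$), the survivor being the product cobordism decorated with once‑dotted spheres, whose associated map is $\pm\Id$. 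Either way, $G'\circ\KhSpace(\Sigma)$ induces $\pm\Id$ on homology.

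Finally I would conclude by Whitehead's theorem. Since $\KhSpace^j(K_1)$ is a finite spectrum—in particular bounded below with finitely generated homology in each degree—a self‑map inducing $\pm\Id$ on $H_*$ is a homotopy equivalence; letting $H$ be a homotopy inverse of $G'\circ\KhSpace(\Sigma)$ and setting $G=H\circ G'\co\KhSpace^j(K_2)\to\KhSpace^j(K_1)$ gives $G\circ\KhSpace(\Sigma)\simeq H\circ\bigl(G'\circ\KhSpace(\Sigma)\bigr)\simeq\Id_{\KhSpace^j(K_1)}$, as desired. The main obstacle is the homology‑level input: if one does not cite~\cite{LV-kh-ribbon} as a black box, one must carry out the $2^n$‑term neck‑cutting computation and verify that it is valid spectrally—that the $n$ simultaneous neck cuts can be organized coherently, and that the vanishing and identification of the surviving term survive passage through Zemke's isotopy and through the gluing/composition structure built into $\KhSpace$. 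Everything else—functoriality, the chain‑level comparison, and the Whitehead step—is formal; note in particular that we never need $\KhSpace(\bar\Sigma\circ\Sigma)$ to be homotopic to $\pm\Id$ on the nose, only to induce $\pm\Id$ on homology, although the neck‑cutting argument in fact yields that sharper statement.
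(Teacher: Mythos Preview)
Your proposal is correct, and you in fact sketch two valid routes. The paper takes your second alternative: it runs the Levine--Zemke argument entirely at the spectral level, using the spectral neck-cutting relation (Proposition~\ref{prop:neck-cut}) and the computation of the map associated to a dotted sphere (Lemma~\ref{lem:dot-sphere}) in place of their combinatorial analogues. This yields $\KhSpace(\bar\Sigma\circ\Sigma)\simeq\pm\Id$ directly as a map of spectra, so no Whitehead step is needed and one may simply take $G=\pm\KhSpace(\bar\Sigma)$. Your primary route---passing to chains, invoking~\cite{LV-kh-ribbon} as a black box for the homology statement, and then upgrading via Whitehead for finite spectra---is also valid and has the advantage of not requiring the spectral neck-cutting machinery at all; the paper's route is preferred here because that machinery is exactly what the surrounding section is building, and it gives the sharper conclusion that the reverse concordance itself furnishes the left inverse up to sign rather than merely some homotopy equivalence followed by an abstract inverse.
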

In other words, $\KhSpace^j(K_1)$ is a retract of $\KhSpace^j(K_2)$.

The main tool in the proof of Theorem~\ref{thm:ribbon} is a neck
cutting relation for $\KhSpace(\Sigma)$, analogous to
the neck cutting relation for Khovanov homology~\cite[Proposition
7]{LV-kh-ribbon}. To state it, we need to define maps associated to dotted
cobordisms. Fix a tangle $T$ and let $p$ be a point on $T$ (not one of
the endpoints). The \emph{elementary dot cobordism} associated to
$(T,p)$ is $[0,1]\times T$ with $(1/2,p)$ marked. Let $\Sigma_{T,p}$
denote the elementary dot cobordism associated to $(T,p)$. For any
integer $P$, Define a
map
\[
  \KhSpace(\Sigma_{T,p})\co \KhSpace(T,P)\to \KhSpace(T,P)
\]
as follows. Let $U$ be a small unknot disjoint from $T$ and adjacent to $p$. There is a
canonical identification $\KhSpace(T\amalg U)\simeq \KhSpace(T,P)\wedge
\KhSpace(U)=\KhSpace(T,P)\wedge( \SphereS_1\vee\SphereS_X)$. Merging $U$
and $T$ at $p$ is a cobordism $T\amalg U\to T$. The map
$\KhSpace(\Sigma_{T,p})$ is the composition
\[
  \KhSpace(T,P)=\KhSpace(T,P)\wedge\SphereS_X\into\KhSpace(T,P)\wedge(\SphereS_1\vee\SphereS_X)=\KhSpace(T\amalg U,P)\to \KhSpace(T,P)
\]
where the first map is the inclusion as the summand where $U$ is labeled $X$
and the second is induced by the merge. This map increases the quantum
grading by $2$.

The key properties of these maps are:
\begin{proposition}\label{prop:dot-rels}
  The map $\KhSpace(\Sigma_{T,p})$ is independent of which side of $T$
  the unknot $U$ lies on and, in general, the specific choice of
  unknot $U$ disjoint from $T$. Further, if $q$ is obtained by moving
  $p$ through a crossing then
  $\KhSpace(\Sigma_{T,p})\sim\pm\KhSpace(\Sigma_{T,q})$. If $\Sigma'$
  is an elementary cobordism from $T$ to $T'$ and $p$ is not in the
  support of $\Sigma$ then
  $\KhSpace(\Sigma_{T',p})\circ\KhSpace(\Sigma')\sim\pm\KhSpace(\Sigma')\circ\KhSpace(\Sigma_{T,p})$.
\end{proposition}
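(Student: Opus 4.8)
The plan is to reduce all three assertions to functoriality of $\KhSpace$ on tangle cobordisms (\Theorem{Kh-spec-functorial}) together with the compatibility of $\KhSpace$ with disjoint unions. Throughout, write $\iota_U\co\KhSpace(T,P)\wedge\SphereS_X\into\KhSpace(T,P)\wedge\KhSpace(U)=\KhSpace(T\amalg U,P)$ for the inclusion of the summand in which $U$ is labeled $X$, and $\mu_{T,p}\co\KhSpace(T\amalg U,P)\to\KhSpace(T,P)$ for the saddle cobordism map merging $U$ into the component of $T$ through $p$, so that $\KhSpace(\Sigma_{T,p})=\mu_{T,p}\circ\iota_U$ by definition. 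All the maps below are maps of spectral bimodules, so the conclusions hold in that (strongest) sense.

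I would begin with the third assertion, which is bookkeeping with the multifunctor structure. Pick $U$ small, near $p$, and disjoint from the support of $\Sigma'$; this is possible since $p$ is not in that support. Then $\Sigma'\amalg\Id_U$ is a cobordism $T\amalg U\to T'\amalg U$, and, under the gluing identifications $\KhSpace(-\amalg U)\simeq\KhSpace(-)\wedge\KhSpace(U)$ (\Lemma{Kh-space-gluing}), it is identified with $\KhSpace(\Sigma')\wedge\Id_{\KhSpace(U)}$; in particular it preserves the wedge decomposition $\KhSpace(U)=\SphereS_1\vee\SphereS_X$, so $\KhSpace(\Sigma'\amalg\Id_U)\circ\iota_U=\iota_U\circ\KhSpace(\Sigma')$. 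On the other hand, the two composite cobordisms $(\text{merge }U\text{ at }p)\circ(\Sigma'\amalg\Id_U)$ and $\Sigma'\circ(\text{merge }U\text{ at }p)$ from $T\amalg U$ to $T'$ are elementary cobordisms with disjoint supports performed in the two orders, hence represent the same $2$-morphism in $\TangMulticat$, so \Theorem{Kh-spec-functorial} gives $\mu_{T',p}\circ\KhSpace(\Sigma'\amalg\Id_U)=\pm\,\KhSpace(\Sigma')\circ\mu_{T,p}$. Chaining the two identities yields $\KhSpace(\Sigma_{T',p})\circ\KhSpace(\Sigma')=\mu_{T',p}\circ\iota_U\circ\KhSpace(\Sigma')=\mu_{T',p}\circ\KhSpace(\Sigma'\amalg\Id_U)\circ\iota_U=\pm\,\KhSpace(\Sigma')\circ\mu_{T,p}\circ\iota_U=\pm\,\KhSpace(\Sigma')\circ\KhSpace(\Sigma_{T,p})$.

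Next I would treat the second assertion together with the ``general choice of unknot'' part of the first. Suppose the auxiliary data (the unknot $U$ and its attaching band) for $\Sigma_{T,p}$ can be carried to that for $\Sigma_{T,q}$ by an isotopy of $T\amalg U$ supported in the complement of $T$ --- this covers changing $U$ for fixed $p$, and also sliding $U$ as a bead along the component of $T$ from near $p$ to near $q$, which in $D^2\times\RR$ is an honest isotopy realized by a movie (involving Reidemeister~II moves when the bead passes a crossing), the band riding along. Let $\phi$ denote the resulting isotopy cobordism. Then $\KhSpace(\phi)$ is a weak equivalence which matches $\iota_U$ with $\iota_{U'}$, since $U$ remains a single circle carrying its $X$-label throughout, and the composite cobordisms $(\text{merge at }q)\circ\phi$ and $(\text{merge at }p)$ from $T\amalg U$ to $T$ are isotopic. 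Applying \Theorem{Kh-spec-functorial} to both facts gives $\KhSpace(\Sigma_{T,q})=\mu_{T,q}\circ\iota_{U'}=\pm\,\mu_{T,q}\circ\KhSpace(\phi)\circ\iota_U=\pm\,\mu_{T,p}\circ\iota_U=\pm\,\KhSpace(\Sigma_{T,p})$. For an unknot not adjacent to $p$, one first isotopes the attaching band back to a short one, reducing to the adjacent case.

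The one point not handled by an isotopy of cobordisms is the ``which side'' case of the first assertion: $U$ a tiny circle to the left versus to the right of the strand $s$ through $p$, two choices that in general are \emph{not} isotopic in the complement of $T$. Here I would argue one level down, at the Khovanov--Burnside functor. By construction $\KhSpace(\Sigma_{T,p})$ is obtained by applying Elmendorf--Mandell's $K$-theory and rectification to a morphism of functors into $\mBurnside$ assembled from the correspondence of the merge saddle; and the correspondence attached to a (genus-zero) saddle merging a circle $U$ into a fixed circle of a capped-off resolution depends only on the combinatorial type of the saddle --- which circles are merged --- not on the geometric side from which $U$ is glued. Identifying the left-hand $U$ with the right-hand $U$ via the evident bijection of components of the capped-off resolutions, the two choices yield literally the same morphism of functors into $\mBurnside$, hence homotopic maps of spectra, in particular equal up to sign; on Khovanov homology this is visibly the map ``multiply the component through $p$ by $X$.'' I expect this last step --- verifying that the pre-rectification Burnside-level data genuinely coincides for the two sides, so that no additional sign is introduced by rectification --- to be the only part requiring care; everything else is a direct consequence of \Theorem{Kh-spec-functorial} and the definition of $\KhSpace(\Sigma_{T,p})$.
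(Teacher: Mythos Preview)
Your treatment of the third assertion is essentially the paper's: both rest on the observation that the merge saddle and $\Sigma'$ have disjoint supports, so their cobordism maps commute (the paper cites \Proposition{Kh-space-movie-multifunc} rather than \Theorem{Kh-spec-functorial}, but either works). Your treatment of the first assertion is more elaborate than the paper's---the paper just says ``immediate from the definitions,'' presumably because the Khovanov--Burnside functor sees only the combinatorics of the merge, as you observe---but your Burnside-level argument is fine.

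The gap is in the second assertion. Your isotopy $\phi$ passes through diagrams in which $U$ and $T$ share crossings (the Reidemeister~II moves you mention), so $\KhSpace(\phi)$ is a genuine composite of Reidemeister maps, not a map of the form $\Id_{\KhSpace(T)}\wedge g$. The claim that $\KhSpace(\phi)\circ\iota_U=\pm\iota_{U'}$ is exactly what needs proof: $\iota_U$ is not a cobordism map, so functoriality does not touch it, and while the claim is visibly true on Khovanov homology, the whole point of \Section{Khovanovs} is that spectral maps are \emph{not} in general determined by their effect on homology. You cannot just say ``$U$ remains a single circle carrying its $X$-label throughout.''

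The paper sidesteps this entirely. Using the multifunctor structure it localizes to the case where $T$ is a single crossing, viewed as a diskular $4$-tangle; then by \Proposition{spec-dual} the relevant Hom-set is
\[
\pi_0\RHom_{\KhSpace(4)}\bigl(\KhSpace(T,P),\KhSpace(T,P)\grs{-2}{0}\bigr)\cong\pi_0\KhSpace^{0}(U_2)\cong\pi_0(\SphereS\vee\SphereS)\cong\ZZ^2,
\]
so the Hurewicz map is an isomorphism and the two spectral maps are determined by the maps they induce on Khovanov's combinatorial tangle invariant, where the equality is an easy computation. This localize-then-Hurewicz trick is the key idea you are missing; it is also what makes the later neck-cutting relation go through.
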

\begin{proof}
  The first statement is immediate from the definitions. For the
  second, we may assume that $T$ in fact consists of a single
  crossing, viewed as a $4$-ended diskular tangle. By
  Proposition~\ref{prop:spec-dual}, both of the maps $\Sigma_{T,p}$
  and $\Sigma_{T,q}$ are elements of
  \begin{equation}\label{eq:dot-rels-1}
    \pi_0\RHom_{\KhSpace(4)}(\KhSpace(T,P),\KhSpace(T,P)\{0,-2\})=\pi_0\KhSpace^{2-2}(U_2)=\pi_0(\SphereS\vee\SphereS).
  \end{equation}
  Here, $U_2$ denotes the 2-component unlink. In particular, this
  element is determined by its image under the Hurewicz homomorphism,
  i.e., the induced map on Khovanov's
  tangle invariant. So, it suffices to verify the result for
  Khovanov's combinatorial tangle invariant, which is a
  straightforward computation.
  Finally, the statement that $\KhSpace(\Sigma_{T,p})$ commutes with
  the maps associated to cobordisms with supports not containing $p$
  follows from the definition of $\KhSpace(\Sigma_{T,p})$ in terms of
  a saddle cobordism from $U\amalg T$ and the fact that cobordism maps
  for cobordisms with disjoint supports commute (part of
  Proposition~\ref{prop:Kh-space-movie-multifunc}).
\end{proof}

We can decompose an arbitrary tangle cobordism with dots on it as a
composition of elementary cobordisms from Definition~\ref{def:el-cob}
and elementary dot cobordisms. We will refer to either an elementary
cobordism from Definition~\ref{def:el-cob} or an elementary dot
cobordism as an \emph{elementary dotted cobordism}.  Given such a
decomposition of a tangle cobordism $\Sigma$ with dots, there is an
induced map $\KhSpace(\Sigma)$, by composing the maps from
Section~\ref{sec:planar-spectral} and the maps $\Sigma_{T,p}$. The
resulting maps are, in fact, independent of the decomposition:
\begin{theorem}\label{thm:dot-func}
  Let $T_1$ and $T_2$ be tangles and $P_1,P_2$ integers.  If two
  sequences of dotted elementary cobordisms from $(T_1,P_1)$ to
  $(T_2,P_2)$ correspond to isotopic dotted cobordisms then the maps
  they induce on Khovanov spectra agree up to sign.
\end{theorem}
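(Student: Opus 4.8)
The plan is to mirror the proof of Theorem~\ref{thm:Kh-spec-functorial}, replacing the tangle (movie) multicategory by a dotted analogue. First I would enlarge the list of elementary cobordisms of Definition~\ref{def:el-cob} by adjoining the elementary dot cobordisms $\Sigma_{T,p}$, obtaining dotted multicategories $\TangMovMulticat^{\bullet}$ and $\TangMulticat^{\bullet}$; the $2$-morphisms of $\TangMulticat^{\bullet}$ are finite sequences of elementary dotted cobordisms modulo the relations of Definitions~\ref{def:cobordism} and~\ref{def:tangle-movie-multicat}, a dotted analogue of the movie-move relation~\ref{item:movies}, and a short list of new relations recording how a dot may be slid about on a decorated surface. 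The corresponding Carter--Saito statement --- that isotopy classes of genuine \emph{dotted} cobordisms are exactly the $2$-morphisms of $\TangMulticat^{\bullet}$ --- follows from Theorem~\ref{thm:CarterSaito} together with the observation that a decorated surface can be isotoped so that no marked point ever lies in the support of a Morse move or Reidemeister move (there is always room to slide a dot out of such a region along its sheet). Hence the only genuinely new moves beyond the undotted ones are: commuting a dot-placement past an elementary cobordism of disjoint support, commuting two dot-placements past each other, sliding a dot along its sheet past a crossing of the underlying tangle, and sliding a dot across a saddle band from one side to the other (with the resulting identifications when a component of the surface meets $S$ or $T$ in several circles). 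This combinatorial classification of dotted cobordisms is essentially standard (it is Bar-Natan's dotted cobordism category), and I expect assembling it cleanly --- rather than any of the subsequent verifications --- to be the main obstacle.

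Next I would define $\KhSpace\co\enl{\TangMovMulticat^{\bullet}}\to\SBimCat$ exactly as in Definition~\ref{def:planar-Kh-space}, using the maps $\KhSpace(\Sigma_{T,p})$ from Proposition~\ref{prop:dot-rels} on the new generators, and check as in Proposition~\ref{prop:Kh-space-movie-multifunc} that this is a projective multifunctor. Descending to $\enl{\TangMulticat^{\bullet}}$ then requires invariance under the relations above. The undotted relations are Theorem~\ref{thm:Kh-spec-functorial}. Commuting a dot past a disjoint-support elementary cobordism, and past another dot, is the last clause of Proposition~\ref{prop:dot-rels} (its proof applies verbatim when the ``other'' cobordism is itself an elementary dot cobordism, since $\KhSpace(\Sigma_{T,p})$ is built from a saddle on a disjoint unknot). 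Sliding a dot past a crossing of the underlying tangle is the middle clause of Proposition~\ref{prop:dot-rels}.

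The only remaining relations are the local ones in which a dot interacts nontrivially with a topology change: sliding a dot across a saddle band, and the identifications near a birth or death. For each of these I would argue exactly as in the proofs of Theorems~\ref{thm:Kh-functorial} and~\ref{thm:Kh-spec-functorial}: reduce to a local model --- a diskular tangle with at most eight ends, no closed components, and underlying tangle (after the saddle/birth/death) a bridge --- and apply Proposition~\ref{prop:spec-dual}. Both maps being compared are then elements of $\pi_0\RHom_{\KhSpace(n)}(\KhSpace(T,P),\KhSpace(T',P'))$, which by that proposition is $\pi_0$ of a Khovanov spectrum of a small unlink, hence a finitely generated free abelian group. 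Such an element is determined by its image under the Hurewicz homomorphism, i.e., by the induced map on Khovanov's combinatorial tangle invariant; and the corresponding equalities on Khovanov homology are the familiar dotted-cobordism relations ($\epsilon(Xv)$ depends only on $v$, $m(Xx\otimes y)=m(x\otimes Xy)$, $(X\otimes 1)\Delta=(1\otimes X)\Delta$, $X^2=0$), verified by direct computation on the finitely many local models. (Equivalently one may invoke functoriality of Khovanov homology under dotted cobordisms, which itself follows from Theorem~\ref{thm:Kh-functorial} by the same local reduction.)

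Finally, Theorem~\ref{thm:dot-func} follows: two sequences of elementary dotted cobordisms presenting isotopic dotted cobordisms are connected by the moves above, each respected by $\KhSpace$ up to sign. The same conclusion can be reached more concretely by using Proposition~\ref{prop:dot-rels} and the across-saddle relation to push every dot-placement onto the boundary of its surface component; a closed component of the cobordism then contributes a scalar depending only on its diffeomorphism type and dot count, which is the same for both presentations, and the remaining data is (i) the underlying undotted map, which is the same up to sign by Theorem~\ref{thm:Kh-spec-functorial}, and (ii) for each boundary component of $T_1$ or $T_2$ the multiplicity of dots pushed onto it, which is an isotopy invariant of the decorated surface once one knows that a dot may be moved to any boundary circle of its surface component.
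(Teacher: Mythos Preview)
Your proposal is correct, but substantially more elaborate than the paper's argument. The paper's proof is three sentences: it observes that two isotopic dotted movies are related by (i) undotted movie moves, (ii) sliding a dot past a crossing, and (iii) exchanging an elementary dot cobordism with a disjoint-support elementary cobordism, and then cites Theorem~\ref{thm:Kh-spec-functorial} for (i) and Proposition~\ref{prop:dot-rels} for (ii) and (iii). That is the whole proof.

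The moves you list as additional---sliding a dot across a saddle band and the interactions with births and deaths---are actually \emph{derivable} from (ii) and (iii) together with the trivial fact that $\KhSpace(\Sigma_{T,p})$ depends only on which arc of $T$ the point $p$ lies on. For instance, to slide a dot from one side of a merge saddle to the other: commute the dot past the saddle to the time after it (disjoint supports), at which point the two arcs have become a single component of the tangle, slide the dot along that component using (ii), then commute back. The split case is handled by commuting to the time \emph{before} the saddle instead. So your Hurewicz-style verifications of these extra local relations, while valid, are unnecessary work.

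Your framework of building dotted multicategories $\TangMovMulticat^\bullet$, $\TangMulticat^\bullet$ and descending a projective multifunctor is also more infrastructure than the paper deploys here, though it is the natural parallel to the undotted story and would be the right thing to do if one wanted a full dotted functoriality package. The paper is content with the bare statement that two isotopic dotted movies induce the same map up to sign, and proves exactly that. Your final paragraph---pushing every dot onto the boundary of its surface component and then invoking Theorem~\ref{thm:Kh-spec-functorial} on the undotted remainder---is the piece of your proposal closest in spirit to what the paper actually does.
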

\begin{proof}
  Given isotopic dotted cobordisms $\Sigma$ and $\Sigma'$, there is an
  isotopy from $\Sigma$ to $\Sigma'$ built from the following components:
  \begin{itemize}
  \item a movie move,
  \item moving a dot past a crossing, and
  \item exchanging the order of an elementary dot cobordism and
    another elementary cobordism with disjoint supports.
  \end{itemize}
  So, the result follows from Theorem~\ref{thm:Kh-spec-functorial} and
  Proposition~\ref{prop:dot-rels}.
\end{proof}

We can now state the neck-cutting relation:
\begin{proposition}\label{prop:neck-cut}
  Let $\Sigma\co T_1\to T_2$ be a tangle
  cobordism, possibly with dots. Let $N$, the \emph{neck}, be a smoothly embedded copy of
  $D^2\times [-1,1]^2$ with 
  $N\cap \Sigma=S^1\times [-1,1]\times\{0\}\subset N$. Let $\Sigma'_+$
  (respectively $\Sigma'_-$) be the result of deleting $N\cap \Sigma$
  from $\Sigma$ and replacing it with
  $D^2\times\{-1,1\}\times\{0\}\subset N$, and putting a new dot on
  $D^2\times\{ 1\}\times\{0\}$ (respectively
  $D^2\times\{ -1\}\times\{0\}$), and smoothing the corners. Then,
  \[
    \KhSpace(\Sigma)=\pm\KhSpace(\Sigma'_+)\pm\KhSpace(\Sigma'_-).
  \]
\end{proposition}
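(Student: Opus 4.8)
The plan is to reduce \Proposition{neck-cut} to a local identity of stable maps attached to the unknot, and then establish that identity by detecting it on Khovanov homology, where it becomes the classical neck-cutting relation.

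\emph{Localization.} Since the neck $N$ is disjoint from the rest of $\Sigma$, since $\Sigma\cap N$ is a standard tube, and since the (finitely many) dots of $\Sigma$ may be pushed away from $N$, I first use isotopy-invariance of the dotted cobordism maps (\Theorem{dot-func}, which rests on \Theorem{Kh-spec-functorial}) to put $\Sigma$ in standard position: over some time subinterval $[t_0,t_1]$ it has the form $\Id_{T'}\amalg(U\times[t_0,t_1])$, where $U$ is a small split unknot containing the core of the neck, $T'$ is a tangle, and all dots lie outside $[t_0,t_1]$. Slicing at $t_0$ and $t_1$ writes $\Sigma=\Sigma_{\mathrm{top}}\circ\Sigma_{\mathrm{mid}}\circ\Sigma_{\mathrm{bot}}$ with $\Sigma_{\mathrm{mid}}=\Id_{T'\amalg U}$, so $\KhSpace(\Sigma_{\mathrm{mid}})=\Id$ and, via the disjoint-union case of the gluing equivalences (\Proposition{Kh-space-movie-multifunc}, \Lemma{Kh-space-gluing}), $\KhSpace(\Sigma)=\KhSpace(\Sigma_{\mathrm{top}})\circ(\Id_{\KhSpace(T')}\wedge\Id_{\KhSpace(U)})\circ\KhSpace(\Sigma_{\mathrm{bot}})$. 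Cutting the neck replaces $\Sigma_{\mathrm{mid}}$ by $\Id_{T'}\amalg\tau_\pm$, where $\tau_\pm\co U\to\emptyset\to U$ is a death followed by a birth with a dot placed on one of the two resulting cap disks; the rest of the decomposition is unchanged. Hence $\KhSpace(\Sigma'_\pm)=\KhSpace(\Sigma_{\mathrm{top}})\circ(\Id_{\KhSpace(T')}\wedge m_\pm)\circ\KhSpace(\Sigma_{\mathrm{bot}})$ with $m_\pm=\KhSpace(\tau_\pm)\co\KhSpace(U)\to\KhSpace(U)$. Because composition and smashing with a fixed spectrum are additive on the homotopy category, it now suffices to prove $\Id_{\KhSpace(U)}=\pm m_+\pm m_-$ for a suitable choice of the two signs.

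\emph{The local identity.} Here $\KhSpace(U)=\SphereS_1\vee\SphereS_X$, the two summands lying in distinct quantum gradings, so any bidegree $(0,0)$ endomorphism — in particular each $m_\pm$ — is a wedge of self-maps of the individual summands, whence $[\KhSpace(U),\KhSpace(U)]$ in bidegree $(0,0)$ is $\pi_0\SphereS\oplus\pi_0\SphereS\cong\ZZ\oplus\ZZ$, which is torsion free. Such an endomorphism is therefore detected by its effect on $H_*(\KhSpace(U))\cong\Kh(U)=V$, where births, deaths and dots become the unit, counit and multiplication by $X$. A direct TQFT computation shows that, on $V$, $m_+$ and $m_-$ are the projections onto $\ZZ\langle X\rangle$ and $\ZZ\langle 1\rangle$ (equivalently: for degree reasons each $m_\pm$ is supported on one of the two wedge summands, on which, by Whitehead's theorem, it is a self-equivalence of $\SphereS$ and hence $\pm\Id$), so $m_++m_-$ equals $\Id_{\KhSpace(U)}$ up to adjusting signs on the summands. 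Feeding this back through the localization yields $\KhSpace(\Sigma)=\pm\KhSpace(\Sigma'_+)\pm\KhSpace(\Sigma'_-)$.

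\emph{Main obstacle.} The only genuinely fiddly step is the localization: one must verify that after the isotopy $\KhSpace$ of the three pieces of $\Sigma$ really does factor as the asserted composite with the tube contribution a single tensor factor $\Id_{\KhSpace(U)}$, and that the identical decomposition computes $\KhSpace(\Sigma'_\pm)$ with only that factor replaced by $m_\pm$. Making this precise requires invoking invariance under isotopy of dotted cobordisms (\Theorem{dot-func}), commutativity of cobordism maps with disjoint supports together with the gluing weak equivalences (\Proposition{Kh-space-movie-multifunc}), the identification $\KhSpace(T'\amalg U)\simeq\KhSpace(T')\wedge\KhSpace(U)$, and \Proposition{dot-rels} for the good behavior of the dot maps under these moves; by contrast, the homotopy-theoretic content of the second step is soft once torsion-freeness of the relevant endomorphism group is observed.
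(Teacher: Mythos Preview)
Your proof is correct, and the overall strategy---localize, then detect the local identity on Khovanov homology via torsion-freeness of the relevant endomorphism group---is the same as the paper's. The difference is in the choice of local model.

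The paper localizes to a \emph{4-ended} diskular tangle $T_4$ (two parallel arcs) and takes as local cobordism the composite $\Sigma_{\mathrm{neck}}\colon T_4\to T_4$ of two saddles; the cut versions become the identity cobordism on $T_4$ with a dot on one of the two arcs. The relevant group of maps is then identified, via the duality result (\Proposition{spec-dual}), with $\pi_0\KhSpace^{0}(U_2)\cong\pi_0(\SphereS\vee\SphereS)=\ZZ^2$, after which the Hurewicz argument applies. Your approach instead localizes to a \emph{closed split unknot} $U$ and takes as local cobordism the identity $\Id_U$, with cut versions the death--birth composite carrying a dot on one cap; the relevant group $[\KhSpace(U),\KhSpace(U)]_{(0,0)}\cong\ZZ^2$ is read off directly from the wedge decomposition $\KhSpace(U)\simeq\SphereS\{-1\}\vee\SphereS\{1\}$ without invoking \Proposition{spec-dual}.

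Your localization is legitimate: the circle $U=S^1\times\{r\}\times\{0\}$ bounds the embedded disk $D^2\times\{r\}\times\{0\}\subset N$, and since $\Sigma\setminus N$ is disjoint from $N$ this disk misses the rest of the tangle, so $U$ really is a split unknot at each time $r$. Arranging the rest of $\Sigma$ to be a product over $[t_0,t_1]$ is the step that needs the most care in your write-up; the paper sidesteps this by using horizontal composition with an identity cobordism in the diskular-tangle framework, so the ``rest of the tangle'' is handled structurally rather than by a separate isotopy. In exchange, your local computation is slightly more elementary, avoiding the appeal to \Proposition{spec-dual}.
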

\begin{proof}
  We will use the fact that the map is local and the
  Hurewicz theorem to deduce this result from the corresponding fact
  for Khovanov homology.
  
  Let $\Sigma_{\mathit{neck}}$ be the cobordism
  \[
    \mathcenter{\begin{tikzpicture}
      \filldraw (1,0) circle (.1);
      \draw[dashed] (0,0) circle (1);
      \draw[->,bend right=30] (.7071,.7071) to (.7071,-.7071);
      \draw[<-,bend left=30] (-.7071,.7071) to (-.7071,-.7071);
    \end{tikzpicture}}
  \longrightarrow
  \mathcenter{
    \begin{tikzpicture}
      \filldraw (1,0) circle (.1);
      \draw[dashed] (0,0) circle (1);
      \draw[->,bend left=30] (.7071,.7071) to (-.7071,.7071);
      \draw[<-,bend right=30] (.7071,-.7071) to (-.7071,-.7071);
    \end{tikzpicture}
    }
    \longrightarrow
    \mathcenter{
    \begin{tikzpicture}
      \filldraw (1,0) circle (.1);
      \draw[dashed] (0,0) circle (1);
      \draw[->,bend right=30] (.7071,.7071) to (.7071,-.7071);
      \draw[<-,bend left=30] (-.7071,.7071) to (-.7071,-.7071);
    \end{tikzpicture}
    }
  \]
  from a flat $4$-ended diskular tangle $T_4$ to itself, consisting of two saddles.
  Using Theorem~\ref{thm:dot-func}, we can arrange that the cobordism
  $\Sigma$ is a (vertical) composition $\Sigma_2\circ\Sigma_1\circ\Sigma_0$ where
  $\Sigma_1$ is obtained by gluing (i.e., horizontally composing) $\Sigma_{\mathit{neck}}$ to an identity
  cobordism, and $\Sigma'=\Sigma_2\circ\Sigma_0$. So, it suffices to
  prove the result when $\Sigma$ is just the cobordism $\Sigma_{\mathit{neck}}$,
  viewed as a cobordism from a flat 4-ended tangle to itself.

  The homotopy class of the map associated to $\Sigma_{\mathit{neck}}$ is an element
  of
  \[
    \pi_0\RHom_{\KhSpace(4)}(\KhSpace(T_4,0),\KhSpace(T_4,0)\{0,-2\})=\pi_0\KhSpace^{2-2}(U_2)=\pi_0(\SphereS\vee\SphereS)
  \]
  (cf.\ Equation~\eqref{eq:dot-rels-1}).  The two copies of the sphere
  spectrum correspond to labeling the circles in $U_2$ $1$ and $X$ or
  vice-versa, and the first equality is
  Proposition~\ref{prop:spec-dual}.  So, like in
  Proposition~\ref{prop:dot-rels}, this element is determined by the
  induced map of Khovanov's combinatorial tangle invariant. Similarly,
  the maps induced by $\Sigma'_+$ and $\Sigma'_-$ are elements of
  $\pi_0(\SphereS\vee\SphereS)$ and so are also determined by the
  induced maps of Khovanov's tangle invariants. At the level of
  combinatorial Khovanov homology, the specified equality is a simple,
  direct computation (and also follows from the corresponding theorem
  for Bar-Natan's picture world~\cite{Bar-kh-tangle-cob}).
\end{proof}

As was the case for Khovanov homology, the maps of Khovanov spectra
associated to unknotted spheres with dots are simple:
\begin{lemma}\label{lem:dot-sphere}
  Let $\Sigma\co K_1\to K_2$ be a cobordism and
  $S\subset [0,1]\times\RR^3$ a $2$-sphere disjoint and
  geometrically unlinked from $\Sigma$ (but possibly knotted). Let $S_k$ be the result of putting $k$
  dots on $S$. Then, the map of Khovanov spectra associated to $\Sigma\amalg S_k$ is:
  \begin{itemize}
  \item Nullhomotopic if $k\neq 1$, and
  \item Homotopic to plus or minus the map induced by $\Sigma$ if $k=1$.
  \end{itemize}
\end{lemma}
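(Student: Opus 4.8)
The plan is to use locality of the cobordism maps to peel off $S_k$ as a dotted cobordism in its own ball, reducing the claim to the value of $\KhSpace$ on a single dotted sphere, and then to dispose of $k\neq 1$ for grading reasons and of $k=1$ via Khovanov homology.

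First I would exploit that $S$ is geometrically unlinked from $\Sigma$: after an ambient isotopy of the dotted cobordism $\Sigma\amalg S_k$, which changes neither $\KhSpace(\Sigma)$ nor $\KhSpace(\Sigma\amalg S_k)$ by the isotopy-invariance in Theorems~\ref{thm:Kh-spec-functorial} and~\ref{thm:dot-func}, I may assume $S$ and all of its dots lie in a fixed ball disjoint from every time-slice of $\Sigma$. Presenting $S_k$ by elementary dotted cobordisms supported in that ball and using that $\KhSpace$ is a multifunctor compatible with disjoint-union (diskular) composition, together with the far-commutation of cobordisms with disjoint supports (Propositions~\ref{prop:Kh-space-movie-multifunc} and~\ref{prop:dot-rels}), I would obtain $\KhSpace(\Sigma\amalg S_k)\simeq\KhSpace(\Sigma)\wedge\KhSpace(S_k)$, where $\KhSpace(S_k)\co\KhSpace(\emptyset)\to\KhSpace(\emptyset)$ is the map associated to the closed dotted cobordism $S_k$ and $\KhSpace(\emptyset)=\SphereS$.

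Next I would analyze $\KhSpace(S_k)$. Viewed as a cobordism $\emptyset\to\emptyset$ carrying $k$ dots it has net quantum degree $2k-\chi(S)=2k-2$, while $\KhSpace(\emptyset)$ is $\SphereS$ placed in quantum grading $0$; hence for $k\neq 1$ the map $\KhSpace(S_k)$ lands in a summand of $\KhSpace(\emptyset)$ that is the zero spectrum, so it is nullhomotopic, and then so is $\KhSpace(\Sigma)\wedge\KhSpace(S_k)$. For $k=1$ the map $\KhSpace(S_1)$ is a self-map of $\SphereS$, hence an element of $\pi_0\SphereS\cong\ZZ$, and by the Hurewicz isomorphism $\pi_0\SphereS\cong H_0(\SphereS)=\ZZ$ it is the integer by which the combinatorial cobordism map $\Kh(S_1)\co\ZZ\to\ZZ$ multiplies---using the compatibility $C_*(\KhSpace(\emptyset))\simeq\KhCx(\emptyset)$ of the spectral and combinatorial invariants, as in the diagram~\eqref{eq:homol-com} and its dotted analogue. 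For an unknotted $S$, presenting $S_1$ as a birth, a dot, and a death shows the composite $\ZZ\to V\to V\to\ZZ$ is $1\mapsto 1\mapsto X\mapsto 1$ (here $V=\ZZ[X]/(X^2)$ is Khovanov's Frobenius algebra), so $\Kh(S_1)=1$; for a possibly knotted $S$ the same value follows from Bar-Natan's local relations for dotted cobordisms, realized by Khovanov homology~\cite{Bar-kh-tangle-cob} (alternatively, from iterated application of the neck-cutting relation of Proposition~\ref{prop:neck-cut}), since the Reidemeister moves in a movie presentation of $S$ contribute only isomorphisms. Thus $\KhSpace(S_1)\simeq\pm\Id_{\SphereS}$ and $\KhSpace(\Sigma\amalg S_1)\simeq\pm\KhSpace(\Sigma)$.

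The step I expect to be the main obstacle is the locality reduction (the factorization $\KhSpace(\Sigma\amalg S_k)\simeq\KhSpace(\Sigma)\wedge\KhSpace(S_k)$): I must verify that geometric unlinkedness of $S$ from $\Sigma$ genuinely allows $\Sigma\amalg S_k$ to be realized, throughout its movie, as a disjoint union of dotted cobordisms living in disjoint disks, so that the gluing and far-commutation properties of $\KhSpace$---and their compatibility with the dotted-cobordism functoriality of Theorem~\ref{thm:dot-func}---actually apply. Granting this, the remaining ingredient is the classical fact that a $k$-dotted closed $2$-sphere evaluates to $\epsilon(X^k)$ regardless of its embedding, which I would quote rather than reprove.
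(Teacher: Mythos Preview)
Your approach is essentially the same as the paper's: reduce by locality to the case $\Sigma=\emptyset$, kill $k\neq 1$ by grading, and for $k=1$ use Hurewicz to reduce to Khovanov homology. The paper handles the locality step exactly as you anticipate, invoking Theorem~\ref{thm:dot-func} to arrange that all intermediate diagrams are disjoint unions, so your flagged ``main obstacle'' is not one.

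The one place where your reasoning is weaker than the paper's is the justification that $\Kh(S_1)=\pm 1$ for a \emph{knotted} sphere $S$. Your appeal to ``Bar-Natan's local relations'' and ``Reidemeister moves contribute only isomorphisms'' does not do the work: Bar-Natan's relations live in an abstract (unembedded) cobordism category, whereas here the question is whether two non-isotopic embedded surfaces induce the same map, and a movie for a knotted $2$-sphere contains Morse moves, not just Reidemeister moves. Iterated neck-cutting is also not obviously sufficient, since the required embedded compressing disks need not exist for a knotted sphere. The paper instead gives a short concrete argument (following Sundberg--Swann): delete a disk from $S$ to get $D\co\emptyset\to U$, compose with a punctured torus to get a closed genus-one surface $T$, observe $\Kh(T)=\pm 2\Kh(S_1)$ via the handle/dot relation, and invoke the Rasmussen--Tanaka computation $\Kh(T)=\pm 2$. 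You are right that the fact is classical and citable (the paper cites Gujral--Levine and Tanaka), but your sketched justification for it is not correct as stated.
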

\begin{proof}
  Again, we deduce this from the case of Khovanov homology.  By
  construction, the map associated to a disjoint union of cobordisms
  is the smash product of the maps associated to the individual
  cobordisms. (This uses Theorem~\ref{thm:dot-func} to show that we
  can assume all the intermediate diagrams are disjoint unions.) So,
  it suffices to prove the result when $K_1=K_2=\Sigma=\emptyset$. It
  follows from the behavior of the quantum grading that if $k\neq 1$
  then $\KhSpace(S_k)$ is nullhomotopic. For the remaining case,
  $\KhSpace(S_1)$ is a map from
  $\SphereS=\KhSpace(\emptyset)\to\KhSpace(\emptyset)=\SphereS$, i.e.,
  an element of $\pi_0(\SphereS)=H_0(\SphereS)=\ZZ$.

  So, it suffices to know that $S_1$ induces $\pm\Id$ on Khovanov
  homology. This is well known (see, for instance,~\cite[Corollary
  1.3]{LG-kh-split}, and see also~\cite[Proof of Theorem
  1.1]{Tanaka-kh-closed}), but for completeness and since the argument
  is short, we give a proof, following Sundberg-Swann~\cite[Theorem
  4.2]{SS-kh-surf}. Let $D$ be the result of deleting a small disk
  from $S$, so $D$ is a cobordism from the empty set to the unknot
  $U$. Let $T$ be the result of composing $D$ and a punctured
  torus. Then,
  $2\Kh(S)=\pm\Kh(T)\co \ZZ=\Kh(\emptyset)\to\Kh(\emptyset)=\ZZ$, but
  Rasmussen and Tanaka showed that $\Kh(T)$ is multiplication by
  $\pm 2$~\cite{Rasmussen-kh-closed,Tanaka-kh-closed}.
\end{proof}

\begin{proof}[Proof of Theorem~\ref{thm:ribbon}]
  This is the same as the proof in the combinatorial case~\cite[Theorem
  1]{LV-kh-ribbon}, using Lemma~\ref{lem:dot-sphere} and
  Proposition~\ref{prop:neck-cut} in place of their combinatorial
  analogues~\cite[Proposition 7]{LV-kh-ribbon}.
\end{proof}

As another corollary of the neck cutting relation, the maps of
Khovanov spectra, like the maps of Khovanov homology, do not see local
knotting:
\begin{corollary}
  If a cobordism $\Sigma'$ is obtained from a cobordism $\Sigma$ by
  taking a (standard) connected sum with a knotted $2$-sphere then
  $\KhSpace^j(\Sigma)=\KhSpace^j(\Sigma')$.
\end{corollary}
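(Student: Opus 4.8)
The plan is to deduce this directly from the neck-cutting relation (\Proposition{neck-cut}) together with \Lemma{dot-sphere}, mirroring the combinatorial argument. First I would describe the geometry: forming the standard connected sum of $\Sigma$ with a knotted $2$-sphere $S$ amounts to choosing a small embedded tube joining $\Sigma$ to $S$, and this tube contains an evident neck $N\cong D^2\times[-1,1]^2$ with $N\cap\Sigma' = S^1\times[-1,1]\times\{0\}$. Applying \Proposition{neck-cut} along $N$ yields
\[
  \KhSpace^j(\Sigma')=\pm\KhSpace^j(\Sigma'_+)\pm\KhSpace^j(\Sigma'_-).
\]
Cutting the neck and inserting the two disks $D^2\times\{\pm 1\}\times\{0\}$ separates $\Sigma$ from $S$: one disk recloses the puncture on the $\Sigma$-side and the other recloses the puncture on the $S$-side, so up to isotopy one of $\Sigma'_\pm$ is the disjoint union of $\Sigma$ (undotted) with $S$ carrying a single dot, and the other is the disjoint union of $\Sigma$ carrying a single dot with $S$ (undotted).

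Next I would apply \Lemma{dot-sphere} to each of the two terms. Since the connected sum is standard, after cutting $N$ the sphere $S$ lies in a ball disjoint from the remainder of the cobordism, hence is disjoint and geometrically unlinked from it (even though $S$ may itself be knotted) --- precisely the hypothesis of \Lemma{dot-sphere}. Therefore the term in which $S$ carries no dot ($k=0$) induces a nullhomotopic map, while the term in which $S$ carries a single dot ($k=1$) induces $\pm\KhSpace^j(\Sigma)$. Substituting these into the neck-cutting formula leaves $\KhSpace^j(\Sigma')=\pm\KhSpace^j(\Sigma)$, which is the claimed equality, these cobordism maps being defined up to sign throughout.

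The only step requiring real attention --- and the place where the word \emph{standard} is used --- is identifying $\Sigma'_\pm$ with the stated disjoint unions and verifying the geometric-unlinkedness hypothesis of \Lemma{dot-sphere}. Here one isotopes the tube so that the two capping disks sit inside a ball meeting $\Sigma$ in a trivial disk and meeting $S$ in the complementary disk, so that after cutting the neck, $S$ is split off into a ball disjoint from $\Sigma$. Once this normalization is arranged, the remaining verification is purely formal; in particular the argument applies verbatim if $\Sigma$ or $S$ already carry dots, using the full strength of \Lemma{dot-sphere} on whichever sphere-summand does not receive the new dot.
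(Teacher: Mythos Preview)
Your proposal is correct and is exactly the argument the paper intends: the paper states this result without proof, simply labeling it ``another corollary of the neck cutting relation,'' and your write-up supplies precisely the details (cut the connecting tube via \Proposition{neck-cut}, then apply \Lemma{dot-sphere} to kill the $k=0$ term and identify the $k=1$ term with $\pm\KhSpace^j(\Sigma)$).
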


\begin{remark}
  As already observed~\cite[Lemma 4]{Bod-kh-sq}, weak functoriality of
  the Khovanov stable homotopy type already gives an obstruction to
  the existence of ribbon concordances: weak functoriality implies
  that if there is a ribbon concordance from $K_1$ to $K_2$ then there
  is a split injection from $\Kh(K_1)$ to $\Kh(K_2)$ respecting the
  action of the Steenrod squares. (So, for example, none of the pairs
  of knots with isomorphic Khovanov homology found by
  Seed~\cite{Seed-Kh-square} are related by ribbon concordances.)

  Theorem~\ref{thm:ribbon} implies that, in fact, for any generalized
  homology theory $h$, there is a split injection from $h(K_1)$ to
  $h(K_2)$.  As an example of the difference, recall that
  $\pi_5^s(\CC P^2)=\ZZ/12\ZZ$. Let $X$ be the result of attaching a
  $6$-cell to $\CC P^2$ via the element $6\in\ZZ/12\ZZ$, and let
  $Y=\CC P^2\vee S^6$. Then $X$ and $Y$ have isomorphic homology
  groups and the same action by the mod-$p$ Steenrod algebra for all
  primes $p$, but $\pi_5^s(X)=\ZZ/6\ZZ$ while
  $\pi_5^s(Y)=\ZZ/12\ZZ$. In particular, if for some knots $K_1$ and
  $K_2$ and quantum grading $j$, $\KhSpace^j(K_1)=X$ and
  $\KhSpace^j(K_2)=Y$ then there is no ribbon concordance relating
  $K_1$ and $K_2$.
\end{remark}

\begin{remark}
  Given two surfaces $\Sigma,\Sigma'\subset [0,1]\times\RR^3$, a more
  general connected sum operation is to delete a disk from each of
  $\Sigma$ and $\Sigma'$ and then attach an annulus in
  $[0,1]\times\RR^3\setminus (\Sigma\cup\Sigma')$ to the new boundary
  components. It follows from neck cutting and the main theorem of
  Gujral-Levine~\cite{LG-kh-split} that taking this kind of
  generalized connected sum with a sphere (even one knotted and linked
  with $\Sigma$) does not change the induced map on Khovanov
  homology. We do not know if Gujral-Levine's result holds for the
  maps of Khovanov spectra so, in particular, we do not know if this
  generalized connected sum with a sphere sometimes changes the map of
  Khovanov spectra.
\end{remark}

\section{Computations}\label{sec:computations}
\emph{Wherein we} describe an example of a \textsc{Hopf-like invariant
  of link cobordisms} coming from naturality of the Khovanov spectrum.

Maps of spaces are much richer than maps of abelian groups. In
particular, there can be non-nullhomotopic maps of spaces when the
induced maps on homology vanish for grading reasons: the familiar Hopf
map in $\pi_1^s(S^0)=\ZZ/2$ is an example. Another example is the
Hopf-like map in $\pi_1(M(\ZZ/2))=[S^{n+2},\Sigma^n\RR P^2]=\ZZ/2$
($n\geq 2$). For the Khovanov spectrum,
this phenomenon can even occur for maps between Khovanov-thin knots,
even though the Khovanov spectra for Khovanov-thin knots are wedge sums of Moore
spectra~\cite[Section 9.3]{RS-khovanov} and, consequently, determined
by their homology. One way to detect interesting maps is to study
their mapping cones. As an example, we have the following proposition.

% If this document doesn't compile for you, comment out this picture.
% It uses relatively new tikz features.
% Also remove the "braids" library from the tikz package include statement.
\begin{figure}
  \centering
  \begin{center}
    \scalebox{.75}{
    \begin{tikzpicture}
      \pic[
      braid/.cd,
      number of strands=8,
      line width = 2pt,
      rotate=90,
      gap=0.1,
      name prefix=braid]{braid = {s_1-s_3^{-1}-s_6^{-1} s_1-s_4 s_2-s_5^{-1} s_2}};
      \draw[bend left=90, line width=2pt] (braid-1-s) to (braid-6-s);
      \draw[bend left=90, line width=2pt] (braid-2-s) to (braid-3-s);
      \draw[bend left=90, line width=2pt] (braid-4-s) to (braid-5-s);
      \draw[bend left=90, line width=2pt] (braid-7-s) to (braid-8-s);
      \draw[bend left=90, line width=2pt] (braid-8-e) to (braid-1-e);
      \draw[bend left=90, line width=2pt] (braid-6-e) to (braid-2-e);
      \draw[bend left=90, line width=2pt] (braid-3-e) to (braid-4-e);
      \draw[bend left=90, line width=2pt] (braid-7-e) to (braid-5-e);
      \draw[line width=2pt, red, dotted] ($(braid-7-1)!0.5!(braid-7-0)$) circle (15pt); 
    \end{tikzpicture}}
  \end{center}
  \caption{\textbf{The knot $8_{19}$.}}
  \label{fig:819}
\end{figure}
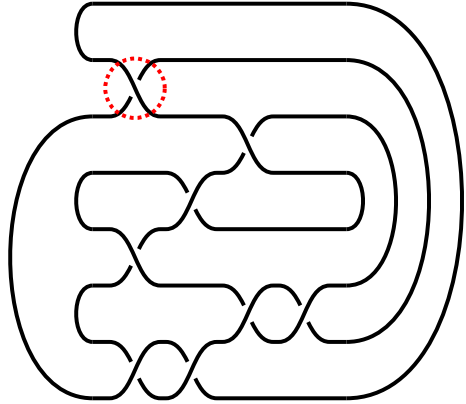

\begin{proposition}
  There is an orientable cobordism $\Sigma$ from the knot $K_0=5_2$ to the link
  $K_1=5_1\cup\text{meridian}$ so that the induced map of Khovanov
  spectra
  \begin{equation}\label{eq:eg}
   S^0\vee S^1\simeq \KhSpace^{3}(K_0)\to
    \KhSpace^{4}(K_1)\simeq S^0\vee \Sigma^{-1}\RP^2
  \end{equation}
  sends $S^1$ to $\Sigma^{-1}\RP^2$ via the Hopf map. More precisely,
  we can choose the homotopy equivalences in Formula~\eqref{eq:eg} so
  that the maps $S^0\to \Sigma^{-1}\RR P^2$ and $S^1\to S^0$ are
  nullhomotopic, in which case the map $S^1\to \Sigma^{-1}\RR P^2$ is
  the Hopf map.
\end{proposition}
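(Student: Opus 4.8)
The plan is to reduce the entire statement to a finite, explicit computation about Khovanov homology with coefficients, packaged through the now-established functoriality of the Khovanov spectrum. First I would exhibit a concrete cobordism $\Sigma\co K_0\to K_1$: take $K_0=5_2$, do a finger move to create a small meridional circle $m$ pushed off one strand of a component, then a sequence of Reidemeister and saddle moves transforming $5_2$ into $5_1$ while keeping $m$ as a separate meridian; the Euler characteristic of this $\Sigma$ is $-1$ (one saddle creating the extra circle must be balanced, so in fact one should be more careful to realize exactly $\chi(\Sigma)=-1$ so that the grading shift lands $\KhSpace^3(K_0)$ into $\KhSpace^4(K_1)$). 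One verifies from standard tables that $\Kh^{*,3}(5_2)$ and $\Kh^{*,4}(5_1\cup\text{meridian})$ are as claimed: $\Kh(5_2)$ in quantum grading $3$ is $\ZZ\oplus\ZZ$ (free, in two adjacent homological gradings, no torsion), so $\KhSpace^3(5_2)\simeq S^0\vee S^1$; and $\Kh(5_1\cup m)$ in quantum grading $4$ is $\ZZ$ in one homological grading and $\ZZ/2$ in the next, with the $\ZZ/2$ realized by a nontrivial $\Sq^1$ (equivalently, the Khovanov spectrum in that range is $S^0\vee \Sigma^{-1}\RP^2$, because $\RP^2=S^1\cup_2 e^2$ has exactly this homology with a nontrivial $\Sq^1$, and $\Sigma^{-1}\RP^2$ is the desuspended Moore-type spectrum). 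Here I would either cite the computations of $\Sq^1$ on Khovanov homology (the action is determined by the Bockstein, so it is forced by the integral Khovanov homology of $5_1\cup m$ once one checks the torsion is a single $\ZZ/2$ in consecutive homological degrees within quantum grading $4$), or do the cube-of-resolutions computation directly.

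Next, using Theorem~\ref{thm:main} (equivalently Theorem~\ref{thm:Kh-spec-functorial}), $\Sigma$ induces a well-defined-up-to-sign homotopy class of maps $f\co S^0\vee S^1\to S^0\vee\Sigma^{-1}\RP^2$. Decompose $f$ into its four matrix components with respect to these wedge decompositions: $f_{00}\co S^0\to S^0$, $f_{10}\co S^1\to S^0$, $f_{01}\co S^0\to\Sigma^{-1}\RP^2$, and $f_{11}\co S^1\to\Sigma^{-1}\RP^2$. The first thing to pin down is the induced map on homology: $H_*(f)$ is the Khovanov-homology map $\Kh(\Sigma)$ in quantum gradings $3\to 4$, which by the direct computation is an isomorphism $\ZZ\to\ZZ$ on the bottom homological generator (hence $f_{00}$ is $\pm 1$ up to a choice, and after adjusting the equivalence we may take $f_{00}=\Id$) and is zero on the $S^1$-summand for degree reasons (the target has no free part in the relevant homological degree, only the $\ZZ/2$). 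Since $f_{10}\co S^1\to S^0$ factors through $\pi_1^s(S^0)=\ZZ/2$, and $f_{01}\co S^0\to\Sigma^{-1}\RP^2$ factors through $\pi_0(\Sigma^{-1}\RP^2)$ which is $0$ in the relevant range (the bottom cell of $\Sigma^{-1}\RP^2$ sits in degree $-1$, so $\pi_0=0$), we get $f_{01}=0$ automatically and $f_{10}\in\ZZ/2$. Now the key normalization: change the wedge splitting of the source by the automorphism $\begin{psmallmatrix}1&0\\f_{10}&1\end{psmallmatrix}$ (which is legitimate since $\pi_0\Hom(S^1,S^0)$ acts on splittings) to arrange $f_{10}=0$; this is consistent with the requirement that the maps $S^1\to S^0$ and $S^0\to\Sigma^{-1}\RP^2$ be nullhomotopic. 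What remains is $f_{11}\co S^1\to \Sigma^{-1}\RP^2$, and the claim is that this is the (desuspended) Hopf map, i.e., the nontrivial element of $\pi_1(\Sigma^{-1}\RP^2)=[S^{n+2},\Sigma^n\RP^2]=\ZZ/2$ coming from $\eta$ composed with the inclusion of the bottom cell.

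To identify $f_{11}$ I would use the mapping-cone strategy suggested in the surrounding text: the cofiber of $f$ (or of a related map built from $\Sigma$) is a spectrum whose cohomology carries a secondary operation or an extended $\Sq$-action distinguishing $\eta$ from $0$. Concretely, $f_{11}$ is nonzero iff the cofiber of $f_{11}$ is $\Sigma^{-1}(S^2\cup_\eta e^3\cup_2 e^4$-type complex$)$ rather than a wedge; this is detected by a nontrivial $\Sq^2$ (from $S^1$ across $\eta$ to the top cell of $\Sigma^{-1}\RP^2$) in $H^*(\Cone(f);\ZZ/2)$. So the computation reduces to: compute the integral and mod-$2$ Khovanov cohomology of the mapping cone of $\Sigma$ — equivalently, glue the cube of resolutions of $K_0$, the cobordism $\Sigma$, and $K_1$ into a single Khovanov-type complex whose homology is $\Kh$ of the cone — and read off $\Sq^2$ on it using the formulas of Lipshitz–Sarkar for the Steenrod action on Khovanov homotopy types~\cite{RS-khovanov}. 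The main obstacle is precisely this last step: one must (i) pick a genuinely controllable representative $\Sigma$ with few saddles so that the combined cube is small enough to compute by hand or by a short program, and (ii) correctly compute the second Steenrod square on the resulting 3-or-4-dimensional cohomology, which requires either invoking the Lipshitz–Sarkar cube-framing machinery or finding an ad hoc geometric argument (e.g. relating the cone to $\KhSpace^j$ of some auxiliary link, such as $8_{19}$ as hinted by Figure~\ref{fig:819}, whose Khovanov homotopy type is known to have a nontrivial $\Sq^2$). Everything else — the functoriality, the grading bookkeeping, the splitting normalizations, and the fact that $\pi_1^s=\ZZ/2$ so there is only one nonzero candidate — is routine once this $\Sq^2$ is computed.
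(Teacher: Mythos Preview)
Your overall strategy---decompose the map into matrix components, normalize, then detect the remaining component via $\Sq^2$ on the cofiber---matches the paper's. But there is a structural gap and a factual error.

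The gap: you treat the $8_{19}$ connection as an optional shortcut at the end, when it is the heart of the argument. The cobordism $\Sigma$ is not constructed by ad hoc finger moves; it is the single saddle cobordism obtained by resolving one specific crossing of a specific diagram of $8_{19}=m(T(3,4))$ (the $0$-resolution is $5_2$, the $1$-resolution is $5_1\cup\text{meridian}$). Because $\Sigma$ is a crossing-change saddle, the mapping cone of $\KhSpace(\Sigma)$ is, up to a grading shift, literally $\KhSpace(8_{19})$, whose homotopy type in the relevant quantum grading was previously computed to be $\Sigma^{-1}\RR P^5/\RR P^2$. The $\Sq^2$ computation is then free: it is the standard action on real projective space. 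Your version leaves you computing $\Sq^2$ on a cube-of-resolutions complex assembled from a vaguely specified multi-step cobordism, which is precisely the hard step you should be avoiding. This also affects your claim that $f_{00}$ is a unit: you assert it follows from ``direct computation of $\Kh(\Sigma)$,'' which you do not carry out; the paper reads it off from the long exact sequence with $\Kh(8_{19})$.

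The error: you claim $\pi_0(\Sigma^{-1}\RR P^2)=0$ because ``the bottom cell sits in degree $-1$.'' Stably $\RR P^2\simeq S^1\cup_2 e^2$, so $\Sigma^{-1}\RR P^2$ has bottom cell in degree $0$ and $\pi_0(\Sigma^{-1}\RR P^2)\cong\ZZ/2$. Thus $f_{01}$ is not automatically zero; both $f_{01}$ and $f_{10}$ are killed by pre- and post-composing with automorphisms of source and target, which is legitimate once $f_{00}$ is known to be a unit.
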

\begin{proof}
  Let $K=8_{19}=m(T(3,4))$, which is shown in
  Figure~\ref{fig:819}. The $0$-resolution (respectively $1$-resolution) of the
  \textcolor{red}{circled} crossing is $K_0$ (respectively
  $K_1$). Hence, this crossing  corresponds to a single saddle
  cobordism from $K_0$ to $K_1$. Since $K_1$ has two components, this
  cobordism is orientable.

  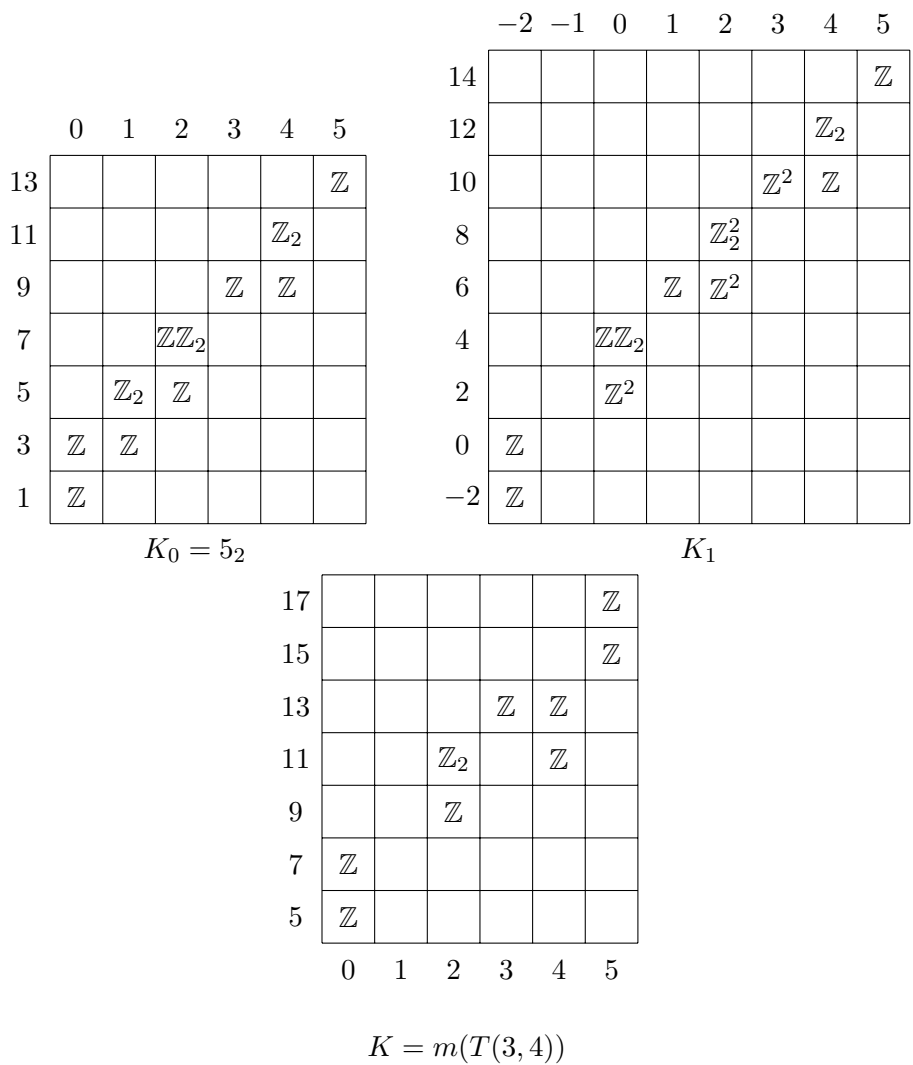
\begin{figure}
    \centering
      \begin{tikzpicture}[scale=0.7,yscale=0.5]
    \draw[ystep=2,xshift=-0.5cm] (6,14) grid (0,0);
    \foreach \h in {0,...,5}{
      \node at (\h,15) {$\h$};}
    \foreach \q in {1,3,...,13}{
      \node at (-1,\q) {$\q$};}

    \foreach \i/\j/\v in {1/0/\ZZ,3/0/\ZZ,3/1/\ZZ,5/1/\ZZ_2,5/2/\ZZ,7/2/\ZZ\ZZ_2,9/3/\ZZ,9/4/\ZZ,11/4/\ZZ_2,13/5/\ZZ}{
      \node at (\j,\i) {$\v$};}
     \node at (2.25,-1) (label) {$K_0=5_2$};
  \end{tikzpicture}
    \qquad
  \begin{tikzpicture}[scale=0.7,yscale=0.5]
    \draw[ystep=2,yshift=-1cm,xshift=-0.5cm] (6,16) grid (-2,-2);
    \foreach \h in {-2,...,5}{
      \node at (\h,16) {$\h$};}
    \foreach \q in {-2,0,...,14}{
      \node at (-3,\q) {$\q$};}

    \foreach \i/\j/\v in {-2/-2/\ZZ,0/-2/\ZZ,2/0/\ZZ^2,4/0/\ZZ\ZZ_2,6/1/\ZZ,6/2/\ZZ^2,8/2/\ZZ_2^2,10/3/\ZZ^2,10/4/\ZZ,12/4/\ZZ_2,14/5/\ZZ}{
      \node at (\j,\i) {$\v$};}
    \node at (1.5,-4) (label) {$K_1$};
  \end{tikzpicture}
    \qquad
    \begin{tikzpicture}[scale=0.7,yscale=0.5]
    \draw[ystep=2,xshift=-0.5cm] (6,18) grid (0,4);
    \foreach \h in {0,...,5}{
      \node at (\h,3) {$\h$};}
    \foreach \q in {5,7,...,17}{
      \node at (-1,\q) {$\q$};}

    \foreach \i/\j/\v in {5/0/\ZZ,7/0/\ZZ,9/2/\ZZ,11/2/\ZZ_2,11/4/\ZZ,13/3/\ZZ,13/4/\ZZ,15/5/\ZZ,17/5/\ZZ}{
      \node at (\j,\i) {$\v$};}
    \node at (2.25,0) (label) {$K=m(T(3,4))$};
  \end{tikzpicture}
    \caption{\textbf{Khovanov homology of $K_0=5_2$, $K_1$, and
        $K=m(T(3,4))$.} The symbol $\oplus$ has been suppressed in 
     $\ZZ\oplus\ZZ/2$ in two places. The homological grading is
     horizontal and the quantum grading is vertical.}
    \label{fig:Kh-example}
  \end{figure}
    
  The Khovanov homologies of $K$, $K_0$, and $K_1$ are shown in
  Figure~\ref{fig:Kh-example}. These computations were extracted from
  the Knot Atlas and Mathematica KnotTheory
  packages~\cite{KAT-kh-knotatlas}. Knot Atlas is not consistent about
  the distinction between a knot and its mirror, but since $K$ is a
  negative knot, with our conventions its Khovanov homology is
  supported in positive gradings (see
  Remark~\ref{rem:grading-help}). For the 2-component link $K_1$, the
  KnotTheory package gives idiosyncratic gradings; we have shifted the
  results to agree with our conventions.

  We have
  \begin{align*}
    \KhCx(K)\grs{-7}{-2}&\simeq \Cone(\KhCx(\Sigma)\co \KhCx(K_0)\grs{1}{0}\to\KhCx(K_1)),\\
    \Sigma^{-2}\KhSpace^{j}(K)&\simeq \Cone(\KhSpace(\Sigma)\co \KhSpace^{j-8}(K_0)\to\KhSpace^{j-7}(K_1)).
  \end{align*}
  One can verify the grading shift either from the diagram and grading formulas
  or by examining the Khovanov homologies: this is the only possibility
  consistent with a long exact sequence
  $\cdots\to\Kh(K_1)\to\Kh(K)\grs{b}{a}\to\Kh(K_0)\to\cdots$.

  Consider $\KhSpace^{11}(K)$. It was calculated previously~\cite{RS-steenrod,JLS-kh-Morse-moves} that
  \[
    \KhSpace^{11}(K)\simeq \Sigma^{-1}\RR P^5/\RR P^2.
  \]
  (Note that our conventions are different from~\cite{RS-steenrod}.) On the
  other hand, since $K_0$ and $K_1$ are thin we have
  \begin{align*}
    \KhSpace^{3}(K_0)&\simeq S^{0}\vee S^1\\
    \KhSpace^{4}(K_1)&\simeq S^0\vee \Sigma^{-1}\RR P^2.
  \end{align*}
  Write the map $S^{0}\vee S^1\to S^0\vee \Sigma^{-1}\RR P^2$ as
  \[
    (a,b,c,d)\in \Bigl(\pi_0^s(S^0)\oplus \pi_0^s(\Sigma^{-1}\RR
    P^2)\oplus\pi_1^s(S^0)\oplus\pi_1^s(\Sigma^{-1}\RR P^2)\Bigr)\cong  \ZZ\oplus(\ZZ/2\ZZ)^3.
  \]
  By considering the homology of
  $\KhSpace^{11}(K)$, $a$ must be a unit. So, we can pre-compose with
  an automorphism of $S^{0}\vee S^1$ and post-compose with an
  automorphism of $S^0\vee \Sigma^{-1}\RR P^2$ so that $b=c=0$. Then,
  considering the Steenrod squares on $\RR P^5/\RR P^2$, the map 
  $\KhSpace^3(K_0)\supset S^1\to \Sigma^{-1}\RR P^2\subset
  \KhSpace^4(K_1)$ must be the Hopf map, as claimed.
\end{proof}

\begin{remark}
  The Khovanov stable homotopy type does not give an interesting
  invariant of closed surfaces in an obvious way. Given a closed
  surface $\Sigma$, viewed as a map from the empty link to the empty
  link, there is an induced map
  \[
    \KhSpace(\Sigma)\co \KhSpace^j(\emptyset)\to \KhSpace^{j-\chi(\Sigma)}(\emptyset).
  \]
  Since $\KhSpace^j(\emptyset)$ is the sphere spectrum $\SphereS$ if
  $j=0$ and trivial for $j\neq 0$, the map $\KhSpace(\Sigma)$ can only
  be nontrivial if $\chi(\Sigma)=0$. In this case, by the Hurewicz
  theorem, the homotopy class of the map $\KhSpace^j(\Sigma)$ is
  determined by the induced map on homology. This map $\ZZ\to\ZZ$
  sends $1$ to $2^{b_0}$ if $\Sigma$ consists of
  $b_0$ tori, and $0$ if $\Sigma$ has any non-toroidal
  components~\cite{Rasmussen-kh-closed,Tanaka-kh-closed,LG-kh-split}.
\end{remark}

\section*{Table of notation}
  
% \begin{table}
%   \centering
  \begin{longtable}[tab:notation]{cp{4.25in}}
    \toprule
    Notation & Meaning\\
    \midrule
    \endhead
    $a,b,\dots$ & Crossingless matchings (\pageref{not:crossingless})\\
    $\Crossingless{n}$ & Set of crossingless matchings of $n$ points
                         ($n$ even) (\pageref{not:crossingless})\\
    $\Wmirror{a}$, $\Wmirror{T}$ & The mirror of a tangle or crossingless matching (\pageref{not:crossingless})\\
    $N$  & The number of crossings of a link $L$ or tangle $T$\\
    $P$ & Auxiliary integer. Morally, number of positive crossings (\pageref{sec:gradings})\\
    $\Crossings$ & The set of crossings of a tangle $T$ (\pageref{not:crossings})\\
    $\annulus$ & A specific annulus (\pageref{sec:tang-multicat})\\
    $T^{m_1,\dots,m_k;n}$ or $T$      & A diskular tangle (\pageref{def:diskular})\\
    $T\circ_i S$, $T\circ(S_1,\dots,S_k)$  & Composition of diskular tangles (\pageref{not:TangComp})\\
    $T_v$ & Resolution of tangle $T$ associated to vertex $v$ of $\CCat{\Crossings}$ (\pageref{eq:KhCx-T-hocolim})\\
    $\Sigma$ & Cobordism of diskular tangles (\pageref{def:cobordism})\\
    $P(\Sigma)$ & Effect of $\Sigma$ on number of positive crossings (\pageref{def:cobordism})\\
    $\chi'(\Sigma)$ & Modified Euler characteristic of $\Sigma$ (\pageref{def:cobordism})\\
           $\TangMovMulticat$  & The tangle cobordism movie multicategory (\pageref{def:tangle-movie-multicat})\\
    $\TangMulticat$    & The tangle cobordism multicategory (\pageref{def:tangle-multicat})\\
    $\enl{\TangMovMulticat}$, $\enl{\TangMulticat}$ & Canonical groupoid enrichments of ${\TangMovMulticat}$, ${\TangMulticat}$ (\pageref{def:enlargement})\\
    $\AbelianGroups$ & (Multi-)Category of abelian groups (\pageref{not:crossings})\\
    $\Spectra$ & (Multi-)Category of symmetric spectra (\pageref{not:KhSpaceAlg})\\
    $\BimCat$ & Multicategory of \dg multimodules (\pageref{def:BimCat})\\
    $\SBimCat$ & Multicategory of spectral multimodules (\pageref{def:SBimCat})\\
    $\SphereS$ & The sphere spectrum (\pageref{not:sphere})\\
    \midrule
    \pagebreak[2]
    $\KhCx(L)$  & The Khovanov complex of a link $L$ (\pageref{not:KhCx})\\
    $\Kh(L)$  & Khovanov homology of a link $L$ (\pageref{not:KhCx})\\
    $V$ & The Khovanov Frobenius algebra or TQFT (\pageref{not:V})\\
    $V(Z)$ & The Khovanov TQFT applied to a closed $1$-manifold $Z$ (\pageref{not:V})\\
    $\CCat{\Crossings}$ & Cube category on the set $\Crossings$ (\pageref{not:crossings})\\
    $\CCatP{\Crossings}$ & Result of doubling terminal object in $\CCat{\Crossings}$ (\pageref{not:crossings})\\
    $|v|$ & Height of a vertex $v$ of $\CCat{\Crossings}$ (\pageref{not:abs-v})\\
    $\KhCx(n)$  & Khovanov's arc algebra on $n$ points ($n$ even) (\pageref{not:crossingless})\\
    $\KhCx(T)$  & Khovanov's complex of bimodules associated to $(2m,2n)$-tangle $T$ (\pageref{not:KhCxT})\\
    $\KhCx(\Sigma)$ & Khovanov map associated to a tangle cobordism $\Sigma$ (\pageref{def:planar-Kh})\\
    $\grs{q}{h}$ & Homological grading shift by $h$, quantum grading shift by $q$ (\pageref{not:grs}) \\
    $\gr_q$, $\gr_h$ & Quantum and homological gradings (\pageref{not:grh})\\
    \midrule
    \pagebreak[2]
    $\KhSpace(K)$, $\KhSpace^j(K)$ & Khovanov spectrum of a link $K$, in quantum grading $j$ (\pageref{not:KhSpace},\pageref{not:KhSpaceT2})\\
    $\KhSpace(n)$  & Spectral arc algebra on $n$ points ($n$ even) (\pageref{not:KhSpaceAlg})\\
    $\KhSpace(T)$, $\KhSpace(T,P)$  & Spectral arc algebra bimodule associated to a $(m,n)$-tangle or diskular tangle $T$ (\pageref{not:KhSpaceT1}, \pageref{not:KhSpaceT2})\\
    $\KhSpace(\Sigma)$ & Map of Khovanov spectra associated to tangle cobordism $\Sigma$ (\pageref{not:KhSpSigma})\\
    \midrule
    \pagebreak[2]
$\mHshapeS{n}$ &  Arc algebra shape multicategory (\pageref{sec:spec-arc-alg})\\
$\SmTshape{m}{n}$, $\SmTshape{m_1,\dots,m_k}{n}$ &  Tangle shape multicategory (\pageref{not:KhSpaceAlg}, \pageref{not:big-tang-shape})\\
$\mGlueS{m}{n}{p}$ & Gluing shape multicategory (\pageref{not:mGlueS})\\
    $\mHshape{n}$, $\mTshape{m}{n}$, $\mGlue{m}{n}{p}$ & Groupoid enriched versions of $\mHshapeS{n}$, $\SmTshape{m}{n}$, $\mGlueS{m}{n}{p}$ (\pageref{not:KhSpaceAlg}, \pageref{not:mGlueS}, \pageref{not:mGlue-2})\\
  $\CobD$ & Divided cobordism category (\pageref{def:CobD}, \pageref{not:enlCobD}, \pageref{def:CobD-annulus})\\
    $\enl{\Cat}$ & Canonical groupoid enrichment of $\Cat$ (\pageref{not:enlCobD}, \pageref{def:enlargement})\\
    $\tId$ & Particular morphism related to canonical groupoid enrichment (\pageref{lem:tId})\\
    $\CCat{\Crossings}\ttimes \mTshape{m}{n}$ & Thickened product of $\CCat{\Crossings}$ and $\mTshape{m}{n}$ (\pageref{not:ttimes1})\\
    \bottomrule
  % \end{tabular}
  \caption[Table of notation]{\textbf{Table of notation.} The page
    where each notation is introduced is noted in parentheses.}
  \label{tab:notation}
\end{longtable}

\vspace{-0.3cm}
\bibliographystyle{myalpha}
\bibliography{newbibfile}
\vspace{1cm}
%\newpage
\end{document}